\DeclareMathOperator{\End}{End}
\DeclareMathOperator{\Dom}{Dom}
\DeclareMathOperator{\kat}{kat}
\DeclareMathOperator{\Kat}{Kat}
\DeclareMathOperator{\Endog}{Endog}
\def\Ind#1#2{#1\setbox0=\hbox{$#1x$}\kern\wd0\hbox to 0pt{\hss$#1\mid$\hss}
\lower.9\ht0\hbox to 0pt{\hss$#1\smile$\hss}\kern\wd0}
\def\notind#1#2{#1\setbox0=\hbox{$#1x$}\kern\wd0
\hbox to 0pt{\mathchardef\nn=12854\hss$#1\nn$\kern1.4\wd0\hss}
\hbox to 0pt{\hss$#1\mid$\hss}\lower.9\ht0 \hbox to 0pt{\hss$#1\smile$\hss}\kern\wd0}
\newtheorem{theorem}{Theorem}[section]
\newtheorem{question}{Question}[section]
\newtheorem{lemma}[theorem]{Lemma}
\newtheorem*{defn}{Definition}
\newtheorem*{thmA}{THEOREM A}
\newtheorem*{thmB}{THEOREM B}
\newtheorem*{thmB,2}{Theorem B, Second version}
\title{Endogenies and linearization in the non-virtually connected case}
\author{Moreno Invitti}
\begin{document}
\maketitle
\begin{abstract}
    We prove a linearization theorem for pre-rings of endogenies acting on a definable abelian group of finite dimension. Observe that no assumptions on the connectivity of $A$ are made. We also prove a similar result when one of the two pre-rings is of quasi-endomorphisms. A corollary of these results is a generalization of Zilber's Field Theorem for finite-dimensional theories.
\end{abstract}
\section{Introduction}
The present work characterizes, under the model-theoretic hypothesis of finite-dimensionality,
irreducible bi-modules, i.e., abelian groups together with two commuting subrings of endomorphisms, and which are "irreducible" for the bi-action.\\
This result is well-known for groups of finite Morley rank \cite{zilber1984some}. It is also known for $o$-minimal theories (see, for example \cite{peterzil2000simple}). The main problem is that the two proofs use techniques proper to the two families of theories involved (in the finite Morley rank case, the indecomposability theorem is necessary). A natural question arises: to which generality can we extend the linearization?\\
An interesting family of theories is finite-dimensional theories, in the sense of \cite{wagner2020dimensional}.
\begin{defn}
    A theory $T$ is said to be \emph{finite-dimensional} if there exists a function $\operatorname{dim}$ from the class of all interpretable subset in any model $\mathcal{M}$ of $T$ into $\omega\cup\{-\infty\}$ such that for any $\phi(x,y)$ formula, $X,Y$ interpretable sets in $T$ and $f$ interpretable function from $X$ to $Y$,
\begin{itemize}
    \item If $a,a'$ have same type over $\emptyset$, $\operatorname{dim}(\phi(x,a))=\operatorname{dim}(\phi(x,a'))$ 
    \item $\operatorname{dim}(\emptyset)=-\infty$ and $\operatorname{dim}(X)=0$ if and only if $X$ is finite;
    \item $\operatorname{dim}(X\cup Y)=\max\{\operatorname{dim}(X),\operatorname{dim}(Y)\}$;
    \item If $\operatorname{dim}(f^{-1}(y))\geq k$ if for any $y\in Y$, then $\operatorname{dim}(X)\geq \operatorname{dim}(Y)+k$;
    \item If $\operatorname{dim}(f^{-1}(y))\leq k$ for any $y\in Y$, then $\operatorname{dim}(X)\leq \operatorname{dim}(Y)+k$.
\end{itemize}
\end{defn}
Examples of finite-dimensional theories are $o$-minimal theories, supersimple theories of finite Lascar rank, and supperosy theories of finite $U^{\text{\thorn}}$-rank.\\
 A first proof of Zilber's Field Theorem in this context has been given by Deloro in \cite{deloro2024zilber}. An important hypothesis of the theorem is that the module on which the ring of endomorphisms acts is connected. This clearly holds in the finite Morley rank and $o$-minimal context, but in general is not true: it is sufficient to take a supersimple theory of finite Lascar rank. Therefore, it remains unclear what happens for non virtually connected modules. The aim of this article is exactly to analyse this case.\\
 In reality, we will work with two more general notions of endomorphisms: endogenies and quasi-endomorphisms. An endogeny $\phi$ of an abelian group $A$ is a subgroup of $A\times A$ such that $\pi_1(\phi)=A$ and $\{a\in A:\ (0,a)\in \phi\}$ is finite. The latter subgroup is called the \emph{katakernel} of $\phi$. A \emph{quasi-endomorphism} $\phi$ is a subgroup of $A$ such that $\pi_1(\phi)$, that is called the domain of $\phi$, is of finite index in $A$ and the katakernel is finite. Therefore, an endogeny is a quasi-endomorphism that is total.\\
Endogenies and quasi-endomorphisms arise naturally in the finite-dimensional context. Indeed, let $H\geq H_1$ be definable subgroups of the definable abelian group $G$ and $\Gamma\leq \operatorname{End}(G)$. Assume that $H$ and $H_1$ are \emph{$\Gamma$-almost invariant} \hbox{i.e.} $\gamma(H)\cap H$ is of finite index in $\gamma(H)$ for any $\gamma\in \Gamma $. Then, $\Gamma$ acts on $G/H$ by endogenies (with katakernel of $\gamma$ equal to $\gamma(H)+H/H$) and $\Gamma$ acts by quasi-endomorphisms on $H/H_1$ with domain $\{h\in H:\ \gamma(h)\in H\}$. In the finite Morley rank context and in the $o$-minimal one, we can avoid working with endogenies or quasi-endomorphisms since, if $H$ is $\Gamma$-almost invariant, the connected component $H^0$ is $\Gamma$-invariant and the action of $\Gamma$ on $G/H^0$ is again by endomorphisms.\\
The article follows the ideas of \cite{WagnerDeloro}, which proves the linearization theorem for endogenies in the connected case.\\
In the first section, we introduce all the definitions that we need in the article, and we verify some easy lemmas.\\
Then, we proceed to prove the following result.
\begin{thmA}
In a finite-dimensional theory, let $A$ be an abelian definable group and $\Gamma,\Delta$ two invariant pre-rings of definable endogenies such that:
\begin{itemize}
    \item $\Gamma,\Delta$ sharply commute;
    \item $A$ is absolutely $(\Gamma,\Delta)$-minimal;
    \item $\Gamma$ is essentially unbounded and $\Delta$ is essentially infinite or vice-versa.
\end{itemize}
    There is a finite $\Gamma$ and $\Delta$-invariant subgroup $F$ such that $\Gamma/{\sim}$ and $\Delta/{\sim}$ act by endomorphisms on $A/F$.
\end{thmA}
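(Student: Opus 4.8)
The plan is to build $F$ as the subgroup generated by all katakernels (and all finite images) that occur in the pre-ring generated by $\Gamma$ and $\Delta$, and then to use absolute $(\Gamma,\Delta)$-minimality to force this group to be finite.

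First I would record the reduction. Let $R$ be the pre-ring of definable endogenies generated by $\Gamma\cup\Delta$; sums, differences and compositions of definable endogenies are again definable endogenies, so $R$ makes sense and $\Gamma,\Delta\subseteq R$. It is enough to produce a finite subgroup $F$ with $\rho(F)\subseteq F$ for every $\rho\in R$ and with $\operatorname{im}(\rho)\subseteq F$ whenever $\rho\in R$ has finite image, where $\rho(F)=\{b:\exists a\in F,\ (a,b)\in\rho\}$. Indeed such an $F$ is $\Gamma$- and $\Delta$-invariant, and for $\gamma\in\Gamma$ the image $\bar\gamma$ of $\gamma$ in $(A/F)\times(A/F)$ is a subgroup with $\pi_1(\bar\gamma)=A/F$ and $\kat(\bar\gamma)=(\gamma(F)+F)/F=0$, so $\bar\gamma$ is the graph of an endomorphism $f_\gamma$ of $A/F$; the identities $\kat(\rho+\rho')=\kat(\rho)+\kat(\rho')$ and $\kat(\rho\rho')=\rho(\kat(\rho'))$, together with sharp commutation, make $\gamma\mapsto f_\gamma$ additive and multiplicative, while the condition on finite images makes it factor through $\sim$. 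This yields ring homomorphisms $\Gamma/{\sim}\to\operatorname{End}(A/F)$ and $\Delta/{\sim}\to\operatorname{End}(A/F)$, which is the conclusion. All of this is routine.

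Then I would produce the candidate. Put $F_0:=\langle\,\kat(\rho):\rho\in R\,\rangle+\langle\,\operatorname{im}(\rho):\rho\in R,\ \operatorname{im}(\rho)\ \text{finite}\,\rangle$. Then $F_0$ is a directed union of finite subgroups, it is a torsion group, and it is $R$-invariant, using $\rho(\kat(\rho'))=\kat(\rho\rho')$, $\rho(\operatorname{im}(\rho'))=\operatorname{im}(\rho\rho')$ and the additivity of each $\rho$. Let $F:=\widetilde{F_0}$ be the definable hull of $F_0$. For $\rho\in R$ the set $\{a:\rho(a)\subseteq F\}$ is a definable subgroup containing $F_0$, hence it contains $F$; so $\rho(F)\subseteq F$, and likewise $F$ absorbs the finite images. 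Thus $F$ is a definable $\Gamma$- and $\Delta$-invariant subgroup, and by absolute $(\Gamma,\Delta)$-minimality it is either finite or of finite index in $A$. If it is finite we are done by the first paragraph, so the whole problem is to exclude that $\widetilde{F_0}$ has finite index.

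The hard part is exactly this exclusion, and it is the feature genuinely new to the non-connected setting: in the connected case of \cite{WagnerDeloro}, $F_0$ automatically lies in a fixed finite subgroup and this alternative never arises, whereas here minimality only offers finite-or-cofinite. Suppose $\widetilde{F_0}$ has finite index. Since $F_0$ is torsion and an endogeny sends torsion to torsion, $\operatorname{Tor}(A)$ then has a definable hull of finite index, so minimality pushes $A$ — modulo a finite subgroup — into a bounded-exponent, and then $\mathbb{F}_p$-linear, regime. At that point I would run the generic-element analysis of \cite{WagnerDeloro} adapted to the non-connected case: using that $\Gamma$ is essentially unbounded and $\Delta$ essentially infinite, pick $\sim$-inequivalent generic $\gamma_1,\gamma_2\in\Gamma$; sharp commutation makes $\ker(\gamma_1-\gamma_2)$ and $\operatorname{im}(\gamma_1-\gamma_2)$ almost $\Delta$-invariant, so minimality forces a generic element of $\Gamma$ to have boundedly small kernel and katakernel. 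Boundedly small katakernels for generic elements rule out the unbounded piling-up of torsion that $\widetilde{F_0}$ infinite of finite index would require, and a short argument absorbs the finitely many non-generic $\sim$-classes; hence $F$ is finite. I expect the two delicate points to be checking that sharp commutation still transports almost-invariance between the $\Gamma$- and $\Delta$-sides without connectivity, and pinning the bounded-exponent case down tightly enough to contradict essential unboundedness; everything else is katakernel combinatorics and the definable-hull argument.
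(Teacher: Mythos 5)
Your proposal correctly identifies the reduction: the whole theorem rests on producing a finite $(\Gamma,\Delta)$-invariant subgroup $F$ absorbing all katakernels and all finite images, after which the action of $\Gamma/{\sim}$ and $\Delta/{\sim}$ on $A/F$ is routine to check. This matches the paper: the paper proves that the bikatakernel $\Kat(\Gamma,\Delta)$ is finite (Theorem~\ref{ThmA}) and then observes, via Lemma~\ref{easylemma}, that it contains all the relevant finite weakly invariant subgroups, including finite images. So the target is right. The problem is that everything beyond that first paragraph is where the paper's proof actually lives, and your account of it does not hold up.

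There are two genuine gaps. The first is technical but real: you set $F:=\widetilde{F_0}$, ``the definable hull of $F_0$,'' and then argue $\rho(F)\subseteq F$ because $\{a:\rho(a)\subseteq F\}$ is a definable subgroup containing $F_0$. For that last step to give $F\subseteq\{a:\rho(a)\subseteq F\}$ you need $F$ to be the \emph{smallest} definable subgroup containing $F_0$, and in a finite-dimensional theory such a smallest definable subgroup need not exist: Lemma~\ref{boundedind} gives a chain condition only up to bounded index, so the intersection of all definable subgroups containing $F_0$ is only type-definable. To manufacture a definable almost-invariant object one has to argue by maximal dimension of a finite sum and then invoke Schlichting, as the paper does in Lemma~\ref{lemma 8}, or work with a carefully chosen definable sum rather than a hull.

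The second gap is the decisive one. You reduce to excluding that $\widetilde{F_0}$ has finite index, and at that point the proposal dissolves into gestures. The inference ``$F_0$ torsion and $\widetilde{F_0}$ of finite index, hence $A$ is of bounded exponent modulo a finite subgroup'' is simply false: the definable hull of a torsion subgroup need not be torsion, and even a torsion definable group of finite dimension can be unbounded (think of quasi-cyclic pieces). The subsequent ``generic-element analysis'' is not an argument; the paper's actual proof of finiteness is a genuinely different and much more structured induction on $\dim A$, organized around the notion of a \emph{line} (a minimal infinite $\Gamma$-image). The three cases of the paper --- all elements of $\Gamma\cup\Delta$ have finite image or finite kernel (Theorem~\ref{Theorem Ab}); $\Gamma$ has a line (Theorem~\ref{Theorem 2A}, using quasi-projections $\pi_i$ and an almost-direct decomposition $A\sim\bigoplus_i L_i$); $\Gamma$ has no line but $\Delta$ does (Theorem~\ref{Theorem A3}, using the definability of a skew-field of fractions from Proposition~3.6 of~\cite{wagner2020dimensional}) --- are each a substantive piece of work, relying on Lemma~\ref{lemma 8} (properties of lines), Lemma~\ref{Lemma 9} (inheritance of sharp commutation, absolute minimality and essential largeness by $\Gamma_L,\Delta_L$), and Lemma~\ref{lemma 10} (construction and combination of quasi-projections). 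None of this is reproducible from a bound on kernels and katakernels of ``generic'' elements, and absolute $(\Gamma,\Delta)$-minimality alone, without the line machinery, does not tell you that a finite-index almost-invariant subgroup is all of $A$ up to a \emph{finite} complement --- which is exactly the non-virtually-connected phenomenon the paper is fighting.
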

Moreover, $F$ will be given explicitly.\\
In the second section, we verify the easy case in which both $\Gamma$ and $\Delta$ act by endogenies with finite kernel or finite image.\\
The third section analyzes the case in which the unbounded pre-ring of endogenies has an endogeny with infinite kernel and infinite image. Finally, we complete the proof in the fourth section.\\
The main result of the fifth section is the following linearization theorem.
\begin{thmB}
    Let $A$ be an abelian definable group and $\Gamma,\Delta$ two invariant pre-rings of definable endogenies such that 
    \begin{itemize}
        \item $C^{\#}(\Gamma)=\Delta$ and $C^{\#}(\Delta)=\Gamma$;
        \item $A$ is absolutely $(\Gamma,\Delta)$-minimal;
        \item $\Gamma$ is essentially unbounded and $\Delta$ is essentially infinite or vice-versa;
        \item $A_0=\Kat(\Gamma,\Delta)$.
    \end{itemize}
    Then, there exists $K$ a definable infinite field such that $A/A_0$ is a finite-dimensional $K$-vector space contained into $\big(\Gamma/{\sim}\big)\cap \big(\Delta/{\sim}\big)$ and $\Gamma/{\sim}$ and $\Delta/{\sim}$ act $K$-linearly on $A/A_0$.
\end{thmB}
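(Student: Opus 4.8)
The plan is to use Theorem A to strip away the genuinely ``endogeny'' features of the problem, reducing to the classical situation of two commuting rings of honest definable endomorphisms, and then to run a Schur–Wedderburn analysis to produce the field. Let me describe the steps.

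\textbf{Step 1 (Reduction to endomorphisms).} The hypotheses $C^{\#}(\Gamma)=\Delta$ and $C^{\#}(\Delta)=\Gamma$ imply in particular that $\Gamma$ and $\Delta$ sharply commute, so the hypotheses of Theorem A are satisfied. Applying it yields a finite $\Gamma$- and $\Delta$-invariant subgroup $F$ on which $\Gamma/{\sim}$ and $\Delta/{\sim}$ act by \emph{endomorphisms} of $A/F$; using the explicit description of $F$ from the proof of Theorem A (it is assembled from the katakernels occurring in $\Gamma$ and $\Delta$) together with the standing hypothesis $A_0=\Kat(\Gamma,\Delta)$, one identifies $F=A_0$. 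Put $B:=A/A_0$ and write $\bar\Gamma:=\Gamma/{\sim}$, $\bar\Delta:=\Delta/{\sim}$, now two commuting subrings of $\End(B)$ consisting of definable endomorphisms; since $C^{\#}$ becomes an honest centralizer once the katakernels are killed, one checks $\bar\Delta=C_{\End(B)}(\bar\Gamma)$ and $\bar\Gamma=C_{\End(B)}(\bar\Delta)$, and in particular $\mathrm{id}\in\bar\Gamma\cap\bar\Delta$, so both are unital.

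\textbf{Step 2 (Descent of minimality).} Next I would verify that, modulo $A_0$, the structural hypotheses on $A$ pass to $B$. Because $A_0$ absorbs every katakernel, a definable subgroup of $B$ invariant (or almost invariant) under $\bar\Gamma$ and $\bar\Delta$ lifts to a definable $(\Gamma,\Delta)$-almost invariant subgroup of $A$, and absolute $(\Gamma,\Delta)$-minimality forces it to be finite or of finite index; the whole point of the choice $A_0=\Kat(\Gamma,\Delta)$ is that no \emph{nonzero finite} such subgroup of $B$ survives, and, using minimality once more, that $B$ has no proper definable $(\bar\Gamma,\bar\Delta)$-submodule of finite index. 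This makes $B$ genuinely ``irreducible'' for the bi-action.

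\textbf{Step 3 (Schur, Wedderburn, the field).} For $0\neq\delta\in\bar\Delta$ the subgroups $\ker\delta$ and $\mathrm{im}\,\delta$ are definable $\bar\Gamma$-submodules of $B$ (as $\delta$ commutes with $\bar\Gamma$), the first proper and the second nonzero, hence by Step 2 trivial, resp. all of $B$: so $\delta$ is an automorphism and $\delta^{-1}\in C_{\End(B)}(\bar\Gamma)=\bar\Delta$. Thus $\bar\Delta$ is a definable division ring $D$; symmetrically every nonzero element of $\bar\Gamma$ is invertible. The Jacobson density / double-centralizer theorem then identifies $B$ with a finite-dimensional left $D$-vector space, $\bar\Gamma$ with $\End_D(B)$, and $B$ with a minimal one-sided ideal of each of $\bar\Gamma$ and $\bar\Delta$ — this is the ``contained into $\Gamma/{\sim}$ and $\Delta/{\sim}$'' part. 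Finally set $K:=Z(D)$, which is definable; it is infinite, since a finite division ring is commutative (Wedderburn) and, in a finite-dimensional theory, a definable division ring is algebraic over its prime field and hence (Jacobson, in the positive-characteristic case) commutative if it has finite centre, whereas $B$ is infinite — so either $D$ is commutative and $K=D$ is infinite, or $Z(D)\supseteq\mathbb{Q}$. As $T$ is finite-dimensional and $K$ infinite, $B$ is a finite-dimensional $K$-vector space; $K$ is central in $\bar\Gamma$ and in $\bar\Delta$, so $K\subseteq \bar\Gamma\cap\bar\Delta=(\Gamma/{\sim})\cap(\Delta/{\sim})$, and $\bar\Gamma,\bar\Delta$ act $K$-linearly.

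\textbf{Main obstacle.} The delicate point is Step 2: with no connectivity available one cannot pass to a connected component, so one must argue directly — and this is exactly where the choice $A_0=\Kat(\Gamma,\Delta)$ and the explicit form of $F$ from Theorem A are used — that $A/A_0$ really has no small invariant pieces left, so that Schur's lemma delivers honest automorphisms (hence a division ring) rather than mere isogenies. Ruling out proper \emph{finite-index} invariant submodules, which absolute minimality by itself permits, is the subtle part; if it cannot be done outright one instead runs the Wedderburn analysis up to commensurability of isogenies and then invokes finite-dimensionality to collapse the commensurability classes.
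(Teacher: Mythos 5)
There is a genuine gap, and it sits exactly where you flag your ``main obstacle'' but is more serious than a finite-index nuisance. In Step 3 you apply Schur's lemma to $\ker\delta$ and $\operatorname{im}\delta$ for $0\neq\delta\in\bar\Delta$, declaring them trivial resp.\ everything ``by Step 2''. But these subgroups are invariant only under $\bar\Gamma$ (the ring commuting with $\delta$), not under $\bar\Delta$, whereas absolute $(\Gamma,\Delta)$-minimality controls only subgroups that are almost invariant under \emph{both} pre-rings simultaneously (the definition is absolute $\Gamma\cup\Delta$-minimality). Nothing in the hypotheses forbids an element $\gamma\in\Gamma$ with infinite kernel and infinite image of infinite index; such images are precisely the ``lines'' of Sections 4--5, and their possible existence is the whole reason the paper is not a routine double-centralizer argument. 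So $B$ need not be irreducible as a $\bar\Gamma$-module, $\bar\Delta$ need not be a division ring, and the Wedderburn step collapses. The paper instead splits into a base case (every nonzero element of $\Gamma\cup\Delta$ injective), where it builds the definable skew-field as the Ore fraction field of $\bar\Gamma$ via Proposition 3.6 of \cite{wagner2020dimensional} and takes its centre, and a general case, where it decomposes $A$ into an almost direct sum of lines via quasi-projections, obtains a field $Z_L$ on each line by induction on dimension, and glues these using the lemma on extending $\Delta$-covariant isomorphisms between lines to invertible elements of $\Gamma$. Your proposal has no counterpart to any of this.

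Two secondary points. First, even in the base case, surjectivity of the monomorphisms (your finite-index worry) is not disposed of by Wedderburn: the paper needs the uniform bound on indices of images coming from Lemma \ref{boundedind} together with the computation $|B:\operatorname{im}(\gamma^n)|=|B:\operatorname{im}(\gamma)|^n$ to force index one; ``running the analysis up to commensurability'' is not a substitute for this. Second, $\bar\Delta$ being a \emph{definable} division ring is not automatic even once invertibility is known --- $\bar\Delta$ is a priori only an invariant, possibly unbounded, union of type-definable sets, and definability is again extracted from the fraction-field construction, not from the double-centralizer identity.
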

\textbf{Remark}:\\
These two theorems are not direct extensions of the Theorems in \cite{WagnerDeloro}. Indeed, we do not assume $A$ minimal but \emph{absolutely minimal} \hbox{i.e.} there exists no almost invariant infinite definable subgroups of infinite index. In reality, if we assume connectivity, our proofs can also be applied in the minimal case and so, in this sense, the results are an extension of the connected case.\\
In the sixth section, we introduce some basic results for quasi-endomorphisms. Then, in sections $7,8,9$, we prove an extension of Theorem $A$ in the case one of the two pre-rings $\Gamma,\Delta$ is of quasi-endomorphisms and the other of endogenies.\\
In section $10$, we prove a version of Theorem $A$ and $B$ in the case that $A$ has finite $n$-torsion for every $n<\omega$. Finally, in section $11$, we prove a generalization of Zilber's Field Theorem in finite-dimensional theories.
\subsection{Finite-dimensional groups}
In this subsection, we recall the main results on finite-dimensional groups, in particular Lemma \ref{boundedind}, which will be fundamental in the proof of Theorem A and Theorem B.
\begin{lemma}\label{boundedind}
    Let $G$ be a definable group of finite dimension and $\{H_i\}_{i\in I}$ a family of uniformly definable subgroups. Then, there exists $n,d<\omega$ such that there is no $J=\{j_1,...,j_n\}\subseteq I$ of cardinality $n$ such that $|\bigcap_{i=1}^kH_{j_i}:\bigcap_{i=1}^{k+1} H_{j_i}|$ has index greater than $d$ for any $k\leq n-1$.
\end{lemma}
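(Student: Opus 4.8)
The plan is to argue by contradiction: I would assume the lemma fails and use a compactness argument to manufacture, in some model of $T$, an infinite strictly descending chain of definable subgroups all of whose consecutive indices are infinite. Such a chain cannot exist in a finite-dimensional theory, and this contradiction proves the lemma. The whole weight of the finite-dimensionality hypothesis will sit in one elementary observation, recorded first.

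The observation is: if $H'\leq H$ are definable subgroups with $[H:H']$ infinite, then $\dim H>\dim H'$. Indeed, the set of left cosets $H/H'$ is interpretable, being the quotient of $H$ by the definable equivalence relation $x^{-1}y\in H'$; the quotient map $H\to H/H'$ is a definable surjection whose fibres are the cosets of $H'$, hence each in definable bijection with $H'$ and of dimension $\dim H'$. The last two clauses of the definition of a dimension function then force $\dim H=\dim(H/H')+\dim H'$, and since $H/H'$ is infinite it has dimension at least $1$, whence $\dim H\geq\dim H'+1$.

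Now write $H_i=\phi(\mathcal{M},a_i)$ for a fixed formula $\phi(x,y)$, and suppose the conclusion fails. Then for every $n,d<\omega$ there are parameters $b_1,\dots,b_n$ such that each $\phi(x,b_j)$ defines a subgroup and $\bigl[\bigcap_{i\leq k}\phi(\mathcal{M},b_i):\bigcap_{i\leq k+1}\phi(\mathcal{M},b_i)\bigr]>d$ for all $k\leq n-1$; crucially, ``$\phi(x,b)$ defines a subgroup'' and ``this index exceeds $d$'' are expressible by first-order formulas, uniformly in the parameters and in $k\leq n$, because the family is \emph{uniformly} definable. Introduce new constants $(c_m)_{m<\omega}$ and form the theory containing $T$, the sentences saying each $\phi(x,c_m)$ defines a subgroup, and, for every $k\geq 1$ and every $d<\omega$, the sentence saying $\bigl[\bigcap_{i\leq k}\phi(\mathcal{M},c_i):\bigcap_{i\leq k+1}\phi(\mathcal{M},c_i)\bigr]>d$. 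A finite fragment of this theory mentions only finitely many constants, thresholds and levels, so it is satisfied in a model of $T$ by a tuple $b_1,\dots,b_n$ supplied by the failure of the lemma with $n$ and $d$ chosen large enough; by compactness the whole theory has a model $M'\models T$.

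In $M'$ the subgroups $\phi(M',c_m)$ yield a descending chain $\phi(M',c_1)\supseteq\phi(M',c_1)\cap\phi(M',c_2)\supseteq\cdots$ in which every consecutive index exceeds all $d<\omega$, hence is infinite. By the first observation, the dimensions of the terms of this chain form an infinite strictly decreasing sequence of natural numbers — impossible, since $\phi(M',c_1)$ is a nonempty definable set in the finite-dimensional theory $T$ and so has finite dimension. This contradiction establishes the lemma. The only point demanding care is the bookkeeping in the compactness step, namely that a single instance of the negated conclusion with $n,d$ large enough covers all finitely many axioms of a given fragment; this is routine and there is no genuine obstacle, the substantive input being the dimension drop forced by infinite index.
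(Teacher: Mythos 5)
Your argument is correct and essentially the same as the paper's: both proceed by contradiction, use compactness to extract a descending chain of uniformly definable subgroups in which every consecutive index is infinite, and then derive a contradiction from the resulting strict dimension drop at each step. The only cosmetic difference is that the paper works with a finite chain of length $\dim(G)+2$ inside a single saturated model via a partial type, while you pass to an elementary extension in an expanded language and produce an infinite chain; the substance — compactness plus the observation that infinite index forces a strict dimension drop — is identical.
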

\begin{proof}
Let $n=\dim(G)$. We will work in a sufficiently saturated structure $\mathfrak{M}$ in which $G$ is definable and assume that $\phi(x,y)$ is the formula defining the family. Assume, by contrary, that the conclusion is false then for $N=n+2$ and for any $k\in \omega$, there exists $g_1,...,g_N$ such that $|\bigcap_{j\leq k-1} \phi(\mathfrak{M},g_j)/\bigcap_{j\leq k} \phi(\mathfrak{M},g_j)|\geq k$. Therefore, the partial type given by the formulas 
$$\exists a^1_1,...,a^1_k,...,a^N_1,...,a_k^N: a^i_j\in \bigcap_{j=1}^i \phi(\mathfrak{M},x_j)\wedge a^i_j{a^i_k}^{-1}\not\in \phi(\mathfrak{M},x_j)\wedge \forall_{i\leq N}\ \phi(\mathfrak{M},x_i)\leq G(\mathfrak{M})$$
for all $k<\omega$, is finitely satisfable. By compactness and saturation, there exist subgroups $\{H_i\}_{i\leq N}$ such that $|\bigcap_{i<j}H_i/\bigcap_{i\leq j}H_i|$ is infinite for every $i$. This would imply that the dimension of $G$ is strictly greater than $n$, a contradiction.
\end{proof}
If we apply these results to the centralizers of elements in $G$, we obtain that a finite-dimensional definable group has the hereditary $\widetilde{\mathfrak{M}}_c$-property.
\begin{defn}
    A group $G$ is \emph{hereditarily-$\widetilde{\mathfrak{M}}_c$} if for any definable subgroups $H,N$, such that $N$ is normalised by $H$, there exist natural numbers $n_{HN}$ and $d_{HN}$ such that any sequence of centralisers 
    $$C_H(a_0/N)\geq C_H(a_0,a_1/N)\geq...\geq C_H(a_0,a_1,...,a_n/N)\geq...$$
    with each centraliser of index at least $d_{HN}$ in the previous one, has length at most $n_{HN}$.
\end{defn}
The notions of almost containment and commensurability are fundamental in the study of groups definable in finite-dimensional theories.
\begin{defn}
\begin{itemize}
    \item  Let $G$ be a group and $A, B\leq G$ subgroups. $A$ \emph{almost contains} $B$, denoted $A\apprge B$, if $|B:B\cap A|$ is finite.
    \item  If $A\apprle B$ and $B\apprle A$, $A$ and $B$ are \emph{commensurable}.
    \item Given a family of subgroups $\{G_i\}_{i\in I}$ in $G$, the family $\{G_i\}_{i\in I}$ is \emph{uniformly commensurable} if there exists $n<\omega$ such that $|G_i/G_i\cap G_j|\leq n$ for every $i,j\in I$.
\end{itemize}
  \end{defn}
\section{Endogenies}
\subsection{Definition}
We will define the notion of endogeny of a group $A$.
\begin{defn}
    Let $A$ be an abelian group. A subgroup $\phi$ of $A\times A$ is called an \emph{endogeny} if it is total ($\pi_1(\phi)=A$ with $\pi_1:A\times A\to A$ the projection on the first component) and the subgroup $\{a\in A:\ (0,a)\in \phi\}$, called the \emph{katakernel} of $\phi$, is finite. The katakernel of $\phi$ is denoted by $\kat(\phi)$.\\
    An endogeny is uniquely determined by the homorphism 
    $$\phi: A\mapsto A/\kat(\phi) \ \ \ \ \ \ \  a\mapsto \phi[a]+\kat(\phi)=\{b\in A:\ (a,b)\in \phi\}$$
    We use the notation with the square parentheses to underline that $\phi[a]$ is a finite subset in $A$ and not an element of $A$.\\
    The set of endogenies of $A$ is denoted by $\Endog(A)$.
\end{defn}
Endogenies have a predominant role in the analysis of $\omega$-categorical groups, as showed by \cite{evans2000supersimple},\cite{dobrowolski2020omega} and \cite{baur1979totally}.\\
We define two operations on $\Endog(A)$ called sum and product:
\begin{defn}
    Let $A$ be an abelian group and $\phi,\psi$ two endogenies of $A$. We define:
    \begin{itemize}
        \item $\phi+\psi$ as the endogeny that sends $a\in A$ into $\phi[a]+\psi[a]$. From a set-wise point of view, the sum is given by the subgroup 
        $$\{(a,b):\ \exists b_1,b_2\ (a,b_1)\in \phi\wedge(a,b_2)\in \psi\wedge (b_1+b_2)=b\}.$$
        The katakernel of the sum is $\kat(\phi)+\kat(\psi)$.
        \item $\phi\circ \psi$ as the composition of relations and so $\phi\circ \psi[a]=\phi[\psi[a]]$. From a set-wise point of view, the composition is given by the subgroup 
        $$\{(a,b):\ \exists  z\in A\ (a,z)\in \psi\wedge (z,b)\in \phi\}.$$
        The katakernel of $\phi\circ \psi$ is $\phi[\kat(\psi)]$. The $\circ$ is usually omitted.
    \end{itemize}
\end{defn}
The structure $(\Endog(A),+,\circ)$ is not a ring but only a pre-ring.
\begin{defn}
    A \emph{pre-ring} is a set $X$ with two binary operations $+,\cdot$ and a constant $0$ such that:
    \begin{itemize}
        \item $(X,+,0)$ is an abelian group;
        \item $(X,\cdot,1)$ is a monoid;
        \item The left distributivity property holds \hbox{i.e.} $z\cdot (x+y)=z\cdot x+z\cdot y$ for every $x,y,z\in X$.
        \item $0\cdot x=x\cdot 0=0$ for any $x\in X$.
    \end{itemize}
\end{defn}
\begin{lemma}\label{Lemma 1}
    The structure $(\Endog(A),+,\cdot,0,Id)$ is a pre-ring. The right distributivity holds up to a finite error set. Indeed,
    $$(\phi\delta+\psi\delta)[a]\leq (\phi+\psi)\delta[a]+\phi[\kat(\delta)]=(\psi+\phi)\delta[a]+\psi[\kat(\delta)].$$
\end{lemma}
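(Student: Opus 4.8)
The plan is to read each pre-ring axiom directly off the subgroup descriptions of $+$ and $\circ$, treating closure and left distributivity as honest equalities and isolating the unavoidable finite discrepancy in right distributivity. First I would check that $\phi+\psi$ and $\phi\circ\psi$ are again endogenies: totality of the result is immediate from totality of the factors (the projection to the first coordinate stays surjective), and finiteness of the katakernels is exactly the content of the already-recorded formulas $\kat(\phi+\psi)=\kat(\phi)+\kat(\psi)$ and $\kat(\phi\circ\psi)=\phi[\kat(\psi)]$ — a sum of finite subgroups is finite, while $\phi[\kat(\psi)]$, being the relational image of the finite subgroup $\kat(\psi)$ under the subgroup $\phi$, is a finite union of cosets of $\kat(\phi)$ and hence a finite subgroup. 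Then $(\Endog(A),+,0)$ is an abelian group: commutativity and associativity of $+$ are visible from $(\phi+\psi)[a]=\phi[a]+\psi[a]$, the endogeny $A\times\{0\}$ is neutral, and $\{(a,-b):(a,b)\in\phi\}$ is the inverse of $\phi$; associativity of $\circ$ is associativity of composition of relations, and the diagonal $\mathrm{Id}=\{(a,a):a\in A\}$ is a two-sided unit.

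Next, for left distributivity $\delta\circ(\phi+\psi)=\delta\circ\phi+\delta\circ\psi$ I would argue by double inclusion on the subgroups. If $(a,b)$ is in the right-hand side, then $b=b_1+b_2$ with $(a,z_1)\in\phi$, $(z_1,b_1)\in\delta$, $(a,z_2)\in\psi$, $(z_2,b_2)\in\delta$; adding the last two pairs gives $(z_1+z_2,b)\in\delta$, and $(a,z_1+z_2)\in\phi+\psi$, so $(a,b)$ lies in the left-hand side. Conversely, from $(a,z_1)\in\phi$, $(a,z_2)\in\psi$ and $(z_1+z_2,b)\in\delta$, totality of $\delta$ provides some $b_1$ with $(z_1,b_1)\in\delta$, and then $b_2:=b-b_1$ satisfies $(z_2,b_2)=(z_1+z_2,b)-(z_1,b_1)\in\delta$. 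For the degeneracies, $0\circ\phi=\{(a,0):a\in\pi_1(\phi)\}=A\times\{0\}=0$ by totality of $\phi$, whereas $\phi\circ 0=A\times\kat(\phi)$, the endogeny inducing the trivial homomorphism $A\to A/\kat(\phi)$, which is what serves as $0$ on that side.

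Finally, for the right-distributivity estimate, observe that $(\phi\delta+\psi\delta)[a]=(\phi\delta)[a]+(\psi\delta)[a]$ by definition of the sum, so a typical element is $b=b_1+b_2$ with $(a,z_1)\in\delta$, $(z_1,b_1)\in\phi$, $(a,z_2)\in\delta$, $(z_2,b_2)\in\psi$, where $z_1-z_2\in\kat(\delta)$ since $z_1,z_2$ lie in the same coset $\{z:(a,z)\in\delta\}$ of $\kat(\delta)$. Choosing $c$ with $(z_2,c)\in\phi$ (totality of $\phi$), subtraction gives $(z_1-z_2,b_1-c)\in\phi$, so $b_1-c\in\phi[\kat(\delta)]$; meanwhile $(z_2,c)\in\phi$ and $(z_2,b_2)\in\psi$ give $(z_2,c+b_2)\in\phi+\psi$, and together with $(a,z_2)\in\delta$ this puts $c+b_2\in(\phi+\psi)\delta[a]$. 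Hence $b=(b_1-c)+(c+b_2)\in(\phi+\psi)\delta[a]+\phi[\kat(\delta)]$, which is the asserted containment; the mirror computation (choose $c'$ with $(z_1,c')\in\psi$ and shift the discrepancy onto the $\psi$-side) yields $b\in(\psi+\phi)\delta[a]+\psi[\kat(\delta)]$. Writing $S=(\phi+\psi)\delta[a]$, $P=\phi[\kat(\delta)]$, $Q=\psi[\kat(\delta)]$, $R=(\phi+\psi)[\kat(\delta)]=\kat((\phi+\psi)\delta)$, the same totality trick shows $P\subseteq R+Q$ and $Q\subseteq R+P$, and since $S$ is a coset of $R$ this forces $S+P=S+Q$, giving the claimed equality of the two right-hand sides. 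None of this is deep; the only point that genuinely needs care — and the phenomenon the lemma is flagging — is the bookkeeping of on-the-nose versus up-to-finite equalities, in particular that the error term $\phi[\kat(\delta)]$ is really necessary because precomposing with $\delta$ smears each fibre $\delta[a]$ over a full coset of $\kat(\delta)$.
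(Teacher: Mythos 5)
Your proof is correct and follows essentially the same route as the paper: direct element-wise verification of the operations, double inclusion for left distributivity, and the observation that the two preimages $z_1,z_2\in\delta[a]$ differ by an element of $\kat(\delta)$, which is exactly where the error term $\phi[\kat(\delta)]$ comes from. You are in fact somewhat more thorough than the paper, which justifies only one of the two containments in the displayed formula and asserts the equality $(\phi+\psi)\delta[a]+\phi[\kat(\delta)]=(\psi+\phi)\delta[a]+\psi[\kat(\delta)]$ without the coset argument you supply.
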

\begin{proof}
    It is quite obvious that $(\Endog(A),+,0)$, with $0$ the $0$-endomorphism, is a commutative group and $(\Endog(A),\cdot,Id)$ is a monoid.\\
    We verify that the left distributivity holds: given $\phi,\psi,\delta\in \Endog(A)$, then 
    $$\delta(\phi+\psi)[a]=\{c\in A:\ \exists b_1,b_2\  (a,b_1)\in\psi\wedge(a,b_2)\in \phi\wedge (b_1+b_2,c)\in \delta\}.$$
    Therefore, every element $c\in \delta(\phi+\psi)[a]$ is given by $c_1+c_2$ such that $(b_i,c_i)\in \delta$ for $i=1,2$. Then, $c=c_1+c_2\in \delta\phi[a]+\delta\psi[a]$. The vice versa is similar.\\
    For right distributivity, observe that
    $$(\phi+\psi)(\delta)[a]=\{c\in A: \exists b,c_1,c_2\ (a,b)\in \delta\wedge(b,c_1)\in \phi\wedge(b,c_2)\in \psi\wedge c=c_1+c_2\}.$$
    On the other hand,
    $$(\phi\delta+\psi\delta)[a]=\{c\in A:\ \exists c_1,c_2,b_1,b_2\ (a,b_1),(a,b_2)\in \delta\wedge (b_1,c_1)\in \phi\wedge (b_2,c_2)\in \psi\wedge c_1+c_2=c\}.$$
    The second set clearly contains the first, but the vice versa is not always true. On the other hand
    $$\phi\delta+\psi\delta[a]\leq (\phi+\psi)\delta[a]+\phi[\kat(\delta)]=(\psi+\phi)\delta[a]+\psi[\kat(\delta)].$$
    Since $b_1-b_2\leq \kat(\delta)$, then $\psi\delta[b_2]\leq \psi[\delta[b_1]+\kat(\delta)]\leq \psi[\delta[b_1]]+\psi[\kat(\delta)]$.
\end{proof}
To overcome this problem, we introduce an equivalence relation on $\Endog(A)$. 
\begin{defn}
    Let $A$ be an abelian group and $\phi,\psi$ two endogenies of $A$. Then, $\phi$ is \emph{equivalent} to $\psi$, denoted by $\phi\sim \psi$, iff $(\phi-\psi)[A]$ is finite.
\end{defn}
We verify that $\sim$ is an equivalence relation:
\begin{itemize}
    \item $\phi\sim\phi$ since $\phi-\phi$ is equal to the function that sends any $a$ in $\kat(\phi)$ that is finite.
    \item If $\phi\sim\psi$, then $\psi\sim \phi$. Indeed, if $\operatorname{Im}(\phi-\psi)$ is finite, then also $\operatorname{Im}(-(\phi-\psi))=\operatorname{Im}(\psi-\phi)$ is finite.
    \item If $\phi\sim \psi$ and $\psi\sim \delta$, then 
    $$(\phi-\delta)[a]=\{c\in A:\ \exists c_1,c_2\ (a,c_1)\in \phi\wedge (a,c_2)\in \delta\wedge c_1-c_2=c\}.$$
    Given $d\in \psi[a]$, then $c_1-d\in \operatorname{Im}(\phi-\psi)$ and $c_2-d\in \operatorname{Im}(\psi-\delta)$. Therefore, $c=c_1-c_2=c_1-d+d-c_2\leq \operatorname{Im}(\phi-\psi)+\operatorname{Im}(\psi-\delta)$ that is finite. 
\end{itemize}
The proof that ($\Endog(A)/{\sim},+,\circ$) is a ring is immediate. 
\begin{lemma}\label{Lemma 2}
    Let $A$ be an abelian group. $(\Endog(A)/{\sim},+,\circ,0,1)$ is a ring.
\end{lemma}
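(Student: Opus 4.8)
The plan is to show that $\sim$ is a congruence on the pre-ring $\Endog(A)$, i.e.\ compatible with both $+$ and $\circ$. Once that is established, the quotient operations are automatically well-defined, the pre-ring axioms descend to classes, and — crucially — the defect in right distributivity recorded in Lemma~\ref{Lemma 1} vanishes, since it is a finite-image error and hence zero modulo $\sim$.

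First I would isolate two trivial observations used throughout: (a) if $S\subseteq A$ is finite and $\eta\in\Endog(A)$, then $\eta[S]$ is finite, being a union of finitely many cosets of $\kat(\eta)$; and (b) a difference of endogenies is again an endogeny, whose image is $(\phi-\psi)[A]$, so $\phi\sim\psi$ exactly means this image is finite. Compatibility with addition is then immediate: if $\phi\sim\phi'$ and $\psi\sim\psi'$, the subgroup $(\phi+\psi)-(\phi'+\psi')$ equals $(\phi-\phi')+(\psi-\psi')$, whose image lies in $(\phi-\phi')[A]+(\psi-\psi')[A]$, a finite set; hence $\phi+\psi\sim\phi'+\psi'$.

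For composition I would treat left and right multiplication separately, then combine. Left multiplication: the left distributivity proved in Lemma~\ref{Lemma 1} is exact, so $\delta\phi-\delta\phi'=\delta(\phi-\phi')$, and its image is contained in $\delta\big[(\phi-\phi')[A]\big]$, finite by observation (a); thus $\delta\phi\sim\delta\phi'$. Right multiplication: Lemma~\ref{Lemma 1} (applied with $\psi=-\phi'$) gives $(\phi\delta-\phi'\delta)[a]\leq(\phi-\phi')\delta[a]+\phi[\kat(\delta)]$, where $(\phi-\phi')\delta[a]=(\phi-\phi')[\delta[a]]\subseteq(\phi-\phi')[A]$ is finite and $\phi[\kat(\delta)]$ is finite; so $(\phi\delta-\phi'\delta)[A]$ is finite and $\phi\delta\sim\phi'\delta$. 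Combining, if $\phi\sim\phi'$ and $\delta\sim\delta'$ then $\phi\delta\sim\phi'\delta\sim\phi'\delta'$, so $\circ$ descends.

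With $\sim$ a congruence, $+$ and $\circ$ are well-defined on $\Endog(A)/{\sim}$. The abelian-group axioms for $(\Endog(A)/{\sim},+,0)$, the monoid axioms for $(\Endog(A)/{\sim},\circ,1)$, and left distributivity all pass to classes directly from the pre-ring structure of $\Endog(A)$. For right distributivity, Lemma~\ref{Lemma 1} shows $(\phi+\psi)\delta$ and $\phi\delta+\psi\delta$ differ by an endogeny with finite image, hence coincide in the quotient, giving $(\phi+\psi)\delta=\phi\delta+\psi\delta$ in $\Endog(A)/{\sim}$. This yields the ring. The only mildly delicate point is the right-multiplication case of congruence, precisely because right distributivity in $\Endog(A)$ holds only up to the error term $\phi[\kat(\delta)]$; but that term is manifestly finite, so it is harmless modulo $\sim$, and everything else is routine bookkeeping.
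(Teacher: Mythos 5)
Your proposal is correct and follows essentially the same route as the paper: compatibility of $\sim$ with $+$ is the same computation, and your splitting of the product case into a left-multiplication step (exact left distributivity plus finiteness of $\delta[S]$ for finite $S$) and a right-multiplication step (Lemma~\ref{Lemma 1}'s error term $\phi[\kat(\delta)]$) is exactly the paper's add-and-subtract of the middle term $\phi\psi'$. The treatment of right distributivity in the quotient via the finite error set also matches the paper.
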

\begin{proof}
    It is sufficient to verify that the operations pass to the quotient since the left distributivity holds: by Lemma \ref{Lemma 1}, $(\phi+\psi)\delta-(\phi\delta-\psi\delta)[A]\leq \psi[\kat(\delta)]$ that is finite. The other properties hold again by Lemma \ref{Lemma 1}.\\
    $+$ passes to the quotient since if $\phi\sim \phi'$ and $\psi\sim \psi'$ then $\phi+\psi\sim \phi'+\psi'$ indeed $(\phi+\psi-\phi'-\psi)[a]=(\phi-\phi')[a]+(\psi-\psi')[a]$ that is finite.\\
    $\circ$ passes to the product \hbox{i.e.} $\phi\psi\sim\phi'\psi'$ since $\phi\psi[a]-\phi'\psi'[a]$ is equal to $\phi\psi[a]+b-b+\phi'\psi'[a]$ for $b\in \phi\psi'[a]$. Up to equivalence, 
    $$\phi\psi[a]-\phi\psi'[a]=\phi[\psi[a]-\psi'[a]]\leq \phi[\operatorname{Im}(\psi-\psi')].$$
    The latter is a fixed finite subgroup. The same for the term $\phi\psi'[a]-\phi'\psi[a]$. 
\end{proof}
Other objects that will be necessary in the proof of the two theorems are \emph{quasi-endomorphisms}.
\begin{defn}
    Let $A$ be an abelian group. A \emph{quasi-endomorphism of $A$} is an additive subgroup $\phi$ of $A\times A$ such that $\Dom(\phi)=\{a\in A:\ \exists b\in A (a,b)\in \phi\}$ is a subgroup of finite index in $A$ and $\kat(\phi)=\{a\in A:\ (0,a)\in \phi\}$ is finite. The set of quasi-endomorphisms of $A$ is denoted by $\mathcal{F}-\End(A)$.
\end{defn}
\subsection{Sharp commutation}
We introduce the first ingredient of Theorem A: the sharp commutation between endogenies.
\begin{defn}
    Let $A$ be an abelian group and $\phi,\psi$ two endogenies. $\phi$ and $\psi$ \emph{sharply commute} if $(\phi\psi-\psi\phi)[A]\leq \kat(\phi)+\kat(\psi)$.\\
    Two subsets $X,Y$ of $\Endog(A)$ \emph{sharply commute} if every $\phi\in X$ and $\psi\in Y$ sharply commute.\\
    Given $X$ a subset of $\Endog(A)$, the subset $C^{\#}(X)$ of $\Endog(A)$ is the set 
    $$\{\phi\in \Endog(A):\ \phi \text{ and $\psi$ sharply commute}\}.$$
\end{defn}
We verify that $C^{\#}(X)$ is a pre-subring of $\Endog(A)$ for every $X\subseteq \Endog(A)$.
\begin{lemma}\label{Lemma 3}
    Let $A$ be an abelian group and $X$ a subset of $\Endog(A)$. Then,
    \begin{itemize}
        \item If $\phi$ and $\psi$ sharply commute, $\phi[\kat(\psi)]\leq \kat(\phi)+\kat(\psi)$;
        \item $C^{\#}(X)$ is a pre-subring of $\Endog(A)$.
    \end{itemize}
\end{lemma}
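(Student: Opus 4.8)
The plan is to prove the two items in turn, the first being the tool needed for the second. For the first item I would use the formula $\kat(\phi\circ\psi)=\phi[\kat(\psi)]$ recorded above, together with the fact that katakernels are subgroups, so that $0\in\kat(\psi\phi)$. Then
$$\phi[\kat(\psi)]=\kat(\phi\psi)\ \leq\ \{\,c_1-c_2:\ (0,c_1)\in\phi\psi,\ (0,c_2)\in\psi\phi\,\},$$
and a difference $c_1-c_2$ of that form is precisely an element $(0,c_1-c_2)\in\phi\psi-\psi\phi$, hence lies in $(\phi\psi-\psi\phi)[A]$. Sharp commutation gives $(\phi\psi-\psi\phi)[A]\leq\kat(\phi)+\kat(\psi)$, which is the claim. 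Since sharp commutation is a symmetric relation, the same statement applied to a sharply commuting pair $(\psi,\phi')$ also yields $\psi[\kat(\phi')]\leq\kat(\psi)+\kat(\phi')$, the orientation I will use below.

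For the second item it is enough to check that $C^{\#}(X)$ contains $0$ and $\mathrm{Id}$ and is closed under $+$ and $\circ$, the remaining pre-ring identities being inherited from $\Endog(A)$. Membership of $0$ and $\mathrm{Id}$ is immediate, since for every $\psi\in X$ the endogenies $0\psi-\psi 0$ and $\mathrm{Id}\,\psi-\psi\,\mathrm{Id}$ have image inside $\kat(\psi)$. For closure under $+$, take $\phi_1,\phi_2\in C^{\#}(X)$ and $\psi\in X$: left distributivity (Lemma \ref{Lemma 1}) gives $\psi(\phi_1+\phi_2)=\psi\phi_1+\psi\phi_2$ exactly, whereas $(\phi_1+\phi_2)\psi[a]\leq\phi_1\psi[a]+\phi_2\psi[a]$ holds trivially; subtracting and using $\kat(\phi_1+\phi_2)=\kat(\phi_1)+\kat(\phi_2)$ and $\kat(\psi)+\kat(\psi)=\kat(\psi)$, the commutator defect at each $a$ lies in $(\phi_1\psi-\psi\phi_1)[A]+(\phi_2\psi-\psi\phi_2)[A]\leq\kat(\phi_1+\phi_2)+\kat(\psi)$.

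The substantive case is closure under $\circ$. Fixing $\phi_1,\phi_2\in C^{\#}(X)$ and $\psi\in X$, I would show $\phi_1\phi_2$ sharply commutes with $\psi$ by telescoping through $\phi_1\phi_2\psi$, $\phi_1\psi\phi_2$, $\psi\phi_1\phi_2$ and bounding the defect of each step inside $\kat(\phi_1\phi_2)+\kat(\psi)$, using repeatedly that $\kat(\phi_1)\leq\phi_1[\kat(\phi_2)]=\kat(\phi_1\phi_2)$ (as $0\in\kat(\phi_2)$). In the first step, for each $a$ the sets $(\phi_2\psi)[a]$ and $(\psi\phi_2)[a]$ differ by a subset of $\kat(\phi_2)+\kat(\psi)$ (sharp commutation of $\phi_2,\psi$); applying $\phi_1$ on the left and using additivity of $\phi_1$ on finite subsets, the defect lands in $\phi_1[\kat(\phi_2)+\kat(\psi)]+\kat(\phi_1)\leq\phi_1[\kat(\phi_2)]+\phi_1[\kat(\psi)]+\kat(\phi_1)$, which the first item collapses to $\kat(\phi_1\phi_2)+\kat(\psi)$; the stray term $\kat(\phi_1\psi\phi_2)=\phi_1[\psi[\kat(\phi_2)]]$ produced by passing between cosets is absorbed the same way (the first item for $(\psi,\phi_2)$, then pushed through $\phi_1$). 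In the second step the non-exact right distributivity of Lemma \ref{Lemma 1} bounds $(\phi_1\psi\phi_2-\psi\phi_1\phi_2)[a]$ by $(\phi_1\psi-\psi\phi_1)\phi_2[a]+(\phi_1\psi)[\kat(\phi_2)]$, where the first summand sits inside $(\phi_1\psi-\psi\phi_1)[A]\leq\kat(\phi_1)+\kat(\psi)$ and the second equals $\phi_1[\psi[\kat(\phi_2)]]\leq\kat(\phi_1\phi_2)+\kat(\psi)$ again by the first item. Adding the two defects gives $(\phi_1\phi_2\psi-\psi\phi_1\phi_2)[A]\leq\kat(\phi_1\phi_2)+\kat(\psi)$, i.e. $\phi_1\phi_2\in C^{\#}(X)$.

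I expect the main obstacle to be bookkeeping rather than strategy: sharp commutation is a quantitative condition — the commutator defect must land in the prescribed finite group $\kat(\cdot)+\kat(\psi)$, not merely in ``some finite set'' — so the soft reasoning modulo $\sim$ that proves Lemma \ref{Lemma 2} does not suffice here. Each error term arising from a non-exact distributivity, or from replacing an endogeny's value by a coset representative, has to be tracked and reabsorbed, and the genuinely non-formal ingredient that makes every such term vanish is precisely the first item, which converts the obstructive $\phi_i[\kat(\psi)]$ and $\psi[\kat(\phi_j)]$ into data already contained in $\kat(\phi_1\phi_2)+\kat(\psi)$.
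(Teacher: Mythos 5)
Your proof is correct and follows essentially the same route as the paper's: item 1 is obtained by viewing $\phi[\kat(\psi)]=\kat(\phi\psi)=(\phi\psi)[0]$ and comparing it with $0\in(\psi\phi)[0]$ inside $\operatorname{Im}(\phi\psi-\psi\phi)$, and closure under composition is proved by telescoping through the middle term $\phi_1\psi\phi_2$ and absorbing each error term via item 1, exactly as in the paper (whose middle term is $\phi x\psi$). The only omission is the trivial closure under opposites, needed for $(C^{\#}(X),+,0)$ to be a group, which follows at once since $(-\phi)x-x(-\phi)=-(\phi x-x\phi)$ has the same image.
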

\begin{proof}
   We prove the first point: $0\in \psi[\kat(\phi)]$ and so 
   $$\psi[\kat(\phi)]=\psi[\kat(\phi)]-0\leq \operatorname{Im}(\psi\phi-\phi\psi)\leq \kat(\phi)+\kat(\psi).$$
   For the second point, since $\Endog(A)$ is a pre-ring by Lemma \ref{Lemma 1}, it is sufficient to verify that $C^{\#}(X)$ is closed for sum, opposite, and composition.\\
   Let $\phi,\psi$ in $C^{\#}(X)$. Then,  for any $x\in X$, $((\phi-\psi)x-x(\phi-\psi))[a]$ is equal to $(\phi x-\psi x-x\psi+x\phi)[a]$ up to the finite term $x[\kat(\phi)]\leq \kat(x)+\kat(\phi)$ by point $1$. Applying commutativity, we obtain $(\phi x-x\phi)[a]+(x\psi-\psi x)[a]$, that is contained, by sharp commutation, into $\kat(x)+\kat(\phi)+\kat(\psi)=\kat(x)+\kat(\phi+\psi)$. For the composition, given $(\phi\psi x-x\phi\psi)[a]$ and $b\in \phi x\psi[a]$, we proceed as before.
\end{proof}
\subsection{Invariance}
We introduce three different notions of invariance, and we analyze their relations with sharp commutation. 
\begin{defn}
  Let $\Gamma$ be a subset of $\Endog(A)$. Then,
  \begin{itemize}
      \item $B\leq A$ is \emph{$\Gamma$-invariant} if $\gamma[A]\leq A$ for any $\gamma\in \Gamma$;
      \item $B\leq A$ is \emph{ weakly $\Gamma$-invariant} if $\gamma[A]\leq A+\kat(\gamma)$ for any $\gamma\in \Gamma$;
      \item $B\leq A$ is \emph{almost $\Gamma$-invariant} if $\gamma[A]$ is almost contained in $A$ for every $\gamma\in \Gamma$ \hbox{i.e.} $\gamma[A]\cap A$ is of finite index in $\gamma[A]$.
  \end{itemize} 
\end{defn}
We verify that almost $\Gamma$-invariance and weak $\Gamma$-invariance behave well under the action of an endogeny that sharply commutes with $\Gamma$.
\begin{lemma}\label{Lemma 4}
    Let $A$ be an abelian group, $\Gamma$ a pre-ring of endogenies and $\delta\in C^{\#}(\Gamma)$. If $B\leq A$ is weakly(almost) $\Gamma$-invariant, then $\delta[B]$ is weakly(almost) $\Gamma$-invariant.
\end{lemma}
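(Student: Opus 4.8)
The plan is to fix an arbitrary $\gamma\in\Gamma$ and prove that $\gamma[\delta[B]]=\gamma\delta[B]$ is weakly (resp.\ almost) contained in $\delta[B]$. The two main moves are: use that $\delta$ sharply commutes with $\gamma$ to replace $\gamma\delta$ by $\delta\gamma$ modulo the finite group $\kat(\gamma)+\kat(\delta)$, and then push the invariance hypothesis on $B$ through $\delta$. From the definition of sharp commutation one gets $\gamma\delta[b]\subseteq\delta\gamma[b]+\kat(\gamma)+\kat(\delta)$ for every $b\in A$, so taking unions over $b\in B$ gives $\gamma\delta[B]\subseteq\delta\gamma[B]+\kat(\gamma)+\kat(\delta)$, where $\delta\gamma[B]=\delta[\gamma[B]]$. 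I would also record three elementary properties of the operator $X\mapsto\delta[X]$ on subgroups of $A$: it is monotone; it is additive, $\delta[X+Y]=\delta[X]+\delta[Y]$; and it sends a finite-index inclusion $H\leq K$ to a finite-index one $\delta[H]\leq\delta[K]$, since $a\mapsto\delta[a]+\delta[H]$ is a well-defined (using $\kat(\delta)=\delta[0]\subseteq\delta[H]$) surjective homomorphism $K\to\delta[K]/\delta[H]$ killing $H$, whence the index is at most $|K:H|$. Note in particular $\kat(\delta)=\delta[0]\subseteq\delta[B]$ because $0\in B$, and recall the first point of Lemma~\ref{Lemma 3}: $\delta[\kat(\gamma)]\leq\kat(\gamma)+\kat(\delta)$.

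For the weak case, weak $\Gamma$-invariance of $B$ gives $\gamma[B]\leq B+\kat(\gamma)$, hence
\[
\delta\gamma[B]=\delta[\gamma[B]]\leq\delta[B+\kat(\gamma)]=\delta[B]+\delta[\kat(\gamma)]\leq\delta[B]+\kat(\gamma)+\kat(\delta).
\]
Feeding this into $\gamma\delta[B]\subseteq\delta\gamma[B]+\kat(\gamma)+\kat(\delta)$ yields $\gamma\delta[B]\subseteq\delta[B]+\kat(\gamma)+\kat(\delta)=\delta[B]+\kat(\gamma)$, the last equality because $\kat(\delta)\subseteq\delta[B]$. This is precisely weak $\Gamma$-invariance of $\delta[B]$.

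For the almost case, almost $\Gamma$-invariance of $B$ says $\gamma[B]\cap B$ has finite index in $\gamma[B]$; applying $\delta$ and the finite-index-preservation property, $\delta[\gamma[B]\cap B]$ has finite index in $\delta[\gamma[B]]=\delta\gamma[B]$, and since $\delta[\gamma[B]\cap B]\leq\delta[B]$ this shows $\delta\gamma[B]$ is almost contained in $\delta[B]$. It then remains to pass from $\delta\gamma[B]$ to $\gamma\delta[B]$: one has $\gamma\delta[B]\leq\delta\gamma[B]+K$ with $K:=\kat(\gamma)+\kat(\delta)$ finite, and a short index computation (the finite group $K$ changes any index by at most a factor $|K|$, and $\gamma\delta[B]$ embeds into the finite group $\big(\delta\gamma[B]+K\big)/\big(\delta[B]\cap(\delta\gamma[B]+K)\big)$) gives that $\gamma\delta[B]$ is almost contained in $\delta[B]$, i.e.\ $\delta[B]$ is almost $\Gamma$-invariant.

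I do not expect a real obstacle; the proof is bookkeeping with finite groups. The one point demanding attention is preventing the finite error term from growing in the weak case, which is exactly why one needs $\kat(\delta)\subseteq\delta[B]$; in the almost case the only subtlety is composing ``almost containment'' correctly with the finite perturbation $\kat(\gamma)+\kat(\delta)$ arising from sharp commutation.
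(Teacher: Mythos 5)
Your proof is correct and follows essentially the same route as the paper: use sharp commutation to trade $\gamma\delta$ for $\delta\gamma$ up to the finite group $\kat(\gamma)+\kat(\delta)$, then push the (weak or almost) invariance of $B$ through $\delta$, absorbing $\kat(\delta)$ into $\delta[B]$. Your version is slightly more careful in the almost case — you make explicit that $\delta$ preserves finite-index inclusions and that the finite perturbation from sharp commutation only changes indices by a bounded factor, whereas the paper's chain of inclusions is stated a bit loosely (in particular, $\delta[B\cap\gamma[B]]\leq\gamma[\delta[B]]$ really only holds up to $\kat(\delta)$) — but the underlying idea is identical.
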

\begin{proof}
    We start from weak invariance. It is sufficient to observe that $\gamma(\delta[B])\leq \delta\gamma[B]+\kat(\gamma)$ by sharp commutation. By weak invariance and Lemma \ref{Lemma 3}, this is contained in
    $$\delta[B]+\delta[\kat(\gamma)]+\kat(\gamma)\leq \delta[B]+\kat(\gamma).$$
    For the almost invariance, $\gamma[\delta[B]]$ is contained in $\delta[\gamma[B]]+\kat(\delta)$ by sharp commutation. Since $B\cap \gamma[B]$ is of finite index in $\gamma[B]$, also $\delta[B\cap \gamma[B]]$ is of finite index in $\delta[\gamma[B]]$. $\delta[B\cap \gamma[B]]$ is contained in $\delta[B]\cap \gamma[\delta[B]]$. Therefore, $\delta[B]\cap \gamma[\delta[B]]$ is of finite index in $\gamma[\delta[B]]$. This verifies that $\delta[B]$ is almost $\Gamma$-invariant.
\end{proof}
We verify that we can define a new ring of endogenies on weakly $\Gamma$-invariant subgroups through restriction-corestriction.
\begin{defn}
    Let $A$ be an abelian group, $B\leq A$ a subgroup, and $\phi$ an endogeny. Then, the \emph{restriction-corestriction of $\phi$ to $B$}, denoted by $\phi_B$, is the subgroup $\phi\cap B\times B$ of $B\times B$.
\end{defn}
In general, the restriction-corestriction is not an endogeny of $B$ (since it is not total). We verify that the restriction-corestriction is well-defined when $B$ is a weakly invariant subgroup.
\begin{lemma}\label{Lemma 5}
    Let $A$ be an abelian group, $\phi\in \Endog(A)$ and $B\leq A$ a weakly $\{\phi\}$-invariant subgroup. Then, $\phi_B$ is an endogeny of $B$.
\end{lemma}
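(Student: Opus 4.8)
The plan is to verify the two defining properties of an endogeny for $\phi_B = \phi \cap (B \times B)$: that it is total as a relation on $B$, and that its katakernel is finite.

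The katakernel is immediate. By definition $\kat(\phi_B) = \{b \in B : (0,b) \in \phi \cap (B\times B)\} \subseteq \{a \in A : (0,a) \in \phi\} = \kat(\phi)$, which is finite since $\phi$ is an endogeny of $A$. So $\kat(\phi_B)$ is finite (in fact a subgroup of $\kat(\phi)$).

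The substantive point is totality, i.e. $\pi_1(\phi_B) = B$. Fix $b \in B$ and pick any $c \in \phi[b]$, which is nonempty because $\phi$ is total on $A$. Weak $\{\phi\}$-invariance gives $\phi[B] \leq B + \kat(\phi)$, so we may write $c = b' + k$ with $b' \in B$ and $k \in \kat(\phi)$. Since $k \in \kat(\phi)$ we have $(0,k) \in \phi$, hence $(0,-k) \in \phi$; adding this to $(b,c) \in \phi$ yields $(b, c - k) = (b, b') \in \phi$. As $b$ and $b'$ both lie in $B$, this shows $(b,b') \in \phi \cap (B \times B) = \phi_B$, so $b \in \pi_1(\phi_B)$. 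Therefore $\pi_1(\phi_B) = B$, and combined with the finiteness of the katakernel, $\phi_B$ is an endogeny of $B$.

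I do not expect a real obstacle here; the only thing to be careful about is the direction of the correction argument — one uses an element of the katakernel, which is available precisely because weak invariance only controls $\phi[B]$ modulo $\kat(\phi)$, to pull a witness from $A$ back into $B$.
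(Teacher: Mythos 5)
Your proof is correct and follows the same route as the paper's: the katakernel of $\phi_B$ is a subgroup of $\kat(\phi)$, hence finite, and totality is exactly the condition $\phi[B]\leq B+\kat(\phi)$ given by weak invariance. You merely spell out the correction-by-a-katakernel-element step that the paper leaves implicit, which is a welcome clarification but not a different argument.
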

\begin{proof}
    It is sufficient to verify that the endogeny $\phi_B$ is total since it is obviously an additive subgroup of $B\times B$ whose katakernel is $\kat(\phi)\cap B$ that is finite.\\
    It is total if, for every $b\in B$, there exists $b'\in B$ such that $b'\in \phi[b]$. This happens iff $\phi[B]\leq B+\kat(\phi)$.
\end{proof}
We also introduce three different notions of minimality.
\begin{defn}
Let $A$ be a definable abelian group and $\Gamma,\Delta$ two pre-rings of definable endogenies.
\begin{itemize}
    \item  $A$ is \emph{$\Gamma$-minimal} if there is no infinite definable $\Gamma$-invariant subgroup of infinite index in $A$;
    \item $A$ is \emph{weakly $\Gamma$-minimal} if there exists no infinite definable weakly $\Gamma$-invariant subgroup of infinite index in $A$;
    \item $A$ is \emph{absolutely $\Gamma$-minimal} is there is no infinite definable almost $\Gamma$-invariant subgroups of infinite index in $A$.
\end{itemize}
    $A$ is \emph{(absolutely/weakly) $(\Gamma,\Delta)$-minimal} if it is (absolutely/weakly) $\Gamma\cup\Delta$-minimal. 
\end{defn}
\subsection{Global katakernel}
We introduce the notion of global katakernel for a pre-subring of endogenies.
\begin{defn}
    Let $A$ be an abelian group and $\Gamma$ a pre-ring of endogenies. The \emph{global katakernel of $\Gamma$}, denoted by $\Kat(\Gamma)$, is $\sum_{\gamma \in \Gamma} \kat(\gamma)$.\\
    If $\Gamma,\Delta$ are two pre-rings of endogenies acting on $A$, the \emph{global bikatakernel $\Kat(\Gamma,\Delta)$} is simply the sum $\Kat(\Delta)+\Kat(\Gamma)$.
\end{defn}
The global katakernel $\Kat(\Gamma)$ is clearly the smallest subgroup such that $\Gamma$ acts by endomorphisms on the quotient of $A$ with the subgroup. The properties of global katakernels are resumed in the following Lemma.
\begin{lemma}\label{Lemma 6}
    Let $A$ be an abelian group and $\Delta,\Gamma$ two pre-rings of endogenies sharply commuting. Then,
    \begin{itemize}
        \item The global katakernel $\Kat(\Gamma)$ is weakly $\Delta$-invariant and $\Gamma$-invariant;
        \item the global bikatakernel $\Kat(\Gamma,\Delta)$ is $(\Delta,\Gamma)$-invariant;
        \item if $A_0\supseteq \Kat(\Gamma,\Delta)$ is weakly $\Delta$-invariant, then $\gamma^{-1}(A_0)$ is weakly $\Delta$-invariant for any $\gamma\in \Gamma$.
    \end{itemize}
\end{lemma}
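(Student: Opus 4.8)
The plan is to treat the three items in turn, each reducing to the formula $\kat(\phi\psi)=\phi[\kat(\psi)]$, the additivity of an endogeny on images (so that $\phi[\Kat(\Gamma)]\leq\sum_{\gamma\in\Gamma}\phi[\kat(\gamma)]$ for any endogeny $\phi$, writing a generic element of $\Kat(\Gamma)$ as a finite sum $\sum_i x_i$ with $x_i\in\kat(\gamma_i)$), and the first item of Lemma~\ref{Lemma 3}. For the first item, fix $\gamma'\in\Gamma$: for each $\gamma\in\Gamma$ the product $\gamma'\gamma$ lies in $\Gamma$ and $\gamma'[\kat(\gamma)]=\kat(\gamma'\gamma)\leq\Kat(\Gamma)$, so $\gamma'[\Kat(\Gamma)]\leq\sum_{\gamma\in\Gamma}\gamma'[\kat(\gamma)]\leq\Kat(\Gamma)$; this is $\Gamma$-invariance. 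For weak $\Delta$-invariance fix $\delta\in\Delta$: since $\delta$ and each $\gamma\in\Gamma$ sharply commute, Lemma~\ref{Lemma 3} gives $\delta[\kat(\gamma)]\leq\kat(\delta)+\kat(\gamma)$, hence $\delta[\Kat(\Gamma)]\leq\sum_{\gamma\in\Gamma}\delta[\kat(\gamma)]\leq\sum_{\gamma\in\Gamma}\bigl(\kat(\delta)+\kat(\gamma)\bigr)=\kat(\delta)+\Kat(\Gamma)$.

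The second item is the first applied also with the roles of $\Gamma$ and $\Delta$ swapped: $\Kat(\Gamma)$ is $\Gamma$-invariant and $\Kat(\Delta)$ is $\Delta$-invariant. Then for $\gamma\in\Gamma$ we get $\gamma[\Kat(\Gamma)+\Kat(\Delta)]\leq\gamma[\Kat(\Gamma)]+\gamma[\Kat(\Delta)]\leq\Kat(\Gamma)+\Kat(\Delta)+\kat(\gamma)$, and $\kat(\gamma)\leq\Kat(\Gamma)$ absorbs the error, so $\gamma[\Kat(\Gamma,\Delta)]\leq\Kat(\Gamma,\Delta)$; symmetrically $\delta[\Kat(\Gamma,\Delta)]\leq\Kat(\Gamma,\Delta)$ for $\delta\in\Delta$. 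Thus $\Kat(\Gamma,\Delta)$ is $(\Delta,\Gamma)$-invariant.

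For the third item, set $B=\gamma^{-1}(A_0)=\{a\in A:\ \gamma[a]\subseteq A_0\}$, a subgroup of $A$; since $\kat(\gamma)\leq\Kat(\Gamma,\Delta)\leq A_0$, the condition defining $B$ is equivalent to $\gamma[a]\cap A_0\neq\emptyset$. Fix $\delta\in\Delta$ and $b\in B$. By sharp commutation $\gamma[\delta[b]]=\gamma\delta[b]\leq\delta\gamma[b]+\kat(\gamma)+\kat(\delta)$; since $\gamma[b]\leq A_0$ and $A_0$ is weakly $\Delta$-invariant, $\delta\gamma[b]\leq\delta[A_0]\leq A_0+\kat(\delta)$; and $\kat(\gamma),\kat(\delta)\leq A_0$, so $\gamma[\delta[b]]\leq A_0$. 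As $\gamma[\delta[b]]=\bigcup_{c\in\delta[b]}\gamma[c]$, every $c\in\delta[b]$ satisfies $\gamma[c]\leq A_0$, i.e. $c\in B$; hence $\delta[B]\leq B$, which in particular yields the asserted weak $\Delta$-invariance of $\gamma^{-1}(A_0)$.

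I expect no genuine obstacle here: all three proofs are short manipulations. The one place to be careful is the third item, where one must (i) note that ``$\gamma[a]$ meets $A_0$'' and ``$\gamma[a]\subseteq A_0$'' coincide precisely because $A_0$ contains $\kat(\gamma)$, and (ii) invoke weak $\Delta$-invariance of $A_0$ exactly once while checking that the three katakernels produced along the way all sit inside $A_0$. Everything else is the routine bookkeeping of katakernels under sums and composites already recorded in Lemmas~\ref{Lemma 1} and~\ref{Lemma 3}.
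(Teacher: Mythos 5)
Your proof is correct and follows essentially the same route as the paper's: item one via $\kat(\gamma'\gamma)=\gamma'[\kat(\gamma)]$ and the first point of Lemma~\ref{Lemma 3}, item two by combining the two invariances and absorbing $\kat(\gamma)$ into $\Kat(\Gamma)$, and item three by pushing $\gamma\delta[b]$ through sharp commutation into $A_0$ and reading off that $\delta[b]\subseteq\gamma^{-1}(A_0)$. The only cosmetic difference is that the paper packages a generic element of $\Kat(\Gamma)$ as lying in $\kat(\gamma_1+\dots+\gamma_n)$ while you sum the images term by term; these are the same computation.
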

\begin{proof}
For the first observe that given $\gamma\in \Gamma$ and $a\in \Kat(\Gamma)$ then $a\in \kat(\gamma_1)+...+\kat(\gamma_n)$ for $\gamma_i\in \Gamma$. Then $\gamma[a]\in \kat(\gamma(\gamma_1+...+\gamma_n))\leq \kat(\Gamma)$ since $\gamma(\gamma_1+...+\gamma_n)\in \Gamma$. Let $a\in \kat(\gamma_1)+...+\kat(\gamma_n)$ for $\gamma_i\in \Gamma$. Then, by Lemma \ref{Lemma 3},
$$\delta[a]\leq \delta[\kat(\gamma_1+...+\gamma_n)]\leq \kat(\gamma_1+...+\gamma_n)+\kat(\delta).$$
In conclusion, $\delta[\Kat(\Gamma)]\leq \Kat(\Gamma)+\kat(\delta)$.\\
The second point is an obvious consequence of the first one.\\
For the third point, by Lemma \ref{Lemma 3},
$$\gamma[\delta[A]]\leq \delta[\gamma[A]]+\kat(\gamma)\leq \delta[A]+\delta[\kat(\gamma)]+\kat(\gamma)\leq \delta[A]+\kat(\gamma).$$
The set $\gamma^{-1}[A_0]$ is equal to $\{a\in A:\ \gamma[a]\leq A_0\}$ since $A_0\subseteq \Kat(\Gamma)$. Applying $\delta$ to $a$ in this subset, 
$$\gamma\delta[a]=\delta\gamma[a]+\kat(\gamma)\leq \delta[A_0]+\kat(\gamma)\leq A_0+\kat(\delta)+\kat(\gamma)=A_0.$$
This completes the proof of the Lemma.
\end{proof}
\subsection{Dimensionality}
We will work with endogenies definable in a finite-dimensional theory, in the sense of \cite{wagner2020dimensional}. The first immediate consequence of this setting is the following Lemma.
\begin{lemma}\label{Lemma 7}
    Let $A$ be a definable abelian group and $\phi$ a definable endogeny of $A$. Then, $\dim(A)=\dim(\operatorname{Im}(\phi))+\dim(\ker(\phi))$ with $\ker(\phi)=\{a\in A:\ (a,0)\in \phi\}$ or equivalently $\{a\in A:\ \phi[a]=\kat(\phi)\}$.
\end{lemma}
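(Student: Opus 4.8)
\section*{Proof proposal for Lemma~\ref{Lemma 7}}

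The plan is to treat $\phi$ itself as a definable subset of $A\times A$ and to compare $\dim(\phi)$ with $\dim(A)$, $\dim(\operatorname{Im}(\phi))$ and $\dim(\ker(\phi))$ by pushing $\phi$ down along its two coordinate projections, using the two fibre axioms for $\dim$ (the one giving a lower bound and the one giving an upper bound). Note that $\phi$, $\operatorname{Im}(\phi)=\pi_2(\phi)=\phi[A]$ and $\ker(\phi)=\pi_1(\phi\cap(A\times\{0\}))$ are all definable, the latter two being subgroups of $A$, so the dimension function applies to all of them, and the relevant fibres will be uniformly definable.

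First I would consider $\pi_1\colon \phi\to A$. This map is surjective — that is precisely totality of $\phi$ — and the fibre over $a\in A$ is $\{a\}\times\phi[a]$, which is a coset of $\{0\}\times\kat(\phi)$, hence finite and of dimension $0$. Since every fibre has dimension exactly $0$, the two fibre axioms applied with $X=\phi$, $Y=A$, $k=0$ give both $\dim(\phi)\ge\dim(A)$ and $\dim(\phi)\le\dim(A)$, so $\dim(\phi)=\dim(A)$.

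Next I would consider $\pi_2\colon \phi\to\operatorname{Im}(\phi)$, taking the codomain to be exactly $\operatorname{Im}(\phi)$ so that there are no empty fibres. For $b\in\operatorname{Im}(\phi)$, choosing some $a_0$ with $(a_0,b)\in\phi$, the fibre $\{a\in A:(a,b)\in\phi\}\times\{b\}$ equals $(a_0+\ker(\phi))\times\{b\}$, a coset of $\ker(\phi)\times\{0\}$, hence of dimension exactly $\dim(\ker(\phi))$. Applying the two fibre axioms with $X=\phi$, $Y=\operatorname{Im}(\phi)$, $k=\dim(\ker(\phi))$ yields $\dim(\phi)=\dim(\operatorname{Im}(\phi))+\dim(\ker(\phi))$. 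Combining with $\dim(\phi)=\dim(A)$ gives the claimed equality $\dim(A)=\dim(\operatorname{Im}(\phi))+\dim(\ker(\phi))$.

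Finally I would remark that the two descriptions of $\ker(\phi)$ coincide: $(a,0)\in\phi$ iff $0\in\phi[a]$, and since $\phi[a]$ is a coset of $\kat(\phi)$ this holds iff $\phi[a]=\kat(\phi)$. There is no genuinely hard step here; the only point requiring care is that in each of the two computations the fibres have \emph{constant} dimension and the codomain is exactly the image, so that the upper- and lower-bound fibre axioms combine into an equality rather than mere inequalities.
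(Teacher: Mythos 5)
Your proof is correct and takes essentially the same approach as the paper, namely computing $\dim$ by fibration; the paper phrases it as the rank–nullity formula for the definable surjective homomorphism $A\to\operatorname{Im}(\phi)/\kat(\phi)$ (noting $\kat(\phi)$ is finite), while you unwind this by projecting the graph $\phi\subseteq A\times A$ down both coordinates, but the underlying argument is identical.
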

\begin{proof}
    The proof follows simply by fibration. Let 
    $$\phi: a\in A\mapsto \phi[a] \operatorname{Im}(\phi)/\kat(\phi).$$
    Then, this is a definable surjective homorphism of groups and so $\dim(A)=\dim(\operatorname{Im}(\phi))+\dim(\ker(\phi))$.
\end{proof}
In both theorems $A$ and $B$, we work with invariant pre-subrings of endogenies, one essentially unbounded and one essentially infinite, in the following sense
\begin{defn}
Let $T$ be a first order theory and $\mathfrak{M}$ a model of $T$.
\begin{itemize}
    \item A subset $X$ in $\mathcal{M}$ is \emph{invariant} if it is union of type-definable sets in $\mathcal{M}$.
    \item An invariant subset $X=\bigcup_{i\in I} A_i(\mathcal{M})$ of $\mathcal{M}$ is \emph{unbounded} if, for every cardinal $\kappa$, there exists an elementary extension $\mathcal{M}'$ of $\mathcal{M}$ such that the cardinality of $X(\mathcal{M}')=\bigcup_{i\in I} A_i(\mathcal{M}')$ is greater than $\kappa$.\
    \item An invariant pre-ring of endogenies $\Gamma$ is \emph{essentially unbounded} if $\Gamma/{\sim}$ is unbounded, it is \emph{essentially infinite} if $\Gamma/{\sim}$ is infinite.
\end{itemize}
\end{defn}
\section{Theorem A: base case}
The proof of the theorem $A$ will be divided into three parts:
\begin{itemize}
    \item A \emph{base case} in which all the endogenies of $\Gamma\cup\Delta$ have finite kernel or finite image.
    \item A \emph{second case} in which the essentially unbounded ring of endogenies has an element with infinite image and infinite kernel.
    \item A \emph{third case} in which only the essentially infinite ring has an element with infinite kernel and infinite image.
\end{itemize}
We start proving the base case, in which every $\phi\in \Gamma\cup\Delta$ has finite kernel or finite image. The statement of the Theorem in this case is the following.
\begin{theorem}\label{Theorem Ab}
    Let $\Gamma,\Delta$ be two sharply commuting invariant pre-rings of definable endogenies of a definable abelian group $A$ of finite dimension such that:
    \begin{itemize}
        \item for any $\phi\in \Gamma\cup\Delta$, either $\phi[A]$ is finite or $\ker(\phi)$ is finite;
        \item $\Gamma$ is essentially unbounded and $\Delta$ is essentially infinite or vice-versa.
    \end{itemize}
    Then, $\Kat(\Gamma,\Delta)$ is finite. Moreover, there exists a finite subgroup $(\Gamma,\Delta)$-invariant that contains all the finite $\Gamma$ or $\Delta$-weakly invariant subgroups.
\end{theorem}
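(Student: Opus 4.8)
The plan is to prove the two finiteness assertions in turn, after some elementary reductions.

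\emph{Reductions.} If $\dim A=0$ then $A$ is finite and there is nothing to prove, so assume $\dim A>0$. Two observations organize everything. First, for $\phi\in\Gamma\cup\Delta$ we have $\phi[A]$ finite iff $\phi\sim 0$; and if $\ker\phi$ is finite then $\dim\phi[A]=\dim A$ by Lemma \ref{Lemma 7}, so $\dim(A/\phi[A])=0$ and $\phi[A]$ has finite index. Thus every element of $\Gamma\cup\Delta$ is either $\sim 0$ or an \emph{almost-automorphism} (finite kernel, finite-index image), and the almost-automorphisms are closed under composition, since a composite of almost-injective endogenies is again almost-injective: if $\ker\gamma,\ker\gamma'$ are finite then $\ker(\gamma\circ\gamma')=(\gamma')^{-1}[\,\ker\gamma+\kat\gamma'\,]$ is the preimage of a finite set under the almost-injective $\gamma'$, hence finite. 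Second, since $\kat(\gamma_1+\gamma_2)=\kat(\gamma_1)+\kat(\gamma_2)$, the family $\{\kat(\gamma):\gamma\in\Gamma\}$ is \emph{directed} under inclusion, so $\Kat(\Gamma)=\bigcup_\gamma\kat(\gamma)$ is a directed union of finite subgroups; likewise $\Kat(\Delta)$, and by Lemma \ref{Lemma 6} both unions are (weakly/genuinely) invariant under $\Gamma$ and $\Delta$. Consequently $\Kat(\Gamma)$ is finite exactly when no strictly increasing chain $\kat(\gamma_0)\subsetneq\kat(\gamma_1)\subsetneq\cdots$ exists.

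\emph{Finiteness of $\Kat(\Gamma)$.} By symmetry of the hypotheses in $\Gamma\leftrightarrow\Delta$ it suffices to treat $\Gamma$, which we may assume essentially unbounded ($\Delta$ then essentially infinite). Suppose $\Kat(\Gamma)$ were infinite and fix a strictly increasing chain of finite subgroups $\kat(\gamma_0)\subsetneq\kat(\gamma_1)\subsetneq\cdots$, each of index $\ge 2$ in the next. I would derive a contradiction from Lemma \ref{boundedind} (equivalently, from the hereditary $\widetilde{\mathfrak M}_c$-property). The two preparatory moves are: (a) pass from the merely \emph{invariant} pre-ring $\Gamma$ to a single uniformly definable family still carrying an infinite chain — using that each finite stage of the chain is witnessed by finitely many parameters, together with compactness, to produce a formula $\theta(a,y)$ with the katakernels among $\{\theta(A,c)\}_c$; and (b) recast the katakernel chain into the ``intersection of subgroups'' form to which Lemma \ref{boundedind} applies. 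Here sharp commutation with $\Delta$ is essential: fixing an almost-automorphism $\delta\in\Delta$, Lemma \ref{Lemma 3} gives $\delta[\kat(\gamma_n)]\le\kat(\gamma_n)+\kat(\delta)$, so $\delta$ almost-preserves every stage, and pull-backs by $\delta$ supply the companion definable subgroups needed; essential infiniteness of $\Delta$ guarantees such a $\delta$ exists (indeed infinitely many inequivalent ones). Essential unboundedness of $\Gamma$ is used to ensure that $\Gamma$ modulo the ideal of finite-image endogenies is genuinely large, so the chain is not an artefact of $\sim$. Granting this contradiction, $\Kat(\Gamma)$ and, symmetrically, $\Kat(\Delta)$ are finite, hence so is $\Kat(\Gamma,\Delta)=\Kat(\Gamma)+\Kat(\Delta)$, which is $(\Gamma,\Delta)$-invariant by Lemma \ref{Lemma 6}.

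\emph{The ``moreover''.} Write $A_0:=\Kat(\Gamma,\Delta)$, finite and $(\Gamma,\Delta)$-invariant. Since $\gamma[F_1+F_2]\le F_1+F_2+\kat(\gamma)$, the finite weakly $\Gamma$-invariant subgroups form a directed family; let $\mathcal U_\Gamma$ be their union, a weakly $\Gamma$-invariant subgroup that is again a directed union of finite subgroups, and define $\mathcal U_\Delta$ symmetrically. I claim $\mathcal U_\Gamma$ is finite. If not, take a strictly increasing chain $F_0\subsetneq F_1\subsetneq\cdots$ of finite weakly $\Gamma$-invariant subgroups; then $(F_n+A_0)/A_0$ is a finite \emph{genuinely} $\Gamma$-invariant subgroup of the now definable, finite-dimensional quotient $A/A_0$, on which $\Gamma$ acts by endomorphisms with the almost-automorphisms acting with finite kernel. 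Running the same chain-condition machinery in $A/A_0$ — using that the action map of $\Gamma$ into the finite ring $\mathrm{End}\big((F_n+A_0)/A_0\big)$ has ``large kernel'' because $\Gamma/{\sim}$ is unbounded, so some almost-automorphism $\gamma\in\Gamma$ sends $F_n$ into the finite group $A_0$, whence $F_n\le\gamma^{-1}[A_0]$, and then bounding $|\gamma^{-1}[A_0]|$ uniformly over almost-automorphisms via Lemma \ref{boundedind} applied to $\{\ker\gamma\}$ — contradicts $|F_n|\to\infty$. Hence $\mathcal U_\Gamma,\mathcal U_\Delta$ are finite and $W:=\mathcal U_\Gamma+\mathcal U_\Delta+A_0$ is finite. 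It is $(\Gamma,\Delta)$-invariant: for $\gamma\in\Gamma$, $\gamma[\mathcal U_\Gamma]\le\mathcal U_\Gamma+\kat(\gamma)\le W$, $\gamma[A_0]\le A_0$, and $\gamma[\mathcal U_\Delta]$ is a \emph{finite} (as $\mathcal U_\Delta$ is finite and $\gamma$ an endogeny) weakly $\Delta$-invariant (by Lemma \ref{Lemma 4}, since $\gamma\in C^{\#}(\Delta)$) subgroup, hence $\gamma[\mathcal U_\Delta]\le\mathcal U_\Delta\le W$; symmetrically for $\delta\in\Delta$. Any finite weakly $\Gamma$- or $\Delta$-invariant subgroup lies in $\mathcal U_\Gamma$ or $\mathcal U_\Delta$, hence in $W$, which is the required subgroup.

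\emph{Main obstacle.} Both finiteness statements hinge on the same delicate point: turning the \emph{invariance} of $\Gamma$ (and $\Delta$) into the \emph{uniform definability} needed to apply Lemma \ref{boundedind}, and packaging the relevant chains of finite subgroups — katakernels, kernels, weakly invariant subgroups — so that the lemma bites; and correctly allocating the two size hypotheses (essential unboundedness $=$ non-degeneracy modulo finite-image endogenies; essential infiniteness $=$ a supply of commuting almost-automorphisms with which to probe the katakernels). Everything else is bookkeeping with Lemmas \ref{Lemma 3}–\ref{Lemma 6}.
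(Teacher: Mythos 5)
Your skeleton (katakernels first, then a maximal finite weakly invariant subgroup absorbing everything) matches the paper's, but the engine you propose to drive both finiteness claims is the wrong one, and this is a genuine gap rather than a packaging issue. You correctly reduce each claim to ruling out a strictly increasing chain of finite subgroups (katakernels, resp.\ finite weakly invariant subgroups), but you then defer the contradiction to Lemma \ref{boundedind}. That lemma is a descending chain condition on intersections of uniformly definable subgroups with large index drops; it says nothing about ascending chains of \emph{finite} subgroups, and finite-dimensionality does not forbid such chains (the $p^n$-torsion subgroups of a divisible group in a theory of finite Morley rank are a standard example). No amount of "recasting the chain into intersection form" or "pulling back by $\delta$" will make the lemma bite, because the subgroups involved are all finite and the lemma only constrains index drops, not sizes. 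The paper's actual mechanism at both of these points is cardinality, not dimension: if the finite weakly $\Gamma$-invariant subgroups had infinite increasing union $S$, then any $\gamma-\gamma'$ with finite image on the infinite set $S$ has infinite kernel, hence (by your dichotomy) finite image on all of $A$, so $\Gamma_S/{\sim}$ has the same cardinality as the unbounded $\Gamma/{\sim}$ yet embeds into $\Endog(S)$, which is bounded; and for $\Kat(\Gamma)$ one uses essential infiniteness of $\Delta$ to find, for each $\gamma$, some $\delta\not\sim 0$ with $\ker(\delta)\supseteq\kat(\gamma)$, so that $\Kat(\Gamma)$ sits inside a \emph{bounded} sum of finite kernels, and an infinite $\Kat(\Gamma)$ would force $\Gamma/{\sim}$ to embed into a bounded object, contradicting unboundedness. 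Your proposal never invokes this bounded-versus-unbounded dichotomy at the critical steps, and your "main obstacle" paragraph shows you have misidentified where the difficulty lies.

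Two further specific problems. First, the clause "$\Kat(\Gamma)$ and, symmetrically, $\Kat(\Delta)$ are finite" is unjustified: the hypotheses on $\Gamma$ and $\Delta$ are not symmetric (one is essentially unbounded, the other only essentially infinite), and the two katakernels require genuinely different arguments — in the paper $\Kat(\Delta)$ is the easy case (each $\kat(\delta)$ is a finite weakly $\Gamma$-invariant subgroup, hence lies in the maximal one $A_0$, whose existence uses unboundedness of $\Gamma$), while $\Kat(\Gamma)$ is the hard case sketched above. Second, in the "moreover" part you propose "bounding $|\gamma^{-1}[A_0]|$ uniformly over almost-automorphisms via Lemma \ref{boundedind} applied to $\{\ker\gamma\}$"; again there is no uniform bound on $|\ker\gamma|$ over $\gamma\in\Gamma$ in general, and Lemma \ref{boundedind} does not supply one. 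The correct route is to show $\gamma^{-1}[A_0]$ is itself a finite weakly $\Delta$-invariant (resp.\ $\Gamma$-invariant) subgroup via Lemma \ref{Lemma 6} and hence already contained in the maximal finite invariant subgroup, rather than to seek a numerical bound.
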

\begin{proof}
    Suppose that $\Gamma$ is essentially unbounded (in case $\Delta$ is essentially unbounded, the proof follows by symmetry).\\
    We prove that $A$ admits a maximal finite $\Gamma$-weakly invariant subgroup $A_0$. Suppose, by way of contradiction, that there exists $\{S_i\}_{i<\omega}$ an infinite countable strictly increasing sequence of finite weakly $\Gamma$-invariant subgroups. We verify that, for any weakly $\Gamma$-invariant subgroups $A$ and $B$, the sum $A+B$ is again $\Gamma$-weakly invariant. Indeed, 
    $$\gamma[A+B]=\gamma[A]+\gamma[B]\leq A+\kat(\gamma)+B+\kat(\gamma)\leq A+B+\kat(\gamma).$$ 
    This implies that the sum $S=\sum_{i<\omega} S_i$ is a countably infinite weakly $\Gamma$-invariant subgroup. Indeed, given $a\in \sum_{i<\omega} S_i$, then $a\in S_{i_1}+...+S_{i_n}$ and $\gamma[a]\leq S_{i_1}+...+S_{i_n}+\kat(\gamma)\leq S+\kat(\gamma)$ by previous proof. Let  $\Gamma_S$ denote the set $\{\gamma_S:\ \gamma\in \Gamma\}$ with $\gamma_S$ the restriction-corestriction of $\gamma$ to $S$ (that is well-defined by Lemma \ref{Lemma 5}). $\Gamma_S$ is essentially unbounded since, given two endogenies $\gamma,\gamma'$ whose difference has finite image on $S$, $\gamma-\gamma'$ has kernel of finite index in $A$. Therefore, it must belong to $\sim$ by hypothesis. Consequently, the set $\Gamma_S/{\sim}$ has the same cardinality as $\Gamma/\sim$, that is essentially unbounded by hypothesis. This is a contradiction since $\Endog(S)$ has at most cardinality $2^{\aleph_0}$ in each elementary extension of $A$.\\
    Since $\Kat(\Delta)$ is given by the sum of finite $\Gamma$ weakly-invariant subgroups, $\Kat(\Delta)$ is contained in $A_0$. Therefore, it is finite.\\
    Moreover, the finite images of elements in $\Delta$ are contained in $A_0$. Indeed, $\delta[A]$ is weakly $\Gamma$-invariant for each $\delta\in \Delta$: $\gamma[\delta[a]]\leq \delta\gamma[a]+\kat(\gamma)\leq \delta[A]+\kat(\gamma)$ by sharp commutation. In addiction, $A_0$ is $\Delta$-invariant indeed $\delta[A_0]$ is a weakly $\Gamma$-invariant finite subgroup, by Lemma \ref{Lemma 6}, and so it must be contained into $A_0$.\\
    We verify that $\Delta/\sim$ can be embedded in $\operatorname{End}(A/A_0)$ by the homorphism $[\_]$ defined as $[\delta](a+A_0)=\delta[a]+A_0$ for each $\delta\in \Delta$ and $a+A_0\in A/A_0$:
    \begin{itemize}
        \item $[\delta]$ is well-defined as function: given $a+A_0=b+A_0$ then $a-b\in A_0$ so $\delta[a-b]\in A_0$ by $\Delta$-invariance. In conclusion, $\delta[a]+A_0=\delta[b]+A_0$ that is an element in $A/A_0$ since $\kat(\delta)\subseteq A_0$. Moreover, it is an endomorphism since $\delta$ is additive.
        \item $[\_]:\Delta/\sim\to \End(A/A_0)$ is well-defined as function \hbox{i.e.} if $\delta\sim \delta'$ then $[\delta]=[\delta']$. Indeed, given $a\in A$, then $(\delta-\delta')[a]\leq A_0$ since $\delta-\delta'$ is an endogeny in $\Delta$ with finite image and so $[\delta](a+A_0)=[\delta'](a+A_0)$.
        \item The function $[\_]$ is injective: if $[\delta]=[\delta']$, then $\delta-\delta'$ has image contained in $A_0$ and so finite \hbox{i.e.} $\delta\sim \delta'$.
    \end{itemize}
    We prove that also $\Kat(\Gamma)$ is contained in $A_0$. It is sufficient to verify that it is finite. Assume, for a contradiction, that $\Kat(\Gamma)$ is infinite.\\
    If $\Delta$ is essentially unbounded, then $\Kat(\Gamma)$ is finite by the previous proof. Therefore, we can assume $\Delta$ essentially infinite and essentially bounded. We verify that $\Kat(\Gamma)$ is contained in a bounded subgroup $B$. This contradicts the essentially unboundedness of $\Gamma$. Indeed, since $\Kat(\Gamma)$ is $\Gamma$-invariant and every $\gamma\in \Gamma$ has finite kernel, $\Gamma$ can be embedded in $\Endog(\Kat(\Gamma))$ that is bounded still bounded. For every $[\delta]\in \Delta/{\sim}$, we denote $\phi[\delta]\in \Endog(A)$ the endogeny $\phi[\delta][a]=\delta[a]+A_0$. This is independent from the choice of an element in $[\delta]$. Indeed, given $\delta,\delta'\in [\delta]$, the difference $\phi[\delta]-\phi[\delta'][a]=\delta-\delta'[a]+A_0\leq A_0$ since $\operatorname{Im}(\delta'-\delta)$ is a weakly $\Gamma$-invariant subgroup and so contained in $A_0$ if $\delta\sim\delta'$. Therefore, $\phi[\delta][a]=\phi[\delta'][a]$. $\phi[\delta]$ commutes sharply with $\Gamma$. Indeed, for every $\gamma \in \Gamma$,
    $$(\gamma\phi[\delta]-\phi[\delta]\gamma)[a]=\gamma[A_0]+A_0+(\gamma\delta-\delta\gamma)[a]\leq A_0+\kat(\gamma).$$ 
    Therefore, the function $\phi:\Delta/\sim\to C^{\#}(\Gamma)$ sending $[\delta]$ in $\phi[\delta]$ is well-defined. Moreover, for every $\delta'\sim \delta$, $\ker(\delta')\subseteq \ker(\phi[\delta])$. Let $a\in \ker(\delta')$. By definition, $\delta'[a]\leq \kat(\delta')\leq A_0$, then $\phi[\delta][a]=\delta[a]+A_0=\delta'[a]+(\delta-\delta')[a]+A_0\leq A_0+A_0+A_0\leq A_0$ and so $a\in \ker(\phi[\delta])$. In addiction, the kernel of $\phi[\delta]$ is finite when $[\delta]\not=0$ since $\ker(\phi[\delta])=\delta^{-1}[A_0]$ that is finite since $A_0$ is finite and $\delta$ has finite kernel by hypothesis.\\
    Given $\gamma\in \Gamma$, $\kat(\gamma)$ is a finite weakly $\Delta$-invariant subgroup and so the restriction-corestriction of $\Delta$ to $\kat(\gamma)$ is well-defined by Lemma \ref{Lemma 5}. Since $\kat(\gamma)$ is finite, also $\Endog(\kat(\gamma))$ is finite and so, by essentially infiniteness of $\Delta$, there exists $\delta\in \Delta$ such that $\delta\not\sim 0$ and $\ker(\delta)\supseteq \kat(\gamma)$. By previous proof, $\ker(\phi[\delta])\supseteq \kat(\gamma)$ and, by arbitrariety of $\gamma\in \Gamma$, 
    $$\Kat(\Gamma)=\sum_{\gamma\in \Gamma} \kat(\gamma)\leq \sum_{[\delta]\in \Delta/\sim-[0]}\ker(\phi[\delta]).$$
    The latter is bounded by essentially boundedness, and since $\ker(\phi[\delta])$ is finite. Therefore, we have reached a contradiction.\\
    We verify that $\Delta/\sim$ acts by injective endomorphisms on $A/A_0$ \hbox{i.e.} $\ker([\delta])=0+A_0$ for $\delta\not\sim 0$. The kernel of $[\delta]$ is the set 
    $$\{a\in A:\ \delta[a]\leq A_0\}=\delta^{-1}[A_0].$$ 
    By Lemma \ref{Lemma 6}, this subgroup is a finite $\Gamma$-invariant subgroup and so it is contained in $A_0$. Since $\Delta/\sim$ is infinite and acts by monomorphism on $A/A_0$, there is no $\Delta/\sim$-invariant finite non-trivial subgroup in $A/A_0$. Suppose, for a contradiction, that there exists a finite $\Delta/\sim$-invariant subgroup $B/A_0$. Then, $\Delta/\sim$ embeds in $\End(B/A_0)$ since, given $b\not=0\in B/A_0$, $[\delta](b)=[\delta'](b)$ iff $[\delta]=[\delta']$. This implies that $\End(B/A_0)$ is finite, and so there exists $[\delta]\in \Delta/\sim-[0]$, whose kernel contains $B$, a contradiction.\\
    For any $\gamma\sim 0$ in $\Gamma$, the finite subgroup $\operatorname{Im}(\gamma)+A_0$ is $\Delta$-invariant, by Lemma \ref{Lemma 6}, and so contained in $A_0$. Therefore, $\Gamma/\sim$ acts by endomorphisms on $A/A_0$ as in the previous case. The kernel of $[\gamma]$ is $\gamma^{-1}[A_0]$ that is a $\Delta/\sim$-invariant subgroup, by Lemma \ref{Lemma 6}, and so again contained in $A_0$.\\
    Therefore, $\Gamma/\sim$ acts by monomorphisms on $A/A_0$. Proceeding as before, we may conclude that every finite weakly $\Gamma$-invariant or $\Delta$-invariant subgroup is contained in $A_0$. This proves the base case of the theorem.
\end{proof}
\section{Theorem A: second case}
In this section, we prove the second case of our theorem: $\Gamma$ is essentially unbounded and not all the elements of $\Gamma$ have finite kernel or finite image.\\
We need the following easy lemma.
\begin{lemma}\label{easylemma}
    Assume that $\Gamma$ and $\Delta$ are two pre-rings of endogenies such that $\Gamma=C^{\#}(\Delta)$ and $\Delta=C^{\#}(\Gamma)$. Then, $\Kat(\Gamma,\Delta)$ contains all the finite weakly $\Gamma$-invariant or $\Delta$-invariant subgroups.
\end{lemma}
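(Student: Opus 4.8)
The plan is to realise every such finite subgroup as the katakernel of one explicit endogeny which is forced, by the sharp-centraliser hypothesis, to lie in the opposite pre-ring, so that it gets absorbed into the global bikatakernel.

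First I would fix a finite subgroup $B\leq A$ which is weakly $\Gamma$-invariant (the weakly $\Delta$-invariant case then follows by symmetry). Since $B$ is finite it is definable, so the subgroup $e_B:=A\times B$ of $A\times A$ is a definable endogeny of $A$: it is total because $\pi_1(e_B)=A$, and $\kat(e_B)=\{b\in A:\ (0,b)\in e_B\}=B$ is finite. As a map, $e_B$ is simply the constant endogeny with $e_B[a]=B$ for all $a$, and its whole point is that it records $B$ as its own katakernel.

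The key computation is then that $e_B$ sharply commutes with every $\gamma\in\Gamma$. Using the conventions $\gamma e_B[a]=\gamma[e_B[a]]=\gamma[B]$ and $e_B\gamma[a]=e_B[\gamma[a]]=B$, one gets $(\gamma e_B-e_B\gamma)[a]=\gamma[B]-B=\gamma[B]+B$ for every $a\in A$. By weak $\Gamma$-invariance we have $\gamma[B]\leq B+\kat(\gamma)$, hence $(\gamma e_B-e_B\gamma)[A]\leq \kat(\gamma)+B=\kat(\gamma)+\kat(e_B)$, which is exactly the definition of sharp commutation. Therefore $e_B\in C^{\#}(\Gamma)=\Delta$, and so $B=\kat(e_B)\leq\Kat(\Delta)\leq\Kat(\Gamma,\Delta)$. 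Running the same argument with the roles of $\Gamma$ and $\Delta$ exchanged, and invoking $C^{\#}(\Delta)=\Gamma$, shows that a finite weakly $\Delta$-invariant subgroup lies in $\Kat(\Gamma)\leq\Kat(\Gamma,\Delta)$; this finishes the proof.

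I do not expect a genuine obstacle here: the entire content is guessing the endogeny $e_B$, and the only point requiring (minor) care is the bookkeeping with the non-symmetric sum and composition of endogenies together with the sign identity $\gamma[B]-B=\gamma[B]+B$ that makes weak invariance coincide exactly with sharp commutation. In particular, unlike the main theorems, no appeal to finite-dimensionality is needed for this lemma.
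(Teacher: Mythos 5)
Your proof is correct and follows essentially the same route as the paper's: both define the constant endogeny $a\mapsto B$ (your $e_B$, the paper's $\delta$), observe that its katakernel is $B$, and check that sharp commutation with $\Gamma$ reduces exactly to the weak $\Gamma$-invariance condition $\gamma[B]\leq B+\kat(\gamma)$, so that $e_B\in C^{\#}(\Gamma)=\Delta$. The only cosmetic difference is that you spell out the sign cancellation $\gamma[B]-B=\gamma[B]+B$ and the definability of $e_B$, which the paper leaves implicit.
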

\begin{proof}
    Let $B$ be a weakly $\Gamma$-invariant finite subgroup. We construct an endogeny with katakernel equal to $B$ contained in $C^{\#}(\Gamma)$. Define $\delta[a]=B$ for every $a\in A$. This is a definable endogeny with katakernel $B$. $\delta\in \Delta= C^{\#}(\Gamma)$ iff $\gamma\delta[a]-\delta\gamma[a]\leq B+\kat(\gamma)$ for any $\gamma\in \Gamma$. This is equal to $\gamma[B]+B\leq B+\kat(\gamma)$ that is exactly the definition of weakly $\Gamma$-invariance. Therefore, $B\leq \Kat(\Gamma,\Delta)$. The proof for weakly $\Delta$-invariant subgroups follows by symmetry.
\end{proof}
\subsection{Lines}
To prove the second version of Theorem A, we need to introduce and analyze lines.
\begin{defn}
    Let $\Gamma$ be a pre-ring of definable endogenies acting on $A$. A \emph{$\Gamma$-line} or simply a line is any $\gamma[A]$ with $\gamma\in \Gamma$ that does not contain any infinite $\Gamma$ image of strictly smaller dimension.
\end{defn}
In the hypothesis of Theorem A, lines respect the following properties.
\begin{lemma}\label{lemma 8}
    Let $\Gamma,\Delta$ be two sharply commuting pre-rings of endogenies of $A$, an absolutely $(\Gamma,\Delta)$-minimal group. For any $\Gamma$-line $L=\gamma[A]$, the following properties hold:
    \begin{itemize}
        \item $L$ is weakly $\Delta$-invariant;
        \item Given $\gamma\in \Gamma$ such that $\gamma[A]$ is infinite, then $\gamma[A]$ contains a line;
        \item There exist $\gamma_1,..,\gamma_n\in \Gamma$ such that $\sum_{i=1}^n \gamma_i[L]$ is of finite index in $A$;
        \item The lines have all the same dimension;
        \item Given $\gamma\in \Gamma$, then $\gamma[L]$ is finite or $\ker(\gamma)\cap L$ is finite.
        \item For every couple of lines $L,L'$, there exists $\gamma\in \Gamma$ such that $\gamma[L]\leq L'$ and $\gamma[L]$ is of finite index in $L'$. In particular, for every line $L$, there exists $\gamma\in \Gamma$ such that $\gamma[A]\leq L$ and $\gamma[L]$ is of finite index in $L$.
    \end{itemize}
\end{lemma}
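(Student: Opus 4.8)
The plan is to prove the six assertions in the order listed, since each feeds the next, working throughout in the ambient finite-dimensional theory so that $\dim$ is available. The first point is immediate from Lemma \ref{Lemma 4}: $A$ is trivially weakly $\Delta$-invariant and $\gamma\in C^{\#}(\Delta)$ because $\Gamma$ and $\Delta$ sharply commute, so $L=\gamma[A]$ is weakly $\Delta$-invariant. For the second point, the set of $\gamma'\in\Gamma$ with $\gamma'[A]$ infinite and $\gamma'[A]\leq\gamma[A]$ is nonempty (it contains $\gamma$) and the dimensions of these subgroups are bounded by $\dim A$, so I would pick $\gamma'$ minimizing $\dim\gamma'[A]$; by minimality $\gamma'[A]$ contains no infinite $\Gamma$-image of strictly smaller dimension, hence is a line inside $\gamma[A]$.

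The third point is where absolute $(\Gamma,\Delta)$-minimality enters. Write $L=\gamma_1'[A]$ and consider finite sums $S=\sum_{i=1}^{n}\gamma_i[L]$ with $\gamma_1=Id$, so that $S$ is an infinite definable subgroup containing $L$; since $\dim S\leq\dim A<\omega$, I can choose the $\gamma_i$ so that $\dim S$ is maximal. Maximality forces $\dim(S+\gamma[L])=\dim S$, hence $\gamma[L]$ to be almost contained in $S$, for every $\gamma\in\Gamma$; adding finitely many such almost-inclusions shows $\gamma[S]=\sum_i\gamma\gamma_i[L]$ is almost contained in $S$, i.e. $S$ is almost $\Gamma$-invariant. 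Each summand $\gamma_i[L]=(\gamma_i\gamma_1')[A]$ is weakly $\Delta$-invariant by the first point, so $S$ is weakly, in particular almost, $\Delta$-invariant. Thus $S$ is an infinite definable almost $(\Gamma,\Delta)$-invariant subgroup, and absolute minimality gives $|A:S|<\infty$.

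The last three points rest on two observations: the fibration identity $\dim L=\dim\phi[L]+\dim(\ker\phi\cap L)$ for any endogeny $\phi$ (the argument of Lemma \ref{Lemma 7} applied to the surjection $L\to\phi[L]/\kat(\phi)$, legitimate since $\kat(\phi)\subseteq\phi[L]$), and the fact that $\gamma[L]=(\gamma\gamma_1')[A]$ is again a $\Gamma$-image of $A$. For the fourth point, given lines $L,L'$ with $L'=\gamma_0'[A]$, apply the third point to $L$ to get $\eta_i\in\Gamma$ with $\sum_i\eta_i[L]$ of finite index in $A$; then $\sum_i(\gamma_0'\eta_i)[L]=\gamma_0'\bigl[\sum_i\eta_i[L]\bigr]$ has finite index in $L'$, while each $(\gamma_0'\eta_i)[L]\leq L'$ is a $\Gamma$-image of $A$ of dimension $\leq\dim L$. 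If $\dim L<\dim L'$, each such image is either finite or an infinite $\Gamma$-image of dimension $<\dim L'$ inside the line $L'$ — the latter forbidden — so they are all finite, contradicting that their sum has finite index in the infinite group $L'$; hence $\dim L\geq\dim L'$, and symmetry gives equality. The fifth point follows: if $\gamma[L]$ and $\ker\gamma\cap L$ were both infinite, the fibration identity would force $\dim\gamma[L]<\dim L$, yet by the second point the infinite group $\gamma[L]$ contains a line, of dimension $\dim L$ by the fourth point — a contradiction. For the sixth point, reuse $\gamma_i:=\gamma_0'\eta_i$: each $\gamma_i[L]\leq L'$; by the fifth point each $\gamma_i[L]$ is finite or has $\dim\gamma_i[L]=\dim L=\dim L'$, hence finite index in $L'$; since the sum has finite index in the infinite $L'$, some $\gamma_i[L]$ is infinite, and this $\gamma:=\gamma_i$ works. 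Taking $L'=L$ and noting $\gamma[A]=\gamma_0[\eta_i[A]]\leq\gamma_0[A]=L$ yields the ``in particular'' clause.

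I expect the third point to be the main obstacle: one must verify both that almost-containment of each $\gamma[L]$ in the maximal sum $S$ survives the finite sum defining $\gamma[S]$ (the $\Gamma$-side) and that the first point genuinely applies to every summand $\gamma_i[L]$ (the $\Delta$-side), so that $S$ is almost $(\Gamma,\Delta)$-invariant before absolute minimality can be invoked; the remaining arguments are then routine manipulations with the fibration identity and the closure of $\Gamma$ under composition.
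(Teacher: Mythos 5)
Your proof is correct and follows essentially the same route as the paper: point~1 via sharp commutation (you route it through Lemma~\ref{Lemma 4}, the paper computes directly, but it is the same observation), point~3 by taking a sum of $\Gamma$-images of maximal dimension and invoking absolute $(\Gamma,\Delta)$-minimality, and points~4--6 by pushing a finite-index sum through the defining endogeny of $L'$ together with the fibration identity for $\dim$. The only places you add content are a clean verification of point~2 (which the paper dismisses as immediate) and a slightly more explicit use of the fact that a finite sum of subgroups, each almost contained in $S$, is itself almost contained in $S$.
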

\begin{proof}
    The first proposition follows easily from sharp commutation since 
    $$\delta\gamma[A]\leq\gamma[\delta[A]]+\kat(\delta)\leq\gamma[A]+\kat(\delta).$$
    The second point follows from the definition of lines.\\
    For the third, let $L$ be a line and $\sum_{i=1}^n \gamma_i[L]=:A'$ a finite sum of $\Gamma$-images of $L$ of maximal dimension. This definable subgroup is weakly $\Delta$-invariant, since a finite sum of weakly $\Delta$-invariant subgroups is weakly $\Delta$-invariant. Moreover, it is almost $\Gamma$-invariant. Indeed, $\gamma[A']+A'$ is again a finite sum of $\Gamma$-images of $L$ and, by maximality of the dimension, $\gamma[A']$ is almost contained in $A'$. This implies that $A'$ must be of finite index in $A$, by the hypothesis of absolute minimality.\\
    For the fourth, it is sufficient to verify that every $\Gamma$-image has dimension greater than or equal to $L$. Then, given two lines $L=\gamma[A],L'=\gamma'[A]$, it follows that $\dim(L)\leq \dim(L')\leq \dim(L)$. Let $\gamma\in \Gamma$ and $L'=\gamma'[A]$ a $\Gamma$-line. By the previous point, there exist $\gamma_1,...,\gamma_n\in \Gamma$ such that $\sum_{i=1}^n \gamma_i[\gamma[A]]$ is of finite index in $A$. Consequently, $\gamma'[\sum_{i=1}^n \gamma_i\gamma[A]]$ is of finite index in $L'$ and so there is a non-finite term $\gamma'\gamma_i\gamma[A]$. Since a line does not contain any $\Gamma$-image infinite not of finite index, $\gamma'\gamma\gamma[A]$ is of finite index in $L'$. This implies, by dimensionality, that $\dim(L)\geq \dim(L')$.\\
    For the fifth point, let $L$ be a line and $\gamma'\in \Gamma$. Then, if $\gamma'[L]$ is not finite, it contains a line $L'$ that must have same dimension as $L$. By dimensionality, $\dim(\gamma'[L])=\dim(L)$ and the kernel is finite.\\
    We prove the last point: let $L,L'=\gamma'[A]$ be lines. By third point, there exist $\gamma_1,...,\gamma_n\in \Gamma$ such that $\sum_{i=1}^n \gamma_i[L]$ is of finite index in $A$. This implies that also $\sum_{i=1}^n \gamma'\gamma_i[L]$ is of finite index in $L'$. Since $L'$ is a line, every $\gamma'\gamma_i[L]$ is either finite or of finite index in $L$. Not all of them can be finite, since the sum is of finite index in $L'$. Consequently, there exists $\gamma'\gamma_i$ such that $\gamma'\gamma_i[A]\leq L'$ and $\gamma'\gamma_i[L]$ is of finite index in $L'$.
\end{proof}
For the proof of the second case of Theorem A, we equip each $\Gamma$-line with two invariant pre-rings of definable endogenies $\Gamma_L$ and $\Delta_L$, defined in the following way.
\begin{defn}
    Let $A$ be an abelian definable group equipped with two invariant sharply commuting pre-rings of endogenies $\Gamma$ and $\Delta$, and $L$ a $\Gamma$-line. We define:
\begin{itemize}
    \item $\Delta_L=\langle\delta_L:\ \delta\in \Delta\rangle$ with $\delta_L$ the restriction-corestriction to $L$ of $\delta$ (that is well-defined by Lemma \ref{Lemma 5} since $L$ is weakly $\Delta$-invariant by Lemma \ref{lemma 8}).
    \item $\Gamma_L=\langle\gamma_L:\ \gamma\in \Gamma\wedge\gamma[A]\leq L\rangle$ with $\gamma_L$ defined as $\gamma_L[l]=\gamma[l]$ for any $l \in L$.
\end{itemize}
Each of the elements of $\Gamma_L,\Delta_L$ is an endogeny of $L$ by definition.
\end{defn}
We verify that these two pre-rings of definable endogenies inherit the properties of $\Gamma$ and $\Delta$. We say that $\Delta$ is \emph{as essentially large as} $\Delta'$ if $\Delta,\Delta'$ are both essentially infinite or both essentially unbounded.
\begin{lemma}\label{Lemma 9}
    Let $A$ be a definable abelian group, $\Gamma,\Delta$ two sharply commuting invariant pre-rings of definable endogenies, and $L$ a $\Gamma$-line. The following properties hold:
\begin{itemize}
        \item Any element in $\Delta_L$ is equivalent, in $\Endog(L)$, to an endogeny $\delta'_L$ with $\delta'\in \Delta$. Any element of $\Gamma_L$ is equivalent to an element $\gamma_L$ for $\gamma\in \Gamma$ with image in $L$;
        \item If $\Gamma$ and $\Delta$ sharply commute, then $\Gamma_L$ and $\Delta_L$ sharply commute;
        \item $L$ is absolutely $(\Gamma_L,\Delta_L)$-minimal.
        \item $L$ has no $\Gamma_L$-lines.
        \item $\Gamma_L$ and $\Delta_L$ are as essentially large as $\Delta$ and $\Gamma$ respectively. 
    \end{itemize}
\end{lemma}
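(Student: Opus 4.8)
Here is how I would attack Lemma~\ref{Lemma 9}.

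The plan is to verify the five items essentially one at a time; the first, second and fourth are finite-error bookkeeping of the kind already carried out in Lemmas~\ref{Lemma 1}, \ref{Lemma 3} and \ref{Lemma 6}, while the real work is concentrated in the third (absolute minimality of the line). For the first item I would use that $L$ is weakly $\Delta$-invariant by Lemma~\ref{lemma 8}, so that by Lemma~\ref{Lemma 5} the restriction-corestriction $\delta\mapsto\delta_L$ is defined on all of $\Delta$, and check that modulo $\sim$ it is a pre-ring homomorphism $\Delta\to\Endog(L)$: left distributivity and associativity hold on the nose, while the right-distributivity discrepancy and the katakernel terms are finite and are reabsorbed exactly as in Lemma~\ref{Lemma 1}, using that for $l\in L$ the set $\delta[l]\cap L$ is a nonempty coset of $\kat(\delta)\cap L$ differing from $\delta[l]$ by a finite set. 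The pre-ring generated by the $\delta_L$ is then, modulo $\sim$, the image of this homomorphism, so every element of $\Delta_L$ is $\sim$-equivalent to a single $\delta'_L$. For $\Gamma_L$ this is cleaner: if $\gamma[A]\le L$ then $\kat(\gamma)\le L$ and $\gamma[L]\subseteq\gamma[A]\le L$, so $\gamma_L$ coincides with the restriction-corestriction of $\gamma$ to $L$, the set $\{\gamma\in\Gamma:\gamma[A]\le L\}$ is already closed under sums and products, and $\gamma\mapsto\gamma_L$ is a homomorphism on it with no error, so modulo $\sim$ every element of $\Gamma_L$ is a single $\gamma_L$ with $\gamma[A]\le L$.

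For the second item I would check sharp commutation on restrictions and then propagate. For $\gamma$ with $\gamma[A]\le L$, $\delta\in\Delta$ and $l\in L$ one has $\gamma_L\delta_L[l]=\gamma[\delta[l]\cap L]$ and $\delta_L\gamma_L[l]=\delta[\gamma[l]]\cap L$; comparing with $\gamma\delta[l]$ and $\delta\gamma[l]$ costs only finite errors bounded by $\gamma[\kat(\delta)]$, $\kat(\delta)$ and $\kat(\gamma)$, sharp commutation of $\Gamma$ and $\Delta$ confines $(\gamma\delta-\delta\gamma)[l]$ to $\kat(\gamma)+\kat(\delta)$, and Lemma~\ref{Lemma 3} lets one reabsorb everything into $\kat(\gamma_L)+\kat(\delta_L)$; passing to sums and products of restrictions — hence, by the first item, to all of $\Gamma_L$ and $\Delta_L$ — again uses only left distributivity and Lemma~\ref{Lemma 3}. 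For the fourth item I would argue by dimension: every $\Gamma_L$-image of $L$ is some $\gamma[L]$ with $\gamma[A]\le L$, and if it is infinite then $\ker(\gamma)\cap L$ is finite by Lemma~\ref{lemma 8}, so $\dim\gamma[L]=\dim L$, and being contained in $L$ it has finite index there; thus no infinite $\Gamma_L$-image of $L$ has infinite index in $L$, so the passage to lines does not continue below $L$, which is what item~4 asserts.

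The third item is the crux. Suppose, for contradiction, that $B\le L$ is infinite, definable, almost $(\Gamma_L,\Delta_L)$-invariant and of infinite index in $L$. First, $B$ is already almost $\Delta$-invariant inside $A$: weak $\Delta$-invariance of $L$ gives $\delta[B]\subseteq L+\kat(\delta)$, so $\delta[B]$ is commensurable with $\delta[B]\cap L=\delta_L[B]$, which is almost inside $B$ by hypothesis. Choosing $\mu_0=\mathrm{Id},\mu_1,\dots,\mu_n\in\Gamma$ with $B^{*}:=\sum_i\mu_i[B]$ of maximal dimension among finite sums of $\Gamma$-images of $B$, the maximal-dimension argument makes $B^{*}$ almost $\Gamma$-invariant, and almost $\Delta$-invariance of $B$ propagates through the $\mu_i$ (using sharp commutation and that images of finite-index subgroups have finite index), so $B^{*}$ is almost $(\Gamma,\Delta)$-invariant and infinite. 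Absolute $(\Gamma,\Delta)$-minimality of $A$ then forces $[A:B^{*}]<\infty$, hence $[L:B^{*}\cap L]<\infty$, while $B\subseteq B^{*}\cap L$ has infinite index there. Turning this into a contradiction is the delicate point: the summands $\mu_i[B]$ with $\mu_i[A]\le L$ are already almost inside $B$ by almost $\Gamma_L$-invariance, and for $\mu_i$ with $\mu_i[A]\not\le L$, Lemma~\ref{lemma 8} supplies $\nu_i\in\Gamma$ with $\nu_i\mu_i[A]\le L$ and $\nu_i$ almost injective on the line $\mu_i[L]$, so that almost $\Gamma_L$-invariance applied to $\nu_i\mu_i$ pins $\mu_i[B]$ down modulo commensurability; organising this control — perhaps together with an extremal choice of $B$ via the chain conditions of Lemma~\ref{boundedind} — so that it genuinely forbids $[B^{*}\cap L:B]$ from being infinite is, I expect, the main obstacle of the lemma, and the one place where absolute minimality of $A$ itself, not merely of $L$, is indispensable.

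For the fifth item I would compare $\sim$-classes. Restriction is surjective on classes by item~1, and its fibres are controlled because $L$ is a line: if two restrictions agree modulo $\sim$ on $L$ while the originals do not agree modulo $\sim$ on $A$, their difference has kernel of dimension at least $\dim L$, which by the classification of $\Gamma$-images and the fact that all lines have dimension $\dim L$ (Lemma~\ref{lemma 8}) happens only within a bounded family. This yields, in every elementary extension, a cardinality comparison between $\Gamma_L/{\sim}$, $\Delta_L/{\sim}$ and $\Gamma/{\sim}$, $\Delta/{\sim}$; the matching is the crossover stated, because the $\Gamma$-maps with image in $L$ are parametrised — via the section-like element $\gamma_0$ of Lemma~\ref{lemma 8} and sharp commutation — by $\Delta$-classes, and dually the $\Delta$-restrictions to $L$ by $\Gamma$-classes, so that the roles of ``essentially unbounded'' and ``essentially infinite'' are exchanged on $L$. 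I expect this item to require care but no genuinely new idea; the real difficulty of the whole lemma remains the descent in item~3.
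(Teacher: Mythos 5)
Items 1, 2 and 4 of your proposal match the paper's proof and are fine. The first genuine gap is exactly where you flag it, in item 3, and the missing idea is concrete: do not sum only $\Gamma$-images $\mu_i[B]$. The paper takes $S=\sum_{i=1}^n\gamma_i\delta_i[B]$ of maximal dimension among finite sums of \emph{composite} $\Gamma\Delta$-images of $B$. Almost $\Gamma$-invariance of $S$ is the usual maximality argument, and almost $\Delta$-invariance follows because sharp commutation moves $\delta$ past each $\gamma_i$ at the cost of a finite katakernel, landing back in a sum of the same shape. Absolute minimality then gives $S$ finite index in $A$. Now apply a $\gamma\in\Gamma$ with $\gamma[A]=L$: then $\gamma[S]=\sum_i\gamma\gamma_i\delta_i[B]$ has finite index in $L$, and since $\delta_i[B]={\delta_i}_L[B]+\kat(\delta_i)$ (weak $\Delta$-invariance of $L$) and $\gamma\gamma_i[\kat(\delta_i)]$ is finite by Lemma~\ref{Lemma 3}, each summand is, up to a finite group, $(\gamma\gamma_i)_L{\delta_i}_L[B]$ --- an element of $\Gamma_L\Delta_L$ applied to $B$, hence almost contained in $B$ by the assumed almost $(\Gamma_L,\Delta_L)$-invariance. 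So $B$ almost contains a finite-index subgroup of $L$, a contradiction. Your version, with $B^{*}=\sum\mu_i[B]$, cannot be closed up this way because the summands $\mu_i[B]$ need not be reachable from $B$ by elements of $\Gamma_L\Delta_L$, and your suggested repair via the $\nu_i$ of Lemma~\ref{lemma 8} does not recover this.

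The second problem is item 5. You take the word ``respectively'' in the statement at face value and try to prove a crossover ($\Gamma_L$ large as $\Delta$, $\Delta_L$ large as $\Gamma$), parametrising $\{\gamma\in\Gamma:\gamma[A]\le L\}$ by $\Delta$-classes; there is no such parametrisation, and the paper proves no crossover. What is actually shown (and what Theorem~\ref{Theorem 2A} uses) is that $\Delta_L$ is as essentially large as $\Delta$ and $\Gamma_L$ as essentially large as $\Gamma$. For $\Delta$ the restriction map $P_L:\Delta/{\sim}\to\Delta_L/{\sim}$ is a surjective ring homomorphism whose injectivity is checked by showing that $\delta_L\sim 0$ forces $\delta[\sum_i\gamma_i[L]]$ finite for a sum of finite index in $A$, hence $\delta\sim 0$; your dimension-of-kernel heuristic points in this direction but is not an argument. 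For $\Gamma_L$ the paper's proof is genuinely different from anything you propose: one picks $\gamma_1,\dots,\gamma_n$ with $\bigcap_i\ker(\gamma_i)$ of minimal dimension to show some $\gamma_i\Gamma/{\sim}$ is as large as $\Gamma/{\sim}$, transfers this to $\gamma_0\Gamma$ where $\gamma_0[A]=L$ by composing with an almost-injective map between the two lines, and finally uses Ramsey's theorem to extract a large family of pairwise inequivalent elements of $\Gamma_L$ of the form $(\gamma_j\gamma'_i)_L$. This step is absent from your proposal and is not routine.
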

\begin{proof}
    \textbf{Point 1:}\\
    It is sufficient to prove that the set $\{\delta_L:\ \delta\in \Delta\}$ is closed for sum and product, up to equivalence. For the sum, let $\delta$ and $\delta'$ be two endogenies in $\Delta$. Then, $\delta_L+\delta'_L$ is equivalent to $(\delta+\delta')_L$. Indeed, given $l\in L$, $\delta_L+\delta'_L[l]=\delta[l]\cap L+\delta'[l]\cap L$ while $(\delta+\delta')_L[l]=(\delta[l]+\delta'[l])\cap L$. $\delta[l]=l'+\kat(\delta)$ and $\delta'[l]=l''+\kat(\delta')$ with $l',l''$ contained in $L$. Therefore, the first is equal to $l'+l''+\kat(\delta)\cap L+\kat(\delta')\cap L$ and the latter is $l'+l''+\Kat(\delta+\delta')\cap L$. Therefore, the image of $\delta_L+\delta'_L-(\delta+\delta')_L$ is contained in the finite set $\kat(\delta)\cap L+\kat(\delta')\cap L+(\kat(\delta)+\kat(\delta'))\cap L$ and the two endogenies are equivalent as elements of $\Endog(L)$.  For the product, the proof is similar. Let $\gamma,\gamma'$ be elements in $\Gamma$ with image contained in $L$. Then, $$(\gamma\gamma')_L[l]=\gamma[\gamma'[l]]=\gamma_L[\gamma'_L[l]]$$
    since $\gamma'[L]$ is contained into $L$. The same for sum.\\
    \textbf{Point 2:}\\
    Since, given a ring of endogenies $\Gamma$, the endogenies that sharply commute with $\Gamma$ form a ring, by Lemma \ref{Lemma 3}, it is sufficient to verify that $\gamma_L$ and $\delta_L$ sharply commute for any $\gamma\in \Gamma$ and $\delta\in \Delta$. The image of $\gamma_L\delta_L-\gamma_L\delta_L$ is contained in $L$ (by construction). On the other hand $\gamma_L[l]=\gamma[l]$ for every element $l\in L$. Therefore,  
    $$\delta_L\gamma_L[l]=\delta_L\gamma[l]=\delta\gamma[l]\cap L.$$
    Clearly, $\gamma\delta_L[l]$ for $l$ in $L$ is contained in $\gamma\delta[l]$. Therefore,
    $$(\gamma_L\delta_L-\delta_L\gamma_L)[l]\leq \delta\gamma[l]\cap L+\gamma\delta[l]\leq (\delta\gamma-\gamma\delta[l])\cap L\leq (\kat(\gamma)+\kat(\delta))\cap L.$$
    Since $\kat(\gamma)\leq L$, this is equal to $\kat(\gamma_L)+\kat(\delta_L)$.\\
    \textbf{Point 3:}\\
    Suppose, for a contradiction, that there exists a definable infinite almost $(\Gamma_L,\Delta_L)$-invariant subgroup $B\leq L$ not of finite index in $L$. By finite-dimensionality, there exists a finite sum of the form $S=\sum_{i=1}^n \gamma_i\delta_i[B]$, for an endogeny $\gamma_i$ in $\Gamma$ and $\delta_i$ in $\Delta$, of maximal dimension between all the finite sums in this form. We verify that $S$ is a definable infinite almost $\Gamma,\Delta$-invariant subgroup. $\gamma[S]$ is almost contained in $S$ for every $\gamma\in \Gamma$: given $\gamma[S]$, it is equal to $\gamma[(\sum_{i=1}^n \gamma_i\delta_i[B])]$ that coincides with $\sum_{i=1}^n (\gamma\gamma_i\delta_i)[B]$. By maximality of the dimension $\gamma[S]+S$ is almost contained in $S$. Let $\delta\in \Delta$, then $\delta[S]=\delta[(\sum_{i=1}^n \gamma_i\delta_i[B])]=\sum_{i=1}^n \delta\gamma_i\delta_i[B]$. By sharp commutation, this subgroup is contained in $\sum_{i=1}^n \gamma_i\delta\delta_i[B]+\kat(\delta)=S'+C$ with $C$ a finite subgroup. Since, by maximality of $S$, $S'$ is almost contained in $S$, $S$ is almost $(\Gamma,\Delta)$-invariant. By absolute minimality of $A$, $S$ has dimension equal to the dimension of $A$. Let $\gamma\in \Gamma$ such that $\gamma[A]=L$. Then, $\gamma[S]$ has finite index in $L$ and so $\sum_{i=1}^n \gamma\gamma_i\delta_i[B]$ has finite index in $L$. $\delta[B]=\delta_L[B]+\kat(\delta)$ since $L$ is weakly $\Delta$-invariant and so 
    $$\gamma\gamma_i\delta_i[B]\leq \gamma\gamma_i[{\delta_i}_L[B]]+\gamma\gamma_i[\kat(\delta_i)].$$ 
    By Lemma \ref{Lemma 3} the latter is contained in $\kat(\delta_i)+\kat(\gamma\gamma_i)$. Therefore, $\gamma[S]\leq \sum_{i=1}^n (\gamma\gamma_i)_L\delta_L[B]+\sum_{i=1}^n \kat(\delta_i)+\kat(\gamma\gamma_i)$. Since the last term is finite, $\sum_{i=1}^n (\gamma\gamma_i)_L\delta_L[B]$ must be of finite index in $L$. As $B$ is almost $(\Gamma,\Delta)$-invariant, $B$ almost contains each $(\gamma\gamma_i)_L\delta_L[B]$. Conseqeuntly, $B$ almost contains also the sum of them, and so $B$ is of finite index in $L$, clearly a contradiction.\\
    \textbf{Point $4$:}\\
    Let $\gamma\in \Gamma$ such that $\gamma[A]\leq L$. Then, $\gamma[L]$ is a $\Gamma$-image in $L$ that is a line by Lemma \ref{lemma 8}. Therefore, the image must be finite or of finite index in $L$. By point one, every element of $\Gamma_L$ is equivalent to the restriction to $L$ of an element of $\Gamma$ and so $L$ has no $\Gamma_L$-lines.\\
    \textbf{Point $5$:}\\
    Denote the restriction-corestriction map 
    $$P_L: \Delta/{\sim}\to \Delta_L/{\sim}.$$
    We verify that this map is a well-defined, surjective, and injective homomorphism. It is well defined since, if $\delta\sim \delta'$, then the image of $\delta-\delta'$ is finite and so also the one of the restriction-corestriction. It is surjective since, by point $1$, each element of $\Delta_L$ is equivalent to an element of $P_L(\Delta)$. It is a homorphism since, as we have already proved, $\delta_L+\delta'_L\sim (\delta+\delta')_L$, $(-\delta)_L\sim -\delta_L$, $(\delta\delta')_L\sim \delta_L\delta'_L$. To prove that it is injective, being a homomorphism of rings, it is sufficient to verify that, if $\delta_L\sim 0$, also $\delta\sim 0$. By Lemma \ref{lemma 8}, there exists a finite sum $S=\sum_{i=1}^n \gamma_i[L]$ with $\gamma_i\in \Gamma$ of finite index in $A$. $\delta[\sum_{i=1}^n (\gamma_i[L])]$ is of finite index in $\delta[A]$ so it is sufficient to verify that $\delta[\sum_{i=1}^n (\gamma_i[L])]$ is finite. This is equal to $\sum_{i=1}^n \delta\gamma_i[L]$ and each $\delta\gamma_i[L]$ is finite: by sharp commutation, this sum is equal to $\gamma_i\delta[L]+\kat(\delta)$ that is finite by hypothesis. In conclusion, $\Delta/{\sim}$ and $\Delta_L/{\sim}$ are isomorphic and so $\Delta/\sim$ is as essentially large as $\Delta$ .\\
    We prove that $\Gamma_L$ is as essentially large as $\Gamma$. Take $\{\gamma_1,...,\gamma_n\}\in \Gamma$ such that $K=\bigcap_{i=1}^n \ker(\gamma_i)$ has minimal dimension possible (also $0$). Suppose, for a contradiction, that none of the $\gamma_i\Gamma$ is as essentially large as $\Gamma$. This implies that $\times_{i=1}^n(\gamma_i\Gamma/{\sim})$ is not large. The function 
    $$\phi:(\Gamma/\sim,+)\to \times_{i=1}^n (\gamma_i\Gamma/\sim,+)$$
    that sends $\gamma$ in $(\gamma_1\gamma,...,\gamma_n\gamma)$ is obviously a homorphism of groups, and it is well-defined since if $\gamma\sim 0$, then also $\gamma_i\gamma\sim 0$ for any $i\leq n$. Assume, for a contradiction, that the kernel of this map is trivial. Then, $|\Gamma/\sim|\leq |\times_{i=1}^n \gamma_i\Gamma/\sim|$, contradicting largeness. Consequently, there exists $\gamma\in \Gamma$ such that $\gamma_i\gamma\sim 0$ for each $i$ and $\gamma\not\sim 0$.  By definition, each $\gamma_i\gamma$ has finite image. This implies that $\operatorname{Im}(\gamma)\cap \ker(\gamma_i)$ is of finite index in $\operatorname{Im}(\gamma)$. The same holds for the intersection of all $\ker(\gamma_i)$ \hbox{i.e.} $K\cap \operatorname{Im}(\gamma)$ is of finite index in $\operatorname{Im}(\gamma)$. Since $\gamma[A]$ is infinite, being not equivalent to $0$, there exists a line $L$ contained in $\gamma[A]$ and $\gamma'\in \Gamma$ such that $\gamma'[L]\subseteq L$ is of finite index in $L$ by Lemma \ref{lemma 8}. Since the dimension of $K\cap \ker(\gamma')$ is the same as $K$ (by minimality of the dimension) and $K$ has finite index in $L$ (by previous construction), we conclude that $D=K\cap \ker(\gamma')\cap L$ is of finite index in $L$. Therefore, $\gamma'[L]$ is finite, a contradiction since $L$ is infinite and $\gamma'[L]$ is of finite index in $L$.  Consequently, there exists $i\leq n$ such that $\gamma_i\Gamma$ is as essentially large as $\Gamma$. Let $\gamma_0$ be such that $\gamma_0[A]=L$ and $L'=\gamma'[A]$ with $\gamma'=\gamma_i$. By Lemma \ref{lemma 8}, there exists $\alpha\in \Gamma$ such that $\gamma_0\alpha[L']$ is of finite index in $L$. Therefore, $L'\cap \ker(\gamma_0\alpha)$ is finite (having $L,L'$ same dimension by Lemma \ref{lemma 8}). Suppose that, for an endogeny $\beta$, $\gamma_0\alpha\gamma'\beta\sim 0$. Then, $\gamma_0\alpha\gamma'\beta[A]$ is finite and $\ker(\gamma_0\alpha)$ almost contains $\gamma'\beta[A]$. Since the intersection of these two subgroups is finite, $\gamma'\beta\sim 0$. This implies that the function $\gamma'\Gamma/{\sim} \to \gamma_0\alpha\gamma'\Gamma/{\sim} \subseteq \gamma_0\Gamma/{\sim}$ is injective and so $\gamma_0\Gamma$ is as essentially large as $\Gamma$. We conclude that $\Gamma_L$ is as essentially large as $\Gamma$. By Lemma \ref{lemma 8}, there exist $\gamma'_1,...,\gamma'_n\in \Gamma$ such that $\sum_{i=1}^n \gamma'_i[L]$ is of finite index in $A$. Let $J$ be a large set of indices such that $\{\gamma_j:\ j\in J\}\subseteq \gamma_0\Gamma$ is a set of pairwise inequivalent endogenies (this set exists by essentially largeness of $\gamma_0\Gamma$). $\gamma_j\gamma_i[A]\subseteq L$ and so $(\gamma_j\gamma_i)_{L}$ is in $\Gamma_L$. Since $\sum_{i=1}^n\gamma'_i[A]$ is of finite index in $A$, then, for any couple $\gamma_j,\gamma_{j'}$ with $j,j'\in J$, there exists $i\leq n$ such that $(\gamma_j-\gamma_{j'})\gamma'_i[L]$ is infinite (if not the sum will be finite). By Ramsey's theorem, for a certain $i$, there exists a large subset such that $\{(\gamma_j\gamma_i)_L:\ j\in J'\}$ is a set of pairwise inequivalent endogenies in $\Gamma_L$. This completes the proof.
\end{proof}
\subsection{Proof of the theorem A: second case}
We have all the instruments to prove the second case of the theorem. We may assume that $\Gamma=C^{\#}(\Delta)$ and $\Delta=C^{\#}(\Gamma)$. Indeed, let $\Gamma,\Delta$ be two invariant pre-rings of definable endogenies on $A$, absolutely $(\Gamma,\Delta)$-minimal such that $\Gamma$ is essentially unbounded, and $\Delta$ is essentially infinite (or vice versa). Then, defined $\Delta'=C^{\#}(\Gamma)$ and $\Gamma'=C^{\#}(\Delta')$, the group $A$ and the two pre-rings $\Gamma'$ and $\Delta'$ respect the hypothesis of Theorem A:
\begin{itemize}
    \item $\Gamma'=C^{\#}(\Delta')$ by hypothesis and $\Delta'=C^{\#}(\Gamma')$ since if $\delta\in C^{\#}(\Gamma')$ then $\delta\in C^{\#}(\Gamma)=\Delta'$;
    \item $C^{\#}(\Gamma)$ is an invariant pre-ring of endogenies over the same parameter set of $\Gamma$. Indeed, given an automorphism $\sigma$ of the monster model fixing $X$, then $\Gamma$ is fixed, and given a definable endogeny $\phi$ that commutes with each element of $\Gamma$, also $\sigma(\phi)$ commutes with each element of $\Gamma$.
    \item $A$ is absolutely $(\Gamma',\Delta')$-minimal as it is absolutely $(\Gamma,\Delta)$-minimal and $\Delta'\supseteq \Delta$ and $\Gamma'\supseteq \Gamma$;
    \item $\Gamma'$ is essentially unbounded and $\Delta'$ is essentially infinite (or vice-versa) since the same holds for $\Delta$ and $\Gamma$.
\end{itemize}
Therefore, we can apply Theorem $A$ and, if $\Kat(\Gamma',\Delta')$ is finite, the same holds for $\Kat(\Gamma,\Delta)$ since $\Kat(\Gamma,\Delta)\leq \Kat(\Gamma',\Delta')$.
\begin{theorem}\label{Theorem 2A}
    Let $A$ be an abelian definable group of finite dimension and $\Gamma,\Delta$ two invariant pre-rings of endogenies on $A$ such that:
    \begin{itemize}
        \item $A$ is absolutely $(\Gamma,\Delta)$-minimal;
        \item $\Gamma=C^{\#}(\Delta)$ and $\Delta=C^{\#}(\Gamma)$;
        \item $\Gamma$ is essentially unbounded and $\Delta$ is essentially infinite;
        \item $\Gamma$ has a line;
        \item For any $\Gamma$-line $L$, the bikatakernel $\Kat(C^{\#}C^{\#}(\Gamma_L),C^{\#}(\Gamma_L))$ is finite.
    \end{itemize}
    Then, the bikatakernel $\Kat(\Gamma,\Delta)$ is finite.
\end{theorem}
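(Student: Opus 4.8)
The plan is to exploit the line hypothesis to trap the whole bikatakernel inside a definable subgroup of dimension strictly below $\dim A$, and then to let absolute minimality force that subgroup --- hence $\Kat(\Gamma,\Delta)$ itself --- to be finite.

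First I fix a $\Gamma$-line $L$ (available by hypothesis). By Lemma~\ref{lemma 8}, $L$ is weakly $\Delta$-invariant and there is $\gamma_0\in\Gamma$ with $\gamma_0[A]\le L$ and $[L:\gamma_0[A]]$ finite; since $\Gamma$ is essentially unbounded and, by Lemma~\ref{Lemma 9}, $\Gamma_L$ is as essentially large as $\Gamma$ while $\Gamma_L\subseteq\Endog(L)$, the line $L$ must be infinite, so $\dim L\ge 1$. Put $\Delta_L^{\#}:=C^{\#}(\Gamma_L)$, the commutant computed inside $\Endog(L)$, and $\Gamma_L^{\#}:=C^{\#}(\Delta_L^{\#})$. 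By symmetry of sharp commutation one has $\Gamma_L^{\#}=C^{\#}(\Delta_L^{\#})$ and $\Delta_L^{\#}=C^{\#}(\Gamma_L^{\#})$, while $\Gamma_L\subseteq\Gamma_L^{\#}$ and $\Delta_L\subseteq\Delta_L^{\#}$ because $\Gamma_L$ and $\Delta_L$ sharply commute (Lemma~\ref{Lemma 9}). By the last hypothesis $F_L:=\Kat(\Gamma_L^{\#},\Delta_L^{\#})=\Kat(C^{\#}C^{\#}(\Gamma_L),C^{\#}(\Gamma_L))$ is finite, and it contains $\Kat(\Gamma_L)$ and $\Kat(\Delta_L)$. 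Since for every $\gamma\in\Gamma$ with $\gamma[A]\le L$ the restriction $\gamma_L$ satisfies $\kat(\gamma_L)=\kat(\gamma)$, this gives $\kat(\gamma)\subseteq F_L$ for all such $\gamma$, and likewise $\kat(\delta)\cap L=\kat(\delta_L)\subseteq F_L$ for every $\delta\in\Delta$.

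Next I would show $\Kat(\Gamma,\Delta)\le D_0:=\gamma_0^{-1}[F_L]$. For $\gamma\in\Gamma$ the endogeny $\gamma_0\gamma\in\Gamma$ has image inside $L$, so $\kat(\gamma_0\gamma)=\gamma_0[\kat(\gamma)]\subseteq\Kat(\Gamma_L)\subseteq F_L$, and summing over $\gamma$ yields $\gamma_0[\Kat(\Gamma)]\subseteq F_L$. For $\Kat(\Delta)$: by Lemma~\ref{Lemma 6} it is weakly $\Gamma$-invariant, so $\gamma_0[\Kat(\Delta)]\le\big(\Kat(\Delta)+\kat(\gamma_0)\big)\cap L$; as $\kat(\gamma_0)\subseteq\gamma_0[A]\subseteq L$ and every element of $\Kat(\Delta)$ lies in a single $\kat(\delta)$ with $\delta\in\Delta$, an element of $\Kat(\Delta)\cap L$ lies in $\kat(\delta)\cap L\subseteq F_L$, and together with $\kat(\gamma_0)\subseteq F_L$ this forces $\big(\Kat(\Delta)+\kat(\gamma_0)\big)\cap L\subseteq F_L$, so $\gamma_0[\Kat(\Delta)]\subseteq F_L$. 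Hence $\gamma_0[\Kat(\Gamma,\Delta)]\subseteq F_L$, i.e. $\Kat(\Gamma,\Delta)\le D_0$, which is a definable subgroup with $\dim D_0=\dim\ker(\gamma_0)=\dim A-\dim L$ by Lemma~\ref{Lemma 7}. If $\dim L=\dim A$ then $D_0$ is finite and we are done, so assume $\dim D_0<\dim A$.

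Finally I would pass to a definable almost-invariant hull. Consider the uniformly definable family $\{\phi^{-1}[D_0]:\phi\in\Gamma\cdot\Delta\}$, where $\Gamma\cdot\Delta=\{\gamma\delta:\gamma\in\Gamma,\ \delta\in\Delta\}\supseteq\Gamma\cup\Delta$, and let $D$ be a definable subgroup commensurable with $\bigcap_{\phi\in\Gamma\cdot\Delta}\phi^{-1}[D_0]$; such a $D$ exists by Lemma~\ref{boundedind}. Since $\Kat(\Gamma,\Delta)$ is $(\Gamma,\Delta)$-invariant (Lemma~\ref{Lemma 6}), $\phi[\Kat(\Gamma,\Delta)]\subseteq\Kat(\Gamma,\Delta)\subseteq D_0$ for every $\phi\in\Gamma\cdot\Delta$, so $\Kat(\Gamma,\Delta)$ lies in that intersection and hence is almost contained in $D$. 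One checks that $D$ is almost $(\Gamma,\Delta)$-invariant: for $\delta'\in\Delta$ and $\phi=\gamma\delta$ we have $\phi\delta'=\gamma(\delta\delta')\in\Gamma\cdot\Delta$, so $\phi[\delta'[D]]\subseteq D_0$; for $\gamma'\in\Gamma$, sharp commutation of $\Delta$ with $\Gamma$ gives $\phi\gamma'=\gamma\delta\gamma'\sim\gamma\gamma'\delta$ up to a finite katakernel error with $\gamma\gamma'\delta\in\Gamma\cdot\Delta$, so $\phi[\gamma'[D]]$ is almost contained in $D_0$; almost-containment in every member of the family then upgrades, via Lemma~\ref{boundedind}, to almost-containment in the stabilised intersection and so in $D$. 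As $D\le D_0$ has dimension $<\dim A$, it has infinite index in $A$, so by absolute $(\Gamma,\Delta)$-minimality $D$ is finite, whence $\Kat(\Gamma,\Delta)$, being almost contained in the finite group $D$, is finite.

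The main obstacle is the verification in this last step that the definable hull $D$ is genuinely almost $(\Gamma,\Delta)$-invariant: it requires commuting an arbitrary $\gamma'\in\Gamma$ past a product $\gamma\delta$ using the sharp commutation of $\Gamma$ with $\Delta$ while keeping every katakernel and every finite-index discrepancy produced by Lemma~\ref{boundedind} under control, and it also requires setting up the index family $\Gamma\cdot\Delta$ carefully enough that Lemma~\ref{boundedind} applies uniformly.
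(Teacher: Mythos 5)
Your proposal takes a genuinely different route from the paper's, and the first half is correct, but the construction of the definable almost-invariant hull $D$ in the final step is not adequately justified as stated.

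The paper's proof of Theorem~\ref{Theorem 2A} forces $\Gamma_L/{\sim}$ to act by automorphisms on $L/L_0$ via the skew-field of fractions, builds quasi-projections $\pi\colon A\to L$, decomposes $A$ as an almost direct sum of lines (Lemma~\ref{lemma 10}), and bounds $\Kat(\Gamma,\Delta)$ piecewise; this structural output is reused in the proof of Theorem~B. You sidestep all of that and directly trap $\Kat(\Gamma,\Delta)$ in the definable subgroup $D_0=\gamma_0^{-1}[F_L]$ of dimension $\dim A-\dim L$. That containment argument (via $\kat(\gamma_0\gamma)=\gamma_0[\kat(\gamma)]$, weak $\Gamma$-invariance of $\Kat(\Delta)$ from Lemma~\ref{Lemma 6}, the fact that every element of $\Kat(\Delta)$ sits inside a single $\kat(\delta)$, and the hypothesis on $F_L$) is sound. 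The idea of then invoking absolute minimality on an almost-invariant definable subgroup of dimension $<\dim A$ is also the right move.

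The gap: the claim that a definable $D$ commensurable with the \emph{full} intersection $\bigcap_{\phi\in\Gamma\cdot\Delta}\phi^{-1}[D_0]$ exists ``by Lemma~\ref{boundedind}'' does not follow. That lemma bounds chains with uniformly large jumps, but the infinite intersection can still have infinite index in every finite subintersection, since an infinite product of bounded indices need not be finite. Moreover your almost-invariance verification (``$\phi[\delta'[D]]\subseteq D_0$'') implicitly needs $D$ to lie inside the intersection, which is false for a merely commensurable $D$. The repair is to take $D$ a \emph{finite} subintersection $\bigcap_{i\le k}\phi_i^{-1}[D_0]$, including $\phi_1=\operatorname{id}$ so $D\le D_0$, chosen of \emph{minimal dimension}. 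Then $D$ is definable with $\dim D<\dim A$; for every $\phi\in\Gamma\cdot\Delta$ the subgroup $D\cap\phi^{-1}[D_0]$ has the same dimension as $D$, hence finite index in $D$ by fibration; and $\Kat(\Gamma,\Delta)\subseteq D$. Almost $\Delta$-invariance then follows because $\delta'^{-1}[\phi_i^{-1}[D_0]]=(\phi_i\delta')^{-1}[D_0]$ is again in the family (the needed katakernel inclusions hold since $\Kat(\Gamma,\Delta)\subseteq D_0$); almost $\Gamma$-invariance follows by rewriting $\phi_i\gamma'=\gamma_i\delta_i\gamma'$ via sharp commutation, absorbing the error $\gamma_i[\kat(\delta_i)+\kat(\gamma')]$ into $\Kat(\Gamma,\Delta)\subseteq D_0$, and reducing to $(\gamma_i\gamma'\delta_i)^{-1}[D_0]$. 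Absolute minimality then makes $D$, and with it $\Kat(\Gamma,\Delta)$, finite. With this fix your argument is a shorter and more elementary proof of Theorem~\ref{Theorem 2A}, at the price of producing none of the quasi-projection and almost-direct-sum structure that the paper's proof extracts and needs again for Theorem~B.
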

\begin{proof}
By hypothesis, for any line $L$, the bikatakernel $L_0=\Kat(C^{\#}(\Gamma_L),C^{\#}C^{\#}(\Gamma_L))$ is finite and, by Lemma \ref{easylemma}, it contains every weakly $C^{\#}(\Gamma_L)$-invariant or $C^{\#}C^{\#}(\Gamma_L)$-invariant finite subgroup. Every finite image of elements in $\Gamma_L$ and every $\gamma_L^{-1}[L_0]$ for $\gamma_L\in \Gamma_L-[0]$ is weakly $C^{\#}(\Gamma_L)$-invariant and finite. The weakly $C^{\#}(\Gamma)$-invariance of the images follows as in Theorem \ref{Theorem Ab} while for $\gamma_L^{-1}[L_0]$ by Lemma \ref{Lemma 6}, since $L_0$ contains $\Kat(C^{\#}(\Gamma_L),\Gamma_L)$. Finally, $\gamma_L^{-1}[L_0]$ is finite since any element of $\Gamma_L$ is equivalent to an element $\gamma_L$ for $\gamma \in \Gamma$ by Lemma \ref{Lemma 9} and any element $\gamma_L$ has finite kernel or finite image by Lemma \ref{lemma 8}. Therefore, $\Gamma_L/\sim$ is an invariant ring of monomorphisms acting on $L/L_0$.\\
We verify that every endomorphism in $\Gamma_L/\sim$ is also surjective. The invariant ring of monomorphisms $\Gamma_L/\sim$ is, clearly, an integral domain. Moreover, the dimension of every type-definable subset $X$ of $\Gamma_L/(\Gamma_L)_0$ is bounded by $\dim(L)$: given $x,y\in X$ and $a\not=0\in L/L_0$ then $x(a)=y(a)$ iff $(x-y)(a)=0$ and so iff $x-y=0$, since $x-y\in \Gamma_L$, and so it is injective or $0$. Therefore, the definable homomorphism $X\to L/L_0$ that sends $\gamma$ to $\gamma[a]+L_0$ is injective and the dimension of $X$ is bounded by $\dim(L)$. In addition, there exists a type-definable subset $X$ of $\Gamma_L/\sim$ of dimension strictly greater than $0$. Indeed, $\Gamma_L/(\Gamma_L)_0$ is unbounded by Lemma \ref{Lemma 9} and, being a bounded union of type-definable subsets, not all type-definable subsets can be finite. Since we are in a finite-dimensional context, a type-definable subset is finite iff it is of dimension $0$. Therefore, there exists a type-definable subset of dimension $>0$ in $\Gamma_L/\sim$.\\
This implies, by Proposition 3.6 of \cite{wagner2020dimensional}, that the skew-field of fractions $K$ of $\Gamma_L/(\Gamma_L)_0$ is definable. We interpret every element of $K$ as an equivalence class of quasi-endomorphisms on $L/L_0$ through the function $\phi:K\to \mathcal{F}-\End(L/L_0)/{\sim}$ that sends $[m]/[n]$, for $m,n\in \Gamma_L/\sim$ and $n\not=[0]$, to $[m\circ n^{-1}]$ with $n^{-1}$ the quasi-endomorphism given by $n^{-1}:\operatorname{Im}(n)\to L/L_0$ sending $b=n(a)$ to $a+L_0$. We verify that this map is a definable function.
\begin{itemize}
        \item $mn^{-1}$ is well defined and in $\mathcal{F}-\End(L/L_0)$. First, the image of $n$, being $n$ different from $0$, is of finite index in $L/L_0$. It is a quasi-endomorphism since both $n$ and $m$ are endomorphisms and, by injectivity, there exists a unique $b$ such that $n(b)=a$.
        \item The function $\phi$ is well defined. We have to check that if $m/n=m'/n'$ as fractions of an ore domain, then the quasi-endomorphisms associated are equivalent. We recall that the equivalence relation in an ore domain $R$ is $m/n=m'/n'$ iff there exists $d,d'\in R$ such that $nd=n'd'\not=0$ and $md=m'd'$. We verify that the quasi-endomorphisms $mn^{-1}$ and $m'n'^{-1}$ are equivalent in $\mathcal{F}-\End(L/L_0)$. The domain of $\operatorname{Im}(d)$ and $\operatorname{Im}(d')$ are of finite index in $L/L_0$, therefore also $\operatorname{Im}(n'd)\cap \operatorname{Im}(nd')$ is of finite index. Let $b\in \operatorname{Im}(n'd)\cap \operatorname{Im}(nd')=D$ and we study $mn^{-1}-m'n'^{-1}(b)$.\\
        $n^{-1}(b)$ is in the domain of $d^{-1}$ and so $mn^{-1}(b)=md(d^{-1}n^{-1})(b)$ that is equal to $m'd'd^{-1}n^{-1}(b)$. Since $m'$ is injective, $m'd'd^{-1}n^{-1}(b)$ is equal to $m'(n'^{-1}(b))$ iff $d'd^{-1}n^{-1}(b)=n'^{-1}(b)$ \hbox{i.e.} $n'd'(d^{-1}n^{-1}(b))=b$. $d^{-1}n^{-1}(b)=c$ with $nd(c)=b$ but $nd=n'd'$ and so $n'd'(c)=b$.
\end{itemize}
Given $X$ a type-definable subset in $\Gamma_L/\sim$ of maximal dimension then, by the proof of Proposition $3.6$ in \cite{wagner2020dimensional}, $X-X/(X-X)^*$ is equal to $K$. The set of the quasi-endomorphisms in the form $(x-x')(y-y')^{-1}$ with $x,x',y,y'\in X$ is a type-definable subset of $\mathcal{F}-\End(L/L_0)$ and the images of $(x-x')(y-y')^{-1}\not=0$ is a uniformly definable family of subgroups of finite index in $L/L_0$. By Lemma \ref{boundedind}, there is a bound $M<\omega$ on these indices. Any element $\gamma$ of $\Gamma_L/\sim$ is equal, in $K$, to an element $(x-x')/(y-y')$ of $X-X/(X-X)^{\ast}$ and, by the preceding proof, they are equivalent as quasi-endomorphisms on $L/L_0$. Therefore, the image of the difference between $\gamma$ and $(x-x')(y-y')^{-1}$ is finite. This image is also a $C^{\#}(\Gamma_L)$-invariant subgroup: given $m,n,m',n'\in \Gamma_L/(\Gamma_L)_0$, the subgroup $mn^{-1}-m'n'^{-1}(L/L_0)$ is $C^{\#}(\Gamma_L)$-invariant. Let $b\in L/L_0$ and $\delta\in C^{\#}(\Gamma_L)$ then, if $a\in L/L_0$ is such that $n(a)=b$,
 $$n(\delta(a))+L_0=\delta(n(a))+L_0=\delta(b)+L_0.$$
 Since also $m$ commutes with $\delta$, 
 $$\delta((mn^{-1}-m'{n'}^{-1}(a)))+L_0=(mn^{-1}-m'{n'}^{-1})(\delta(a))+L_0\leq \operatorname{Im}(mn{-1}-m'{n'}^{-1}).$$
 In conclusion, $\operatorname{Im}(mn^{-1}-m'n'^{-1})$ is a finite $C^{\#}(\Gamma_L)$-invariant subgroup in $L$ and so contained in $L_0$. In other words, $mn^{-1}$ coincides with $\gamma$ on the image of $n$. Consequently, $\operatorname{Im}(\gamma)\geq \operatorname{Im}(mn^{-1})$ and the index of the image of any element in $\Gamma_L/(\Gamma_L)_0$ in $L/L_0$ must be smaller than $M$. Suppose, for a contradiction, that there exists $\gamma\in \Gamma_L/(\Gamma_L)_0$ not surjective. We prove by induction that $|L/L_0:\operatorname{Im}(\gamma^n)|=|L/L_0:\operatorname{Im}(\gamma)|^n$. The case $n=1$ is obvious, so assume that the assumption is true for $n$. For $n+1$, take the function
$$\gamma^n:L/L_0\to \gamma^n(L/L_0)/\gamma^{n+1}(L/L_0).$$
Then, $L/L_0/\gamma(L/L_0)$ is isomorphic to $\gamma^n(L/L_0)/\gamma^{n+1}(L/L_0)$ and so 
$$|L/L_0:\gamma^{n+1}(L/L_0)|=|L/L_0:\gamma^n(L/L_0)||\gamma^n(L/L_0):\gamma^{n+1}(L/L_0)|=|L/L_0:\gamma(L/L_0)|^{n+1}.$$ 
This is a contradiction, since the index of the image of $\gamma^n$ goes to infinity when $n$ goes to infinity. Therefore, any $\gamma$ is surjective on $L/L_0$.\\
Taken $\gamma\in \Gamma$ such that $\gamma[A]\leq L$ and $\gamma[L]$ is of finite index in $L$, which exists by Lemma \ref{lemma 8}, $\gamma[L/L_0]=L/L_0$ and so $\gamma[L]+L_0$ is equal to $L$. We define $\gamma'[a]=\gamma[a]+L_0$. This is an endogeny such that $\gamma'[A]\leq L$ and $\gamma'[L]=L$. It remains to prove that it belongs to $\Gamma=C^{\#}(\Delta)$. Given $a\in A$, 
$$\gamma'\delta[a]-\delta\gamma'[a]= (\gamma\delta-\delta\gamma)[a]+L_0+\delta[L_0].$$ 
Being $L_0$ a $\Delta_L$-invariant subgroup of $L$, 
$$\delta[L_0]=\delta_L[L_0]+\kat(\delta)\leq L_0+\kat(\delta).$$
Finally, by sharp commutation between $\delta$ and $\gamma$, 
$$(\delta\gamma-\gamma\delta)[a]\leq \kat(\delta)+\kat(\gamma)+L_0=L_0+\kat(\delta).$$
The proof of the Theorem follows from the next lemma. We recall that a finite sum $\sum_{i=1}^n A_i$ of subgroups $\{A_i\}_{i<n}$ of $A$ is \emph{almost direct} if, for any $i\leq n$, $A_i\cap \sum_{j\not=i} A_j$ is finite.
\begin{lemma}\label{lemma 10}
   Let $A$ be an abelian definable infinite group of finite dimension and $\Gamma,\Delta$ two invariant pre-rings of definable endogenies such that:
    \begin{enumerate}
        \item $A$ is $(\Gamma,\Delta)$-almost minimal;
        \item $\Gamma=C^{\#}(\Delta)$;
        \item $\Gamma$ is essentially infinite and $\Delta$ is essentially unbounded or vice-versa;
        \item any $\Gamma$-line $L$ has a finite $(\Gamma_L,\Delta_L)$-invariant subgroup $L_0$ that contains every $C^{\#}(\Gamma_L)$ or $C^{\#}(C^{\#}(\Gamma_L))$-weakly invariant subgroup;
        \item for any line $L$, there exists $\gamma\in \Gamma$ such that $\gamma[A]\leq L$ and $\gamma[L]=L$.
    \end{enumerate}
    Then, the following properties hold:
    \begin{itemize}
        \item There exists a quasi-projection $\pi:A\to L$ such that $\pi\in \Gamma$, $\pi[A]\leq L$ and $\pi[l]=l+\Kat(\pi)$ for any $l\in L$;
        \item There exist lines $\{L_i\}_{i\leq n}$ such that $\sum_{i=1}^n L_i$ is an almost direct sum of finite index in $A$;
        \item The bikatakernel $\Kat(\Gamma,\Delta)$ is finite.
    \end{itemize}
\end{lemma}
\begin{proof}
\textbf{Point 1:}\\
Let $L$ be a line and $\gamma\in \Gamma$ given by hypothesis $5$ of the statement.\\
We define $\pi:A\to L$ simply as $\pi[a]=(\gamma_L)^{-1}\gamma[a]+L_0$. $\gamma_L^{-1}[L_0]=L_0$ indeed $\gamma_L^{-1}[L_0]$ is a finite $C^{\#}(\Gamma_L)$-invariant subgroup that is contained in $L_0$, by definition. Moreover, $\gamma[L_0]=\gamma_L[L_0]=L_0$. We prove that $\pi$ is a quasi-projection on $L$. Clearly, $\pi$ is an endogeny such that $\pi[A]\leq L$ and $\pi[L]=L$. In addition, 
$$\pi[l]=\lambda_L^{-1}\lambda[l]+L_0=\lambda_L^{-1}\lambda_L[l]+L_0=l+\kat(\lambda_L)+L_0=l+L_0.$$
We verify that $\pi\in \Gamma$ \hbox{i.e.} it sharply commutes with every $\delta\in \Delta$. It is sufficient to prove that
$$\delta\pi-\pi\delta[a]\leq \delta_L\pi[a]-\pi\delta[a]+\kat(\delta)$$
is contained into $L_0+\kat(\delta)$ for any $\delta\in \Delta$ and $a\in A$. If we verify that $\gamma_L[\delta_L\pi-\pi\delta[a]]\leq L_0$, then the proof is completed. Indeed, this implies 
$$\delta_L\pi-\pi\delta[a]\leq \gamma_L^{-1}[L_0]=L_0.$$
We analyse $\gamma_L[\delta_L\pi[a]]$. By sharp commutation, it is contained in $$\delta_L\gamma_L\pi[a]+L_0=\delta_L\gamma_L[\gamma_L^{-1}\gamma[a]+L_0]+L_0=\delta_L{\gamma_L}{\gamma_L}^{-1}\gamma[a]+\delta_L[L_0]+L_0.$$
$\delta_L[L_0]=L_0$ since $L_0$ is $\Delta_L$-invariant and $\gamma_L\gamma_L^{-1}[l]=l+L_0$. Consequently, the last term is contained in
$$\delta_L[\gamma[a]+L_0]+L_0=\delta_L[\gamma[a]]+L_0.$$
Moreover, by surjectivity of $\gamma_L$, there exists $l\in L$ such that $\gamma[a]=\gamma_L[l]$. So, we obtain that 
$$\delta_L[\gamma[a]+L_0]+L_0\leq \delta_L[\gamma_L[l]]+L_0.$$
On the other hand,
$\gamma_L\pi\delta[a]=\gamma_L[\gamma^{-1}_L\gamma\delta[a]]+L_0$ is equal to $\gamma\delta[a]+L_0=\gamma\delta[a]\cap L+L_0$. As $\gamma$ and $\delta$ sharply commute, the latter is a subset of
$$\kat(\gamma)+\delta\gamma[a]\cap L+L_0=L_0+\delta_L\gamma_L[l].$$
Taking the difference, we conclude that $\gamma_L(\delta_L\pi-\pi\delta)[a]\leq L_0$.\\
\textbf{Point 2:}\\
We construct quasi-projections $\pi_i:A\to L_i$ such that $\sum_{i=1} \pi_i\sim 1$ and $\sum_{i=1}^n L_i$ is an almost direct sum of finite index in $A$.\\
Let $L$ be a line and $\pi$ a quasi-projection on $L$. $\operatorname{Im}(\pi)\cap \operatorname{Im}(1-\pi)\leq {L_i}_0$. Indeed, given $a\in \operatorname{Im}(\pi)\cap \operatorname{Im}(1-\pi)$, then there exist $b,c\in A$ such that $a\in \pi[b]$ and $a\in 1-\pi[c]$. Nevertheless, $\pi\pi[c]=\pi[c]$ and $\pi\pi[b]=\pi[b]$ by construction of $\pi$. Therefore,
$$a\in \pi[b]=\pi\pi[b]=\pi(1-\pi)[c]=\pi[c]-\pi\pi[c]=\pi[c]-\pi[c]=\kat(\pi).$$
By Lemma \ref{lemma 8}, $\operatorname{Im}(1-\pi)$ is either finite or contains a line $L_2$ with a quasi-projection $\pi'_2$. We define $\pi_2=\pi'_2(1-\pi_1)$: this is again a quasi-projection to $L_2$ with katakernel $\pi'_2[\kat(\pi_1)]$. In fact, $\pi'_2(1-\pi_1)[a]$ for $a\in L_2\leq \operatorname{Im}(1-\pi_1)$ is equal to 
$$\pi'_2[a-\pi_1[a]]=\pi'_2[a+\kat(\pi_1)]=\pi'_2[a]+\pi'_2[\kat(\pi_1)]=a+\pi'_2[\kat(\pi_1)].$$ 
Furthermore, $\pi_2[L_1]=\pi'_2(1-\pi_1)[L_1]=\kat(\pi_2)$. Let $H_2=\operatorname{Im}(1-\pi_1-\pi_2)$. The sum $\operatorname{Im}(\pi_1+\pi_2)+H_2$ is almost direct and the intersection $\operatorname{Im}(\pi_1+\pi_2)\cap H$ is contained into ${L_1}_0+{L_2}_0$. Moreover, $\operatorname{Im}(\pi_1+\pi_2)$ contains $L_1$ and $L_2$ since $\pi_1+\pi_2[L_1]=L_1+\kat(\pi_2)$ and $\pi_1+\pi_2[L_2]\leq L_2+\kat(\pi_1)$.\\
Iterating the process, by finite-dimensionality, we reach $\sum_{i=1}^n \pi_i[A]$ with image of finite index in $A$.\\
\textbf{Point 3:}\\
Let $H$ be $\operatorname{Im}(1-\sum_{i=1}^n \pi_i)$ a finite weakly $\Delta$-invariant subgroup and $L_i=\pi_i[A]$ with $\pi_i$ obtained in the point $2$. We prove that $\Kat(\Gamma,\Delta)$ is contained into $\sum_{i=1}^n {L_i}_0+H=A_0$. Let $\delta\in \Delta$, then $\kat(\delta)=\sum_{i=1} \pi_i\delta[0]+(1-\sum_{i=1}^n \pi_i)[\delta[0]]$. The latter is contained in $A_0$ by hypothesis. The intersection between $\sum_{i\not=j} \pi_i[A]+H\cap \pi_j[A]$ is equal to the katakernel of the $\pi_i$ and so it is contained into $\sum_{i=1}^n {L_i}_0$. Therefore, it is sufficient to prove that $\pi_i\delta[0]\leq (L_i)_0$ to verify the assumption and 
$$\pi_i\delta[0]\leq (\delta\pi_i[0]+\kat(\pi_i))\cap L_i=\delta\pi_i[0]\cap L+\kat(\pi_i)$$
This is contained in
$$\delta_L(\pi_i[0])+{L_i}_0\leq \kat(\delta_L)+\kat(\pi_i)+{L_i}_0={L_i}_0.$$
For $\gamma\in \Gamma$, 
$$\gamma[0]\leq \sum_{i=1}^n \pi_i\gamma[0]+H.$$
Consequently, $\pi_i\gamma[0]=\Kat(\pi_i\gamma)\leq \Kat(\Gamma_L)\leq A_0$ and this completes the proof of the Lemma.
\end{proof}
\end{proof}
\section{Theorem A: third case}
We finally prove the last case of our theorem, whose statement is the following.
\begin{theorem}\label{Theorem A3}
    Let $A$ be an abelian definable group in a finite-dimensional theory, and $\Gamma,\Delta$ be two invariant pre-rings of definable endogenies of $A$ such that:
    \begin{itemize}
        \item $\Delta=C^{\#}(\Gamma)$;
        \item $\Gamma$ is essentially unbounded and $\Delta$ is essentially infinite;
        \item $\Gamma$ has no lines;
        \item $\Delta$ has a line;
        \item $A$ is absolutely $(\Gamma,\Delta)$-minimal.
    \end{itemize}
    Then, $\Kat(\Gamma,\Delta)$ is finite.
\end{theorem}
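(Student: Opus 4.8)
The plan is to transport the whole problem onto a $\Delta$-line, settle it there with the base case Theorem \ref{Theorem Ab}, and then lift back to $A$ via Lemma \ref{lemma 10} applied with the roles of $\Gamma$ and $\Delta$ exchanged. I would first dispose of the case in which the $\Delta$-lines have finite index in $A$: then every infinite $\Delta$-image is of finite index, so by Lemma \ref{Lemma 7} every element of $\Delta$ has finite image or finite kernel; and the same holds for $\Gamma$, because $\Gamma$ has no lines, so by Lemma \ref{lemma 8} no element of $\Gamma$ has infinite image. Hence Theorem \ref{Theorem Ab} applies directly to $A$ and already gives that $\Kat(\Gamma,\Delta)$ is finite. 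From now on I fix a $\Delta$-line $L$ with $\dim(L)<\dim(A)$.

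Since $\Gamma$ and $\Delta$ sharply commute, $L$ is weakly $\Gamma$-invariant by the symmetric form of Lemma \ref{lemma 8}, so the restriction--corestriction $\Gamma_L$ and the pre-ring $\Delta_L=\langle\delta_L:\ \delta\in\Delta,\ \delta[A]\leq L\rangle$ are well-defined pre-rings of endogenies of $L$. By the analogue of Lemma \ref{Lemma 9} for the $\Delta$-line $L$, these sharply commute, $L$ is absolutely $(\Gamma_L,\Delta_L)$-minimal, $L$ has no $\Delta_L$-lines, and $\Gamma_L,\Delta_L$ are as essentially large as $\Gamma,\Delta$, hence essentially unbounded and essentially infinite respectively. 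The extra observation I would record is that every element of $\Gamma_L\cup\Delta_L$ has finite image or finite kernel: for a generator $\gamma_L$ this is inherited from $\gamma$ (as $\Gamma$ has no lines); for a generator $\delta_L$ with $\delta[A]\leq L$, the set $\delta[L]$ is again a $\Delta$-image of $A$, namely $(\delta\delta_0)[A]$ where $L=\delta_0[A]$, contained in the $\Delta$-line $L$, hence finite or of finite index in $L$, the latter forcing $\ker(\delta_L)=\ker(\delta)\cap L$ to have dimension $0$ by Lemma \ref{Lemma 7}; and this dichotomy is stable under $\sim$, which by the first point of Lemma \ref{Lemma 9} exhausts $\Gamma_L$ and $\Delta_L$. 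Therefore $(L,\Gamma_L,\Delta_L)$ satisfies the hypotheses of Theorem \ref{Theorem Ab}, which yields a finite $(\Gamma_L,\Delta_L)$-invariant subgroup $L_0\leq L$ containing every finite weakly $\Gamma_L$- or $\Delta_L$-invariant subgroup of $L$, on whose quotient $L/L_0$ the rings $\Gamma_L/{\sim}$ and $\Delta_L/{\sim}$ act by monomorphisms; one checks in passing that $L_0$ is weakly $\Gamma$-invariant.

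Next I would verify the hypotheses of Lemma \ref{lemma 10} with $\Delta$ in the role of ``$\Gamma$''. Hypotheses (1)--(3) are immediate: hypothesis (2) is exactly $\Delta=C^{\#}(\Gamma)$, and hypothesis (3) uses the ``or vice versa''. Hypothesis (4) follows from the previous paragraph, since a finite weakly $C^{\#}(\Delta_L)$-invariant subgroup is weakly $\Gamma_L$-invariant ($\Gamma_L\subseteq C^{\#}(\Delta_L)$) and a finite weakly $C^{\#}C^{\#}(\Delta_L)$-invariant subgroup is weakly $\Delta_L$-invariant ($\Delta_L\subseteq C^{\#}C^{\#}(\Delta_L)$), so both lie in $L_0$. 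For hypothesis (5) I would rerun the linearization argument of Theorem \ref{Theorem 2A}: $\Gamma_L/(\Gamma_L)_0$ is an integral domain of monomorphisms of $L/L_0$ with a type-definable subset of positive dimension (because $\Gamma_L/{\sim}$ is essentially unbounded), so its skew-field of fractions $K$ is interpretable by Proposition 3.6 of \cite{wagner2020dimensional}, and the bounded-index argument of Lemma \ref{boundedind} together with the identity $|L/L_0:\operatorname{Im}(\gamma^{n})|=|L/L_0:\operatorname{Im}(\gamma)|^{n}$ forces every element of $\Gamma_L/{\sim}$ to act as an automorphism of $L/L_0$, so that $L/L_0$ is a finite-dimensional left $K$-vector space. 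As $\Delta_L$ sharply commutes with $\Gamma_L$ and $L_0$ absorbs the relevant katakernels, $\Delta_L/{\sim}$ acts $K$-linearly on $L/L_0$; choosing $\delta\in\Delta$ with $\delta[A]\leq L$ and $\delta[L]$ of finite index in $L$ (Lemma \ref{lemma 8}), the induced $\delta_L$ is an injective $K$-linear endomorphism of the finite-dimensional $K$-space $L/L_0$, hence surjective, so $\delta[L]+L_0=L$; then $\delta'[a]:=\delta[a]+L_0$ still lies in $C^{\#}(\Gamma)=\Delta$ (here one uses that $L_0$ is weakly $\Gamma$-invariant) and satisfies $\delta'[A]\leq L$ and $\delta'[L]=L$. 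Lemma \ref{lemma 10} then gives that $\Kat(\Delta,\Gamma)=\Kat(\Gamma,\Delta)$ is finite.

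The main obstacle I expect is hypothesis (5), i.e. the linearization step: the skew-field has to be built from the essentially unbounded side $\Gamma_L$, one must check that the $\Delta_L$-action commutes with the inverses of $\Gamma_L$-elements (so that it is genuinely $K$-linear, not merely commuting with $\Gamma_L$), and the katakernel bookkeeping must be kept coherent, since the single finite subgroup $L_0$ is required to absorb simultaneously the errors for $\Gamma_L$, for $\Delta_L$, and for the passage back to $A$. The rest is routine once the finite-index case is peeled off and the symmetry $\Gamma\leftrightarrow\Delta$ in Lemmas \ref{lemma 8}, \ref{Lemma 9} and \ref{lemma 10} is written out explicitly.
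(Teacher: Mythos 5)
Your proposal tracks the paper's overall structure — pass to a $\Delta$-line $L$, invoke the symmetric versions of Lemmas \ref{lemma 8} and \ref{Lemma 9} to set up $\Gamma_L$ and $\Delta_L$, obtain $L_0$ by the construction of Theorem \ref{Theorem Ab}, and feed everything into Lemma \ref{lemma 10} with the roles of $\Gamma$ and $\Delta$ exchanged — but it replaces the paper's argument for the crucial surjectivity step (hypothesis (5)) with a genuinely different and cleaner one. Where the paper argues by contradiction, supposing the indices $|L/L_0:\operatorname{Im}(\delta_i)|$ for $\delta_i\in\Delta_L/{\sim}$ unbounded, passing to the type-definable group $B=\bigcap_i\operatorname{Im}(\delta_i)$, checking that $\Gamma_B/{\sim}$ acts on $B$ by automorphisms, building a definable skew-field from $\Gamma_B/{\sim}$ so that $B$ becomes a definable $K$-vector space of finite index, and so deriving a contradiction, you observe that the Theorem \ref{Theorem 2A} machinery applied to the essentially unbounded side $\Gamma_L$ already realizes $L/L_0$ as a finite-dimensional $K$-vector space for the definable skew-field $K$ built from $\Gamma_L/{\sim}$; since each $\delta_L\in\Delta_L/{\sim}\setminus\{[0]\}$ commutes with $\Gamma_L/{\sim}$ in $\End(L/L_0)$ (the bikatakernel being absorbed by $L_0$), it also commutes with the inverses and hence is $K$-linear, and an injective $K$-linear endomorphism of a finite-dimensional $K$-space is automatically surjective. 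Both routes rest on the same heavy lifting — Lemma \ref{boundedind} together with the $|L/L_0:\operatorname{Im}(\gamma^{n})|=|L/L_0:\operatorname{Im}(\gamma)|^{n}$ induction, applied to $\Gamma_L$ — but yours removes the auxiliary group $B$ entirely and is shorter and more transparent. Two small remarks: your opening reduction (disposing of $\Delta$-lines of finite index) is innocuous but vacuous under the paper's intended reading of "$\Delta$ has a line'', which means a line of dimension strictly below $\dim(A)$, equivalently that some $\delta\in\Delta$ has infinite kernel and infinite image, so such a line automatically has infinite index; and the phrase "no element of $\Gamma$ has infinite image'' is a slip (the identity does), though the intended conclusion that every $\gamma\in\Gamma$ has finite kernel or finite image is exactly what "$\Gamma$ has no lines'' says and is what Theorem \ref{Theorem Ab} requires.
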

\begin{proof}
    We prove that every $\Delta$-line has a quasi-projection, then the proof follows by Lemma \ref{lemma 10}.\\
    Therefore, let $L$ be a $\Delta$-line. By Lemma \ref{lemma 8}, the elements of $\Gamma_L$ and $\Delta_L$ have finite image or finite kernel. As in the proof of Theorem \ref{Theorem Ab}, there exists $L_0$ maximum finite weakly $\Gamma_L$-invariant subgroup that contains $\Kat(\Gamma_L,\Delta_L)$ and such that it is $(\Gamma_L,\Delta_L)$-invariant. $\ker(\delta_L)$ is $0$ in $L/L_0$. Indeed, $\delta_L^{-1}[L_0]$ is a $\Gamma_L$-invariant subgroup and so it is contained in $L_0$. Moreover, $\ker(\gamma_L)$ for any $\gamma_L\not=0\in \Gamma_L/(\Gamma_L)_0$ is $0$ since $\ker(\Gamma_L)$ is a finite $\Delta_L/(\Delta_L)_0$-invariant subgroup. On the other hand, $\Delta_L/\sim$ acts by monomorphisms and it is infinite, a contradiction. For the same reason, also the finite images of elements in $\Gamma_L$ or $\Delta_L$ in $L/L_0$ are $0$. Therefore, $\Gamma_L/{\sim},\Delta_L/{\sim}$ are two rings of monomorphisms acting on $L/L_0$.\\
    We prove that also in this case, the elements of $\delta_L/{\sim}-[0]$ act by automorphisms on $L/L_0$. This implies, given $\delta\in \Delta_L-[0]_{\sim}$, that $\delta+L_0$ is surjective on $L$. The construction of a quasi-projection follows as in the proof of Theorem \ref{Theorem 2A}. Again working as in Theorem \ref{Theorem 2A}, it is sufficient to verify that there exists a bound on the index of the images of elements in $\Delta_L/{\sim}-[0]$. Suppose, for a contradiction, that there is no bound on the indices of the images of elements in $\Delta_L/{\sim}$. Take $\{\delta_i\}_{i<\omega}\subseteq \Delta_L/{\sim}$ a countable family of monomorphisms such that $|L/L_0:\delta_i(L/L_0)|>i$. Let $B=\bigcap_{i<\omega} \operatorname{Im}(\delta_i)$ be a type-definable subgroup of $L/L_0$ of dimension equal to the dimension of $L$. $B$ is $\Gamma_L/{\sim}$-invariant being the intersection of $\Gamma_L/{\sim}$-invariant subgroups. The restriction of any element of $\Gamma_L/{\sim}$ to $B$ is injective, since any element of $\Gamma_L/{\sim}$ is injective. Moreover, any element $\gamma$ of $\Gamma_L/{\sim}$ is surjective on $L$, by proof of the Theorem \ref{Theorem 2A}. We show that $\gamma$ is surjective on $B$. It is sufficient to prove that, for any $b\in B$, the $b'\in L$ such that $\gamma(b')=b$ in $L/L_0$ belongs to $B$. Let $\delta_i(l)=b$ then $b'=\gamma^{-1}\delta_i(l)$ but $\gamma^{-1}$ and $\delta_i$ commutes so $b'=\delta_i(\gamma^{-1}(l))$ and so $b'\in \operatorname{Im}(\delta_i)$. The definable skew-field of fractions $K$ of $\Gamma_B/{\sim}$ can be identified by a skew-field contained in $\operatorname{Aut}(B)$ through the identification $m/n=mn^{-1}$ with $m,n\in X=Y-Y$ with $Y$ a type-definable subset of maximal dimension in $\Gamma_L/{\sim}$.
    \begin{itemize}
        \item We have already proven that $mn^{-1}$ are partial endomorphisms with inverse $nm^{-1}$. Since $n^{-1}$ is total, these are automorphisms.
        \item The function sending $K\to XX^{-1}$ is well defined since given $mn^{-1}\sim m'n'^{-1}$, they coincide on the domains. Since the domains are total, they are equal as automorphisms.
        \item This identification is a homomorphism for the sum. It follows simply by proving that
        $$x{x'}^{-1}+y{y'}^{-1}\sim z{z'}^{-1} \iff x/x'+y/y'=z/z'.$$
        We recall that the sum in the skew-field of fractions of an Ore domain is given by $x/x'+y/y'=xt+yt'/x't$ with $x't=y't'$. These $t,t'$ exist since $\dim(y'Y+x'Y)=\dim(Y)$ and so the function 
        $$+:y'Y\times x'Y\to y'Y+x'Y$$
        is not injective. Therefore, there exist $t,t'\in X=Y-Y$ such that $y't=x't$. Given 
        $$(xt+yt')(x't)^{-1}(a)=xt(x't)^{-1}(a)+yt'(t')^{-1}y'^{-1}(a),$$
        this is equal to $xx'^{-1}(a)+yy'^{-1}(a)$. 
        \item We verify that this identification is a homomorphism for the product. It is sufficient to prove that if $1/x\cdot y=z/z'$ then $x^{-1}y\sim zz'^{-1}$. We have that $1/x\cdot y=z/z'$ iff $yz'=xz$. $z,z'$ exist again by the Ore condition and $x^{-1}y\sim zz'^{-1}$ iff $y\sim xzz'^{-1}$. Since $xz\sim yz'$, $xzz'^{-1}\sim yz'z'^{-1}\sim y$ and this completes the proof.
    \end{itemize}
    Therefore, the image $K'$ of the identification of $K$ in $\operatorname{Aut}(B)$ is a definable skew-field and $B$ is a $K$-vector space. Since we are working in a finite-dimensional theory and $\dim(K)>0$, the dimension of $B$ is finite and $B$ is a definable subgroup of $L/L_0$. Being of the same dimension of $L/L_0$, it is of finite index in $L/L_0$, and this is a contradiction. This completes the proof of the Theorem.
\end{proof}
We resume all the results obtained in the following Theorem.
\begin{theorem}\label{ThmA}
    Let $A$ be an abelian definable group of finite dimension and $\Delta,\Gamma$ two invariant pre-rings of definable endogenies such that:
    \begin{itemize}
        \item $A$ is absolutely $(\Gamma,\Delta)$-minimal;
        \item $\Gamma$ is essentially unbounded and $\Delta$ essentially infinite or vice-versa;
        \item $\Gamma,\Delta$ commute sharply.
    \end{itemize}
    Then, $\Kat(\Gamma,\Delta)$ is finite.
\end{theorem}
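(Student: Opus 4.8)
The plan is to reduce Theorem \ref{ThmA} to the three cases already settled — Theorems \ref{Theorem Ab}, \ref{Theorem 2A} and \ref{Theorem A3} — by an induction on $\dim(A)$, the base case $\dim(A)=0$ being trivial since then $A$ is finite and so is $\Kat(\Gamma,\Delta)\subseteq A$.

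First I would perform the standard reduction to the self-commutant situation. Replacing $\Delta$ by $\Delta'=C^{\#}(\Gamma)$ and $\Gamma$ by $\Gamma'=C^{\#}(\Delta')$, one has $\Gamma'=C^{\#}(\Delta')$ and $\Delta'=C^{\#}(\Gamma')$; the group $A$ remains absolutely $(\Gamma',\Delta')$-minimal (enlarging a pre-ring only adds invariance constraints, hence can only shrink the class of almost invariant subgroups), essential unboundedness and essential infiniteness are inherited from $\Gamma\subseteq\Gamma'$ and $\Delta\subseteq\Delta'$, and $\Kat(\Gamma,\Delta)\le\Kat(\Gamma',\Delta')$, so it suffices to treat $\Gamma=C^{\#}(\Delta)$ and $\Delta=C^{\#}(\Gamma)$. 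Say $\Gamma$ is essentially unbounded and $\Delta$ essentially infinite, the other orientation being symmetric.

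Now split into the three cases. If no element of $\Gamma\cup\Delta$ has both infinite kernel and infinite image, Theorem \ref{Theorem Ab} applies directly. If the essentially unbounded $\Gamma$ has some $\gamma$ with $\ker(\gamma)$ and $\gamma[A]$ both infinite, then by Lemma \ref{Lemma 7} every infinite $\Gamma$-image of minimal dimension has dimension strictly between $0$ and $\dim(A)$, so $\Gamma$ has a line and, by the fourth item of Lemma \ref{lemma 8}, every $\Gamma$-line $L$ satisfies $\dim(L)<\dim(A)$. To apply Theorem \ref{Theorem 2A} I must verify its fifth hypothesis, namely that $\Kat\big(C^{\#}C^{\#}(\Gamma_L),C^{\#}(\Gamma_L)\big)$ is finite for every $\Gamma$-line $L$; this is exactly the inductive step. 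Set $\Delta^{L}=C^{\#}(\Gamma_L)$ and $\Gamma^{L}=C^{\#}(\Delta^{L})$, so that $\Gamma^{L}=C^{\#}(\Delta^{L})$ and $\Delta^{L}=C^{\#}(\Gamma^{L})$; by Lemma \ref{Lemma 9} the line $L$ is absolutely $(\Gamma_L,\Delta_L)$-minimal — hence absolutely $(\Gamma^{L},\Delta^{L})$-minimal — and $\Gamma^{L}$, $\Delta^{L}$ sharply commute, while the fifth item of Lemma \ref{Lemma 9} gives that $\Delta_L$ is essentially unbounded and $\Gamma_L$ essentially infinite, so the same holds for $\Delta^{L}\supseteq\Delta_L$ and $\Gamma^{L}\supseteq\Gamma_L$. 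Since $\dim(L)<\dim(A)$, the inductive hypothesis applied to $(L,\Gamma^{L},\Delta^{L})$ gives that $\Kat(\Gamma^{L},\Delta^{L})$ is finite, which is precisely what Theorem \ref{Theorem 2A} requires; it then yields $\Kat(\Gamma,\Delta)$ finite. Finally, if $\Gamma$ has no element with infinite kernel and infinite image but $\Delta$ does, then every infinite $\Gamma$-image is of finite index in $A$ — so $\Gamma$ has no line — whereas $\Delta$ has a line as above, putting us under the hypotheses of Theorem \ref{Theorem A3}; no induction is needed here because on a $\Delta$-line the relevant endogenies all have finite kernel or finite image by Lemma \ref{lemma 8}, so the required finite subgroup is produced as in the base case.

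The only delicate point is the bookkeeping of the induction in the second case: one must check that the line genuinely has strictly smaller dimension — which is exactly what the hypothesis "$\Gamma$ contains an endogeny with infinite kernel and infinite image" buys, via Lemma \ref{Lemma 7} — and that the three structural hypotheses of Theorem \ref{ThmA} descend from $(A,\Gamma,\Delta)$ to $(L,\Gamma^{L},\Delta^{L})$, which is furnished by Lemmas \ref{lemma 8} and \ref{Lemma 9}. Everything else is a direct appeal to the three already established cases.
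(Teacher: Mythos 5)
Your proof is correct and takes essentially the same approach as the paper: you run an induction on $\dim(A)$ (the paper phrases this as a minimal-dimension counterexample, which is equivalent), perform the same reduction to $\Gamma=C^{\#}(\Delta)$ and $\Delta=C^{\#}(\Gamma)$, and split into the same three cases handled by Theorems \ref{Theorem Ab}, \ref{Theorem 2A} and \ref{Theorem A3}. The extra bookkeeping you carry out — verifying via Lemmas \ref{lemma 8} and \ref{Lemma 9} that $(L,\Gamma^L,\Delta^L)$ satisfies the hypotheses so that the inductive hypothesis furnishes the fifth condition of Theorem \ref{Theorem 2A} — is precisely what the paper compresses into the phrase ``by minimality of the dimension of $A$''.
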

\begin{proof}
  Let $(A,\Gamma,\Delta)$ be a counterexample of the Theorem of minimal dimension.\\
  We may assume that $\Gamma$ is essentially unbounded. In the other case, the proof follows by symmetry. As already observed, we can suppose that $\Gamma=C^{\#}(\Delta)$ and $\Delta=C^{\#}(\Gamma)$. If $\Gamma$ and $\Delta$ have no lines, the contradiction follows by Theorem \ref{Theorem Ab}. If $\Gamma$ has a line, for each $\Gamma$-line $L$, $\Kat(C^{\#}(\Gamma_L),C^{\#}C^{\#}(\Gamma_L))$ is finite, by minimality of the dimension of $A$. Then, the conclusion follows from Theorem \ref{Theorem 2A}. Finally, if $\Gamma$ has no lines but $\Delta$ has a line, the conclusion follows by Theorem \ref{Theorem A3}.
\end{proof}
\section{Proof of theorem B}
To prove Theorem $B$, we work in $A/A_0$ with $A_0$ the bikatakernel of $\Kat(\Delta,\Gamma)$. Theorem $B$ follows from the following result.
\begin{thmB,2}\label{Theorem B,2 version}
    Let $A$ be an abelian definable group of finite dimensions and $\Gamma,\Delta$ two invariant rings of endomorphisms such that:
    \begin{itemize}
        \item $C(\Gamma)=\Delta$ and $C(\Delta)=\Gamma$;
        \item $\Gamma$ is unbounded and $\Delta$ is infinite or vice-versa;
        \item no elements of $\Gamma\cup\Delta$ has finite non-$0$ kernel or finite non-$0$ image;
        \item $A$ is absolutely $(\Gamma,\Delta)$-minimal.
    \end{itemize}
    Then, there exists a definable field $Z$ such that $A$ is a $Z$-vector space of finite dimension contained into $\Delta\cap \Gamma$ and $\Delta$ and $\Gamma$ act $Z$-linearly on $A$.
\end{thmB,2}
The hypothesis of Theorem \ref{Theorem B,2 version} are respected by $A/A_0,\Delta/\sim$ and $\Gamma/\sim$ as in Theorem $B$ since:
\begin{itemize}
    \item By Lemma \ref{easylemma}, $\Delta/\sim$ and $\Gamma/\sim$ act on $A/A_0$ without finite non-$0$ images or finite non-$0$ kernels;
    \item $\Delta/\sim$ and $\Gamma/\sim$ are one infinite and the other unbounded by hypothesis;
    \item $A/A_0$ is absolutely $(\Delta/\sim,\Gamma/\sim)$-minimal;
    \item $C(\Gamma/\sim)=\Delta/\sim$. Indeed, given $\delta\in C(\Gamma/\sim)$, the endogeny $\delta'[a]=a+A_0$ sharply commutes with $\Gamma$. Therefore, it belongs to $\Delta$ and $[\delta']=\delta\in \Delta/\sim$. To verify that it commutes, it is sufficient to observe that 
    $$\delta'\gamma[a]-\gamma\delta'[a]+A_0\leq \delta(\gamma[a]+A_0)-\gamma(\delta(a))+A_0=([\gamma]\delta-\delta[\gamma](a+A_0))+A_0\leq A_0.$$
    This follows by commutation in $\End(A/A_0)$. Since $\kat(\delta')=A_0$, then $\delta'\in \Delta$ by arbitrariety of $\gamma\in \Gamma$.
\end{itemize}
The proof of Theorem \ref{Theorem B,2 version} is divided into two cases:
\begin{itemize}
    \item A \emph{base case} in which all the endomorphisms of $\Gamma\cup\Delta-0$ are  monomorphisms.
    \item A \emph{general case} in which at least one of the two pre-rings has a line.
\end{itemize}
\subsection{Theorem B Second version: base case}
We start from the base case, whose statement is the following.
\begin{theorem}\label{Theorem Bb}
    Given $A$ an abelian definable group and $\Gamma,\Delta$ two invariant rings of definable endomorphisms on $A$ such that
    \begin{itemize}
        \item $\Gamma=C(\Delta)$ and $\Delta=C(\Gamma)$; 
        \item $\Gamma$ is unbounded and $\Delta$ is infinite;
        \item every non-$0$ endomorphisms $\phi$ in $\Gamma\cup\Delta$ is a monomorphism.
    \end{itemize}
    Then, there exists a definable field $Z\subseteq \Delta\cap \Gamma$ such that $A$ is a vector space of finite dimension and $\Gamma,\Delta$ act $Z$-linearly.
\end{theorem}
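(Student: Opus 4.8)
The plan is to show that every nonzero element of $\Gamma$ and of $\Delta$ is an automorphism of $A$, so that $\Gamma$ and $\Delta$ become interpretable skew fields, and then to pin down the required commutative field as (the center of) $\Gamma$. Some soft preliminaries come first. Since $\Gamma$ is unbounded, $A$ is infinite; since every nonzero $\phi\in\Gamma\cup\Delta$ is injective it has infinite image, so $\sim$ is trivial on $\Gamma$ and on $\Delta$, and both rings are integral domains ($\phi\psi=0$ with $\psi\neq0$ would give $\operatorname{Im}(\psi)\subseteq\Ker(\phi)$ infinite, hence $\phi=0$). Every finite $\Gamma$-invariant subgroup $F\le A$ is trivial: the kernel of the action map $\Gamma\to\End(F)$ has finite index in the unbounded, hence infinite, ring $\Gamma$, so it contains some $\gamma\neq0$, and then $F\subseteq\Ker(\gamma)=0$; likewise every finite $\Delta$-invariant subgroup is trivial, using that $\Delta$ is infinite. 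Finally, fixing $a\neq0$ the evaluation $\gamma\mapsto\gamma[a]$ is injective on $\Gamma$, so every type-definable subset of $\Gamma$ has dimension $\le\dim(A)$, and since $\Gamma$ is unbounded it cannot be a union of $0$-dimensional type-definable sets, so it contains one of positive dimension; similarly for $\Delta$.

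Next I would run the surjectivity step, imitating the proof of Theorem~\ref{Theorem 2A} but with the simplification that here every nonzero image automatically has full dimension. By Proposition~3.6 of \cite{wagner2020dimensional}, applied to the domain $\Gamma$ together with the positive-dimensional type-definable set just produced, $\Gamma$ is an Ore domain and its skew field of fractions $K$ is interpretable; fix a type-definable $X\subseteq\Gamma$ of maximal dimension, so that every element of $K$ equals $(x-x')(y-y')^{-1}$ for suitable $x,x',y,y'\in X$. For nonzero $n\in\Gamma$ the image $\operatorname{Im}(n)$ is a definable subgroup of dimension $\dim(A)$, hence of finite index, so $mn^{-1}$ is a genuine quasi-endomorphism; the images of the nonzero $(x-x')(y-y')^{-1}$ form a uniformly definable family of finite-index subgroups, so Lemma~\ref{boundedind} furnishes a single bound $M$ on their indices. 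Any $\gamma\in\Gamma$ coincides, as an element of $K$, with some $(x-x')(y-y')^{-1}$; the difference of the two maps on $\operatorname{Im}(y-y')$ — which is $\Delta$-invariant because $\delta[\operatorname{Im}(y-y')]=(y-y')[\operatorname{Im}(\delta)]\subseteq\operatorname{Im}(y-y')$ — has finite, $\Delta$-invariant, hence trivial image, so $\operatorname{Im}(\gamma)\supseteq\operatorname{Im}(x-x')$ and $|A:\operatorname{Im}(\gamma)|\le M$ for every nonzero $\gamma\in\Gamma$. Since $\Gamma$ is a domain, $\gamma^{t}\neq0$, and $\gamma^{t}$ induces an isomorphism $A/\operatorname{Im}(\gamma)\cong\operatorname{Im}(\gamma^{t})/\operatorname{Im}(\gamma^{t+1})$, whence $|A:\operatorname{Im}(\gamma^{t})|=|A:\operatorname{Im}(\gamma)|^{t}$; being bounded by $M$ this forces $|A:\operatorname{Im}(\gamma)|=1$. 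So $\Gamma\setminus\{0\}\subseteq\operatorname{Aut}(A)$, and symmetrically $\Delta\setminus\{0\}\subseteq\operatorname{Aut}(A)$; moreover $\gamma^{-1}$ commutes with $\Delta$ whenever $\gamma$ does, so $\gamma^{-1}\in C(\Delta)=\Gamma$, and therefore $\Gamma$ and $\Delta$ are skew fields, $\Gamma=K$ is interpretable, and $A$ is a $K$-vector space.

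To finish, I would invoke the structure theory of definable division rings in a finite-dimensional theory: $\Gamma$ is finite-dimensional over its center, which is an infinite definable field $Z:=Z(\Gamma)$ (in the connected case one even knows such a division ring is commutative, cf.\ \cite{deloro2024zilber}). Since $\dim(Z)\ge1$ and $\dim(A)<\infty$, an infinite $K$-dimension of $A$ would produce definable $K$-subspaces of dimension $\ge n\cdot\dim(Z)$ for every $n$, which is impossible, so $d:=\dim_K(A)<\infty$; from $\Delta=C(\Gamma)$ we get $\Delta=\End_K(A)\cong M_d(K)$, and since $\Delta$ is a skew field this forces $d=1$. Thus $A\cong{}_KK$, both $\Gamma$ and $\Delta$ act on $A$ by multiplications, and these are $Z$-linear because $Z$ is central; moreover $Z=Z(\Gamma)=\Gamma\cap\Delta$, so $Z\subseteq\Gamma\cap\Delta$, $A$ is the finite-dimensional $Z$-vector space $K$, and $\Gamma,\Delta$ act $Z$-linearly, as required.

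The step I expect to be the main obstacle is this last input — that the definable skew field $\Gamma$ is finite-dimensional over an infinite definable center (or outright commutative). It cannot come from the minimality/commutation hypotheses: for $A\cong\Gamma$ these only forbid two-sided-ideal-like subgroups, which is vacuous for a division ring, so it has to be drawn from the general theory of definable division rings in finite-dimensional theories. A secondary delicate point is the extraction of the uniform index bound $M$, which is why the skew field of fractions and Lemma~\ref{boundedind} are brought in even though every endomorphism at issue is already a monomorphism.
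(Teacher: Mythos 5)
Your proposal is correct and follows essentially the same route as the paper: surjectivity of nonzero elements via the uniform index bound and the $|A:\operatorname{Im}(\gamma^{t})|=|A:\operatorname{Im}(\gamma)|^{t}$ argument borrowed from Theorem \ref{Theorem 2A}, then the definable skew field of fractions (Proposition 3.6 of \cite{wagner2020dimensional}), its infinite definable center (Proposition 3.5, which is exactly the input you flag as the "main obstacle"), and finite $Z$-dimension of $A$ (Proposition 3.3). The only additions beyond the paper's argument are the identifications $\Delta\cong M_d(K)$ forcing $d=1$ and $Z=\Gamma\cap\Delta$, which are harmless but not needed for the statement.
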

\begin{proof}
    The elements of $\Gamma$ are also surjective as already observed in the proof of Theorem \ref{Theorem 2A}. It follows that $\Gamma$ can be embedded in the definable skew-field of fractions $K$ by Proposition $3.6$ of \cite{wagner2020dimensional}. Moreover, by Proposition $3.5$ of \cite{wagner2020dimensional}, the center $Z$ of the skew-field $K$ is an infinite and definable field. Therefore, it commutes with every element of $\Gamma$. $K$ acts by definable automorphisms on $A$ and commute with every element of $\Delta$ and $\Gamma$: we define the action simply taking $(m/n)(a+A_0)=mn^{-1}(a)$ for $m,n\in \Gamma$ respectively. The function $\phi:K\to \Gamma$ sending $m/n$ in $mn^{-1}$ is well-defined and every $mn^{-1}$ with $m,n\in \Gamma$ commutes with $\Delta$ as already proven in Theorem $A$. In addition to this, the function $\phi$ is injective. If $mn^{-1}=m'n'^{-1}$, then $n^{-1}=m^{-1}m'n'^{-1}$ and so $n'm'^{-1}=nm^{-1}$ and $mm^{-1}=Id=m'm'^{-1}$.\\
    This implies that $Z$ can be identified with a subset of $\Gamma$ that commutes with every element in $\Gamma$. Consequently, it is contained in $\Delta$, which will be unbounded. Consequently, there exists a definable skew-field $L$ that coincides with $\Delta$.\\
    $Z$ acts on $A$ as a subfield of $K$ so $A$ is a $Z$-vector space of finite dimension by Proposition $3.3$ of \cite{wagner2020dimensional} and $\Gamma,\Delta$ acts $Z$-linearly on $A/A_0$ by hypothesis. This proves the Theorem.
\end{proof}
\subsection{Theorem B Second version: general case}
We prove the second case of Theorem \ref{Theorem B,2 version}, whose statement is the following.
\begin{thmB,2}
    Let $A$ be an abelian definable group of finite dimension and $\Gamma,\Delta$ two invariant rings of definable endomorphisms such that:
    \begin{itemize}
        \item $A$ is absolutely $(\Gamma,\Delta)$-minimal;
        \item $\Gamma=C(\Delta)$ and $\Delta=C(\Gamma)$;
        \item $\Gamma$ is infinite and $\Delta$ is unbounded or vice-versa.
        \item No elements of $\Gamma\cup \Delta$ has finite non-$0$ kernel or image.
    \end{itemize}
    Then, $A$ is a $K$-vector space of finite dimension with $K$ a definable field such that $\Gamma$ and $\Delta$ act $K$-linearly on $A$.
\end{thmB,2}
\begin{proof}
    Let $(A,\Gamma,\Delta)$ be a counterexample with $A$ of minimal dimension.\\
    If $A$ has no $\Gamma$ or $\Delta$-lines, the proof follows by Theorem \ref{Theorem Bb}. Therefore, we may assume, without loss of generality, that $\Gamma$ has a line. We verify that no element in $\Gamma_L\cup \Delta_L$ has finite non-$0$ kernel. Let $\gamma'_L\in \Gamma_L$ with finite image. Then, $\gamma'\gamma\in \Gamma$ has finite non-$0$ image, since $\gamma'_L(L)=\gamma'(\gamma(A))$, a contradiction to the hypothesis. Let $\delta\in \Delta$ such that $\delta_L(L)=\delta\gamma(A)\cap L$ is finite. Then, since $\delta(L)\leq L$, the image $\delta\gamma(A)\cap L=\delta\gamma(A)=B$ is finite. There exist, by Lemma \ref{lemma 8}, a subset of endomorphisms $\{\gamma_i\}_{i\leq n}\subseteq \Gamma$ such that $\sum_{i=1}^n \gamma_i\gamma(A)$ is of finite index in $A$. Therefore, $\delta(A)$ is finite iff $\delta(\sum_{i=1}^n\gamma_i\gamma(A))$ is finite. The latter is contained, by commutation, into $\sum_{i=1}^n \gamma_i(\delta\gamma(A))=\sum_{i=1}^n \gamma_i(B)$ that is finite and non-trivial, a contradiction.\\
    By minimality of $A$, there exists a definable field $Z_L$ contained in $C(\Delta_L)\cap C(C(\Delta_L))$. We can construct the projections as in Theorem $A$, and, by Lemma \ref{lemma 10}, there exists a direct sum of lines $L_i$ of finite index in $A$. We will use the following Lemma in the last part of the proof.
\begin{lemma}\label{Lemma 10}
   Let $A$, $\Gamma$, $\Delta$, and $L$ be as in the Theorem. Then,
    \begin{itemize}
        \item Given a $\Delta$-covariant automorphisms $\sigma:L\to L'$, there exists $\gamma\in \Gamma$ such that $\gamma$ is invertible and extends $\sigma$;
        \item $\Gamma_L=C(\Delta_L)$ and $\Delta_L=C(\Gamma_L)$. Moreover, given $\phi\in \Gamma_L\cap \Delta_L$, there exists $\phi'\in \Gamma\cap \Delta$ such that $\phi'_L=\phi$.
    \end{itemize}
\end{lemma}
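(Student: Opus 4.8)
The plan is to reduce everything to an explicit direct-sum decomposition of $A$ into lines. In the present setting (no element of $\Gamma\cup\Delta$ with finite non-$0$ kernel or image) the finite ``error'' subgroups all vanish: $L_0=0$, every nonzero element of $\Gamma_L$ and of $\Delta_L$ is an automorphism of $L$, the quasi-projections of Lemma~\ref{lemma 10} are honest idempotents, and one obtains $A=L_1\oplus\dots\oplus L_n$ with $L_1=L$, each $L_i$ a $\Gamma$-line and $\pi_i\in\Gamma$ the projection onto $L_i$. Since $L_i=\pi_i[A]$ and $\Gamma$ commutes with $\Delta$, each $L_i$ is $\Delta$-invariant; and by Lemma~\ref{lemma 8} each $L_i$ is carried isomorphically onto $L$ by a restriction of a suitable $\gamma^{(i)}\in\Gamma$ (finite index becomes equality, and the finite kernel, being $\Delta_L$-invariant in a group with no finite nonzero $\Delta_L$-invariant subgroups, is $0$). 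Fix such isomorphisms $\sigma_i\colon L\to L_i$, $\sigma_1=\mathrm{id}_L$; each $\sigma_i$ is $\Delta$-covariant because it is a restriction of an element of $C(\Delta)$. For an endomorphism $\theta$ of $L$ write $\widehat\theta:=\sum_{i=1}^n\sigma_i\theta\sigma_i^{-1}\pi_i$ for its ``diagonal spreading'' to $A$.

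For the first item, let $\sigma\colon L\to L'$ be a $\Delta$-covariant automorphism onto a $\Gamma$-line $L'$. Choose a second decomposition $A=L'_1\oplus\dots\oplus L'_n$ into lines with $L'_1=L'$, together with $\Gamma$-isomorphisms $\tau_i\colon L\to L'_i$. For $i\ge 2$ the map $\tau_i\sigma_i^{-1}\colon L_i\to L'_i$ is a $\Delta$-covariant isomorphism, and I set $\gamma:=\sigma\pi_1+\sum_{i\ge2}\tau_i\sigma_i^{-1}\pi_i$. This is a bijection $A\to A$ sending each $L_i$ isomorphically onto $L'_i$; it is assembled from $\Delta$-covariant maps between $\Delta$-invariant summands, so it commutes with every $\delta\in\Delta$, whence $\gamma\in C(\Delta)=\Gamma$, and by construction $\gamma|_L=\sigma$ with $\gamma$ invertible.

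For the second item, $\Gamma_L\subseteq C(\Delta_L)$ and $\Delta_L\subseteq C(\Gamma_L)$ are immediate from the commutation of $\Gamma$ with $\Delta$ and the $\Delta$-invariance of $L$. For $C(\Delta_L)\subseteq\Gamma_L$: given $\phi\in C(\Delta_L)$, the endomorphism $\phi\pi_1\colon A\to L\subseteq A$ commutes with every $\delta\in\Delta$ (using $\pi_1\delta=\delta\pi_1$, $\delta(L)\subseteq L$, and $\phi\circ\delta|_L=\delta|_L\circ\phi$), hence $\phi\pi_1\in C(\Delta)=\Gamma$; since $\pi_1|_L=\mathrm{id}_L$ and $\phi\pi_1[A]\subseteq L$ we get $(\phi\pi_1)_L=\phi$, so $\phi\in\Gamma_L$. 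For $C(\Gamma_L)\subseteq\Delta_L$: given $\psi\in C(\Gamma_L)$, consider $\widehat\psi$; for $\gamma\in\Gamma$ decompose $\gamma$ into blocks $\gamma_{ji}=\pi_j\gamma\iota_i\colon L_i\to L_j$. A short computation with the $\Delta$-covariance of the $\sigma_i$ and the identity $\delta\gamma=\gamma\delta$ shows each $\sigma_j^{-1}\gamma_{ji}\sigma_i$ lies in $C(\Delta_L)$, which by the previous step equals $\Gamma_L$, so $\psi$ commutes with it; comparing blocks gives $\widehat\psi\gamma=\gamma\widehat\psi$, hence $\widehat\psi\in C(\Gamma)=\Delta$ and $\widehat\psi_L=\psi$. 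Finally, for $\phi\in\Gamma_L\cap\Delta_L$ the single element $\phi':=\widehat\phi$ works: it lies in $C(\Gamma)=\Delta$ by the computation just made (as $\phi\in\Delta_L\subseteq C(\Gamma_L)$), and it lies in $C(\Delta)=\Gamma$ because every $\delta\in\Delta$ is block-diagonal for $A=\bigoplus L_i$ with blocks $\sigma_k^{-1}\delta|_{L_k}\sigma_k=\delta|_L\in\Delta_L$, with which $\phi\in\Gamma_L=C(\Delta_L)$ commutes; thus $\phi'\in\Gamma\cap\Delta$ and $\phi'_L=\phi$.

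I expect the main obstacle to be the block bookkeeping behind the inclusion $C(\Gamma_L)\subseteq\Delta_L$ (and the ``moreover''): one must verify that an arbitrary $\gamma\in\Gamma$, written as a matrix of maps between the $L_i$, has every entry governed — after conjugating back to $L$ by the $\sigma_i$ — by $\Gamma_L=C(\Delta_L)$. This is exactly where the centralizer equality $\Gamma_L=C(\Delta_L)$ and the existence of the $\Delta$-covariant isomorphisms $\sigma_i$ feed back into the argument; everything else is routine algebra with the idempotents $\pi_i$ and the $\Delta$-invariance of the summands.
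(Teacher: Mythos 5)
Your overall strategy for the second item — decompose $A$ as a direct sum of lines $L_1\oplus\dots\oplus L_n$ with $L_1=L$, fix $\Delta$-covariant isomorphisms $\sigma_i\colon L\to L_i$, spread any $\psi\in C(\Gamma_L)$ (resp.\ $\phi\in\Gamma_L\cap\Delta_L$) to $\widehat\psi=\sum_i\sigma_i\psi\sigma_i^{-1}\pi_i$, and verify $\widehat\psi\in C(\Gamma)$ by showing $\sigma_j^{-1}\gamma_{ji}\sigma_i\in C(\Delta_L)=\Gamma_L$ block by block — is essentially the argument the paper gives, and your block bookkeeping for $C(\Gamma_L)\subseteq\Delta_L$ is in fact tidier than the paper's chain of ``$\delta'_{L'}$ is independent of the chosen $\gamma$'' verifications. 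For the first item, by contrast, the paper is much lighter: it does not decompose $A$ into lines at all, but takes the single projection $\pi$ onto $L$, sets $\gamma'=\sigma\pi+(1-\pi)$ (which is $\sigma$ on $L$ and the identity on $\ker\pi$), checks $\Delta$-commutation from the $\Delta$-covariance of $\sigma$, and exhibits the inverse $\sigma^{-1}\pi'+(1-\pi')$. Your two-decomposition construction for $\gamma$ works, but it is heavier than necessary and, more importantly, makes your first item depend on machinery ($\sigma_i$, $\tau_i$) that the paper only needs for the second.

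The genuine gap is the existence of the $\sigma_i$ (and $\tau_i$) themselves. You assert ``by Lemma~\ref{lemma 8} each $L_i$ is carried isomorphically onto $L$ by a restriction of a suitable $\gamma^{(i)}\in\Gamma$ (finite index becomes equality, \dots)''. Lemma~\ref{lemma 8} only yields $\gamma^{(i)}\in\Gamma$ with $\gamma^{(i)}[L_i]$ of \emph{finite index} in $L$ and $\ker(\gamma^{(i)})\cap L_i$ \emph{finite}. Killing the finite kernel by $\Delta$-invariance is fine, but ``finite index becomes equality'' is not automatic: nothing in the hypotheses (in particular not absolute minimality, which concerns infinite subgroups of infinite index) rules out a $\Delta$-invariant subgroup of finite proper index in $L$. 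The paper supplies the missing step: it also takes $\beta\in\Gamma$ with $\beta[L]$ of finite index in $L_i$, observes that $(\pi_L\gamma^{(i)}\beta)|_L$ is a nonzero element of $\Gamma_L$ and hence an automorphism of $L$ (by the surjectivity result already established for $\Gamma_L$), so that $\gamma^{(i)}|_{L_i}$ is surjective; the symmetric composite in $\Gamma_{L_i}$ gives injectivity. You quote the fact that nonzero elements of $\Gamma_L$ are automorphisms in your opening paragraph, but you never feed it back into this composition argument, and without it the $\sigma_i$ are not known to exist — which undermines both items and the ``moreover''. Fill in that one step and the rest of your proof goes through.
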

\begin{proof}
\textbf{Point 1:}\\
    Let $\sigma$ be a $\Delta$-covariant isomorphism between $L$ and $L'$, two lines. We want to construct $\gamma'\in \Gamma$ such that $\gamma':L\to L'$ coincides with $\sigma$. Take $\pi$ and $\pi'$ the projections of $L$ and $L'$ respectively and $A=L\oplus H$ with $H=\operatorname{Im}(1-\pi)=\ker(\pi)$. Since $\pi[L']$ is a $\Gamma$-image, it can only be $0$ or of dimension equal to $L'$ (by Lemma \ref{lemma 8} and by hypothesis). In the first case, $\pi$ is invertible. Therefore, $\sigma$ is an automorphism $\Delta$-covariant and so it belongs to $\Gamma$. In the latter case, $L'\cap H$ is finite and so trivial. Let $\gamma'=\sigma\pi+(1-\pi)$. Then, $\gamma'$ coincides with $\sigma$ on $L$ and with $1$ on $H$, and it commutes with every element of $\Delta$. By hypothesis, it is an element of $\Gamma$. Finally, it is invertible with inverse given by $\sigma^{-1}\pi'+(1-\pi')$. Indeed, by linearity, it is sufficient to prove that the composition $\gamma'^{-1}\gamma'$ coincides with the identity on $L$ and $H$. On $L$, the element $\gamma'(l)=\sigma(l)$ belongs to $L'$ and so $\gamma'^{-1}\gamma'$ is equal to $\sigma^{-1}\sigma(l)=l$. On $H$, $\gamma'(h)=1$ and $\gamma'^{-1}(l)=1$. This completes the proof of point $1$.\\
    \textbf{Point 2:}\\
    We verify that $\Gamma_L=C(\Delta_L)$ and vice-versa. Given an element $\gamma$ in $C(\Delta_L)$, the homomorphism $\gamma\pi_L$ is in $C(\Delta)=\Gamma$ and the restriction to $L$ is equivalent to $\gamma$.\\
    We verify that, for every $L$ and $L'$, there exists a surjective homomorphism $\gamma'\in \Gamma$ going from one to the other. This implies, since the kernels are all trivial by the previous proof, that it is also an isomorphism. Suppose, by way of contradiction, that all the homomorphisms with finite kernel from $L$ to $L'$ and from $L'$ to $L$ are not surjective (there exists at least one element for each of these families by Lemma \ref{lemma 8}). Additionally, the compositions are not surjective and have finite kernel. These form a subset $X$ of $\Gamma$ given by endomorphisms such that the image of $L$ is a subgroup of finite index in itself. Let $X'=\{x\circ \pi:\ x\in X\}$, a subset of $\Gamma$ whose restriction to $L$ coincides with the restriction to $L$ of $X$. Therefore, we identify $X$ with a subset of $\Gamma_L$. Since, by proof of the Theorem $A$, every element of $\Gamma_L$ is surjective, we have reached a contradiction. By first point, we can assume that $\gamma\in \Gamma$ is an invertible endomorphism that, restricted to $L$, coincides with $\gamma'$. Let $\delta'$ be a $\Gamma_L$-covariant isomorphism, then the function $\gamma\delta\gamma^{-1}$ sends $L'$ in itself. Let $\delta'_{L'}$ be the induced isomorphism. Given $\gamma'\in \Gamma$ that induces an isomorphism between $L$ and $L'$, then the restriction of $\gamma^{-1}\gamma'$ to $L$ commutes with $\Delta_L$ and so it is in $\Gamma_L$. Therefore, it commutes with $\delta$ and 
    $$\gamma'\delta\gamma'^{-1}=\gamma(\gamma^{-1}\gamma')\delta\gamma'^{-1}=\gamma\delta\gamma$$
    Consequently, $\delta'_{L'}$ is independent by the choice of $\gamma\in \Gamma$.\\
    Let $\gamma''_{L'}\in \Gamma_{L'}$ non-zero. This is an automorphism of $L'$ and so, by point $1$, there exists $\gamma''$ in $\Gamma$ invertible that reduces to $\gamma''_{L'}$. $\gamma\gamma''$ is another automorphism in $\Gamma$ that defines an isomorphism between $L$ and $L'$. Therefore,
    $$\gamma''\delta'_{L'}\gamma''^{-1}=(\gamma''\gamma)\delta'(\gamma\gamma'')^{-1}=\delta'_{L'}.$$
    This implies that $\delta'_{L'}$ commutes with $\gamma''_{L'}$. Indeed, $\delta'_{L''}\gamma_1=\gamma_1\delta'_{L'}$ with $\gamma_1\in \sim$ that induces an isomorphism between $L'$ and $L''$: consider $\gamma_2,\gamma_3\in \Gamma/\sim$ inducing isomorphism between $L$ and $L'$ and $L$ and $L''$ respectively. The composition $\gamma_1\gamma_2$ induces an isomorphism between $L$ and $L''$ and on $L'$
    $$\delta'_{L''}\gamma_1=\gamma_1\gamma_2\delta'_{L''}\gamma_2^{-1}\gamma_1^{-1}\gamma_1=\gamma_1\gamma_2\delta'_{L''}\gamma_2^{-1}=\gamma_1\delta'_{L'}$$
    as we want to prove.\\
    Finally, let $\{L_i\}_{i\leq n}$ be the lines such that $A$ is the direct sum of $L_i$, and define $\delta(a)=\sum_{i=1}^n \delta'_{L_i}(l_i)$ with $l_i\in L_i$ and $a=\sum_{i=1}^n l_i$. This endomorphism is definable, well-defined, and extends $\delta'$. It remains to verify that it commutes with every element of $\Gamma$.\\
    Given $\gamma\in \Gamma$, let $\gamma_{i,j}=\pi_i\gamma\pi_j$ with $\pi_i$ the projection associated to $L_i$. By linearity, $\delta$ commutes with $\gamma$ iff it commutes with every $\pi_i\gamma$ and so iff, for every $i,j\leq n$, $\gamma_{i,j}$ commutes with $\delta'_{L_j}$. This follows from the previous observation.\\
    For the last point, observe that, given $\phi\in \Gamma_L\cap \Delta_L$, the associated $\phi'\in \Delta$ commutes with every element of $\Delta$ and so $\phi'\in C(\Delta)=\Gamma$.
\end{proof}
We conclude the proof of Theorem $B$. Let $Z_L$ be the subfield of $C(\Delta_L)\cap C(C(\Delta_L))$ given by the inductive hypothesis. Applying Lemma \ref{Lemma 10}, $Z_L\subseteq \Gamma_L\cap \Delta_L$ and by point $2$ of Lemma \ref{Lemma 10}, each element of $Z_L$ is equal to $\phi_L$ for $\phi\in \Gamma\cap \Delta$. Let $Z=<\phi:\ \phi_L\in Z_L>\subseteq \Gamma\cap \Delta$. The function from $\Delta$ to $\Delta_L$ is a definable isomorphism, and, being $Z_L$ a definable field contained in $\Delta_L$, also $Z$ is a definable field. Moreover, $Z$ acts on $A$ by automorphisms: each $z\in Z$ is an endomorphism that admits an inverse and so it is an automorphism of $A$. Finally, each element of $Z$ commutes with every element in $\Delta\cup\Gamma$. In conclusion $A$ is a $Z$-vector space with $\Gamma$ and $\Delta$ acting $Z$-linearly.
\end{proof}
\section{Quasi-Endomorphisms}
In the following sections, we extend the previous result to quasi-endomorphisms. More precisely, we prove the following theorem.
\begin{theorem}
    Let $A$ be a definable abelian group of finite dimension, $\Delta$ an invariant pre-ring of definable endogenies and $\Gamma$ an invariant near-ring of definable quasi-endomorphisms of $A$ such that:
    \begin{itemize}
        \item $A$ is absolutely $(\Gamma,\Delta)$-minimal;
        \item $\Delta$ is essentially unbounded and $\Gamma$ is essentially infinite or vice-versa;
        \item $\Delta$ and $\Gamma$ flat commute.
    \end{itemize}
    Then, $\Kat(\Gamma,\Delta)$ is finite.
\end{theorem}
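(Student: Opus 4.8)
I would run the proof in parallel with that of Theorem~\ref{ThmA}, splitting into a base case in which every element of $\Gamma\cup\Delta$ has finite kernel or finite image, a second case in which the essentially unbounded pre-ring has a quasi-endomorphism (resp.\ endogeny) with infinite kernel and infinite image, and a third case in which only the essentially infinite one does. Take a counterexample $(A,\Gamma,\Delta)$ of minimal dimension. In contrast with Theorem~\ref{ThmA} the hypothesis is no longer symmetric — one side is a near-ring of quasi-endomorphisms, the other a pre-ring of endogenies — so the two orientations (``$\Delta$ essentially unbounded'' and ``$\Gamma$ essentially unbounded'') are genuinely distinct and both must be carried out. As in the passage preceding Theorem~\ref{Theorem 2A}, I would first replace $\Gamma,\Delta$ by the flat centraliser $C^{\flat}(\Delta)$ and the sharp centraliser $C^{\#}(C^{\flat}(\Delta))$, checking that these remain invariant over the same parameters, that $A$ stays absolutely $(\Gamma,\Delta)$-minimal, and that the essentially-large hypotheses survive; one also needs the evident quasi-endomorphism analogue of Lemma~\ref{easylemma}, namely that every finite weakly $\Gamma$- or $\Delta$-invariant subgroup $B$ lies in $\Kat(\Gamma,\Delta)$, witnessed by the constant quasi-endomorphism $a\mapsto B$.

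\textbf{Controlling the domains.} The one genuinely new ingredient is the bookkeeping of the domains $\Dom(\gamma)$, $\gamma\in\Gamma$. Since $\Gamma$ is invariant these form a uniformly definable family of finite-index definable subgroups of $A$, so Lemma~\ref{boundedind} yields $d<\omega$ and $\gamma_1,\dots,\gamma_k\in\Gamma$ with $A^{\ast}:=\bigcap_{i\le k}\Dom(\gamma_i)$ of finite index in $A$ and $[A^{\ast}:A^{\ast}\cap\Dom(\gamma)]\le d$ for every $\gamma\in\Gamma$. Consequently every identity involving an application of some $\gamma\in\Gamma$ holds up to a term contained in a single fixed finite union of cosets of $A^{\ast}$; such terms, together with all katakernels, are weakly $\Gamma$-invariant and — by flat commutation and Lemma~\ref{Lemma 6} — weakly $\Delta$-invariant, hence are absorbed into $\Kat(\Gamma,\Delta)$. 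In effect ``$\Dom(\gamma)$ of finite index'' plays on the $\Gamma$-side exactly the role that ``$\kat$ finite'' already plays, so Lemmas~\ref{Lemma 3}, \ref{Lemma 5} and~\ref{Lemma 6} have flat-commutation counterparts with essentially the same proofs.

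\textbf{The three cases.} With this dictionary, the base case reproduces the argument of Theorem~\ref{Theorem Ab}: a maximal finite weakly $\Gamma$-invariant subgroup $A_0$ exists by the cardinality bound on $\mathcal{F}-\End$ and $\Endog$ in elementary extensions, one shows $\Kat(\Delta),\Kat(\Gamma)\subseteq A_0$, and then $\Gamma/{\sim}$ and $\Delta/{\sim}$ act by monomorphisms on $A/A_0$, ruling out finite invariant subgroups. In the cases with a line, equip each line $L$ with induced pre-rings $\Gamma_L,\Delta_L$; the point is that if $L$ is a weakly $\Delta$-invariant line and $\gamma\in\Gamma$ has $\gamma[\Dom(\gamma)]\le L$, then $\Dom(\gamma)\cap L$ has finite index in $L$, so $\gamma_L$ is again a quasi-endomorphism of $L$ (this dualises the argument of Lemma~\ref{lemma 8} and uses absolute minimality). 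One then establishes the quasi-endomorphism analogue of Lemma~\ref{Lemma 9} — flat/sharp commutation of $\Gamma_L$ and $\Delta_L$, absolute minimality of $L$, absence of $\Gamma_L$-lines, and that $\Gamma_L,\Delta_L$ are as essentially large as $\Delta,\Gamma$. Minimality of the dimension then makes the line bikatakernel $L_0$ finite; the skew-field-of-fractions machinery (Propositions~3.3, 3.5 and~3.6 of \cite{wagner2020dimensional}) applies on $L/L_0$, surjectivity of the induced endomorphisms follows from the index-bound argument of Theorem~\ref{Theorem 2A} via Lemma~\ref{boundedind}, and the quasi-projection construction of Lemma~\ref{lemma 10} produces an almost-direct decomposition of $A$ into lines together with the finiteness of $\Kat(\Gamma,\Delta)$.

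\textbf{Main obstacle.} I expect the hard part to be the quasi-endomorphism version of Lemma~\ref{Lemma 9}, and specifically the claim that $\Gamma_L$ is as essentially large as $\Delta$: the restriction-corestriction of a quasi-endomorphism to a line need not be a quasi-endomorphism unless the line meets the domain in finite index, so the Ramsey-plus-intersection-of-kernels argument of point~5 of Lemma~\ref{Lemma 9} must be run while simultaneously controlling katakernels \emph{and} domain-complements. A secondary difficulty is arranging that the quasi-endomorphism induced on a line is ``almost total'', so that the quasi-projection of Lemma~\ref{lemma 10} can be built; this again rests on lines being images of quasi-endomorphisms together with the bounded-domain-index observation above, and, in the orientation where $\Gamma$ is the essentially unbounded side, on verifying that the cardinality bound used in the base case still applies to $\mathcal{F}-\End$.
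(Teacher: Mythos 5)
\textbf{There is a genuine gap in your domain-control argument, and your overall route diverges from the paper's in a way that I think cannot be repaired along the lines you sketch.}

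Your central device is to apply Lemma~\ref{boundedind} to the family $\{\Dom(\gamma)\}_{\gamma\in\Gamma}$, producing a finite-index $A^{\ast}=\bigcap_{i\le k}\Dom(\gamma_i)$ with $[A^{\ast}:A^{\ast}\cap\Dom(\gamma)]\le d$ for all $\gamma$, and then to claim that the resulting ``error terms'' are weakly invariant and ``absorbed into $\Kat(\Gamma,\Delta)$''. This does not work: the obstruction coming from a domain deficit is a union of cosets of a finite-index subgroup, not a \emph{finite} subgroup, and $\Kat(\Gamma,\Delta)$ is a sum of finite katakernels. Nothing about the dictionary ``finite index on the $\Gamma$-side $\leftrightarrow$ finite katakernel'' is available at this level, and the paper's own concluding remark to Section~9 states precisely that the missing ingredient in the $\Gamma$-unbounded case is ``an element that is contained in the domain of any quasi-endomorphism'' --- which your $A^{\ast}$ does \emph{not} provide, since $A^{\ast}\cap\Dom(\gamma)$ can vary among infinitely many distinct index-$\le d$ subgroups and $\Dom(\Gamma)=\bigcap_\gamma\Dom(\gamma)$ may still have infinite index. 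In the same vein, your ``evident quasi-endomorphism analogue of Lemma~\ref{easylemma}'' is not evident: the constant endogeny $a\mapsto B$ flat commutes with $\gamma\in\Gamma$ only if $B\le\Dom(\gamma)$ (one of the two domain conditions in the definition of flat commutation), so the statement requires $B\le\Dom(\Gamma)$ and is exactly as delicate as the domain question itself.

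What the paper actually does is rather different and hinges on which side carries the quasi-endomorphisms. When $\Delta$ (the endogeny side) is essentially unbounded, it first shows (Theorem~\ref{Extension Base case} and Theorem~\ref{ExtA general}) that $\Dom(\Gamma)$ has finite index in $A$, by assuming a strictly descending chain of domains, cutting down to a bounded essentially infinite near-subring $\Gamma'$, and invoking the skew-field-of-fractions machinery from \cite{wagner2020dimensional} on $\Dom(\Gamma')+K/K$ to get a contradiction; once $\Dom(\Gamma)$ has finite index, one passes to the definable $(\Gamma,\Delta)$-invariant subgroup $D=\Dom(\Gamma)$, where $\Gamma$ acts by \emph{endogenies}, checks that $\Gamma_D,\Delta_D$ still satisfy the hypotheses, and applies the already-proven Theorem~\ref{ThmA}. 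When $\Gamma$ (the quasi-endomorphism side) is essentially unbounded, the paper does \emph{not} establish finite index of $\Dom(\Gamma)$; Section~9 runs a separate argument: if $\Gamma$ has no lines one passes to $A/\Kat(\Delta)$, analyses the finite cancellative semigroup of equivalence classes of $\Delta$-elements with finite kernel (which is a group, so each such element satisfies $\delta^{n}\sim\mathrm{Id}$), and shows an intersection of infinite $\Delta$-kernels is almost $(\Gamma,\Delta)$-invariant, hence finite by absolute minimality; if $\Gamma$ has a line one builds a quasi-projection $\pi$ with $\pi=\gamma_L^{-1}\gamma+\gamma^{-1}[L_0]$ and checks flat commutation directly. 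Your proposal to replace $\Gamma$ and $\Delta$ by $C^{\flat}(\Delta)$ and $C^{\#}(C^{\flat}(\Delta))$, and then to run a symmetric three-case split, glosses over exactly this asymmetry; without the reduction to $\Dom(\Gamma)$ (or a substitute for it on the $\Gamma$-unbounded side), the analogues of Lemmas~\ref{Lemma 9} and~\ref{lemma 10} do not just ``go through with the same proofs''.

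You did correctly identify the hard point (Point~5 of the quasi-endomorphism Lemma~\ref{Lemma 16} on essential largeness of $\Gamma_L$), and the base case outline is sound; but the ``bounded domain index via Lemma~\ref{boundedind} + absorption'' idea is the load-bearing step in your proposal, and it is the step that fails.
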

\subsection{Basic properties}
We need some basic results on quasi-endomorphisms.
\begin{lemma}
    Let $A$ be an abelian group, $\phi\in \mathcal{F}-\operatorname{End}(A)$ and $B\leq A$ of finite index. Then,
    \begin{itemize}
        \item $\phi[B]$ is of finite index in $\phi[A]$. Therefore, if $\phi[B]$ is finite, then also $\phi[A]$ is finite;
        \item $\phi^{-1}[B]=\{a\in A:\ \phi[a]\leq B+\kat(\phi)\}$ is of finite index in $A$;
        \item $\ker(\phi)=\phi^{-1}[\kat(\phi)]$;
        \item Given $a,b\in \operatorname{Im}(\phi)$, then $\phi^{-1}[a+b]+\ker(\phi)=\phi^{-1}[a]+\phi^{-1}[b]+\ker(\phi)$.
    \end{itemize}
\end{lemma}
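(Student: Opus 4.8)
The plan is to treat the four points in turn, in each case reducing to the induced group homomorphism $\bar\phi\colon\Dom(\phi)\to A/\kat(\phi)$, $a\mapsto\phi[a]$, whose image is $\operatorname{Im}(\phi)/\kat(\phi)$ (note $\kat(\phi)=\phi[0]\subseteq\operatorname{Im}(\phi)$). The only facts I will use are: a surjective homomorphism both sends and pulls back finite-index subgroups to finite-index subgroups; $\Dom(\phi)$ has finite index in $A$; for $a\in\Dom(\phi)$ the set $\phi[a]$ is a coset of $\kat(\phi)$; and for $a\in\operatorname{Im}(\phi)$ the set $\phi^{-1}[a]=\{x:(x,a)\in\phi\}$ is a nonempty coset of $\ker(\phi)$.

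For the first point, $B\cap\Dom(\phi)$ has finite index in $\Dom(\phi)$, and additivity of $\phi$ shows $\phi[B]=\phi[B\cap\Dom(\phi)]$ is a subgroup of $A$ containing $\kat(\phi)$; it is the full preimage in $\operatorname{Im}(\phi)$ of $\bar\phi(B\cap\Dom(\phi))$ under $\operatorname{Im}(\phi)\to\operatorname{Im}(\phi)/\kat(\phi)$, hence has finite index in $\operatorname{Im}(\phi)=\phi[A]$; the ``therefore'' clause is then immediate. For the second and third points I read $\phi^{-1}[B]$ as $\{a\in\Dom(\phi):\phi[a]\subseteq B+\kat(\phi)\}$, the restriction to $\Dom(\phi)$ being forced so that the set is a subgroup. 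Since each $\phi[a]$ is a $\kat(\phi)$-coset and $B+\kat(\phi)\supseteq\kat(\phi)$, this set is exactly $\bar\phi^{-1}\bigl((B+\kat(\phi))/\kat(\phi)\bigr)$; as $B+\kat(\phi)$ has finite index in $A$, this preimage has finite index in $\Dom(\phi)$, hence in $A$, proving point 2. Taking $B=\kat(\phi)$ then gives $\phi^{-1}[\kat(\phi)]=\{a\in\Dom(\phi):\phi[a]\subseteq\kat(\phi)\}=\{a:\phi[a]=\kat(\phi)\}=\{a:(a,0)\in\phi\}=\ker(\phi)$, which is point 3.

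For the last point, additivity of $\phi$ gives $\phi^{-1}[a]+\phi^{-1}[b]\subseteq\phi^{-1}[a+b]$ directly, since $(x,a),(y,b)\in\phi$ implies $(x+y,a+b)\in\phi$. As $a$, $b$, and $a+b$ all lie in the subgroup $\operatorname{Im}(\phi)$, each of $\phi^{-1}[a]$, $\phi^{-1}[b]$, $\phi^{-1}[a+b]$ is a nonempty coset of $\ker(\phi)$; so after adding $\ker(\phi)$ the above inclusion reads $\phi^{-1}[a]+\phi^{-1}[b]+\ker(\phi)\subseteq\phi^{-1}[a+b]+\ker(\phi)=\phi^{-1}[a+b]$, and a coset contained in a coset of the same subgroup must equal it, giving the claimed equality. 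I do not anticipate any genuine obstacle: the whole lemma is bookkeeping with cosets and finite indices, and the one point needing care is fixing the convention for $\phi^{-1}[-]$ so that the implicit restriction to $\Dom(\phi)$ is built in.
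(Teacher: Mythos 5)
Your proof is correct, and it takes a somewhat more conceptual route than the paper's. The paper works directly with coset decompositions: for point 1 it writes $\Dom(\phi)$ as a finite union of cosets of $B\cap\Dom(\phi)$ and then distributes $\phi$ over them; for point 2 it picks a finite transversal of $B\cap\operatorname{Im}(\phi)$ in $\operatorname{Im}(\phi)$ and shows every $a\in\Dom(\phi)$ lies in some coset $a_i+\phi^{-1}[B]$; and for point 4 it verifies both inclusions by hand, producing an explicit preimage of $a+b$ in $\phi^{-1}[a]+\phi^{-1}[b]$ and then using the kernel-coset characterization to push an arbitrary element of $\phi^{-1}[a+b]$ back into $\phi^{-1}[a]+\phi^{-1}[b]+\ker(\phi)$. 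You instead route everything through the induced surjection $\bar\phi\colon\Dom(\phi)\to\operatorname{Im}(\phi)/\kat(\phi)$ and the general fact that surjective homomorphisms send and pull back finite-index subgroups to finite-index subgroups; this folds points 1–3 into a single piece of bookkeeping and makes the convention $\phi^{-1}[B]\subseteq\Dom(\phi)$ explicit (which the paper leaves implicit but uses in its own proof). For point 4 your observation that $\phi^{-1}[a]+\phi^{-1}[b]$ and $\phi^{-1}[a+b]$ are each single cosets of $\ker(\phi)$, so that the easy inclusion forces equality, replaces the paper's two-sided verification with a one-line argument; it even shows the ``$+\ker(\phi)$'' in the statement is superfluous. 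Both proofs are correct and of comparable length; yours is a touch cleaner because it reuses the same reduction for the first three points rather than arguing each one from scratch.
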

\begin{proof}
\begin{itemize}
    \item Suppose $\Dom(\phi)=\bigcup_{i=1}^n a_i+B\cap \Dom(\phi)$ for $a_i\in A$, then $\phi[\Dom(\phi)]=\phi[a_i+B\cap \Dom(\phi)]$. By additivity, it is equal to $\bigcup \phi[a_i]+\phi[B\cap \Dom(\phi)]$ that coincide with $\bigcup_{i=1}^n\bigcup_{k\in \phi[a_i]}k+\phi[B\cap \Dom(\phi)]$. The other implication is obvious.
    \item $\phi^{-1}[B]$ is of finite index in $A$ iff it is of finite index in $\Dom(\phi)$. Since $B$ is of finite index in $A$, $B\cap \operatorname{Im}(\phi)$ is of finite index in $\operatorname{Im}(\phi)$. Therefore, given $a\in \Dom(\phi)$, the intersection $\phi[a]\cap (a_i+B)$ is not empty for some $a_i\in \operatorname{Im}(\phi)$ and so $\phi[a-b_i]\cap B\not= \emptyset$. Consequently, $a-b_i\in \phi^{-1}[B]$ and $\phi^{-1}[B]$ is of finite index.
    \item For the third point, observe that $\ker(\phi)=\{a\in \Dom(\phi):\ 0\in\phi[a]\}=\{a\in \Dom(\phi):\ \kat(\phi)=\phi[a]\}=\phi^{-1}[\kat(\phi)]$.
    \item For the fourth point, $\phi^{-1}[a]+\phi^{-1}[b]\subseteq \phi^{-1}[a+b]$. Indeed, given $a'+b'$ with $\phi[a']=a+\kat(\phi)$ and $\phi[b']=b+\kat(\phi)$, then $\phi[a'+b']=\phi[a']+\phi[b']+\kat(\phi)$ and so $\phi[a'+b']=a+b+\kat(\phi)$. Consequently $a'+b'\in \phi^{-1}[a+b]$. On the other hand, given $c\in \phi^{-1}[a+b]$, then $\phi[c]=a+b+\kat(\phi)$. Taken two counter images $a',b'$ of $a,b$ (that exist by hypothesis) $\phi[c]-\phi[a'+b']=\kat(\phi)$ and so $c-(a'+b')\in \ker(\phi)$ and so $c\in \phi^{-1}[a]+\phi^{-1}[b]+\ker(\phi)$.
\end{itemize}
\end{proof}
We can define two operations on $\mathcal{F}-\End(A)$.
\begin{itemize}
    \item We define $+$ as $\phi+\psi:\Dom(\phi)\cap \Dom(\psi)\rightarrow A/\kat(\psi)+\kat(\phi)$ such that $(\phi+\psi)[a]=\phi[a]+\psi[a]$.
    \item We define $\cdot$ as $\phi\cdot\psi:\psi^{-1}[\Dom(\phi)]\cap \Dom(\psi)\rightarrow A/\phi[\kat(\psi)]$ such that $(\phi\cdot\psi)[a]=\phi[\psi[a]\cap \Dom(\phi)]$.
\end{itemize}
These two operations do not behave well, indeed, neither the left nor the right distributivity holds. Nevertheless, $(\mathcal{F}-\End(A),+,0)$ and $(\mathcal{F}-\End(A),\circ,Id)$ are monoids. Therefore, $\mathcal{F}-\End(A)$ is a \emph{near-ring} with the following (non-standard) definition. 
\begin{defn}
    Let $X$ be a set with two operations $+$ and $\cdot$. $X$ is a \emph{near-ring} if there exists two elements $0,1$ such that $(X,+,0)$ and $(X,\cdot,1)$ are monoids. 
\end{defn}
\begin{lemma}
    $(\mathcal{F}-\End(A),+,0)$ is a commutative monoid, $(\mathcal{F}-\End(A),\cdot,Id)$ is a monoid.
\end{lemma}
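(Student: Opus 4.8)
The plan is to read off every monoid axiom directly from the description of $+$ and $\circ$ (the operation written $\cdot$) as operations on additive subgroups of $A\times A$, invoking only the preceding lemma for the finite-index statements and using the finiteness of katakernels. Note at the outset that distributivity plays no role here, so the known failure of distributivity for $\mathcal{F}-\End(A)$ is irrelevant.

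First I would check that both operations land inside $\mathcal{F}-\End(A)$. For $\phi,\psi\in\mathcal{F}-\End(A)$ the sum $\phi+\psi=\{(a,b+b'):(a,b)\in\phi,\ (a,b')\in\psi\}$ is an additive subgroup of $A\times A$ (closure under addition uses that $A$ is abelian); its domain is $\Dom(\phi)\cap\Dom(\psi)$, of finite index in $A$ as an intersection of two finite-index subgroups; and $\kat(\phi+\psi)=\{b+b':(0,b)\in\phi,\ (0,b')\in\psi\}=\kat(\phi)+\kat(\psi)$ is finite. For the composition of relations $\phi\circ\psi=\{(a,c):\exists z\ (a,z)\in\psi\wedge(z,c)\in\phi\}$, closure under addition and negation follows at once from $\phi$ and $\psi$ being subgroups; its domain is $\{a\in\Dom(\psi):\psi[a]\cap\Dom(\phi)\neq\emptyset\}=\psi^{-1}[\Dom(\phi)]\cap\Dom(\psi)$, which is of finite index in $A$ by the second item of the preceding lemma applied to the finite-index subgroup $B=\Dom(\phi)$; and $\kat(\phi\circ\psi)=\phi[\kat(\psi)\cap\Dom(\phi)]$ is finite, being a union of finitely many cosets of the finite group $\kat(\phi)$. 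Hence $\phi+\psi,\ \phi\circ\psi\in\mathcal{F}-\End(A)$.

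Next I would dispatch commutativity of $+$ and associativity of $+$ and $\circ$, all of which are purely formal. The pointwise sum is visibly symmetric in $\phi$ and $\psi$, and both $(\phi+\psi)+\chi$ and $\phi+(\psi+\chi)$ equal $\{(a,b_1+b_2+b_3):(a,b_1)\in\phi,\ (a,b_2)\in\psi,\ (a,b_3)\in\chi\}$ because addition in $A$ is associative. Composition of binary relations is associative for any relations, so $(\phi\circ\psi)\circ\chi=\phi\circ(\psi\circ\chi)$ as subsets of $A\times A$, and both sides lie in $\mathcal{F}-\End(A)$ by the previous step applied twice.

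Finally, the units. Taking $0:=A\times\{0\}$, which has $\Dom(0)=A$ and $\kat(0)=\{0\}$, one gets $\phi+0=\{(a,b+0):(a,b)\in\phi\}=\phi$, so $0$ is a two-sided identity for $+$; and taking $Id:=\{(a,a):a\in A\}$, with $\Dom(Id)=A$ and $\kat(Id)=\{0\}$, relational composition immediately gives $\phi\circ Id=Id\circ\phi=\phi$, so $Id$ is a two-sided identity for $\circ$. This yields that $(\mathcal{F}-\End(A),+,0)$ is a commutative monoid and $(\mathcal{F}-\End(A),\cdot,Id)$ is a monoid. The one step that is more than bookkeeping with subgroups of $A\times A$ is the finite-index claim for $\Dom(\phi\circ\psi)$ — that is exactly where the preceding lemma on quasi-endomorphisms is needed; everything else is formal.
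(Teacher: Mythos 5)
Your proof is correct, and it differs from the paper's in one notable and worthwhile way: for associativity of the product you identify the paper's operation with relational composition of additive subgroups of $A\times A$, so that $(\phi\circ\psi)\circ\chi=\phi\circ(\psi\circ\chi)$ holds as an identity of relations before any questions of domain or katakernel arise; you then get that both sides lie in $\mathcal{F}-\End(A)$ by applying closure twice. The paper instead works with the definition $\phi\cdot\psi:\psi^{-1}[\Dom(\phi)]\cap\Dom(\psi)\to A/\phi[\kat(\psi)]$ as a partial map and verifies associativity by unpacking and matching the two iterated domains explicitly. Your route is shorter and less prone to bookkeeping errors, at the small cost of the (easy but worth stating) observation that $\phi\cdot\psi$ as defined in the paper coincides, as a subset of $A\times A$, with the set-theoretic composition $\{(a,c):\exists z\,(a,z)\in\psi\wedge(z,c)\in\phi\}$ — a fact you use implicitly when you write the domain both as $\{a\in\Dom(\psi):\psi[a]\cap\Dom(\phi)\neq\emptyset\}$ and as $\psi^{-1}[\Dom(\phi)]\cap\Dom(\psi)$, and which rests on $\psi[a]$ being a single coset of $\kat(\psi)$. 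You also make the closure step (that $+$ and $\circ$ land back in $\mathcal{F}-\End(A)$, via the finite-index lemma for $\psi^{-1}[B]$) explicit, whereas the paper folds it into the definitions of the two operations; this is a matter of exposition, not substance. The paper additionally remarks that elements with non-total domain have no additive inverse, but that remark is not part of the statement being proved.
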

\begin{proof}
$(\mathcal{F}-\End(A),+,0)$ is a monoid.
\begin{itemize}
    \item $+$ is associative since the domain of $(\phi+\psi)+\tau$ and $\phi+(\psi+\tau)$ are equal to $\Dom(\phi)\cap \Dom(\psi)\cap \Dom(\tau)$. Obviously, the images coincide.
    \item $+$ is commutative since $A$ is commutative and the domain does not depend on the order.
    \item The endomorphisms $0:A\rightarrow A$ such that $0(a)=0$ for any $a\in A$ is the $0$ of the set.
    \item Each element $\phi$ that has not global domain has no opposite since, for each $\psi\in \mathcal{F}-\End(A)$, the domain of $\phi+\psi$ is strictly contained in $\Dom(\phi)$.
\end{itemize}
$(\mathcal{F}-\End(A),\circ, Id)$ is a monoid:
\begin{itemize}
    \item $\cdot$ is associative. For the images is obvious. For the domains, 
    $$\Dom(\phi(\psi\gamma))=(\psi\gamma)^{-1}[\Dom(\phi)]=\{a\in \Dom(\psi\gamma)|\psi\gamma[a]\in \Dom(\phi)+\kat(\psi\gamma)\}.$$
    This is 
    $$\{a\in \Dom(\gamma):\ \gamma[a]\in \Dom(\psi)+\kat(\gamma),\psi\gamma[a]\in \Dom(\phi)+\kat(\psi\gamma)\}.$$
    On the other hand,
    $$\Dom((\phi\psi)\gamma)=\{a\in \Dom(\gamma):\ \gamma[a]\subseteq \Dom(\phi\psi)+\kat(\gamma)\}.$$
    This is equal to 
    $$\{a\in \Dom(\gamma):\ \gamma[a]\in \Dom(\psi)+\kat(\gamma),\psi[\gamma[a]\cap \Dom(\psi)]\leq \Dom(\phi)+\kat(\psi\gamma)\}.$$
    Therefore, the domains are the same.
    \item $Id$ is obviously the identity. 
\end{itemize}
We observe that the distributivity laws do not hold.
\begin{itemize}
        \item on the left: we take $\psi(\phi+\gamma)$. The domain is 
        $$(\phi+\gamma)^{-1}[\Dom(\psi)]=\{a\in \Dom(\phi)\cap \Dom(\gamma):\ \phi[a]+\gamma[a]\leq \Dom(\psi)+\kat(\phi)+\kat(\gamma)\}.$$
        On the other hand, the domain of $\psi\phi+\psi\gamma$ is 
        $$\{a\in \Dom(\phi)\cap \Dom(\gamma):\ \phi[a]\in \Dom(\psi)+\kat(\phi)\wedge \gamma[a]\in \Dom(\psi)+\kat(\gamma)\}.$$
        This is obviously contained in the first domain, but in general, the other inclusion does not hold. Moreover, the katakernels are different since we only know that 
        $$\kat(\psi(\phi+\gamma))=\psi[(\kat(\phi)+\kat(\gamma))\cap \Dom(\psi)].$$
        The latter contains
        $$\psi[\kat(\phi)\cap \Dom(\psi)]+\psi[\kat(\gamma)\cap \Dom(\psi)]=\kat(\psi\phi+\psi\gamma).$$
        Observe that, when at least one between $\kat(\gamma)$ or $\kat(\phi)$ is contained in the domain of $\psi$, the two katakernels coincide and also the domains. In this case, taken $a\in \Dom(\psi\gamma+\phi\gamma)$, then 
        $$(\psi\phi+\psi\gamma)[a]=\psi\phi[a]+\psi\gamma[a]$$
        and 
        $$\psi(\phi+\gamma)[a]=\psi[(\phi[a]+\psi[a])\cap \Dom(\psi)]$$
        that is equal to $\psi[\phi[a]\cap \Dom(\psi)]+\psi[\gamma[a]]$ that coincides with the first one.\\
        For right distributivity, observe that 
        $$\Dom((\gamma+\phi)\psi)=\{a\in \Dom(\psi):\ \psi[a]\leq (\Dom(\gamma)\cap \Dom(\delta))+\kat(\psi)\}$$
        while 
        $$\Dom(\gamma\psi+\phi\psi)=\{a\in \Dom(\psi):\ \psi[a]\leq (\Dom(\gamma)+\kat(\psi))\cap (\Dom(\phi)+\kat(\psi)).$$
        Obviously, the second contains the first, but the converse is not always true. The katakernels are respectively $\kat(\gamma\psi+\phi\psi)=\gamma[\kat(\psi)\cap \Dom(\gamma)]+\phi[\kat(\psi)\cap \Dom(\phi)]$ and $(\gamma+\phi)\psi[0]=(\gamma+\phi)[\kat(\psi)\cap \Dom(\gamma)\cap \Dom(\phi)]$. The first obviously contains the second, but, in general,  the vice versa is not true.\\
        If we suppose that $\kat(\psi)\leq \Dom(\phi)$ or $\kat(\psi)\leq \Dom(\gamma)$, then the two domains and katakernels coincide.\\
        Suppose $\Dom(\gamma)\supseteq \kat(\psi)$, then the distributivity holds up to a finite error since, given $a$ in the domain of $\gamma\psi+\phi\psi$, then 
        $$(\gamma+\phi)\psi[a]=\{c=c_1+c_2:\ \exists b\ (b,c_1)\in \gamma \wedge (b,c_2)\in \phi\wedge (a,b)\in \psi\}$$
        and 
        $$(\gamma\psi+\phi\psi)[a]=\{c=c_1+c_2:\ \exists b_1,b_2 (b_1,c_1)\in \gamma \wedge (b_2,c_2)\in \phi\wedge (a,b_i)\in \psi\}.$$
        Since $a\in \psi^{-1}[\Dom(\gamma)]$, then $\psi[a]\leq \Dom(\gamma)+\kat(\psi)\leq \Dom(\gamma)$ and so we can define $y\in \gamma[b_2]$ then $c_2+y\in (\gamma+\phi)\psi[a]$ and $c_1-y\in \gamma[b_1-b_2]\leq \gamma[\kat(\psi)]$. This completes the proof.
    \end{itemize}
\end{proof}
To define the structure of a ring, we introduce an equivalence relation on $\mathcal{F}-\End(A)$.
\begin{defn}
    $\phi\sim \psi$ if $(\phi-\psi)[\Dom(\phi)\cap \Dom(\psi)]$ is finite. In the case of endogenies, the latter coincide with the usual equivalence relation.
\end{defn}
Working with quasi-endomorphisms, we are able to define restriction-corestriction also for an almost invariant subgroup.
\begin{lemma}
    Let $\phi$ be a quasi-endomorphism of $A$ and $B$ a subgroup of $A$ almost $\phi$-invariant. Then, the \emph{restriction-corestriction} $\phi_B$ of $\phi$ to $B$ is the quasi-endomorphism on $B$ that sends $x$ to $\phi[x]\cap B$.  
\end{lemma}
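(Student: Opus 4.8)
The plan is to verify directly the three clauses in the definition of a quasi-endomorphism of $B$ for $\phi_B = \phi\cap(B\times B)$: that it is an additive subgroup of $B\times B$, that $\kat(\phi_B)$ is finite, and that $\Dom(\phi_B)$ has finite index in $B$. The first is immediate, $\phi_B$ being the intersection of the two subgroups $\phi$ and $B\times B$ of $A\times A$; the second is also immediate, since $\kat(\phi_B)=\{b\in B:(0,b)\in\phi\}=\kat(\phi)\cap B$ is finite because $\kat(\phi)$ is. So the whole content is the statement about the domain, and this is the one place where the hypothesis that $B$ is almost $\phi$-invariant gets used.

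First I would unwind the domain. By definition $\Dom(\phi_B)=\{b\in B:\exists b'\in B,\ (b,b')\in\phi\}\subseteq \Dom(\phi)\cap B$, and since $\Dom(\phi)$ has finite index in $A$ the subgroup $\Dom(\phi)\cap B$ already has finite index in $B$; hence it suffices to show $\Dom(\phi_B)$ has finite index in $\Dom(\phi)\cap B$. For $b\in\Dom(\phi)\cap B$ the set $\phi[b]$ is a coset of $\kat(\phi)$, and I would record the elementary fact that $\phi[b]\cap B\neq\emptyset$ if and only if $\phi[b]\subseteq B+\kat(\phi)$ (the nontrivial direction: if $\phi[b]=c+\kat(\phi)\subseteq B+\kat(\phi)$, write $c=b''+k$ with $b''\in B$, $k\in\kat(\phi)$; then $b''\in\phi[b]\cap B$).

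Next, set $\phi[B]:=\bigcup_{b\in\Dom(\phi)\cap B}\phi[b]$, a subgroup of $A$ containing $\kat(\phi)$ (take $b=0$), so that $b\mapsto\phi[b]$ is a surjective homomorphism $\overline{\phi}\colon \Dom(\phi)\cap B\twoheadrightarrow \phi[B]/\kat(\phi)$. Composing with the quotient onto $\phi[B]/\big(\phi[B]\cap(B+\kat(\phi))\big)$ — which is legitimate since $\kat(\phi)\subseteq\phi[B]\cap(B+\kat(\phi))$ — the observation above identifies $\Dom(\phi_B)$ with the kernel of this surjection, so that $[\Dom(\phi)\cap B:\Dom(\phi_B)]=[\phi[B]:\phi[B]\cap(B+\kat(\phi))]$. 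Now almost $\phi$-invariance of $B$ means $\phi[B]\cap B$ has finite index in $\phi[B]$; since $\phi[B]\cap B\subseteq \phi[B]\cap(B+\kat(\phi))$, a fortiori $\phi[B]\cap(B+\kat(\phi))$ has finite index in $\phi[B]$. Combining with the reduction of the previous paragraph, $\Dom(\phi_B)$ has finite index in $B$, which finishes the proof.

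The only mild obstacle is the coset bookkeeping, i.e.\ keeping straight the difference between ``$\phi[b]$ meets $B$'' and ``$\phi[b]\subseteq B+\kat(\phi)$''; once that is settled the argument is a routine kernel/index computation together with the hypothesis. It is also worth double-checking at the start that ``$B$ almost $\phi$-invariant'' is to be read as ``$\phi[B]\cap B$ of finite index in $\phi[B]$'', with $\phi[B]$ the union of the finite sets $\phi[b]$ over $b\in\Dom(\phi)\cap B$, consistently with the definition of almost $\Gamma$-invariance in Section~2.
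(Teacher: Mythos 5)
Your proposal is correct and follows essentially the same route as the paper: reduce to showing $\Dom(\phi_B)$ has finite index in $\Dom(\phi)\cap B$, observe that $b\in\Dom(\phi_B)$ iff $\phi[b]\subseteq B+\kat(\phi)$, and use almost invariance to see that $(B+\kat(\phi))\cap\phi[B]$ has finite index in $\phi[B]$ and pull back. Your kernel/index computation simply makes precise the paper's one-line "therefore, the counter-image is of finite index."
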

\begin{proof}
    Let $\phi'$ be the restriction-corestriction. To prove that $\phi_B$ is in $\mathcal{F}-\End(B)$, it is sufficient to prove that:
    \begin{itemize}
        \item It is well defined. Indeed, if $\phi[b]\subseteq B+\kat(\phi)$, there exists $a\in \phi[b]$ such that $a\in B$ and we can define $\phi[b]\cap B$ as $a+\kat(\phi)\cap B$.
        \item The katakernel of $\phi'$ is finite since it is equal to $\kat(\phi)\cap B$.
        \item The domain of $\phi$ almost contains $B$ since $\Dom(\phi)$ is of finite index in $A$. We verify that $B\cap \phi^{-1}[B]$ is of finite index in $B\cap \Dom(\phi)$. $(B+\kat(\phi))\cap\phi[B]$ is, by hypothesis of almost invariance, of finite index in $\phi[B]$. Therefore, the counter-image is of finite index in $B$.
        \item $\phi$ is a quasi-endomorphism, obviously by hypothesis.
    \end{itemize}
\end{proof}
Our aim, as in the case of endogenies, is to find a finite subgroup $A_0$ in $A$ such that the function, that we call projection, between $\Delta\rightarrow \Delta_{A/A_0}$ and $\Gamma\rightarrow \Gamma_{A/A_0}$ that sends $\phi$ in $\phi_{A/A_0}$ with $\phi_{A/A_0}[a+A_0]=\phi_{A/A_0}[a+A_0\cap \Dom(\phi)]+A_0$ for $a\in \Dom(\phi)A_0/A_0$ is well defined and $\Delta_{A/A_0},\Gamma_{A/A_0}$ are as essentially large as $\Gamma,\Delta$.\\
We determine the properties of $A_0$ with the following lemma.
\begin{lemma}\label{Lemma 19}
    Let $A$ be a definable abelian group, $\Delta$ a near-ring of quasi-endomorphisms on $A$, and $B\leq A$ a finite subgroup of $A$. Then, the projection $\pi$ is well defined and a homomorphism of near-rings iff $B$ is $\Delta$-invariant. Moreover, this projection is invariant under the equivalence relation $\sim$ \hbox{i.e.} $\phi\sim\psi$ iff $\pi(\phi)\sim\pi(\psi)$. This implies obviously that $\Delta$ is as essentially large as $\Delta_{A/B}$.
\end{lemma}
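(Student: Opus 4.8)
The plan is to pin down the map $\pi$ concretely and then read off all three assertions from that description. Writing $q\colon A\to A/B$ for the quotient map, the first step is to note that for $\phi\in\Delta$ the quasi-endomorphism $\pi(\phi)=\phi_{A/B}$ is exactly the pushforward $\overline{\phi}:=(q\times q)(\phi)$ of the subgroup $\phi\le A\times A$ to $(A/B)\times(A/B)$: its domain is $q(\Dom(\phi))$, which is of finite index in $A/B$ because $\Dom(\phi)$ is of finite index in $A$, and its katakernel is $q(\phi[B\cap\Dom(\phi)])$, which is finite because both $B$ and $\kat(\phi)$ are. Hence $\pi(\phi)\in\mathcal{F}-\End(A/B)$ in all cases, and $\kat(\pi(\phi))=0$ precisely when $\phi[B\cap\Dom(\phi)]\le B$; bearing in mind that $\Delta$-invariance of $B$ also entails $B\le\Dom(\delta)$ for every $\delta\in\Delta$, the condition ``$B$ is $\Delta$-invariant'' is thus equivalent to ``every $\pi(\delta)$ has trivial katakernel and total domain'', i.e. is an honest partial endomorphism of $A/B$.

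Next I would prove the biconditional. For the implication ``$B$ $\Delta$-invariant $\Rightarrow$ $\pi$ is a well-defined homomorphism of near-rings'': that $\pi(0)=0$ and $\pi(Id)=Id$ is immediate (the diagonal of $A$ maps onto the diagonal of $A/B$), so the content is $\overline{\phi+\psi}=\overline{\phi}+\overline{\psi}$ and $\overline{\phi\circ\psi}=\overline{\phi}\circ\overline{\psi}$ as subgroups of $(A/B)\times(A/B)$. In each case the inclusion $\subseteq$ is formal. For $\supseteq$ one lifts a witnessing tuple from $A/B$ back to $A$: the two lifts of the coordinate that has to be shared differ by some $b\in B\subseteq\Dom(\phi)$ (resp. $\Dom(\psi)$), and the resulting discrepancy in the output is $\phi[b]$ (resp. $\psi[b]$), which lies in $B$ by invariance and so vanishes modulo $B$; translating the lifted tuple by $(b,\phi[b])$ then lands it back inside the graph. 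The domains match because $B\le\Dom(\phi)\cap\Dom(\psi)$ gives $q(\Dom(\phi)\cap\Dom(\psi))=q(\Dom(\phi))\cap q(\Dom(\psi))$, and similarly $\kat(\overline{\phi}\circ\overline{\psi})=\overline{\phi}[\kat(\overline{\psi})]=q(\phi[\kat(\psi)])=\kat(\overline{\phi\circ\psi})$. This shows $\pi$ is a homomorphism of near-rings.

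For the converse I would argue by contraposition: if $B$ is not $\Delta$-invariant, then some $\overline{\delta}$ has $\kat(\overline{\delta})=q(\delta[B\cap\Dom(\delta)])\ne 0$ (or non-total domain), and this multivaluedness introduced by $q$ is incompatible with the operations: composing or adding such a $\overline{\delta}$ with a suitable partner yields a relation strictly larger than the pushforward of the corresponding element of $\Delta$ (in an extreme case the ``full'' relation $A/B\times A/B$, where $\pi$ ought to give, say, $Id$), so $\pi$ is not a homomorphism. For the ``$\sim$'' clause, note that since $B$ is finite a subset of $A$ is finite iff its image under $q$ is finite; combined with $\pi(\phi)-\pi(\psi)=\pi(\phi-\psi)=\overline{\phi-\psi}$ — using that $\pi$ is additive, that $q$ is a homomorphism, and that $B\le\Dom(\phi)\cap\Dom(\psi)$ makes the two domains agree — this gives $\phi\sim\psi\iff\pi(\phi)\sim\pi(\psi)$. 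Finally $\pi$ is onto $\Delta_{A/B}=\langle\,\overline{\delta}:\delta\in\Delta\,\rangle$, being a homomorphism whose image contains the generators, so the induced map $\Delta/{\sim}\to\Delta_{A/B}/{\sim}$ is a definable bijection between invariant sets, whence $\Delta_{A/B}$ is as essentially large as $\Delta$.

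The step I expect to cost the most care is the composition case of the forward implication: getting the two domains and the two katakernels to match \emph{exactly} rather than just up to finite index, and being precise that the sole obstruction is $\delta[b]\notin B$ for some $b\in B$, so that the converse is genuinely sharp rather than a mere sufficient-condition statement. A secondary subtlety worth stating carefully up front is exactly what ``$B$ is $\Delta$-invariant'' means for quasi-endomorphisms (namely $B\le\Dom(\delta)$ and $\delta[B]\le B$ for all $\delta$), since both parts of this are used in the proof.
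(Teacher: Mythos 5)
Your reading of $\pi(\phi)$ as the pushforward $(q\times q)(\phi)$ is the same computation the paper does in coset notation, and the treatment of the $\sim$-clause (finiteness of $B$ plus additivity of $\pi$, reducing to $\phi\sim 0\iff\pi(\phi)\sim 0$) matches the paper's argument exactly. You are in fact more thorough than the paper on the forward direction: the paper only verifies that each $\pi(\phi)$ is additive on $A/B$ and never explicitly checks $\pi(\phi+\psi)=\pi(\phi)+\pi(\psi)$ or $\pi(\phi\circ\psi)=\pi(\phi)\circ\pi(\psi)$, whereas you do, including the domains and katakernels.

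Two points need attention. First, the definitional one you flag at the end is not merely cosmetic: the paper's notion of $\Delta$-invariance for a quasi-endomorphism is $\delta[B\cap\Dom(\delta)]\leq B$, \emph{without} requiring $B\leq\Dom(\delta)$, and this weaker condition is what the paper's proof establishes as equivalent to well-definedness. Your exact matching of domains, $q(\Dom(\phi)\cap\Dom(\psi))=q(\Dom(\phi))\cap q(\Dom(\psi))$, and of katakernels in the composition case genuinely uses $B\leq\Dom(\delta)$, so as written you prove the biconditional for a strictly stronger hypothesis than the paper's; either adopt the paper's convention and redo the matching up to that convention, or state explicitly that you are strengthening the invariance hypothesis (which does hold in the paper's applications, where $B$ is a sum of katakernels contained in all domains by flat commutation). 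Second, and more seriously, your necessity direction is not a proof. Because you declare $\pi(\phi)\in\mathcal{F}-\End(A/B)$ unconditionally, the whole burden of ``only if'' falls on showing that a nontrivial katakernel of some $\overline{\delta}$ breaks the homomorphism property, and the claim that ``composing or adding such a $\overline{\delta}$ with a suitable partner yields a relation strictly larger than the pushforward'' is asserted, not demonstrated (and it is not obvious which partner works in general). The paper avoids this entirely by reading ``well defined'' as ``$\pi(\phi)$ is a single-valued partial endomorphism of $A/B$'': then $\pi(\phi)(0+B)=q(\phi[B\cap\Dom(\phi)])$ must equal $0+B$, which is exactly $\phi[B\cap\Dom(\phi)]\leq B$. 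You should either adopt that reading or actually exhibit the failure of compatibility with the operations.
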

\begin{proof}
   $\pi$ is well defined if $\pi(\phi)$ is a partial endomorphism of $A/A_0$. $\pi(\phi)$ is well defined iff $\phi[(a+A_0)\cap \Dom(\phi))]+A_0=\phi[(b+A_0)\cap \Dom(\phi)]+A_0$ if $a,b\in \Dom(\phi)$ and $a+A_0=b+A_0$.\\
  $\phi[(a+A_0)\cap \Dom(\phi)]+A_0=\phi[a]+\phi[A_0\cap \Dom(\phi)]+A_0$ while $\phi[(b+A_0)\cap \Dom(\phi)]=\phi[b]+\phi[A_0\cap \Dom(\phi)]$. By additivity $\phi[a]-\phi[b]\leq \phi[A_0\cap \Dom(\phi)]$. If $\phi[\Dom(\phi)\cap A_0]\subseteq A_0$, the conclusion follows. This condition is also necessary since, to be a homomorphism, $(\pi\phi)(0)\leq A_0$ and so $\phi[A_0\cap \Dom(\phi)]\leq A_0$.\\
  $\pi$ is an homorphism: given $\pi(\phi)(a+b)=\phi[(a+b+A_0)\cap \Dom(\phi)]$. Since $a,b\in \Dom(\phi)+A_0/A_0$, there exists $a',b'\in \Dom(\phi)$ such that $a+A_0=a'+A_0$ and $b+A_0=b'+A_0$. Therefore, this is equal to $\phi[a']+\phi[b']+A_0$. Nevertheless, $\phi[a']+A_0=\phi[(a'+A_0)\cap \Dom(\phi)]+A_0=\phi[(a+A_0)\cap \Dom(\phi)]$ and this completes the first part of the proof.\\
  To prove the second part, it is sufficient, since $\pi$ is a homomorphism of near-rings, to prove that $\phi\sim 0$ iff $\pi\phi\sim 0$. $\phi\sim 0$ iff $\operatorname{Im}(\phi)$ is finite but $\operatorname{Im}(\pi\phi)=A_0+\operatorname{Im}(\phi)$ and, since $A_0$ is finite, $A_0+\operatorname{Im}(\phi)$ is finite iff $\operatorname{Im}(\phi)$ is finite.
\end{proof}
In particular, we would like to take $A_0=\Kat(\Gamma,\Delta)$. Consequently, we must have that $\Kat(\Gamma,\Delta)$ is finite and $\phi[\Kat(\Gamma,\Delta)\cap \Dom(\phi)]\leq \Kat(\Gamma,\Delta)$ for every $\phi\in \Gamma\cup\Delta$. This, in general, is not clear if the two near-rings only sharp commute. Therefore, we introduce a new concept: the flat commutation.
\begin{defn}
    Given $\delta,\gamma$ two quasi-endomorphism of $A$, then $\delta$ and $\gamma$ \emph{flat commute} if:
    \begin{itemize}
        \item $\delta[\Dom(\gamma)\cap \Dom(\delta)]\leq \Dom(\gamma)$ and $\gamma[\Dom(\delta)\cap \Dom(\gamma)]\leq \Dom(\delta)$;
        \item $(\delta\gamma-\gamma\delta)[\Dom(\delta)\cap \Dom(\gamma)]\leq \kat(\delta)+\kat(\gamma)$. Observe that $\delta\gamma$ and $\gamma\delta$ are defined since $a\in \Dom(\delta)\cap \Dom(\gamma)$ so $\gamma[a]\leq \Dom(\delta)$ and $\delta[a]\leq \Dom(\gamma)$ by first hypothesis.
    \end{itemize}
   Let $X$ be a subset of $\mathcal{F}-\End(A)$. We define $C^{\flat}(X)=\{\phi\in \mathcal{F}-\End(A):\ \forall x\in X\ \phi\text{ and $x$ flat commute}\}$.
\end{defn}
We verify that this new type of commutativity behaves well for sum, product, and katakernel.
\begin{lemma}\label{Lemma 13}
    Given $\Delta$ a near-ring of quasi-endomorphism on $A$, then:
    \begin{itemize}
        \item For any flat commuting quasi-endomorphisms $\gamma,\delta$, $$\gamma[\kat(\delta)],\delta[\kat(\gamma)]\leq \kat(\gamma)+\kat(\delta).$$
        \item The set $C^{\flat}(\delta)$ is a near subring.
    \end{itemize}
\end{lemma}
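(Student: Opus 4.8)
The plan is to follow the proof of Lemma~\ref{Lemma 3} almost verbatim; the only genuinely new ingredient is the bookkeeping of domains, and the place where the first clause of flat commutation does its work. For the first point I would begin by evaluating clause~(i) of flat commutation at $0$: since $0\in\Dom(\delta)\cap\Dom(\gamma)$, one gets $\kat(\delta)=\delta[0]\subseteq\Dom(\gamma)$ and symmetrically $\kat(\gamma)\subseteq\Dom(\delta)$, so $\gamma[\kat(\delta)]$ and $\delta[\kat(\gamma)]$ are defined and equal to $(\gamma\delta)[0]$ and $(\delta\gamma)[0]$ respectively. Then, exactly as in Lemma~\ref{Lemma 3}, since $0\in(\delta\gamma)[0]$ (because $0\in\kat(\gamma)$ and $0\in\delta[0]=\kat(\delta)$), I would write
$$\gamma[\kat(\delta)]=(\gamma\delta)[0]-0\subseteq(\gamma\delta)[0]-(\delta\gamma)[0]=(\gamma\delta-\delta\gamma)[0]\subseteq\kat(\gamma)+\kat(\delta)$$
by clause~(ii), and the bound on $\delta[\kat(\gamma)]$ follows by symmetry.

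For the second point, since $+$ and $\cdot$ are already globally associative on $\mathcal{F}-\End(A)$ with units $0$ and $Id$, it suffices to check that $C^{\flat}(\delta)$ contains $0$ and $Id$ --- both routine, as both flat commute with $\delta$ (their images and katakernels are trivial or equal to the ambient resp.\ relevant domains) --- and is closed under $+$ and $\cdot$. Given $\phi,\psi\in C^{\flat}(\delta)$, closure under $+$ is the easy case: clause~(i) for $\phi+\psi$ and $\delta$ follows from clause~(i) for $\phi,\delta$ and for $\psi,\delta$, since for $a\in\Dom(\delta)\cap\Dom(\phi)\cap\Dom(\psi)$ one gets $\phi[a],\psi[a]\subseteq\Dom(\delta)$ hence $(\phi+\psi)[a]\subseteq\Dom(\delta)$, and $\delta[a]\subseteq\Dom(\phi)\cap\Dom(\psi)=\Dom(\phi+\psi)$. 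These same containments force $\delta(\phi+\psi)$ and $(\phi+\psi)\delta$ to be genuinely additive on this domain, so that clause~(ii) reduces to $(\phi\delta-\delta\phi)[a]+(\psi\delta-\delta\psi)[a]\subseteq\kat(\phi)+\kat(\psi)+\kat(\delta)=\kat(\phi+\psi)+\kat(\delta)$.

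Closure under $\cdot$ is where the real effort goes. For clause~(i) of $\phi\psi$ and $\delta$, the containment $(\phi\psi)[\Dom(\delta)\cap\Dom(\phi\psi)]\subseteq\Dom(\delta)$ is a two-step application of clause~(i) (first for $\psi,\delta$, then for $\phi,\delta$); the reverse containment $\delta[\Dom(\phi\psi)\cap\Dom(\delta)]\subseteq\Dom(\phi\psi)$ is the hard part, and I would prove it by taking $b\in\delta[a]$, noting $b\in\Dom(\psi)$ by clause~(i) for $\psi,\delta$, and then showing $\psi[b]$ meets $\Dom(\phi)$: one has $\psi[b]\subseteq\psi\delta[a]$, and by clause~(ii) $\psi\delta[a]$ lies in a fixed $(\kat(\psi)+\kat(\delta))$-neighbourhood of some $c\in\delta[\psi[a]\cap\Dom(\phi)]\subseteq\Dom(\phi)$, while $\kat(\delta)\subseteq\Dom(\phi)$ (clause~(i) for $\phi,\delta$ at $0$); since $\psi[b]$ is a single $\kat(\psi)$-coset sitting inside this neighbourhood, it must actually lie in $\Dom(\phi)+\kat(\psi)$, that is, it meets $\Dom(\phi)$. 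For clause~(ii) I would insert the intermediate term $\phi\delta\psi$ --- now legitimate --- and write, up to a finite error, $(\phi\psi)\delta-\delta(\phi\psi)=\phi(\psi\delta-\delta\psi)+(\phi\delta-\delta\phi)\psi$; the second summand is pointwise inside $\kat(\phi)+\kat(\delta)$, and for the first, applying $\phi$ to $(\psi\delta-\delta\psi)[a]\subseteq\kat(\psi)+\kat(\delta)$ gives, using the first point together with $\phi[\kat(\psi)]=\kat(\phi\psi)$, something inside $\kat(\phi\psi)+\kat(\phi)+\kat(\delta)$; finally $\kat(\phi)=\phi[0]\subseteq\phi[\kat(\psi)]=\kat(\phi\psi)$, so everything lands in $\kat(\phi\psi)+\kat(\delta)$.

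The hard part, then, is precisely this domain bookkeeping in the composition case: where Lemma~\ref{Lemma 3} could chase images freely because every endogeny is total, here one has to keep feeding $0$ into clause~(i) of flat commutation to place the various katakernels inside the relevant domains, and use a coset/finite-index argument to promote ``contained in a finite neighbourhood of a subgroup'' to ``meets that subgroup''. Once this is settled, the same argument applies verbatim to $C^{\flat}(X)$ for an arbitrary subset $X\subseteq\mathcal{F}-\End(A)$.
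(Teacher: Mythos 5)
Your argument tracks the paper's proof almost line by line: evaluating the first flat-commutation clause at $0$ for the first point, the same two domain containments plus clause (ii) for closure under sum, and the same decomposition via the intermediate term $\phi\delta\psi$ for closure under composition, with the coset-tightening step for the harder domain direction matching the paper's inclusion-chasing. The only spot where you are appreciably lighter than the paper is also the only place it works hard: the second summand is not literally ``pointwise inside $\kat(\phi)+\kat(\delta)$''. Unwinding the set difference $\phi\delta\psi[a]-\delta\phi\psi[a]$ through a common representative $q\in\psi[a]\cap\Dom(\phi)\cap\Dom(\delta)$ (which lies in $\Dom(\phi\delta)\cap\Dom(\delta\phi)$) picks up, on top of $(\phi\delta-\delta\phi)[q]$, the composed katakernels $\kat(\phi\delta\psi)$ and $\kat(\delta\phi\psi)$; routing those into $\kat(\phi\psi)+\kat(\delta)$ still requires the modular law, the first point of the lemma, and $\kat(\phi)\leq\kat(\phi\psi)$, exactly as you already did for the first summand, and that bookkeeping is what the paper's longest display is doing. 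With that patch the proof is correct and follows the paper's route; note the clause-(ii) target for $\phi\psi$ and $\delta$ is $\kat(\phi\psi)+\kat(\delta)$, so $\kat(\phi)+\kat(\delta)$ alone would not have been the right aggregate bound for the composition case.
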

\begin{proof}
    For the first point, let $\delta[\kat(\gamma)]$. Then,  
    $$\delta[\kat(\gamma)]=\delta[\kat(\gamma)]-0\leq \delta\gamma-\gamma\delta[0]\leq \kat(\gamma)+\kat(\delta).$$
    Since $\mathcal{F}-\End(A)$ is a near-ring, it is sufficient to prove that $C^{\flat}(\Delta)$ is closed for sum, opposite, and product.\\
    Let $\phi,\psi\in C^{\flat}(\delta)$. We start verifying the closure for the sum.
    \begin{itemize}
        \item $(\phi+\psi)[\Dom(\delta)\cap \Dom(\phi)\cap \Dom(\psi)]$ is contained into 
        $$\phi[\Dom(\delta)\cap \Dom(\phi)]+\psi[\Dom(\delta)\cap \Dom(\psi)]\leq \Dom(\delta).$$
        For the vice versa, $\delta[\Dom(\psi)\cap \Dom(\delta)\cap \Dom(\phi)]$ is contained in $\Dom(\phi)\cap \Dom(\psi)$ by hypothesis.
        \item Given $a\in \Dom(\phi+\psi)\cap \Dom(\delta)$, then 
        $$(\phi+\psi)\delta[a]-(\psi+\phi)\delta[a]\leq \kat(\phi)+\kat(\psi)+\kat(\delta).$$
        We can apply both left and right distributivity since $\Dom((\phi+\psi)\delta)\cap \Dom(\phi\delta+\psi\circ \delta)$ contains, by flat commutation, $\Dom(\delta)\cap \Dom(\phi)\cap \Dom(\psi)$. The same holds for right distributivity. Left distributivity holds up to the term $\phi[\kat(\delta)]$ that, by flat commutation, is contained into $\kat(\phi)+\kat(\delta)$.\\
        Therefore,
        $$(\phi+\psi)\delta[a]-\delta(\phi+\psi)[a]\leq \phi\delta[a]+\psi\delta[a]-\delta\phi[a]-\delta\psi[a]+\kat(\delta)+\kat(\phi)$$
        that is contained, by flat commutation, into $\kat(\phi)+\kat(\psi)+\kat(\delta)$. This completes the proof. 
    \end{itemize}
    We prove that $C^{\flat}(\delta)$ is closed for produt.
    \begin{itemize}
        \item We verify that $(\phi\psi)[\Dom(\delta)]\leq \Dom(\delta)$. This is 
        $$\phi[\psi[\Dom(\delta)\cap \Dom(\psi)]\cap \Dom(\phi)]\leq \phi[\Dom(\delta)\cap \Dom(\phi)]\leq \Dom(\delta).$$
        For the other direction, we verify that $\delta[\Dom(\phi\psi)]\leq \Dom(\phi\psi)=\psi^{-1}[\Dom(\phi)]$. Applying $\delta$,  
        $$\delta[\psi^{-1}[\Dom(\phi)]]=\{\delta[a]:\ \psi[a]\in \Dom(\phi)\}.$$
        It belongs to $\Dom(\phi\psi)$ iff $\psi(\delta[a])\leq \Dom(\phi)+\kat(\psi)$. Since $a\in \Dom(\psi)\cap \Dom(\delta)$ and by flat commutation, $\psi(\delta[a])\leq \delta\psi[a]+\kat(\psi)$ but $\psi[a]\leq \kat(\psi)+\Dom(\phi)$ by hypothesis. Therefore, $$\delta[\psi[a]]\leq \delta[(\Dom(\phi)+\kat(\psi))\cap \Dom(\delta)].$$
        Since $\kat(\psi)\leq \Dom(\delta)$, then by flat commutation, $$\delta[(\Dom(\phi)+\kat(\psi))\cap \Dom(\delta)]=\delta[(\Dom(\phi)\cap \Dom(\delta)]+\kat(\psi)\leq \Dom(\phi)+\kat(\psi).$$
        This completes the proof.
        \item For the commutation, we study $\phi\psi\delta[a]-\delta\phi\psi[a]$ for $a\in \Dom(\delta)\cap \Dom(\phi\psi)$. We add and subtract $\phi\delta\psi[a]$ that is defined since $a\in \Dom(\psi)\cap \Dom(\delta)$ and so $\psi[a]\leq \Dom(\delta)$. Moreover, $\psi[a]\leq \Dom(\phi)+\kat(\phi)$ and so $\delta\psi[a]\leq \delta[\Dom(\phi)\cap \Dom(\delta)]+\delta[\kat(\psi)]$ since $\kat(\psi)\subseteq \Dom(\delta)$. Therefore, it is contained in $\Dom(\phi)+\kat(\psi)$ as we want. We reduce $(\phi\psi\delta-\phi\delta\psi)[a]$. $a\in \Dom(\psi)\cap \Dom(\delta)$ and the domain of $\phi(\psi\delta-\delta\psi)$ contains the domain of $(\phi\psi\delta-\psi\delta\phi)$. This implies that 
        $$(\phi\psi\delta-\psi\phi\delta)[a]=\phi[\psi\delta[a]\cap \Dom(\phi)+\delta\psi[a]\cap \Dom(\phi)]\leq \phi[(\psi\delta-\delta\psi[a])\cap \Dom(\phi)]$$
        that is $\phi(\psi\delta-\delta\psi)[a]$. By flat commutation of $\psi$ and $\delta$, this is contained into $\kat(\psi)+\kat(\delta)$. By flat commutation between $\delta$ and $\phi$, $\kat(\delta)\leq \Dom(\phi)$ and so 
        $$\phi[(\kat(\psi)+\kat(\delta))\cap \Dom(\phi)]\leq \phi[\kat(\psi)\cap \Dom(\phi)]+\kat(\phi)+\kat(\delta)\leq \kat(\delta)+\kat(\phi\psi).$$
        For the term $\phi\delta\psi[a]-\delta\phi\psi[a]$, 
        $$\phi\delta\psi[a]-\delta\phi\psi[a]=\phi\delta[\psi[a]\cap \Dom(\phi\delta)]-\delta\phi[\psi[a]\cap \Dom(\delta\phi)].$$
        In addiction, $a\leq \Dom(\delta)$ and by commutation $\psi[a]\leq \Dom(\delta)$. Therefore, $\pi[a]\cap \Dom(\delta\phi)=\pi[a]\cap \Dom(\phi)$.\\
        On the other hand, $\psi[a]\leq \Dom(\phi)+\kat(\psi)$ and so $\psi[a]=l+\kat(\psi)$ with $l\in \Dom(\phi)$. This implies that $\delta[l]\leq \Dom(\phi)$, and then 
        $$\phi\delta\psi[a]=\phi\delta[l+(\kat(\psi)\cap \Dom(\phi\delta))]=\phi\delta[l]+\phi\delta[\kat(\psi)\cap \Dom(\phi\delta)].$$
        Moreover,
        $$\delta[\kat(\psi)\cap \Dom(\phi\delta)]\leq (\kat(\psi)+\kat(\delta))\cap \Dom(\phi)+\kat(\psi)$$
        that is contained into $\phi[\Dom(\phi)\cap \kat(\psi)]=\kat(\phi\psi)$.\\
        It remains to evaluate $\phi\delta[l]-\phi\psi\delta[a]=\phi[\delta[l]-\psi\delta[a]\cap \Dom(\phi)]$. This is equal to 
        $$\phi[(\delta[l]-\psi\delta[a])\cap \Dom(\phi)]\leq \phi[(\delta\psi[a]-\psi\delta[a])\cap \Dom(\phi)]=\phi(\delta\psi-\psi\delta)[a]$$
        that by flat commutation is contained in 
        $$\phi[\kat(\psi)\cap \Dom(\phi)]+\phi[\kat(\delta)]\leq \kat(\phi\psi)+\kat(\delta).$$
    \end{itemize}
This proves that $C^{\flat}(\Delta)$ is a near-subring.
\end{proof}
In the following Lemma, we prove two important properties of flat commutation.
\begin{lemma}\label{Lemma 15}
   Let $\Gamma,\Delta$ be two flat commuting near-rings of quasi-endomorphism. Then,
    \begin{itemize}
        \item $\Kat(\Gamma,\Delta)$ is $(\Gamma,\Delta)$-invariant;
        \item Given $\delta\in \Delta$ and $A_0$ a $\Gamma$-invariant subgroup containing $\Kat(\Delta)$, then $\delta^{-1}[A_0]$ is $\Gamma$-invariant.
    \end{itemize}
\end{lemma}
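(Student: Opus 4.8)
The plan is to mirror Lemma~\ref{Lemma 6} almost verbatim, substituting flat commutation for sharp commutation and Lemma~\ref{Lemma 13} for Lemma~\ref{Lemma 3}; the only genuinely new work is the bookkeeping of domains, which I will carry out using the first clause of the definition of flat commutation (the one asserting $\gamma[\Dom(\delta)\cap\Dom(\gamma)]\leq\Dom(\delta)$ and symmetrically). Two elementary observations will be used throughout. First, since a near-ring is closed under addition and $(\gamma_1+\dots+\gamma_n)[0]=\kat(\gamma_1)+\dots+\kat(\gamma_n)$, every element of $\Kat(\Gamma)$ already lies in a single $\kat(\gamma)$ with $\gamma\in\Gamma$, so $\Kat(\Gamma)=\bigcup_{\gamma\in\Gamma}\kat(\gamma)$, and likewise for $\Delta$. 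Second, if $B$ is $\Gamma$-invariant then $\kat(\gamma)=\gamma[0]\subseteq\gamma[B\cap\Dom(\gamma)]\subseteq B$ for every $\gamma\in\Gamma$; in particular $\kat(\gamma)\subseteq A_0$ for the subgroup $A_0$ of the second point.

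For the first point I would fix $\gamma\in\Gamma$ and $a\in\Kat(\Gamma,\Delta)\cap\Dom(\gamma)$ and, using the first observation, write $a=u+v$ with $u\in\kat(\gamma')$ and $v\in\kat(\delta')$ for some $\gamma'\in\Gamma$, $\delta'\in\Delta$. The domain clause of flat commutation places $\kat(\delta')=\delta'[0]$ inside $\Dom(\gamma)$, so $v\in\Dom(\gamma)$, hence also $u=a-v\in\Dom(\gamma)$, and additivity of $\gamma$ applies: $\gamma[u]\subseteq\gamma[\kat(\gamma')\cap\Dom(\gamma)]=\kat(\gamma\gamma')\subseteq\Kat(\Gamma)$ since $\gamma\gamma'\in\Gamma$, and $\gamma[v]\subseteq\gamma[\kat(\delta')]\subseteq\kat(\gamma)+\kat(\delta')\subseteq\Kat(\Gamma,\Delta)$ by Lemma~\ref{Lemma 13}. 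Hence $\gamma[a]\subseteq\Kat(\Gamma,\Delta)$, and the same computation with the roles of $\Gamma$ and $\Delta$ exchanged handles $\delta\in\Delta$; this gives $(\Gamma,\Delta)$-invariance of $\Kat(\Gamma,\Delta)$.

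For the second point I would first note that $\delta^{-1}[A_0]=\{a\in\Dom(\delta):\delta[a]\subseteq A_0\}$ (because $\kat(\delta)\subseteq\Kat(\Delta)\subseteq A_0$), which is a subgroup of $A$. Given $\gamma\in\Gamma$ and $a\in\delta^{-1}[A_0]\cap\Dom(\gamma)$, flat commutation gives $\gamma[a]\subseteq\Dom(\delta)$ and $\delta[a]\subseteq\Dom(\gamma)$, so together with $\delta[a]\subseteq A_0$ we get $\gamma\delta[a]\subseteq\gamma[A_0\cap\Dom(\gamma)]\subseteq A_0$ by $\Gamma$-invariance of $A_0$. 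Flat commutation again yields $\delta\gamma[a]\subseteq\gamma\delta[a]+\kat(\gamma)+\kat(\delta)$, which lies in $A_0$ by the two observations, and since $\gamma[a]\subseteq\Dom(\delta)$ this reads $\gamma[a]\subseteq\delta^{-1}[A_0]$. Therefore $\delta^{-1}[A_0]$ is $\Gamma$-invariant.

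The only step that requires care is the additivity argument in the first point: one cannot split $a\in\Kat(\Gamma,\Delta)$ and apply $\gamma$ termwise unless both summands lie in $\Dom(\gamma)$, and a priori neither does. The way around this is to observe that it suffices to put \emph{one} summand there --- the $\Delta$-part, via the domain clause of flat commutation --- after which the $\Gamma$-part lands in $\Dom(\gamma)$ automatically, since $\Dom(\gamma)$ is a subgroup and $a$ itself was chosen in it. Beyond that, the argument is a transcription of the proof of Lemma~\ref{Lemma 6}.
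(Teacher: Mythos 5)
Your proposal is correct and follows essentially the same route as the paper: for the first point you decompose an element of $\Kat(\Gamma,\Delta)$ into two single katakernels (using closure of the near-ring under sums), invoke the domain clause of flat commutation to legitimise termwise application of $\gamma$, and handle the two terms via closure under product and Lemma~\ref{Lemma 13} respectively; for the second point you reduce to $\delta\gamma[a]\leq A_0$ exactly as the paper does. Your explicit remark that $\Gamma$-invariance of $A_0$ forces $\kat(\gamma)\leq A_0$ (so that $A_0$ in fact contains the whole bikatakernel) is a small piece of bookkeeping the paper leaves implicit, but it is not a different argument.
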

\begin{proof}
    For the first, observe that, given $a\in \Kat(\Delta)$, there exists $\delta\in \Delta$ such that $a\in \kat(\delta)$. The same holds for $b\in \Kat(\Gamma)$. Given $a+b\in \Kat(\Gamma,\Delta)$, then 
    $$\delta'[a+b\cap \Dom(\delta)]\leq \delta'[(\kat(\delta)+\kat(\gamma))\cap \Dom(\delta)].$$ 
    Since $\kat(\gamma)\leq \Dom(\delta)$, this is equal to 
    $$\delta'[\kat(\delta)\cap \Dom(\delta)]+\kat(\delta')+\kat(\gamma)\leq \Kat(\Gamma,\Delta)$$
    since $\Delta$ is closed for product.\\
    To prove the second point, it is sufficient to verify that $\delta\gamma[\delta^{-1}[A_0]\cap \Dom(\gamma)]\leq A_0$, since $A_0$ contains the bikatakernel. Therefore, $\delta^{-1}[A_0]\leq \Dom(\delta)$, and, applied th flat commutation, $\gamma\delta\delta^{-1}[A_0]\leq \gamma[A_0\cap \Dom(\gamma)]\leq A_0$ by invariance. In particular, $\Kat(\Gamma,\Delta)=A_0$ respects the hypothesis and so $\delta^{-1}[\Kat(\Gamma,\Delta)]$ is $\Gamma$-invariant.\\
\end{proof}
The following result is trivial.
\begin{lemma}
    Let $A$ be a finite-dimensional abelian group, $B$ a definable subset of $A$ and $\phi$ a definable quasi-endomorphism. Then, $\dim(B)=\dim(\phi[B\cap \Dom(\phi)])+\dim(\ker(\phi))$.
\end{lemma}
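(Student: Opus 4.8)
\emph{Plan.} This is a fibration computation with the dimension axioms. First I would replace $B$ by $B\cap\Dom(\phi)$: in every application $B$ is a definable subgroup, and since $\Dom(\phi)$ has finite index in $A$, $B\cap\Dom(\phi)$ has finite index in $B$, whence $\dim(B)=\dim(B\cap\Dom(\phi))$ by the dimension axioms. Next, $\kat(\phi)$ is finite, hence a definable subgroup, so $A/\kat(\phi)$ is interpretable and $a\mapsto\phi[a]$ is a definable map $f\colon\Dom(\phi)\to A/\kat(\phi)$ (sending $a$ to the single coset $\phi[a]$). Let $Y=\{\,\phi[a]:a\in B\cap\Dom(\phi)\,\}$ be the image of the restriction of $f$ to $B\cap\Dom(\phi)$; since the quotient map $A\to A/\kat(\phi)$ has finite fibres, $\dim(Y)=\dim(\phi[B\cap\Dom(\phi)])$.

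Now I would compute the fibres of the restricted map. For $b,b_0\in\Dom(\phi)$ one has $\phi[b]=\phi[b_0]\iff\phi[b-b_0]=\kat(\phi)\iff b-b_0\in\ker(\phi)$, using $\ker(\phi)=\phi^{-1}[\kat(\phi)]$ (an earlier basic lemma) and additivity of $\phi$; so the fibre over $\phi[b_0]$ is $(b_0+\ker(\phi))\cap B$. When $B$ is a subgroup with $\ker(\phi)\subseteq B$ — in particular $B=A$, or any $\phi$-invariant subgroup, which is the setting in which the lemma is used — this fibre is exactly the coset $b_0+\ker(\phi)$, so every fibre has dimension $\dim(\ker(\phi))$. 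Applying both fibration axioms to the surjection $B\cap\Dom(\phi)\to Y$ gives $\dim(B\cap\Dom(\phi))=\dim(Y)+\dim(\ker(\phi))$, and combining with the two reductions above yields exactly $\dim(B)=\dim(\phi[B\cap\Dom(\phi)])+\dim(\ker(\phi))$. Taking $\phi$ total and $B=A$ recovers Lemma~\ref{Lemma 7}.

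\emph{Where care is needed.} The delicate point is the hypothesis on $B$: the displayed identity holds verbatim precisely when $\ker(\phi)\subseteq B$ (with $B$ a subgroup). For a genuinely arbitrary definable subset $B$ neither reduction above is free — $B$ might lie in a single coset of $\Dom(\phi)$ disjoint from $\Dom(\phi)$, making the right-hand side $-\infty$ — and the fibres of $f$ over $B$ are translates of $\ker(\phi)\cap B$ rather than of $\ker(\phi)$, so the clean general statement is $\dim(B\cap\Dom(\phi))=\dim(\phi[B\cap\Dom(\phi)])+\dim(\ker(\phi)\cap B)$, of which the stated form is the special case $\ker(\phi)\subseteq B$. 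Apart from this bookkeeping I expect no further obstacle.
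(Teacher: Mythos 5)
Your proposal is correct and uses essentially the same argument as the paper: the paper's proof is a one-line reduction to the endogeny case (Lemma \ref{Lemma 7}) via the observation that $\dim(B\cap\Dom(\phi))=\dim(B)$, i.e.\ exactly the fibration of $a\mapsto\phi[a]$ over its image that you spell out. Your closing caveat is well taken — the paper's statement and terse proof tacitly assume $B$ is a subgroup meeting $\Dom(\phi)$ with fibres that are full cosets of $\ker(\phi)$, which is how the lemma is actually applied, and your corrected general form $\dim(B\cap\Dom(\phi))=\dim(\phi[B\cap\Dom(\phi)])+\dim(\ker(\phi)\cap B)$ is the honest version of the statement.
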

\begin{proof}
    The proof is equal to the case of endogenies since $\dim(B\cap \Dom(\phi))=\dim(B)$. 
\end{proof}
We finish this section with the definition of global domain.
\begin{defn}
    Given $\Delta$ a near-ring of quasi-endomorphisms, we define 
    $$\Dom(\Delta)=\bigcap_{\delta\in \Delta} \Dom(\delta).$$
    If $\Gamma,\Delta$ are two near-rings of quasi-endomorphisms over $A$, then $\Dom(\Gamma,\Delta)=\Dom(\Delta)\cap \Dom(\Gamma)$.
\end{defn}
The global domain has the following property.
\begin{lemma}\label{Lemma 14}
    Let $\Gamma,\Delta$ be two flat commuting near-rings of quasi-endomorphisms. Then, $\Dom(\Delta)$ is weakly $(\Gamma,\Delta)$-invariant.
\end{lemma}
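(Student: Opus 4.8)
The plan is to check the two halves of ``weakly $(\Gamma,\Delta)$-invariant'' separately: that $\Dom(\Delta)$ is weakly $\Gamma$-invariant and that it is weakly $\Delta$-invariant (recall this is just weak $\Gamma\cup\Delta$-invariance, in analogy with the definition of $(\Gamma,\Delta)$-minimality). Write $D=\Dom(\Delta)=\bigcap_{\delta\in\Delta}\Dom(\delta)$; as an intersection of subgroups it is a subgroup. The whole argument is purely algebraic — no finite-dimensionality is needed.

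For weak $\Gamma$-invariance, fix $\gamma\in\Gamma$ and $a\in D\cap\Dom(\gamma)$. For every $\delta\in\Delta$ we have $a\in\Dom(\delta)\cap\Dom(\gamma)$, so the domain-inclusion clause of flat commutation for the pair $\gamma,\delta$ gives $\gamma[a]\subseteq\Dom(\delta)$. Since this holds for all $\delta\in\Delta$, we get $\gamma[a]\subseteq\bigcap_{\delta\in\Delta}\Dom(\delta)=D$. Hence $\gamma[D\cap\Dom(\gamma)]\subseteq D\subseteq D+\kat(\gamma)$, which is in fact stronger than weak invariance. Note only the first clause of flat commutation is used here.

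For weak $\Delta$-invariance — the delicate part, using only that $\Delta$ is a near-ring — fix $\delta_0\in\Delta$ and $a\in D$; I must show $\delta_0[a]\subseteq D+\kat(\delta_0)$. Since $\Delta$ is closed under multiplication, $\delta\delta_0\in\Delta$, so $a\in D\subseteq\Dom(\delta\delta_0)$; unwinding the definition of the domain of a product (via $\delta_0^{-1}$), this says precisely $\delta_0[a]\subseteq\Dom(\delta)+\kat(\delta_0)$, for every $\delta$. Pick $b\in\delta_0[a]$ and set $T_\delta=\{f\in\kat(\delta_0):\ b-f\in\Dom(\delta)\}$; the previous sentence says each $T_\delta$ is non-empty. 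The key point is that $\Dom(\delta+\delta')=\Dom(\delta)\cap\Dom(\delta')$ and $\delta+\delta'\in\Delta$, whence $T_\delta\cap T_{\delta'}=T_{\delta+\delta'}$ is again one of the (non-empty) sets $T_{\delta''}$. Thus $\{T_\delta\}_{\delta\in\Delta}$ is a family of non-empty subsets of the finite set $\kat(\delta_0)$ closed under intersection; choosing $\delta_1$ with $|T_{\delta_1}|$ minimal, equality $|T_{\delta_1\!+\delta}|=|T_{\delta_1}|$ forces $T_{\delta_1}\subseteq T_\delta$ for all $\delta$, so $T_{\delta_1}\subseteq\bigcap_\delta T_\delta$. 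Any $f$ in this intersection satisfies $b-f\in\bigcap_\delta\Dom(\delta)=D$, i.e.\ $b\in D+\kat(\delta_0)$; since $b$ was an arbitrary point of the coset $\delta_0[a]$ of $\kat(\delta_0)$ and $D+\kat(\delta_0)$ is a union of such cosets, this yields $\delta_0[a]\subseteq D+\kat(\delta_0)$.

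Combining the two parts gives the lemma. I expect the second step to be the real obstacle: the naive computation only shows $\delta_0[a]\subseteq\bigcap_\delta\bigl(\Dom(\delta)+\kat(\delta_0)\bigr)$, and in general this intersection strictly exceeds $\bigcap_\delta\Dom(\delta)+\kat(\delta_0)=D+\kat(\delta_0)$ (the operation ``$+\,\text{finite group}$'' does not commute with infinite intersections, even of finite-index subgroups). What rescues the argument is the finiteness of $\kat(\delta_0)$ together with the fact that $\Dom$ sends finite sums in $\Delta$ to intersections of domains, letting one extract a single uniform witness $f$ and collapse the intersection.
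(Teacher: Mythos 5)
Your proof is correct, and for the delicate half (weak $\Delta$-invariance) it is actually cleaner and more robust than the paper's. The paper's argument splits into two cases according to whether $\Dom(\Delta)$ has finite index in $A$. In the infinite-index case the paper extracts a strictly descending chain $\{\Dom(\delta_i)\}_{i\in I}$ whose intersection is $\Dom(\Delta)$, pushes $\delta$ through the chain using $\delta[\Dom(\delta_i\delta)]\leq\Dom(\delta_i)+\kat(\delta)$, and then pigeonholes on the finite set $\kat(\delta)$ to find a single element $c$ working cofinally in the chain. In the finite-index case it instead picks $\delta_1,\dots,\delta_n$ realising the intersection and works with $\delta'=\sum_i\delta_i$. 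Your version avoids the case split entirely, and it avoids the chain-extraction step, which the paper does not justify and which is not obviously available in general: the family $\{\Dom(\delta)\}_{\delta\in\Delta}$ is a filter base (closed under finite intersection because $\Dom(\delta+\delta')=\Dom(\delta)\cap\Dom(\delta')$), but a filter base need not have a cofinal \emph{chain}. Your key move — defining $T_\delta=\{f\in\kat(\delta_0):\,b-f\in\Dom(\delta)\}$ and observing that $T_\delta\cap T_{\delta'}=T_{\delta+\delta'}$ remains in the family — replaces the chain with the lattice structure of the domains, and the minimality argument on $|T_{\delta_1}|$ produces the uniform witness $f$ in one stroke. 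Both proofs rest on the same two ingredients (finiteness of $\kat(\delta_0)$ and closure of $\Delta$ under $+$ and $\circ$), but yours packages them so that the argument works for an arbitrary near-ring $\Delta$ with no auxiliary hypotheses, exactly as the statement requires.
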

\begin{proof}
    $\Dom(\Delta)$ is $\Gamma$-invariant, being an intersection of $\Gamma$-invariant subgroups.\\
    We verify that $\Dom(\Delta)$ is weakly $\Delta$-invariant. Assume that $\Dom(\Delta)$ is not of finite index, then there exists a strictly descending chain $\{\Dom(\delta_i)\}_{i\in I}$ such that $\bigcap_{i\in I}\Dom(\delta_i)=\Dom(\Delta)$. Being $\Delta$ a near-ring, $\Dom(\Delta)\leq \bigcap_{i\in I} \Dom(\delta_i\delta)$ with $\Dom(\delta_i)>\Dom(\delta_j)$ for $j>i$ and $I$ an index set. Therefore,
    $$\delta[\Dom(\Delta)]\leq \delta[\bigcap_{i\in I} \Dom(\delta_i\delta)].$$
    This is contained into $\bigcap_{i\in I}\delta[\Dom(\delta_i\delta)]$ and, by definition, $$\delta[\Dom(\delta_i\delta)]\leq \Dom(\delta_i)+\kat(\delta).$$
    In conclusion 
    $$\delta[\Dom(\Delta)]\leq \bigcap_{i\in I} (\Dom(\delta_i)+\kat(\delta))$$
    Since $\{\delta_i\}_{i\in I}$ is a descending chain of subgroups, then, for every $b\in \delta[\Dom(\Delta)]$, there exist $a_i\in \Dom(\delta_i)$ and $c_i\in \kat(\delta)$ such that $b=a_i+c_i$. Consequently, there exists a subset $J$ of same cardinality as $I$ such that $c_j=c$. Therefore, $b-c_j\in \Dom(\delta_j)$ for every $j\in J$ and, so $b-c_j\in \Dom(\Delta)$ \hbox{i.e.}
    $$\delta[\Dom(\Delta)]\leq \Dom(\Delta)+\kat(\delta).$$
    Assume that $\Dom(\Delta)$ is of finite index. Then, there exist $\delta_1,...,\delta_n\in \Delta$ such that, for any $\delta\in \Delta$, 
    $$\Dom(\delta)\geq \bigcap_{i=1}^n\Dom(\delta_i).$$
    Take $\delta'=\sum_{i=1}^n \delta_i$. Then, $\Dom(\delta')=\bigcap_{i=1}^n \Dom(\delta_i)=\Dom(\Delta)$. Let $\delta\in \Delta$, then 
    $$\delta[\Dom(\delta')]\leq \Dom(\delta')+\kat(\delta)\iff\Dom(\delta)\leq \delta^{-1}[\Dom(\delta')]=\Dom(\delta'\delta)\leq \Dom(\delta).$$ 
    By arbitrariety of $\delta$, $\Dom(\Delta)$ is weakly $\Delta$-invariant.
\end{proof}
\section{Theorem A: Base case}
We prove the extension to quasi-endomorphisms of Theorem \ref{Theorem Ab}.
\begin{theorem}\label{Theorem QA}
    Let $A$ be a definable infinite abelian group of finite dimension and $\Gamma,\Delta$ two invariant near-rings of quasi-endomorphisms such that:
    \begin{itemize}
        \item $\Gamma$ is essentially infinite and $\Delta$ is essentially unbounded (or vice-versa);
        \item $\Gamma,\Delta$ flat commute;
        \item every element of $\Gamma\cup\Delta$ has finite image or finite kernel.
    \end{itemize}
    Then, $\Kat(\Gamma,\Delta)$ is finite.
\end{theorem}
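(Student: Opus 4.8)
The plan is to transcribe the proof of Theorem~\ref{Theorem Ab}, exchanging the roles of $\Gamma$ and $\Delta$ (so I assume throughout, without loss of generality, that $\Delta$ is essentially unbounded and $\Gamma$ is essentially infinite), replacing every appeal to sharp commutation and to Lemmas~\ref{Lemma 3} and~\ref{Lemma 6} by the corresponding flat statements from Section~6 (Lemmas~\ref{Lemma 13}, \ref{Lemma 15}, \ref{Lemma 14}, \ref{Lemma 19}), and inserting intersections with the relevant domains everywhere; the finite error terms produced by compositions are then absorbed exactly as in Lemma~\ref{Lemma 13}. The first step is to produce a maximal finite weakly $\Delta$-invariant subgroup $A_0$ of $A$. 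If none existed there would be a strictly increasing chain $S_0\subsetneq S_1\subsetneq\cdots$ of finite weakly $\Delta$-invariant subgroups; their union $S$ is a countably infinite weakly $\Delta$-invariant, hence almost $\Delta$-invariant, subgroup, so the restriction-corestrictions $\delta_S$ are well defined and $\Delta_S=\langle\delta_S:\delta\in\Delta\rangle\subseteq\mathcal{F}-\End(S)$. The map $\Delta/{\sim}\to\Delta_S/{\sim}$ is injective: if $(\delta-\delta')[S\cap\Dom(\delta-\delta')]$ is finite then $\delta-\delta'$ cannot have finite kernel (otherwise its image on the $S_i$ grows unboundedly, since $\Dom(\delta-\delta')$ has finite index and $\ker(\delta-\delta')$ is finite), so by the third hypothesis it has finite image, i.e. $\delta\sim\delta'$. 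Hence $|\Delta_S/{\sim}|\ge|\Delta/{\sim}|$ is unbounded, while $|\mathcal{F}-\End(S)|\le2^{\aleph_0}$ in every elementary extension (the $S_i$ being finite, $S$ stays countable) --- a contradiction.

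Next I would show $\Kat(\Gamma)$ is finite. Each $\kat(\gamma)$ is finite and, by Lemma~\ref{Lemma 13}, weakly $\Delta$-invariant, hence $\kat(\gamma)\le A_0$ by maximality; summing over $\Gamma$ gives $\Kat(\Gamma)\le A_0$. The finite images of elements of $\Gamma$ are likewise weakly $\Delta$-invariant (flat commutation, Lemma~\ref{Lemma 13}) and so lie in $A_0$, and $A_0$ is $\Gamma$-invariant by Lemma~\ref{Lemma 15}; by Lemma~\ref{Lemma 19} the projection identifies $\Gamma/{\sim}$ with $\Gamma_{A/A_0}/{\sim}$, which therefore acts faithfully on $A/A_0$ (by injective quasi-endomorphisms, every $\gamma\not\sim0$ having finite kernel).

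The remaining, and main, point is that $\Kat(\Delta)$ is finite. If $\Gamma$ is also essentially unbounded, the two previous steps with $\Gamma$ in the unbounded role produce a finite maximal weakly $\Gamma$-invariant subgroup absorbing every $\kat(\delta)$, so $\Kat(\Delta)$ is finite. Otherwise $\Gamma$ is essentially bounded; suppose for contradiction $\Kat(\Delta)$ is infinite. For each $\delta\in\Delta$, weak $\Gamma$-invariance of $\kat(\delta)$ together with $\Kat(\Gamma)\le A_0$ confines $\gamma[\kat(\delta)\cap\Dom(\gamma)]$ to the fixed finite group $A_0+\kat(\delta)$ for all $\gamma\in\Gamma$; since this condition depends on $\gamma$ only through finitely many maps, the set $K_\delta=\{\gamma\in\Gamma:\kat(\delta)\cap\Dom(\gamma)\subseteq\ker(\gamma)\}$ is a subgroup of finite index in $\Gamma$. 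As $\Gamma/{\sim}$ is infinite, $K_\delta$ contains some $\gamma\not\sim0$; such a $\gamma$ has finite kernel, so $\gamma^{-1}[A_0]$ is finite, depends only on $[\gamma]$ (because $\operatorname{Im}(\gamma-\gamma')\le A_0$ when $\gamma\sim\gamma'$, finite $\Gamma$-images being weakly $\Delta$-invariant), and almost contains $\kat(\delta)$. Hence $\Kat(\Delta)$ is almost contained in $\sum_{[\gamma]\in\Gamma/{\sim}}\gamma^{-1}[A_0]$, a sum of boundedly many finite groups, and is therefore bounded. But $\Kat(\Delta)$ is $\Delta$-invariant (Lemma~\ref{Lemma 15}), and every $\delta\not\sim0$ restricts to a non-zero quasi-endomorphism of $\Kat(\Delta)$ (its kernel there being finite while $\Kat(\Delta)\cap\Dom(\delta)$ is infinite), so $\Delta/{\sim}$ embeds into $\mathcal{F}-\End(\Kat(\Delta))/{\sim}$, which is bounded --- contradicting the unboundedness of $\Delta$. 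Thus $\Kat(\Gamma,\Delta)=\Kat(\Gamma)+\Kat(\Delta)$ is finite.

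The hard part is the last step, and more generally the passage from sharp to flat commutation: flat commutation (Lemmas~\ref{Lemma 13} and~\ref{Lemma 15}) is exactly what makes $\Kat(\Gamma,\Delta)$ absorbed under the two actions and what keeps the images of the finite groups $\kat(\delta)$ inside a single finite group, whereas sharp commutation alone does not suffice. The second recurring difficulty is that restriction-corestriction is taken to subgroups that are neither definable nor of finite index ($S$ above, $\Kat(\Delta)$ above), so one must use almost-invariance in place of weak invariance and systematically replace ``$\phi(x)$'' by ``$\phi[x\cap\Dom(\phi)]$'', being careful in particular that $\kat(\delta)$ need not lie in $\Dom(\gamma)$ --- this is why the bounds above are phrased with ``almost contains'' and, if necessary, are reduced to the global domain $\Dom(\Gamma)\cap\Dom(\Delta)$ via Lemma~\ref{Lemma 14}. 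Modulo this bookkeeping, the argument is a faithful transcription of the proof of Theorem~\ref{Theorem Ab}.
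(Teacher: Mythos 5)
Your proof follows the paper's own argument for Theorem~\ref{Theorem QA} in all essentials: existence of a maximal finite weakly $\Delta$-invariant subgroup $A_0$ via the chain/boundedness argument of Theorem~\ref{Theorem Ab}, absorption of each $\kat(\gamma)$ into $A_0$ by flat commutation, and then a pigeonhole on the finitely many maps induced on $\kat(\delta)+A_0/A_0$ to produce some $\gamma\not\sim 0$ with $\kat(\delta)\le\gamma^{-1}[A_0]$ finite, forcing $\Kat(\Delta)$ into a bounded group and contradicting unboundedness of $\Delta$ via the embedding into $\mathcal{F}-\End(\Kat(\Delta))$. The only real difference is organisational: the paper avoids your case split on whether $\Gamma$ is essentially unbounded by passing once and for all to a countable (hence bounded) essentially infinite near-subring $\Gamma'\le\Gamma$ and summing $\gamma^{-1}[A_0]$ over $\Gamma'-[0]$; your route through the bounded set $\Gamma/{\sim}$ works equally well.

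One point must be tightened. You repeatedly weaken containments to ``almost contains'' on the grounds that $\kat(\delta)$ need not lie in $\Dom(\gamma)$; but almost-containment is vacuous for finite groups, and the inference from ``each $\kat(\delta)$ is almost contained in $T$'' to ``$\sum_\delta\kat(\delta)$ is almost contained in $T$'' is false (the leftover finite pieces can accumulate to an unbounded group). The gap closes because the first clause of flat commutation gives genuine containment: $\kat(\delta)=\delta[0]\le\Dom(\gamma)$ for every $\gamma\in\Gamma$ (apply $\delta[\Dom(\gamma)\cap\Dom(\delta)]\le\Dom(\gamma)$ at $0$), so $\kat(\delta)\cap\Dom(\gamma)=\kat(\delta)$ and the whole of $\kat(\delta)$ lands in $\gamma^{-1}[A_0]$, exactly as in the paper. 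Similarly, your description of $K_\delta$ as ``a subgroup of finite index in $\Gamma$'' is not meaningful for a near-ring whose additive structure is only a monoid; what the pigeonhole actually yields, and all you need, is a difference $\gamma-\gamma'\not\sim 0$ of two inequivalent elements inducing the same map on the finite group $\kat(\delta)+A_0/A_0$.
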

 \begin{proof}
     If $\Delta$ is essentially unbounded, it admits a maximal finite weakly $\Delta$-invariant subgroup $A_0$ contained in the domain of $\Delta$, by the proof of Theorem \ref{Theorem Ab}. Since each $\kat(\gamma)$ for $\gamma\in \Gamma$ is weakly $\Delta$-invariant and contained in the domain of every $\delta$, it is contained in $A_0$ and so $\Kat(\Gamma)$ is finite.\\
     To prove that $\Kat(\Delta)$ is finite, we work in $A/A_0$. Here, $\Gamma$ acts by partial endomorphisms by Lemma \ref{Lemma 19}. Indeed, $A_0$ is $\Gamma$-invariant: $\gamma[A_0]\leq \Dom(\Delta)$ and it is weakly $\Delta$-invariant by flat commutation. Consequently, $\gamma[A_0]\leq A_0$.\\
     We take $\Gamma'$ a bounded essentially infinite near-subring of $\Gamma$ (it is possible simply taking $X$ a countable family of pairwise non-equivalent partial endomorphisms and define $\Gamma'$ as the pre-ring generated). On each katakernel $\kat(\delta)+A_0/A_0\leq \Dom(\Gamma)\leq \Dom(\Gamma')$, $\gamma\in \Gamma'$ defines a function simply sending $a+A_0$ to $\gamma[a]+A_0$. By essentially infiniteness, there exists two non equivalent elements $\gamma,\gamma'$ such that $\gamma-\gamma'[\kat(\delta)+A_0/A_0]=0+A_0$ (since the number of function from $\kat(\delta)+A_0/A_0$ to $A_0$ is finite). Therefore, $(\gamma-\gamma')^{-1}[A_0]\geq \kat(\delta)$ and $(\gamma-\gamma')^{-1}[A_0]$ is finite.\\
     This implies that 
     $$\kat(\Delta)= \sum_{\delta\in \Delta} \kat(\delta)\leq \sum_{\gamma\in \Gamma'-[0]} \gamma^{-1}[A_0].$$
     The latter is a bounded sum of finite subgroups and so itself bounded. This is a contradiction by the proof of Theorem \ref{Theorem Ab}.
\end{proof}
\section{Theorem A: First case}
\subsection{Lines}
We introduce lines in this new context. The definition is the same as for endogenies.
\begin{defn}
    We define $L$ a \emph{line} an infinite $\Gamma$($\Delta$)-image that does not contain any infinite $\Gamma$($\Delta$)-image of infinite index.
\end{defn}
The following Lemma is the adaptation to quasi-endomorphisms of the Lemma \ref{lemma 8}.
\begin{lemma}\label{Lemma 12}
    Let $A$ be an abelian definable subgroup of finite dimension and $\Gamma,\Delta$ two flat commuting invariant near-rings of quasi-endomorphisms. Assume that $A$ is absolutely $(\Gamma,\Delta)$-minimal and let $L,L'$ be two $\Gamma$-lines. The following propositions are true:
    \begin{enumerate}
        \item $L$ is almost $\Delta$-invariant;
        \item For every $\gamma\not\sim 0$, $\gamma[\Dom(\gamma)]$ contains a line;
        \item For every $L$ line, there exists a finite sum $\sum_{i=1}^n \gamma_i[L\cap \Dom(\gamma_i)]$ for $\gamma_i\in \Gamma$ of finite index in $A$;
        \item For any couple of lines $L,L'$, there exists $\gamma\in \Gamma$ such that $\gamma[L\cap \Dom(\gamma)]$ almost contains $L'$;
        \item Every two lines have the same dimension. For any $\gamma\in \Gamma$, $\gamma[L]$ is finite or $\ker(\gamma)\cap L$ is finite.
    \end{enumerate}
\end{lemma}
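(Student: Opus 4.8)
The plan is to transcribe, step by step, the proof of Lemma \ref{lemma 8} into the quasi-endomorphism setting, making three systematic substitutions: sharp commutation becomes flat commutation, ``weakly $\Delta$-invariant'' becomes ``almost $\Delta$-invariant'', and every place where a (total) endogeny was applied is augmented with the bookkeeping needed to control domains. The two tools that make this bookkeeping go through are: the elementary properties of quasi-endomorphisms established earlier (above all: $\Dom(\phi)$ has finite index, $\phi^{-1}[B]$ has finite index when $B$ does, and $\phi[B\cap\Dom(\phi)]$ has finite index in $\phi[\Dom(\phi)]$ when $B$ has finite index), and the fact that flat commutation, together with Lemma \ref{Lemma 13}, lets one rewrite $\delta\gamma$ as $\gamma\delta$ at the cost of only the fixed finite set $\kat(\delta)+\kat(\gamma)$ --- harmless because every conclusion of the Lemma is stated up to commensurability or up to passing to a finite-index subgroup.

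First I would prove Points $1$ and $2$. For Point $1$, write $L=\gamma[\Dom(\gamma)]$ and fix $\delta\in\Delta$. The first clause of flat commutation gives $\gamma[\Dom(\gamma)\cap\Dom(\delta)]\subseteq\Dom(\delta)$, so $\delta$ is defined on this subgroup, and the second clause gives $\delta\gamma[\Dom(\gamma)\cap\Dom(\delta)]\subseteq L+\kat(\delta)+\kat(\gamma)$. Since $\Dom(\gamma)\cap\Dom(\delta)$ has finite index in $\Dom(\gamma)$, the subgroup $\gamma[\Dom(\gamma)\cap\Dom(\delta)]$ has finite index in $L$, hence in $L\cap\Dom(\delta)$; applying $\delta$ and using the elementary facts, $\delta$ of it has finite index in $\delta[L\cap\Dom(\delta)]$, while its image sits inside $L$ up to the fixed finite set. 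Thus $\delta[L\cap\Dom(\delta)]\cap L$ has finite index in $\delta[L\cap\Dom(\delta)]$, i.e.\ $L$ is almost $\Delta$-invariant. Point $2$ is immediate: if $\gamma[\Dom(\gamma)]$ is infinite it is either a line or it properly contains, in dimension, an infinite $\Gamma$-image, and one descends by induction on dimension exactly as for endogenies.

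Next, Points $3$, $4$ and $5$. For Point $3$, choose $\gamma_1,\dots,\gamma_n\in\Gamma$ so that $S=\sum_{i=1}^n\gamma_i[L\cap\Dom(\gamma_i)]$ has maximal dimension among all finite sums of this form. Then $S$ is almost $\Gamma$-invariant by the usual maximality argument ($\gamma[S\cap\Dom(\gamma)]$ is commensurable with a sum of the same type, so of dimension at most $\dim(S)$), and almost $\Delta$-invariant because, by Point $1$ and flat commutation, $\delta[\gamma_i[L\cap\Dom(\gamma_i)]\cap\Dom(\delta)]$ is commensurable, up to a finite error, with a $\Gamma$-image of $\delta[L\cap\Dom(\delta)]$, which is itself commensurable with a $\Gamma$-image of $L$; so $\delta[S\cap\Dom(\delta)]$ is almost contained in a finite sum of $\Gamma$-images of $L$ and hence, by maximality of $S$, almost contained in $S$. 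Absolute $(\Gamma,\Delta)$-minimality then forces $S$ of finite index in $A$. For Point $4$, given lines $L,L'$, pick $\gamma'\in\Gamma$ with $\gamma'[\Dom(\gamma')]$ commensurable with $L'$ and apply $\gamma'$ to the finite-index subgroup $\sum\gamma_i[L\cap\Dom(\gamma_i)]$ of Point $3$; writing $L=\lambda[\Dom(\lambda)]$, each summand $\gamma'\gamma_i[L\cap\Dom(\gamma'\gamma_i)]$ is commensurable with the genuine $\Gamma$-image $\gamma'\gamma_i\lambda[\Dom(\gamma'\gamma_i\lambda)]$, which lies in $L'$ up to a finite set, hence (as $L'$ is a line) is finite or of finite index in $L'$; not all can be finite, so some $\gamma=\gamma'\gamma_i$ has $\gamma[L\cap\Dom(\gamma)]$ almost containing $L'$. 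Finally, for Point $5$, a composition-of-three argument as in Lemma \ref{lemma 8} (replacing $[A]$ by $[\Dom(\cdot)]$ throughout) shows that for any infinite $\Gamma$-image $M$ and any line $L'$ one has $\dim(M)\geq\dim(L')$; applying this to $M$ a line gives equidimensionality, and if $\gamma[L\cap\Dom(\gamma)]$ is infinite then it is commensurable with an infinite $\Gamma$-image, so $\dim(\gamma[L\cap\Dom(\gamma)])=\dim(L)$, whence the fibration $b\mapsto\gamma[b]$ on $L\cap\Dom(\gamma)$ forces $\dim(\ker(\gamma)\cap L)=0$.

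The main obstacle I anticipate is exactly the domain bookkeeping that is invisible in the endogeny case: there every map was total, so one never verified that iterated preimages and finite intersections of domains stay of finite index, nor tracked the finite katakernel terms produced when $\delta\gamma$ is replaced by $\gamma\delta$. Here each of these must be carried along, and the argument survives only because domains are of finite index by definition, preimages of finite-index subgroups under quasi-endomorphisms are again of finite index, and every statement is phrased modulo commensurability so the unavoidable finite errors from Lemma \ref{Lemma 13} are absorbed. One subsidiary point deserves emphasis: in the endogeny case Point $3$ rested on weak minimality, but the sum $S$ produced here is only almost $(\Gamma,\Delta)$-invariant, which is precisely why the hypothesis of the Lemma must be \emph{absolute} $(\Gamma,\Delta)$-minimality.
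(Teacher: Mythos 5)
Your proposal is correct and follows essentially the same route as the paper: each point is the proof of Lemma \ref{lemma 8} transcribed with flat commutation in place of sharp commutation, the maximal-dimension sum argument plus absolute minimality for Point 3, application of $\gamma'$ to that finite-index sum for Point 4, and Point 4 plus fibration for Point 5. The minor variations (deriving the almost $\Delta$-invariance of $S$ via maximality rather than directly from Point 1, and getting equidimensionality via the ``every infinite $\Gamma$-image dominates every line'' statement rather than directly from Point 4) are inessential.
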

\begin{proof}
    \textbf{Point 1:}\\
    Given $\gamma\in \Gamma$ and $\delta\in \Delta$, then $\gamma[\Dom(\delta)\cap \Dom(\gamma)]$ is of finite index in $L$. Therefore, $\delta[L\cap \Dom(\gamma)]$ is almost contained in $\delta[\gamma[\Dom(\gamma)\cap \Dom(\delta)]]$. By flat commutation, this is in $\gamma[\operatorname{Im}(\delta)\cap \Dom(\gamma)]\leq L$. This verifies the almost invariance.\\
    \textbf{Point 2:}\\
    Obvious by definition.\\
    \textbf{Point 3:}\\
    Given $L$ a line, let $S=\sum_{i=1}^n L_i$ be a finite sum of $\Gamma$ images of $L$ of maximal dimension. We verify that it is almost $(\Delta,\Gamma)$-invariant. The almost $\Delta$-invariance follows from the fact that any $\Gamma$-image is almost $\Delta$-invariant. $L_i\cap \Dom(\delta)$ is of finite index in $L_i$ for any $\delta\in \Delta$. Therefore, the index of $\sum_{i\leq n} L_i\cap \Dom(\delta)$ is finite in $\sum_{i\leq n} L_i$. Consequently,
    $$\delta\big[\sum_{i\leq n} L_i\cap \Dom(\gamma)\big]=\sum_{i\leq n} \delta[L_i\cap \Dom(\gamma)]$$
    is almost contained in $\sum_{i\leq n}L_i$ and this proves the almost invariance.\\
    For $\gamma\in\Gamma$, $\gamma[\sum_{i\leq n}L_i\cap \Dom(\gamma)]$ has $\gamma[\sum_{i\leq n}(L_i\cap \Dom(\gamma))]$ as a subgroup of finite index. The latter is $\sum_{i\leq n} \gamma[L_i]$, which is a sum of $\Gamma$-images of $L$ by closure for the product. Therefore, $\sum_{i\leq n} \gamma[L_i]+\sum_{i\leq n} L_i$ is of same dimension as $\sum_{i\leq n} L_i$. By absolute minimality of $A$, $\sum_{i\leq n} L_i$ is of finite index in $A$.\\
    \textbf{Point 4:}\\
    Let $L'=\gamma'[\Dom(\gamma')],L$ be two lines and $\sum_{i=1}^n \gamma_i[L\cap \Dom(\gamma_i)]$ of finite index in $A$. The image through $\gamma'$ of $\sum_{i=1}^n \gamma_i[L\cap \Dom(\gamma_i)]$ is of finite index in $L'$. Therefore, proceeding as in the previous proof, the sum 
    $$\sum_{i_1}^n \gamma'\gamma_i[L\cap \Dom(\gamma'\gamma_i)]=\sum_{i=1}^n \gamma'\gamma_i[L]$$
    is of finite index in $L'$. By minimality of the dimension of $L'$, each of the $\gamma'\gamma_i[L]$ is of finite index in $L'$ or finite. Not all of them are finite since the sum is of finite index.\\
    \textbf{Point 5:}\\
    By point $4$, given any two lines $L,L'$, there exists $\gamma[L\cap \Dom(\gamma)]$ of finite index in $L'$. Then, $\dim(L)\geq \dim(\gamma[L\cap \Dom(\gamma)])=\dim(L')$. By symmetry, we have the equivalence.
\end{proof}
We introduce $\Delta_L$ and $\Gamma_L$ in this new context.
\begin{lemma}\label{Lemma 16}
    Let $A$ be an abelian definable group of finite dimension and $\Gamma,\Delta$ two invariant pre-rings of quasi-endomorphisms such that: 
    \begin{itemize}
        \item $\Gamma,\Delta$ flat commute;
        \item $A$ is absolutely $(\Gamma,\Delta)$-minimal;
        \item $L$ is a $\Gamma$-line.
    \end{itemize}
    We define $\Delta_L=\langle \phi_{|L}:\ \phi\in \Delta \rangle$ and $\Gamma_L=\langle \phi:L\cap \Dom(\phi)\rightarrow L:\ \phi\in \Gamma,\ \phi[\Dom(\phi)]\subseteq L\rangle$ two near-rings of quasi-endomorphisms on $L$. We have that:
    \begin{itemize}
        \item Any $\gamma\in \Gamma_L$ is equivalent to an element $\gamma'_L$, and every element of $\Delta_L$ is equivalent to an element $\delta'_L$, for $\gamma'\in \Gamma$ with image in $L$ and $\delta'\in \Delta$;
        \item $\Gamma_L$ and $\Delta_L$ flat commute;
        \item $L$ is absolutely $(\Gamma_L,\Delta_L)$-minimal;
        \item $\Gamma_L$ and $\Delta_L$ are as essentially large as $\Delta$ and $\Gamma$.
    \end{itemize}
\end{lemma}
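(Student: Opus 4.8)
The plan is to mirror the proof of Lemma \ref{Lemma 9} line by line, replacing sharp commutation by flat commutation and carrying the extra domain bookkeeping; since every domain appearing here is of finite index in the ambient group, intersecting with a domain never changes anything up to the equivalence $\sim$, so the arguments transport essentially verbatim. Note first that $L$ is almost $\Delta$-invariant by Lemma \ref{Lemma 12}, so each restriction-corestriction $\delta_{|L}$ is a well-defined quasi-endomorphism of $L$, as is every generator of $\Gamma_L$. For the first point I would show the generating sets are closed, up to $\sim$, under sum and product: for $\delta,\delta'\in\Delta$ the difference $(\delta_{|L}+\delta'_{|L})-(\delta+\delta')_{|L}$ has image inside the finite group $(\kat(\delta)\cap L)+(\kat(\delta')\cap L)+((\kat(\delta)+\kat(\delta'))\cap L)$, exactly as in Point $1$ of Lemma \ref{Lemma 9}, with domains agreeing up to finite index; and for $\gamma,\gamma'\in\Gamma$ with image in $L$ one has $(\gamma\gamma')_L[l]=\gamma[\gamma'[l]]=\gamma_L[\gamma'_L[l]]$ since $\gamma'[L]\subseteq L$, with the analogous statement for $\Delta_L$. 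Hence every element of $\Gamma_L$ (resp. $\Delta_L$) is equivalent to a single restriction.

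For the second point, by closure of flat centralizers (Lemma \ref{Lemma 13}) it suffices to check that $\gamma_L$ and $\delta_L$ flat commute for $\gamma\in\Gamma$ with image in $L$ and $\delta\in\Delta$. The domain-containment clause follows by intersecting the corresponding clause for $\Gamma,\Delta$ with $L$; for the commutator, for $l$ in the common domain $\gamma_L[l]=\gamma[l]$, so $\delta_L\gamma_L[l]=\delta\gamma[l]\cap L$ while $\gamma_L\delta_L[l]\subseteq\gamma\delta[l]$, giving $(\gamma_L\delta_L-\delta_L\gamma_L)[l]\leq(\gamma\delta-\delta\gamma)[l]\cap L\leq(\kat(\gamma)+\kat(\delta))\cap L$, which equals $\kat(\gamma_L)+\kat(\delta_L)$ because $\kat(\gamma)\subseteq L$. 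For the third point I argue by contradiction: given a definable infinite $B\leq L$ that is almost $(\Gamma_L,\Delta_L)$-invariant and not of finite index, take a definable subgroup $S=\sum_{i=1}^n\gamma_i\delta_i[B\cap\Dom(\gamma_i\delta_i)]$, with $\gamma_i\in\Gamma$ and $\delta_i\in\Delta$, of maximal dimension among such finite sums. Using flat commutation $\delta[\gamma_i\delta_i[B]]\subseteq\gamma_i\delta\delta_i[B]+\kat(\delta)$ together with maximality, $S$ is almost $(\Gamma,\Delta)$-invariant in $A$, so by absolute minimality $\dim S=\dim A$. Picking $\gamma\in\Gamma$ with $\gamma[\Dom(\gamma)]$ of finite index in $L$, the image $\gamma[S\cap\Dom(\gamma)]$ has finite index in $L$; substituting $\delta_i[B]=(\delta_i)_L[B]+\kat(\delta_i)$ up to finite index and absorbing the resulting errors into $\kat(\delta_i)+\kat(\gamma\gamma_i)$ via Lemma \ref{Lemma 13}, one obtains $\sum_i(\gamma\gamma_i)_L(\delta_i)_L[B]$ of finite index in $L$. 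But $B$ almost contains each summand, hence their sum, forcing $B$ to be of finite index in $L$ — a contradiction.

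Finally, for the fourth point the restriction map $P_L\colon\Delta/{\sim}\to\Delta_L/{\sim}$ is a well-defined surjective ring homomorphism by the first point; injectivity uses that if $\delta_{|L}\sim0$ then, with $S=\sum_i\gamma_i[L\cap\Dom(\gamma_i)]$ of finite index in $A$ (Lemma \ref{Lemma 12}), $\delta[S\cap\Dom(\delta)]=\sum_i\delta\gamma_i[L\cap\Dom(\gamma_i)]\subseteq\sum_i\gamma_i\delta[L]+\kat(\delta)$ is finite, so $\delta\sim0$ and $\Delta/{\sim}\cong\Delta_L/{\sim}$. For $\Gamma_L$ I would follow the argument of Point $5$ of Lemma \ref{Lemma 9}: choose $\gamma_1,\dots,\gamma_n\in\Gamma$ with $\bigcap_i\ker(\gamma_i)$ of minimal dimension, show via the homomorphism $\Gamma/{\sim}\to\prod_i\gamma_i\Gamma/{\sim}$ that some $\gamma_i\Gamma$ must be as essentially large as $\Gamma$ — the alternative yields $\gamma\not\sim0$, a line inside $\gamma[\Dom(\gamma)]$, and some $\gamma'\in\Gamma$ contradicting minimality of the kernel dimension — and then transport a large family of pairwise inequivalent endomorphisms into $\Gamma_L$ through compositions $\gamma_j\gamma_i$, which have image in $L$, applying Ramsey's theorem to extract a large pairwise-inequivalent subfamily $\{(\gamma_j\gamma_i)_L\}$.

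The main obstacle is precisely this last step: the essentially-largeness computation of Lemma \ref{Lemma 9} is delicate, and with quasi-endomorphisms every intersection, preimage and composition must be re-checked against the domain constraints, while the one-sided near-ring structure (no honest right distributivity) must be handled with the partial-distributivity estimates of the preceding section. However, because all the domains involved are of finite index, each such discrepancy is swallowed up to $\sim$, so the argument goes through \emph{mutatis mutandis}; the remaining three points are routine adaptations.
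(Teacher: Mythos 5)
Your overall strategy is the same as the paper's: Lemma~\ref{Lemma 16} is proved by transposing Lemma~\ref{Lemma 9} point by point, using Lemma~\ref{Lemma 13} to reduce flat commutation to a check on generators, and the essentially-large computation goes through the same $P_L$-isomorphism and $\gamma_i\Gamma$/Ramsey argument for $\Gamma_L$. Your outlines of Points~1, 3 and~4 track the paper's proof faithfully.

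There is, however, a concrete gap in Point~2. You claim ``the domain-containment clause follows by intersecting the corresponding clause for $\Gamma,\Delta$ with $L$''. That reasoning would only produce $\gamma_L[l]\leq\Dom(\delta)\cap L$. But the domain of the restriction-corestriction $\delta_L$ is \emph{not} $\Dom(\delta)\cap L$: it is $\delta^{-1}[L]\cap L$, because $\delta_L$ is only defined where the image of $\delta$ actually meets $L$. To get the required containment $\gamma_L[\Dom(\delta_L)\cap\Dom(\gamma_L)]\leq\Dom(\delta_L)$ one must prove $\delta\gamma[l]\leq L+\kat(\delta)$, which needs a genuine extra application of flat commutation, not a restriction: write $\delta\gamma[l]\subseteq\gamma\delta[l]+\kat(\gamma)+\kat(\delta)$, note $\delta[l]\subseteq L+\kat(\delta)$ by $l\in\delta^{-1}[L]$, push through $\gamma[L\cap\Dom(\gamma)]\subseteq L$ and $\gamma[\kat(\delta)]\subseteq\kat(\gamma)+\kat(\delta)$ (using that $\kat(\delta)\subseteq\Dom(\gamma)$, itself a consequence of flat commutation), and conclude since $\kat(\gamma)\subseteq\operatorname{Im}(\gamma)\subseteq L$. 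This is precisely what the paper's Point~2 does. Related to this, your blanket heuristic that ``because all the domains involved are of finite index, each such discrepancy is swallowed up to $\sim$'' does not help here: the flat-commutation conditions on domains are \emph{exact} inclusions, not statements up to finite error, so one cannot wave away the discrepancy between $\Dom(\delta)\cap L$ and $\delta^{-1}[L]\cap L$ by appealing to $\sim$.
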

\begin{proof}
    \textbf{Point 1:}\\
    Let $\gamma_i\in \Gamma$ with image in $L$ for every $i$. Then, $\sum_{i=1}^n {\gamma_i}_{L}$ is equal to $(\sum_{i=1}^n\gamma_i)_L$. The domain of the latter is $\Dom\big({\sum_{i=1}^n {\gamma_i}}_{L}\big)=\bigcap_{i=1}^n \Dom(\gamma_i)\cap L$ that is equal to $\bigcap_{i=1}^n (\Dom(\gamma_i)\cap L)$. The latter is the domain of $\sum_{i=1}^n {\gamma_i}_L$. They have same katakernel: $\kat(\gamma_L)=\kat(\gamma)$ for any $\gamma\in \Gamma$ with image in $L$. They have the same image by definition. For the product it is sufficient to observe that $\Dom((\gamma\gamma')_L)$ is equal to $\gamma'^{-1}[\Dom(\gamma)]\cap L$. Since $\gamma'$ has image in $L$, the latter is equal to $\gamma'^{-1}[\Dom(\gamma)\cap L]\cap L$. This is $\gamma'^{-1}[\Dom(\gamma_L)]\cap L$. Therefore, 
    $${\gamma'_L}^{-1}[\Dom(\gamma_L)]=\{a\in \Dom(\gamma')\cap L:\ \gamma'[a]\cap \Dom(\gamma_L)\not=\emptyset\}=\gamma'^{-1}[\Dom(\gamma_L)]\cap L.$$
    Obviously, they have the same image and katakernel.\\
    Given $\delta,\delta'\in \Delta$, then $\delta_L+\delta'_L\sim (\delta+\delta')_L$ and similarly for the product.\\
    The domain of the first is $\delta^{-1}[L]\cap\delta'^{-1}[L]\cap L$, while the domain of the second is $(\delta+\delta')^{-1}[L]\cap L$ and contains the first. Taken $l$ an element of this domain, 
    $$(\delta+\delta')_{L}[l]-(\delta_L-\delta'_L)[l]=(\delta+\delta'[l])\cap L-\delta[l]\cap L-\delta'[l]\cap L.$$
    Since $\delta[l]\leq L+\kat(\delta)$ and $\delta'[a]\leq L+\kat(\delta')$, there exists $b_1\in \delta[a]\cap L$ and $b_2\in \delta'[a]\cap L$ such that $(\delta+\delta')_{L}[l]=(b_1+b_2)+(\kat(\delta')+\kat(\delta))\cap L$. Therefore, the latter is equal to 
    $$b_1+b_2-b_1+\kat(\delta)\cap L-b_2+\kat(\delta')\cap L-\delta'[a]\cap L+(\kat(\delta')+\kat(\delta))\cap L.$$
    This is contained in 
    $$\kat(\delta')\cap L+\kat(\delta)\cap L+(\kat(\delta')+\kat(\delta))\cap L$$
    that is finite.\\
    For product, $\Dom((\delta\delta')_L)=(\delta\delta')^{-1}[L]$ that is equal by definition to 
    $$\{a\in \Dom(\delta\delta'):\ \delta[\delta'[a]\cap \Dom(\delta)]\cap L\not=\emptyset\}.$$
    On the other hand, the domain of $\delta_L\delta'_L$ is 
    $${\delta'_L}^{-1}(\Dom(\delta_L))=\{a\in \Dom(\delta'_L):\ \delta'[a]\cap \Dom(\delta_L)\not=\emptyset\}.$$
    This is 
    $$\{a\in \delta'^{-1}[L]:\ \delta[\delta'[a]\cap \Dom(\delta)]\cap L\not=\emptyset\}.$$ 
    Therefore, the latter is equal to $\Dom((\delta\delta')_L)\cap \delta'^{-1}[L]$. For the katakernels, $\kat((\delta\delta')_L)=\delta[\delta'[0]\cap \Dom(\delta)]\cap L$ while 
    $$\kat(\delta_L\delta'_L)=\delta_L[\delta'_L[0]\cap \Dom(\delta_L)]=\delta[\delta'[0]\cap L\cap \delta^{-1}[L]]\cap L.$$
    This is contained in the first katakernel. On the common domain, the two functions coincide up to katakernel.\\
    \textbf{Point 2:}\\
    $\Gamma_L$ and $\Delta_L$ flat commute. Since $C^{\flat}(\Gamma)$ and $C^{\flat}(\Delta)$ are near subrings, it is sufficient to prove that $\gamma_L$ and $\delta_L$ flat commute for $\gamma\in \Gamma$ with image in $L$ and $\delta\in \Delta$.
    \begin{itemize}
        \item For the domains, we prove first that $\delta_L[\Dom(\gamma_L)\cap \Dom(\delta_L)]\leq \Dom(\gamma_L)$. This is $L\cap \delta[\Dom(\gamma)\cap L\cap \Dom(\delta_L)]$ and it is contained in
        $$L\cap \delta[\Dom(\gamma)\cap \Dom(\delta)]\leq L\cap \Dom(\gamma)=\Dom(\gamma_L).$$
        For the vice-versa, we show that $\gamma_L[\Dom(\delta_L)\cap \Dom(\gamma_L)]\leq \Dom(\delta_L)$. By definition and flat commutation,
        $$\gamma[\Dom(\delta_L)\cap \Dom(\gamma)\cap L]\leq \gamma[\Dom(\delta)\cap \Dom(\gamma)]\leq \Dom(\delta)\cap L.$$
        Since $\Dom(\delta_L)=\delta^{-1}[L]\cap L$, it is sufficient to show that $\delta[\gamma[\Dom(\delta_L)\cap \delta^{-1}[L]]\leq L+\kat(\delta)$. This subgroup is contained, by flat commutativity, in
        $$\kat(\delta)+\gamma[\delta[\delta^{-1}[L]\cap \Dom(\gamma)]].$$
        By definition, it is in 
        $$\kat(\delta)+\gamma[\Dom(\gamma)\cap (L+\kat(\delta))].$$
        Since $\kat(\delta)\leq \Dom(\gamma)$, this subgroup is contained into 
        $$\kat(\delta)+\gamma[L\cap \Dom(\gamma)]+\gamma[\kat(\delta)]\leq L+\kat(\delta)$$ 
        as we want to prove.
        \item We verify the flat commutativity, \hbox{i.e.} we show that, for every $a\in \Dom(\delta_L)\cap \Dom(\gamma_L)$, 
        $$\gamma_L\delta_L[a]-\delta_L\gamma_L[a]\leq \kat(\gamma)+\kat(\delta)\cap L.$$ 
        By definition, $\gamma_L[a]=\gamma[a]$ and $\delta_L[\gamma[a]]$. Since $\gamma[a]\leq L$ and $a\in \Dom(\delta)$, then $\delta[\gamma[a]]\leq L+\kat(\delta)$ and so $\delta_L[\gamma[a]\cap \Dom(\delta_L)]=\delta\gamma[a]\cap L$. Moreover, 
        $$\gamma_L[\delta_L[a]]=\gamma[\delta_L[a]]\leq \gamma\delta[a].$$
        Therefore, this is contained in $L\cap ((\gamma\delta-\delta\gamma)[a])$. Since $\gamma,\delta$ flat commute, the latter is contained into $(\kat(\gamma)+\kat(\delta))\cap L$. On the grounds that $\kat(\gamma)\leq L$, it remains $\kat(\delta)\cap L+\kat(\gamma)$ as we want.
    \end{itemize}
    \textbf{Point 3:}\\
    Suppose there exists an infinite definable almost $(\Gamma_L,\Delta_L)$-invariant subgroup $B'$ not of finite index in $L$. By finite-dimensionality, there exists a finite sum of the form $$S=\sum_{i=1}^n \gamma_i\delta_i[B'\cap \bigcap_{i=1}^n \Dom(\gamma_i)\cap \Dom(\delta_i)=:B]$$ with $\gamma_i,\delta_i$ quasi-endomorphisms in $\Gamma$ and $\Delta$ respectively, of maximal dimension in $A$. This is a definable infinite subgroup, and we prove that it is almost $(\Gamma,\Delta)$-invariant. $\gamma[S\cap \Dom(\gamma)]$ is almost contained in $S$ for every $\gamma\in \Gamma$. Indeed, the subgroup $\gamma[S\cap \Dom(\gamma)]$ is equal to $\gamma[(\sum_{i=1}^n \gamma_i\delta_i[B'])\cap \Dom(\gamma)]$. Since $\sum_{i=1}^n (\gamma_i\delta_i[B]\cap \Dom(\gamma))$ has finite index in $(\sum_{i=1}^n \gamma_i\delta_i[B])\cap \Dom(\gamma)$, to prove the almost containment, it is sufficient to verify that $\sum_{i=1}^n \gamma[\gamma_i\delta_i[B]\cap \Dom(\phi)]=\sum_{i=1}^n (\gamma\gamma_i)\delta_i[B]=S'$ is almost contained in $S$. By maximality of the dimension, $S$ is of finite index in $S+S'$. Therefore, $S'$ is almost contained in $S$ and the same holds for $\gamma[S\cap \Dom(\gamma)]$. Taken $\delta\in \Delta$, $\delta[S\cap \Dom(\delta)]=\delta[(\sum_{i=1}^n \gamma_i\delta_i[B])\cap \Dom(\delta)]$. This is almost contained into $\sum_{i=1}^n \delta[\gamma_i\delta_i[B]]$. Since $\Dom(\delta)$ is of finite index in $A$, this is almost contained into $\sum_{i=1}\delta[\gamma_i[\delta_i[B]\cap \Dom(\delta)]$. For each $i$, $\delta_i[B]\cap \Dom(\delta)\leq \Dom(\delta)\cap \Dom(\gamma_i)$. Applying the flat commutation, 
    $$\gamma_i[\delta[\delta_i[B]\cap \Dom(\delta)]]+\kat(\delta)=\gamma_i[\delta\delta_i[B]].$$
    Therefore, $\delta[S]$ is almost contained in $\sum_{i=1}^n \gamma_i\delta\delta_i[B]$. The latter subgroup is almost contained in $S$ by maximality of the dimension, and so $S$ is also almost $\Delta$-invariant. By absolutely $(\Delta,\Gamma)$-minimality, $S$ has dimension equal to the dimension of $A$. Therefore, given $\gamma\in \Gamma$ such that $\operatorname{Im}(\gamma)\leq L$, $\gamma[S\cap \Dom(\gamma)]$ has finite index in $L$. As usually, this implies that $\sum_{i=1}^n \gamma\gamma_i\delta_i[B]$ has finite index in $L$. Since $B\leq L$, then $\delta_i[B]\cap L={\delta_i}_L[B]$ is of finite index in $\gamma_i[B]$. Therefore, also $\sum_{i=1}^n \gamma\gamma_i[{\delta_i}_L[B]]$ is of finite index in $L$ but ${\delta_i}_L[B]\leq L$ and so $\gamma\gamma_i[{\delta_i}_L[B]]=(\gamma\gamma_i)_L\delta_L[B]$ that is almost contained in $B$ by almost $(\Delta,\Gamma)$-invariance. Therefore, $L$ is almost contained in $\gamma[S]$, which is almost contained in $B$. In conclusion, $\dim(B)=\dim(L)$, a contradiction.\\
    \textbf{Point $4$:}\\
    We take the restriction-corestriction map 
    $$P_L: \Delta/\sim\rightarrow \Delta_L/\sim.$$ 
    We prove that this map is surjective, well-defined, and injective.\\
    It is surjective since, by the previous Lemma, every element of $\Delta_L$ is equivalent to an element of $P_L(\Delta)$.\\
    It is well defined since, if $\delta\sim \delta'$, then the image of $\delta-\delta'$ is finite and so also the one of the restriction-corestriction.\\
    It is a homomorphism since, as we have already proved, $\delta_L+\delta'_L\sim (\delta+\delta')_L$, $(-\delta)_L\sim -\delta'_L$ and $(\delta\delta')_L\sim \delta_L\delta'_L$.\\
    To prove that it is injective, being a homomorphism of rings, it is sufficient to verify that if $\delta_L\sim 0$, also $\delta\sim 0$, \hbox{i.e.} there exists a subgroup $B$ of finite index of $\Dom(\psi)$ such that $\delta[B]$ is finite. By previous Lemma, there exists a finite sum $S=\sum_{i=1}^n \gamma_i[L'']$ with $\gamma_i\in \Gamma$ and $L''=L\cap \bigcap_{i=1}^n \Dom(\gamma_i)\cap \Dom(\delta_L)$ such that $S$ has finite index in $A$. To conclude, we verify that $\delta[\sum_{i=1}^n \gamma_i[L'']]$ is finite. This is equal to $\sum_{i=1}^n \delta\gamma_i[L'']$. By flat commutativity, every $\delta\gamma_i[L'']$ is equal to $\gamma_i\delta[L'']+\kat(\delta)$. Since $L''$ is contained in $L$, the subgroup $\delta[L'']=\delta_L[L'']+\kat(\psi)$ is finite. Therefore, $\delta[\Dom(\delta)]$ is finite as we want to prove. This implies that $\Delta$ is as essentially large as $\Delta_L$.\\
    It remains to prove that $\Gamma_L$ is as essentially large as $\Gamma$.\\
    Let $\{\gamma_1,...,\gamma_n\}\in \Gamma$ be such that $K=\bigcap_{i=1}^n \ker(\gamma_i)$ is of minimal dimension possible. Suppose, for a contradiction, that none of the $\gamma_i\Gamma$ is essentially as large as $\Gamma$. Recall that 
    $$\gamma:\Gamma\sim\rightarrow \gamma\Gamma/\sim$$
    is a homorphism of near-rings, as proved in Lemma \ref{Lemma 12}, and $|\Gamma/\sim|>|\gamma\Gamma/\sim|$. Then 
    $$|X_i=\{\gamma\in \Gamma:\ \gamma_i\gamma\sim 0\wedge \gamma\not\sim 0\}|=|\Gamma/\sim|>|\Gamma_{L}/\sim|.$$
    Then, at each step, we can find $|\Gamma/\sim|$ elements $\gamma$ in $X_i$ such that $\gamma\not\sim 0$. Consequently, there exists $\gamma\in \Gamma$ such that $\gamma_i\gamma\sim 0$ for any $i\leq n$ and $\gamma\not\sim 0$. By definition, each $\gamma_i\gamma[\Dom(\gamma_i\gamma)]$ is finite. Therefore, by dimensionality, the kernel of $\gamma_i\gamma$ has finite index in $A$. Consequently, also $\gamma[\ker(\gamma_i\gamma)]$ is of finite index in $\operatorname{Im}(\gamma)$. Since this holds for all $i$, also $\gamma[K]$ is of finite index in $\operatorname{Im}(\gamma)$. Since $\gamma[\Dom(\gamma_i\gamma)]$ is infinite, being not equivalent to $0$, there exists a line $L$ contained in $\gamma[\Dom(\gamma_i\gamma)]$ and $\gamma'\in \Gamma$ such that $\gamma'[L\cap \Dom(\gamma')]\subseteq L$ is of finite index in $L$. Since the dimension of $K\cap \ker(\gamma)$ is the same as $\dim(K)$ (by minimality of the dimension) and $K$ has finite index in $L$ (by previous construction), we conclude that $D=K\cap \ker(\gamma)\cap L$ is of finite index in $L$. This implies that $\gamma[L]$ has finite image since $\gamma[D]\subseteq \kat(\gamma)$ ($D\subseteq \ker(\gamma)$) and $D$ has finite index in $L$ by hypothesis. This is impossible since $L$ is infinite. In conclusion, there exists $i\leq n$ such that $\gamma_i\Gamma$ is as essentially large as $\Gamma$.\\
    Let $\gamma_0$ be such that $\gamma_0[\Dom(\gamma_0)]=L$ and $L'=\gamma'[\Dom(\gamma_i)]$ with $\gamma'=\gamma_i$. By the previous Lemma, there exists $\alpha\in \Gamma$ such that $\gamma_0\alpha[L']$ is of finite index in $L$. Therefore, $L'\cap \ker(\gamma_0\alpha)$ is finite. Suppose that for a quasi-endomorphism $\psi$, $\gamma_0\alpha\gamma'\psi\sim 0$. Then, the image is finite and so $\ker(\phi_0\alpha)$ almost contains $\gamma'\psi[\Dom(\gamma'\psi)]$. Since the intersection of these two subgroups is finite, $\gamma'\psi\sim 0$. Consequently, we may conclude that left multiplication $\gamma'\Gamma/\sim \rightarrow \gamma_0\alpha\gamma'\Gamma/\sim \subseteq \gamma_0\Gamma/\sim$ preserves the equivalence, \hbox{i.e.} it is injective and so $\gamma_0\Gamma$ is essentially as large as $\Gamma$.\\
    We conclude that $\Gamma_L$ is essentially large. By Lemma \ref{Lemma 12}, there exist $\gamma_1,...,\gamma_n$ such that $\sum_{i=1}^n \gamma_i[L"]$ with $L''=L\cap \bigcap_{i=1}^n \Dom(\gamma_i)$ is of finite index in $A$ and none of the $\gamma_i$ is equivalent to $0$. Let $J$ be a large set of indices such that $\{\psi_j:\ j\in J\}\subseteq \gamma_0\Gamma$ is a set of inequivalent pairwise endogenies (by essentially largeness of $\gamma_0\Gamma$). $\psi_j\gamma_i[\Dom(\psi_j\gamma_i)]\subseteq L$ and $(\psi_j\gamma_i){_L}\in \Gamma_L$. Moreover, since $\sum_{i\leq n}\gamma_i[L'']$ is of finite index in $A$. Then, for any couple of pairwise endogenies $\psi_j,\psi_{j'}$, there exists an $i$ such that $(\psi_j-\psi_{j'}\gamma_i[L"]$ with $L''=L\cap \Dom((\psi_j-\psi_{j'})\gamma_i)$ is infinite (if not the sum will be finite). By Ramsey's theorem, for a fixed $i$, there exists a large subset such that $\{(\psi_j\gamma_i)_L:\ j\in J'\}$ is a set of pairwise inequivalent $\mathcal{F}-\End(L)$. This completes the proof.
\end{proof}
\subsection{Theorem A: proof of base case revisited}
We divide the proof into two cases: the first in which $\Delta$ is an essentially unbounded pre-ring of endogenies. In particular, we prove the following theorem.
\begin{theorem}
    Let $A$ be an abelian infinite group, $\Delta$ a pre-ring of definable endogenies and $\Gamma$ a pre-ring of definable quasi-endomorphisms of $A$ such that:
    \begin{itemize}
        \item $\Delta$ is essentially unbounded;
        \item $\Gamma$ is essentially infinite;
        \item $A$ is absolutely $(\Gamma,\Delta)$-minimal;
        \item $\Delta$ and $\Gamma$ flat commute.
    \end{itemize}
    Then, $\kat(\Gamma,\Delta)$ is finite and $\Dom(\Gamma)$ is of finite index in $A$.
\end{theorem}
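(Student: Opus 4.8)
The plan is to run, for this mixed endogeny/quasi-endomorphism situation, the template of the base case Theorem \ref{Theorem Ab} and its quasi-endomorphism counterpart Theorem \ref{Theorem QA}, using absolute $(\Gamma,\Delta)$-minimality as a substitute for the hypothesis (present in those theorems but absent here) that every element has finite image or finite kernel. The first step is to produce a maximal finite weakly $\Delta$-invariant subgroup $A_0$. Since a sum of two finite weakly $\Delta$-invariant subgroups is again one, if no maximal such subgroup existed we would obtain an infinite weakly $\Delta$-invariant $S=\sum_i S_i$; because $\Delta$ consists of \emph{endogenies}, Lemma \ref{Lemma 5} makes the restriction--corestriction $\Delta_S$ a pre-ring of endogenies of $S$. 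The only place the finite image/kernel hypothesis entered the proof of Theorem \ref{Theorem Ab} was to see $\Delta_S$ still essentially unbounded, and I would replace that argument as follows: if $\delta,\delta'\in\Delta$ have $(\delta-\delta')_S\sim 0$ on the infinite group $S$, then $\ker(\delta-\delta')\cap S$ has finite index in $S$, so $\ker(\delta-\delta')$ is infinite; if also $\delta-\delta'\not\sim 0$ in $\Endog(A)$ then its image is infinite, so by Lemma \ref{Lemma 7} $\ker(\delta-\delta')$ has dimension $<\dim A$, hence infinite index, while a flat-commutation computation (as in the proof of Lemma \ref{Lemma 4}) shows it is almost $(\Gamma,\Delta)$-invariant --- contradicting absolute $(\Gamma,\Delta)$-minimality. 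Thus $\delta\sim\delta'$ in $\Endog(A)$, so $\Delta_S/{\sim}$ is as large as $\Delta/{\sim}$, impossible since $\Endog(S)$ is bounded. Hence $A_0$ exists.

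Next I would record that $\Kat(\Gamma)\subseteq A_0$: each $\kat(\gamma)$ is finite and, by Lemma \ref{Lemma 13}, weakly $\Delta$-invariant (as $\delta[\kat(\gamma)]\subseteq\kat(\gamma)+\kat(\delta)$), so it lies in $A_0$; and using flat commutation one checks $\gamma[A_0\cap\Dom(\gamma)]+\kat(\gamma)$ is again a finite weakly $\Delta$-invariant subgroup, whence $A_0$ is $\Gamma$-invariant and, by Lemma \ref{Lemma 19}, both $\Gamma$ and $\Delta$ act on $A/A_0$ preserving essential size. To see $\Kat(\Delta)$ is finite I would work in $A/A_0$: fixing a bounded essentially infinite near-subring $\Gamma'\le\Gamma$, for each $\delta\in\Delta$ the finite group $(\kat(\delta)+A_0)/A_0$ admits finitely many maps into the finite group $A_0$, so two inequivalent $\gamma,\gamma'\in\Gamma'$ agree on it, giving $(\gamma-\gamma')^{-1}[A_0]\supseteq\kat(\delta)$ with $(\gamma-\gamma')^{-1}[A_0]$ finite (its index over $\ker(\gamma-\gamma')$ is finite, and $\ker(\gamma-\gamma')$ cannot be infinite of infinite index by the argument above). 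Hence $\Kat(\Delta)\subseteq\sum_{\gamma\in\Gamma'-\{0\}}\gamma^{-1}[A_0]$ is bounded; but $\Kat(\Gamma,\Delta)=A_0+\Kat(\Delta)$ is $(\Gamma,\Delta)$-invariant by Lemma \ref{Lemma 15}, so if it were infinite, restricting $\Delta$ (endogenies) to this bounded invariant subgroup would embed $\Delta/{\sim}$ into $\Endog(\Kat(\Gamma,\Delta))/{\sim}$ (injectivity again from the $\ker(\delta)$ absolute-minimality argument), contradicting essential unboundedness of $\Delta$. So $\Kat(\Gamma,\Delta)$ is finite.

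For the finiteness of the index of $\Dom(\Gamma)$ I would use that the family $\{\Dom(\gamma):\gamma\in\Gamma\}$ is downward directed, since $\Dom(\gamma+\gamma')=\Dom(\gamma)\cap\Dom(\gamma')$, that each $\Dom(\gamma)$ is definable of finite index and both $\Delta$-invariant (first clause of flat commutation) and almost $\Gamma$-invariant (it has finite index, so it almost contains $\gamma'[\Dom(\gamma)]$), and that $\Dom(\Gamma)=\bigcap_\gamma\Dom(\gamma)$ is weakly, hence almost, $(\Gamma,\Delta)$-invariant by Lemma \ref{Lemma 14}. By Lemma \ref{boundedind} there are $n,d<\omega$ bounding chains of $\Dom(\gamma)$'s with all index-drops $>d$, so a maximal such chain yields $\gamma_1,\dots,\gamma_m$ with $D_0:=\Dom(\gamma_1+\cdots+\gamma_m)$ satisfying $|D_0:D_0\cap\Dom(\gamma)|\le d$ for all $\gamma$. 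If $\Dom(\Gamma)$ had infinite index, this forces an infinite strictly descending chain of definable almost $(\Gamma,\Delta)$-invariant finite-index subgroups inside $D_0$ all of whose drops have index $\le d$; passing to $A/A_0$ and restricting $\Delta$ to the inverse limit $A/\bigcap_k D_k$, one derives a contradiction with the essential unboundedness of $\Delta$ along the lines of the previous step. This gives $\Dom(\Gamma)$ of finite index.

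The main obstacle I expect is this last step: $\Dom(\Gamma)$ is only type-definable (an infinite intersection), so absolute minimality --- stated for definable subgroups --- has to be transferred through the finite sub-intersections $\Dom(\gamma_1+\cdots+\gamma_k)$ together with Lemma \ref{boundedind}, and one must check carefully that the ``bounded-drop'' descending chain really does produce an object on which $\Delta$ collapses enough to contradict unboundedness. A secondary delicate point, used repeatedly, is making the claim ``the relevant kernels $\ker(\delta-\delta')$, $\ker(\delta)$ are almost $(\Gamma,\Delta)$-invariant of infinite index'' fully rigorous from flat commutation and Lemma \ref{Lemma 7}; everything else is routine adaptation of Theorems \ref{Theorem Ab} and \ref{Theorem QA}.
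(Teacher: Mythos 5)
There is a genuine gap, and it sits at the heart of your plan. You repeatedly invoke the dichotomy ``$\ker(\delta-\delta')$ (resp.\ $\ker(\gamma-\gamma')$) is almost $(\Gamma,\Delta)$-invariant, hence by absolute minimality finite or of finite index.'' Flat commutation only gives you almost invariance of $\ker(\delta-\delta')$ under the \emph{other} pre-ring $\Gamma$ (via $\psi\gamma[a]\subseteq\gamma\psi[a]+\kat(\psi)+\kat(\gamma)$); there is no commutation hypothesis \emph{within} $\Delta$ or within $\Gamma$, so almost invariance under the pre-ring containing the element itself is simply not available, and absolute $(\Gamma,\Delta)$-minimality (which is $\Gamma\cup\Delta$-minimality) cannot be applied. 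The dichotomy is in fact false: take $A=K^2$, $\Gamma=M_2(K)$, $\Delta=K$ acting by scalars. Then $A$ is absolutely $(\Gamma,\Delta)$-minimal, yet $\begin{pmatrix}1&0\\0&0\end{pmatrix}\in\Gamma$ has infinite kernel of infinite index. This kills the injectivity of $\Delta/{\sim}\to\Delta_S/{\sim}$ in your first step, the finiteness of $(\gamma-\gamma')^{-1}[A_0]$ in your second step, and more generally the hope of running only the base-case template: elements with infinite kernel and infinite image are exactly the case the paper's machinery of lines, quasi-projections and the induction on dimension is built to handle, and it cannot be bypassed by absolute minimality alone.

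The treatment of $\Dom(\Gamma)$ is also incomplete. Lemma \ref{boundedind} bounds the length of chains all of whose index drops exceed $d$; it does not exclude an infinite strictly descending chain of domains with drops bounded by $d$, whose intersection then has infinite (bounded) index, and the contradiction you promise ``along the lines of the previous step'' is never made concrete. The paper's actual proof is quite different: it is an induction on $\dim(A)$, splitting on whether $\Gamma$ or $\Delta$ has a line. With no lines one is in the base case (Theorem \ref{Extension Base case}), where the finite index of $\Dom(\Gamma)$ comes from a definable skew-field of fractions acting on $\Dom(\Gamma')$. With a line, $A$ is an almost direct, finite-index sum of lines $L_i$; the induction hypothesis gives $\Dom(\Gamma_{L_i})$ of finite index in $L_i$, and transversals are glued to get $\Dom(\Gamma)$ of finite index in $A$. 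Finiteness of the bikatakernel is then obtained by restricting both pre-rings to $D=\Dom(\Gamma)$, where they become sharply commuting pre-rings of \emph{endogenies}, and quoting the already-established endogeny Theorem \ref{ThmA}. You should restructure your argument around that reduction rather than around a global base-case computation.
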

Since $\Dom(\Gamma)$ is $(\Gamma,\Delta)$-invariant, we can assume, up to work here, that both the pre-rings are of endogenies. We start from the base case.
\begin{theorem}\label{Extension Base case}
     Let $A$ be an abelian infinite group of finite dimension, $\Delta$ a pre-ring of definable endogenies and $\Gamma$ a near-ring of definable quasi-endomorphisms of $A$ such that:
    \begin{itemize}
        \item $\Delta$ is essentially unbounded;
        \item $\Gamma$ is essentially infinite;
        \item No element of $\Gamma\cup\Delta$ has infinite image and infinite kernel;
        \item $\Delta$ and $\Gamma$ flat commute.
    \end{itemize}
    Then, $\Kat(\Gamma,\Delta)$ is finite and $\Dom(\Gamma)$ is of finite index in $A$.
\end{theorem}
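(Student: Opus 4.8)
The plan is to transpose the proof of Theorem~\ref{Theorem Ab}, and of its quasi-endomorphism version Theorem~\ref{Theorem QA}, to the present mixed situation in which $\Delta$ acts by endogenies and $\Gamma$ by quasi-endomorphisms, and to add the one genuinely new assertion, that $\Dom(\Gamma)$ has finite index in $A$. First I would exploit the essential unboundedness of $\Delta$ exactly as in Theorem~\ref{Theorem Ab}: finite weakly $\Delta$-invariant subgroups are closed under sum, so an infinite strictly increasing chain of them would have as union a countably infinite weakly $\Delta$-invariant subgroup $S$; restriction-corestriction (well defined by Lemma~\ref{Lemma 5}) then makes $\Delta_S/{\sim}$ essentially unbounded — here one uses that an endogeny of $\Delta$ with infinite kernel has finite image, so that $\delta_S\sim\delta'_S$ forces $\delta\sim\delta'$ — which is impossible since $\Endog(S)$ has cardinality at most $2^{\aleph_0}$ in every elementary extension. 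Hence there is a maximal finite weakly $\Delta$-invariant subgroup $A_0$. Since each $\kat(\gamma)$, $\gamma\in\Gamma$, is finite and, by flat commutation and the first item of Lemma~\ref{Lemma 13}, weakly $\Delta$-invariant ($\delta[\kat(\gamma)]\le\kat(\delta)+\kat(\gamma)$), it lies in $A_0$; so $\Kat(\Gamma)\le A_0$ is finite. Moreover $A_0$ is $\Gamma$-invariant, as $\gamma[A_0]$ is again a finite weakly $\Delta$-invariant subgroup, so $\Gamma$ acts on $A/A_0$ by partial endomorphisms (Lemma~\ref{Lemma 19}).

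For the remaining katakernel I would copy the argument of Theorem~\ref{Theorem QA}. Fix a bounded, essentially infinite sub-near-ring $\Gamma'$ of $\Gamma$. For $\delta\in\Delta$ the finite group $(\kat(\delta)+A_0)/A_0$ is preserved by every $\gamma\in\Gamma'$, because $\gamma[\kat(\delta)]\le\kat(\gamma)+\kat(\delta)\le A_0+\kat(\delta)$ (again Lemma~\ref{Lemma 13}) and $\gamma[A_0]\le A_0$; since this finite group has only finitely many self-maps, essential infinity of $\Gamma'$ produces inequivalent $\gamma,\gamma'\in\Gamma'$ with $(\gamma-\gamma')[(\kat(\delta)+A_0)/A_0]=0+A_0$, so $\kat(\delta)\le(\gamma-\gamma')^{-1}[A_0]$, a finite subgroup as $\gamma-\gamma'\not\sim 0$ has finite kernel. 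Thus $\Kat(\Delta)\le\sum_{\psi\in\Gamma'/{\sim}\setminus\{0\}}\psi^{-1}[A_0]$ is bounded; but $\Kat(\Delta)$ is $\Delta$-invariant and $\Delta$ acts on it by endogenies each of which has finite image or finite kernel, so $\Delta/{\sim}$ would embed into the bounded ring $\Endog(\Kat(\Delta))/{\sim}$, contradicting essential unboundedness unless $\Kat(\Delta)$ is finite. Combining, $\Kat(\Gamma,\Delta)=\Kat(\Gamma)+\Kat(\Delta)$ is finite.

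I expect the finiteness of $[A:\Dom(\Gamma)]$ to be the main obstacle. The family $\{\Dom(\gamma):\gamma\in\Gamma\}$ is closed under finite intersections (as $\Dom(\gamma+\gamma')=\Dom(\gamma)\cap\Dom(\gamma')$), each $\Dom(\gamma)$ is $\Delta$-invariant since every $\delta\in\Delta$ is total and $\delta[\Dom(\gamma)\cap\Dom(\delta)]\le\Dom(\gamma)$ by flat commutation, so $\Dom(\Gamma)=\bigcap_{\gamma\in\Gamma}\Dom(\gamma)$ is $\Delta$-invariant and $\Delta$ acts by endogenies on $A/\Dom(\Gamma)$. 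If the induced ring homomorphism $\Delta/{\sim}\to\Endog(A/\Dom(\Gamma))/{\sim}$ is not injective, then some $\delta-\delta'\not\sim 0$ has image almost contained in $\Dom(\Gamma)$; by hypothesis $\delta-\delta'$ has finite kernel, so $\operatorname{Im}(\delta-\delta')$ is a definable subgroup of full dimension, hence of finite index in $A$, and therefore so is $\Dom(\Gamma)$. Otherwise $\Delta/{\sim}$ embeds into $\Endog(A/\Dom(\Gamma))/{\sim}$; but $A/\Dom(\Gamma)$ is a bounded group (being a quotient of $A$ by the type-definable subgroup obtained by intersecting all the definable finite-index subgroups $\Dom(\gamma)$, which has bounded index by the chain condition of Lemma~\ref{boundedind}), so its endogeny ring is bounded and we again contradict essential unboundedness of $\Delta$. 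Hence $\Dom(\Gamma)$ has finite index, and then it automatically coincides with a finite subintersection of the $\Dom(\gamma)$'s, hence is definable. The delicate points to verify are the uniform-definability of $\{\Dom(\gamma)\}_{\gamma\in\Gamma}$ needed for Lemma~\ref{boundedind}, the boundedness of $A/\Dom(\Gamma)$, and that restriction to $\Dom(\Gamma)$ preserves essential size (so that one may, alternatively, reduce the whole statement to Theorem~\ref{Theorem Ab} by restriction-corestriction to $\Dom(\Gamma)$, where $\Gamma$ and $\Delta$ become sharply commuting invariant pre-rings of definable endogenies).
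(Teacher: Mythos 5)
Your first part — the finiteness of $\Kat(\Gamma,\Delta)$ — is essentially the paper's own argument: the paper simply invokes Theorem~\ref{Theorem QA}, and you have correctly unpacked that proof (maximal finite weakly $\Delta$-invariant subgroup $A_0$ via essential unboundedness of $\Delta$, $\Kat(\Gamma)\le A_0$ by flat commutation and Lemma~\ref{Lemma 13}, and the bounded sub-near-ring $\Gamma'$ to trap $\Kat(\Delta)$ in a bounded sum of finite subgroups). That portion is fine.

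The second assertion, that $\Dom(\Gamma)$ has finite index, is where your argument has a genuine gap. Your ``otherwise'' branch needs $A/\Dom(\Gamma)$ to be bounded so that $\Endog(A/\Dom(\Gamma))/{\sim}$ is bounded, and you justify this by ``the chain condition of Lemma~\ref{boundedind}''. But that lemma only excludes long chains in which \emph{every} step has index greater than $d$; it says nothing about an infinite descending chain of finite-index subgroups whose successive indices are all at most $d$ (say, all equal to $2$), and the intersection of such a chain can have infinite — and, a priori, unbounded — index. (Compare $\bigcap_n n\mathbb{Z}$ in $\mathrm{Th}(\mathbb{Z},+)$, a finite-dimensional theory: every step is small, yet the intersection has index $\widehat{\mathbb{Z}}$.) So neither the boundedness of $A/\Dom(\Gamma)$ nor even the fact that it is a quotient by a type-definable subgroup is established, and without it the embedding $\Delta/{\sim}\hookrightarrow\Endog(A/\Dom(\Gamma))/{\sim}$ yields no contradiction. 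Your closing suggestion to instead restrict-corestrict everything to $\Dom(\Gamma)$ and invoke Theorem~\ref{Theorem Ab} is circular, since that restriction is only legitimate once $\Dom(\Gamma)$ is known to be definable of finite index. The paper closes exactly this gap by a different mechanism: it supposes for contradiction that there is an infinite strictly descending chain $\Dom(\gamma_i)$, passes to the bounded, essentially infinite near-ring $\Gamma'$ generated by the $\gamma_i$ and to $D=\Dom(\Gamma')+K/K$, shows that $\Delta$ restricts to an essentially unbounded ring of monomorphisms on $D$, and uses Proposition~3.6 of \cite{wagner2020dimensional} to produce a definable skew-field of fractions acting on $D$; this forces $D$ to be definable of finite index, which is incompatible with the strictly descending chain of overgroups $\Dom(\gamma_i)\supseteq D$. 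Some version of this skew-field (or another genuinely new) argument is needed; the dichotomy on injectivity of the map into $\Endog(A/\Dom(\Gamma))/{\sim}$ alone does not suffice.
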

\begin{proof}
     The first part follows from Theorem \ref{Theorem QA}. Let $K$ be the maximum finite $\Delta$-invariant finite subgroup contained in $A$, which is also $\Gamma$-invariant by flat commutation.\\
     For the second part, suppose, for a contradiction, that there exists $\{\gamma_i\in \Gamma\}_{i\in \mathbb{N}}$ such that $\Dom(\gamma_i)>\Dom(\gamma_{i+1})$ for every $i<\omega$. Moreover, we can suppose that $\gamma_i,\gamma_j$ are not equivalent. Consequently, the near-ring $\Gamma'$ generated by $\{\gamma_i\in \Gamma\}_{i\in \mathbb{N}}$ is bounded and essentially infinite. We work in $\Dom(\Gamma')+K/K=D$ and define $\Gamma',\Delta'$ the restrictions-corestrictions of $\Gamma,\Delta$ to $D$, respectively. $\Gamma'$ and $\Delta'$ are well-defined two near-rings of quasi-endomorphisms by Lemma \ref{Lemma 14}. In particular, $\Delta'$ is a ring of endomorphisms.\\
     These two near-rings of quasi-endomorphisms inherit the properties of the two rings $\Gamma,\Delta$:
     \begin{itemize}
         \item $\Delta',\Gamma'$ commute since given $[\delta]\in \Delta'$, $[\gamma]\in \Gamma'$ and $a+K$ with $a\in \Dom(\Gamma)$ then $[\delta][\gamma]-[\gamma][\delta](a+K)\leq (\delta\gamma[a]-\gamma\delta[a])\cap \Dom(\Gamma)+K\leq K$ by flat commutation. 
         \item $\Delta',\Gamma'$ are essentially large as $\Gamma,\Delta$ respectively.\\
         Given $\delta,\delta'\in \Delta$, if the image of $\delta-\delta'[\Dom(\Gamma)+K]$ is finite, being $\Dom(\Gamma)$ of bounded index in $A$, also the image of $\delta-\delta'$ is bounded and so finite being definable. A similar proof holds for $\gamma,\gamma'\in \Gamma$.
     \end{itemize}
     $\Delta'$ acts by monomorphisms on $D$: $\ker([\delta])=\delta^{-1}[K]\cap (\Dom(\Gamma')+K)$. Since $\delta$ is an endomorphism, $K\leq \delta^{-1}[K]$ and so this coincide with $K+\Dom(\Gamma')\cap \delta^{-1}[K]$. $\Dom(\Gamma')\cap \delta^{-1}[K]$ is weakly $\Gamma'$-invariant, being intersection of a $\Gamma'$-invariant subgroup ($\delta^{-1}[K]$ by Lemma \ref{Lemma 15}) and a weakly $\Gamma'$-invariant subgroup ($\Dom(\Gamma')$ is weakly $\Gamma'$-invariant by Lemma \ref{Lemma 14}). Therefore, it is contained in $K$ by previous proof.\\
     Finally, every finite image of elements in $\Delta'$ is $0$ since $\delta(\Dom(\Gamma')+K)+K$ is finite iff $\delta[\Dom(\Gamma')+K]$ is finite. By $\Delta'$-invariance of $\Dom(\Gamma')$, 
     $$\delta[\Dom(\Gamma')+K]\leq \Dom(\Gamma')+K$$
     and
     $$\gamma\delta[\Dom(\Gamma')+K]\leq \gamma\delta[\Dom(\Gamma')]+ \delta\gamma[\Dom(\Gamma')]+K\leq \delta[\Dom(\Gamma')]+K.$$ 
     Therefore, $\delta[\Dom(\Gamma')]+K\bigcap \Dom(\Gamma')$ is weakly $\Gamma'$-invariant and so it is contained in $K$.\\
     Therefore, on $A/K$, every element of $\Delta'$ is equivalent to another iff on the common domains they coincide.\\
     $\Delta'$ is essentially unbounded since $\Delta$ is essentially unbounded. Therefore, it contains a type-definable subset of dimension strictly greater than $0$. Moreover, the dimension of each type-definable subset is bounded by the dimension of $A$ as in the proof of Theorem \ref{ThmA}. Therefore, the field of fractions $L$ of $\Delta'$ is type-definable.\\
     Working as in the proof of theorem \ref{ThmA}, there exists a definable subset $X$ of $\Delta'$ such that, for any $\delta\in \Delta'$, there are $m,n\in X$ such that the image of $[\delta]-mn^{-1}$ is finite on $\Dom(\Gamma'+K/K)$.\\
     We verify that this image is $0+K$. 
     We prove that $mn^{-1}(\Dom(\Gamma')+K/K)$ is $\Gamma'$-invariant. Let $b+K\in \operatorname{Im}(n)$ and $\gamma\in \Gamma'$. Then, there exists $a+K\in \Dom(\Gamma')+K$ such that $n(a+K)=b+K$. Up to take a suitable $a$, we may assume $a\in \Dom(\Gamma')$. Then, $\gamma [m](a+K)\leq \gamma[(m[a]+K)\cap \Dom(\gamma)]$ and $m[a]\leq \Dom(\gamma)$ by flat commutation. The latter is contained in $m\gamma[a]+K$ by $\Gamma'$-invariance of $K$. By flat commutation, since $a\in \Dom(\Gamma)$, we have that $\gamma[a]\leq \Dom(\Gamma')+\kat(\gamma)\leq \Dom(\Gamma')$. Then, $\gamma[m[a]]+K\leq m[\gamma[a]]\leq \operatorname{Im}(mn^{-1})$. In conclusion, when the image of $[m][n]^{-1}-[m'][n']^{-1}$ is finite, it is $0$ in $\Dom(\Gamma')/K$. Therefore, $[m][n]^{-1}$ and $[m'][n']^{-1}$ coincide on the common domain. As in the proof of Theorem \ref{ThmA}, the skew-field of fractions of $\Delta'$ is definable and acts on $\Dom(\Gamma')$. Therefore, $\Dom(\Gamma')$ is of finite index, contradicting the fact that there exists a subset $(\gamma_i)_{i\in \mathbb{N}}$ of $\Gamma$ such that $\Dom(\gamma_i)>\Dom(\gamma_{i+1})$. This completes the proof of the base case.
\end{proof}
\subsection{Theorem A: first case}
We prove our Theorem in case the pre-ring of endogenies $\Delta$ is essentially unbounded.
\begin{theorem}\label{ExtA general}
     Let $A$ be an abelian infinite group, $\Delta$ be a pre-ring of endogenies and $\Gamma$ a near-ring of quasi-endomorphisms of $A$ such that:
    \begin{itemize}
        \item $\Delta$ is essentially unbounded;
        \item $\Gamma$ is essentially infinite;
        \item $A$ is absolutely $(\Gamma,\Delta)$-minimal;
        \item $\Delta$ and $\Gamma$ flat commute.
    \end{itemize}
    Then, $\Kat(\Gamma,\Delta)$ is finite and $\Dom(\Gamma)$ is of finite index in $A$.
\end{theorem}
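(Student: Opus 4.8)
The plan is to argue by induction on $\dim(A)$, treating the statement together with its ``vice versa'' form (the essentially unbounded ring of endogenies being instead essentially infinite, the quasi-endomorphism ring essentially unbounded), so that both configurations are available to the induction. So fix a counterexample $(A,\Gamma,\Delta)$ of minimal dimension. First I would reduce to a maximal situation by replacing $\Gamma$ with $C^{\flat}(\Delta)$: this is a near-subring by Lemma \ref{Lemma 13}, the pair $(\Gamma,\Delta)$ only grows so absolute $(\Gamma,\Delta)$-minimality and the essential-largeness hypotheses are preserved, $\Delta$ still flat commutes with it, and both $\Kat(\Gamma,\Delta)$ and $\Dom(\Gamma)$ can only shrink, so it suffices to prove the conclusion for the enlarged $\Gamma$. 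Next, if no element of $\Gamma\cup\Delta$ has simultaneously infinite image and infinite kernel, then Theorem \ref{Extension Base case} applies directly and we are done; this disposes of the ``no lines'' situation, since whenever every element of a ring has finite image or finite kernel its infinite images all have full dimension and cannot contain smaller-dimensional infinite images.

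So assume some $\phi\in\Gamma\cup\Delta$ has both infinite image and infinite kernel; then $\phi[\Dom(\phi)]$ is infinite of dimension $<\dim(A)$ and, by Lemma \ref{Lemma 12}(2) (resp. Lemma \ref{lemma 8}(2) when $\phi\in\Delta$), it contains a line $L$ with $\dim(L)<\dim(A)$; swapping the roles of $\Gamma$ and $\Delta$ if necessary, I may take $L$ to be a line of whichever ring still has an element with infinite image of smaller dimension, so that $L$ is equipped with $\Gamma_L$ and $\Delta_L$ as in Lemma \ref{Lemma 16} (using the restriction-corestrictions, which are well defined because $L$ is almost invariant by Lemma \ref{Lemma 12}(1), and taking for the ``line ring'' on $L$ only the elements of the appropriate ring whose image already lies in $L$, exactly as in Theorem \ref{Theorem 2A}). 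By Lemma \ref{Lemma 16} the near-rings $\Gamma_L,\Delta_L$ flat commute, $L$ is absolutely $(\Gamma_L,\Delta_L)$-minimal, and they are as essentially large as the corresponding original rings, so $(L,\Gamma_L,\Delta_L)$ satisfies the hypotheses of the main theorem in one of its two configurations, with $\dim(L)<\dim(A)$. The inductive hypothesis then yields that $\Kat(\Gamma_L,\Delta_L)$ is finite and that the global domain of the quasi-endomorphism part on $L$ is of finite index in $L$. Writing $L_0$ for the maximal finite $(\Gamma_L,\Delta_L)$-invariant subgroup of $L$ (it contains $\Kat(\Gamma_L,\Delta_L)$ by Lemma \ref{Lemma 15} and every finite weakly $C^{\flat}$- or $C^{\flat}C^{\flat}$-invariant subgroup by the flat analogue of Lemma \ref{easylemma}), I pass to $L/L_0$, on which — exactly as in Theorems \ref{Theorem 2A} and \ref{Theorem A3} — the relevant ring acts by monomorphisms; the index of the images of its nonzero elements is bounded by Lemma \ref{boundedind} applied to the uniformly definable family $\{\,mn^{-1}(L/L_0):m,n\in X\,\}$ produced by the definable skew-field of fractions of $\Gamma_L/{\sim}$ (Proposition 3.6 of \cite{wagner2020dimensional}), and the multiplicativity $|L/L_0:\operatorname{Im}(\gamma^n)|=|L/L_0:\operatorname{Im}(\gamma)|^n$ then forces every nonzero element to be an automorphism of $L/L_0$. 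From an element of $\Gamma$ (resp. $\Delta$) with image almost equal to $L$ one then obtains a surjection of $L/L_0$, and hence a quasi-projection $\pi:A\to L$ lying in the appropriate ring and flatly commuting with the other, built as in Lemma \ref{lemma 10}(1).

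Finally I would invoke the quasi-endomorphism version of Lemma \ref{lemma 10}: iterating the construction produces quasi-projections $\pi_1,\dots,\pi_n$ onto lines $L_1,\dots,L_n$ whose almost-direct sum has finite index in $A$, and the decomposition $\kat(\phi)=\sum_i \pi_i\phi[0]+(1-\sum_i\pi_i)\phi[0]$, combined with flat commutation and the fact that $\pi_i\phi[0]\leq (L_i)_0$, gives $\Kat(\Gamma,\Delta)\leq \sum_i (L_i)_0+H$ for a fixed finite $H$, so $\Kat(\Gamma,\Delta)$ is finite. For the finite index of $\Dom(\Gamma)$ — the genuinely new assertion — I would use that $\Dom(\Gamma_{L_i})$ has finite index in $L_i$ by the inductive hypothesis, that $\sum_i L_i$ has finite index in $A$, and the domain-tracking results (Lemma \ref{Lemma 14} and, crucially, Lemma \ref{boundedind}): any infinite strictly descending chain of domains $\Dom(\gamma_0)>\Dom(\gamma_1)>\cdots$ in $\Gamma$, after projecting through the $\pi_i$ and restricting to the lines, would yield such a chain in some $\Gamma_{L_i}$, contradicting either Lemma \ref{boundedind} or the already-established finiteness of $|L_i:\Dom(\Gamma_{L_i})|$. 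The main obstacle is precisely this last point: in the pure-endogeny Theorem \ref{Theorem 2A} all domains equal $A$ and nothing of the kind is needed, whereas here one must simultaneously control $\Kat(\Gamma,\Delta)$ and prevent the global domain from degenerating, and the interplay between the quasi-projections (whose own domains have only finite, not full, index) and the descending chains of domains is the delicate part; a secondary difficulty is verifying that every restriction/corestriction used to move data between $A$, the $L_i$, and the quotients $L_i/(L_i)_0$ stays well defined in the quasi-endomorphism setting, which rests on the almost-invariance in Lemma \ref{Lemma 12} and the weak invariance in Lemma \ref{Lemma 14}.
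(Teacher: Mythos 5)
Your inductive skeleton (minimal-dimension counterexample, base case via Theorem \ref{Extension Base case}, passage to a line $L$ with $\Gamma_L,\Delta_L$ as in Lemma \ref{Lemma 16}) matches the paper, and your argument for the finite index of $\Dom(\Gamma)$ is essentially the paper's: the paper simply observes that $\Dom(\gamma)\supseteq\sum_i\Dom(\gamma_{|L_i})\supseteq\sum_i\Dom(\Gamma_{L_i})$, where each $\Dom(\Gamma_{L_i})$ has finite index in $L_i$ by induction and $\sum_i L_i$ has finite index in $A$, so a transversal count finishes it --- note that this needs only the lines, not the quasi-projections $\pi_i$ you route it through. But for the finiteness of $\Kat(\Gamma,\Delta)$ you propose to rebuild the entire quasi-projection and skew-field machinery of Theorem \ref{Theorem 2A} directly in the quasi-endomorphism setting (``the quasi-endomorphism version of Lemma \ref{lemma 10}''), and this is a genuine gap, not a deferred routine verification. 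The bound on the dimension of a type-definable subset $X$ of $\Gamma_L/{\sim}$ comes from evaluating at a fixed nonzero point lying in every domain, and when the elements of $\Gamma_L$ are genuinely partial there is in general no such common point; this is exactly the obstruction the paper flags at the end of Section 10 as the reason the two-quasi-endomorphism-rings case remains open. Your construction of the $\pi_i$ therefore rests on a lemma that does not exist in the paper for this configuration and whose proof is precisely the hard open point.

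The paper avoids all of this by reversing the order of the two conclusions. It first proves that $\Dom(\Gamma)$ has finite index in $A$ (using only the inductive hypothesis on the lines, as above), and then restricts both rings to $D=\Dom(\Gamma)$: on $D$ every element of $\Gamma$ is total, so $\Gamma_D$ is a pre-ring of endogenies, flat commutation collapses to sharp commutation (since $D\supseteq\kat(\delta)$ for all $\delta$), absolute minimality passes to $D$, and $\Kat(\Gamma_D,\Delta_D)=\Kat(\Gamma,\Delta)\cap D$. The finiteness of the bikatakernel is then an immediate application of the already-established endogeny Theorem \ref{ThmA}, with no new quasi-projection construction needed. I recommend you adopt this reduction: prove the domain statement first, then quote Theorem \ref{ThmA} on $\Dom(\Gamma)$.
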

\begin{proof}
   Let $(A,\Gamma,\Delta)$ be a counterexample with $A$ of minimal dimension.\\
    If $\Gamma,\Delta$ have no lines, the contradiction follows by Theorem \ref{Extension Base case}. \\
    Suppose $\Delta$ has a line. We verify that $\Dom(\Gamma)$ is of finite index in $A$.\\
    Let $\{L_i\}_{i<n}$ be lines such that $\sum_{i=1}^n L_i$ is of finite index in $A$. For each $i$, let $\gamma_i$ be the restriction-corestriction of $\gamma$ to $L_i$ with domain 
    $$\Dom(\gamma_i)=\{l\in L_i\cap \Dom(\gamma_i):\ \gamma[l]\leq L_i+\kat(\gamma)\}=L_i\cap \Dom(\gamma).$$ 
    By Lemma \ref{Lemma 13}, $\Delta_{L_i},\Gamma_{L_i}$ respect the hypothesis of the Theorem and, by induction hypothesis, $\Dom(\Gamma_{L_i})$ is of finite index in $L_i$. Given $T_i$ a transversal of $\Dom(\Gamma_i)$ in $L_i$ and $T$ a transversal of $\sum_{i=1}^n L_i$ in $A$, the sum $\sum_{i=1}^n T_i+T$ contains a transversal of $\sum_{i=1}^n \Dom(\Gamma_i)$ in $A$. Since $\Dom(\gamma)\supseteq \sum_{i=1}^n \Dom(\gamma_i)$, the subgroup $\Dom(\Gamma)$ is of finite index in $A$.\\
    If $\Gamma$ has a line, let $L_i$ be lines such that $\sum_{i\leq n} L_i$ is of finite index in $A$ and take $\gamma_i\in \Gamma$ such that, for any $i\leq n$, $\gamma_i[A]\leq L_i$ and ${\gamma_i}_{L_i}[L_i]$ of finite index in $L_i$ (these exist by Lemma \ref{Lemma 12}). Then, $\Dom(\gamma)\geq \Dom(\gamma_i\gamma)$ for any $i$. By definition $\Dom(\gamma_i\gamma)\geq \Dom(\Gamma_{L_i})$ so, as before, $\Dom(\Gamma)$ is of finite index.\\
    Finally, to prove the finiteness of the katakernel, it is sufficient to pass to $\Dom(\Gamma)$. Indeed, $\Kat(\Gamma,\Delta)$ is finite iff $\Kat(\Gamma,\Delta)\cap \Dom(\Gamma)$ is finite. Let $\Gamma_D$ and $\Delta_D$ be the restrictions-corestrictions of $\Gamma$ and $\Delta$ to $D=\Dom(\Gamma)$. We prove that $\Kat(\Gamma_D,\Delta_D)$ is equal to $\Kat(\Gamma,\Delta)\cap D$. Let $a\in \Kat(\Gamma,\Delta)\cap D=(\kat(\gamma)+\kat(\delta))\cap D$. Since $D$ contains $\kat(\delta)$, this is equal to 
    $$\kat(\gamma)\cap D+\kat(\delta)=\kat(\gamma_D)+\kat(\delta_D)\leq \Kat(\Delta_D,\Gamma_D).$$ By arbitrariety of $a\in \Kat(\Gamma,\Delta)\cap D$, we conclude that $\Kat(\Gamma,\Delta)\cap D\leq \Kat(\Delta_D,\Gamma_D)$. The other inclusion is obvious.\\
    The rings $\Delta_D,\Gamma_D$ respect the hypothesis of the Theorem \ref{ThmA}: 
    \begin{itemize}
        \item For the essentially unboundeness and infiniteness and for the fact that they are endogenies, it follows as in the proof of Theorem \ref{Extension Base case}.
        \item For the sharp commutation observe that, given $\delta\in \Delta,\gamma\in \Gamma$ and $a\in D$, then 
        $$\delta_D\gamma_D-\gamma_D\delta_D[a]\leq (\delta\gamma-\gamma\delta[a])\cap D\leq \kat(\gamma)\cap D+\kat(\delta)=\kat(\gamma_D)+\kat(\delta_D)$$
        since $D\supseteq \kat(\delta)$.
        \item Since $D$ is a definable subgroup of $A$, a definable subgroup of $D$ is a definable subgroup of $A$. Suppose, for a contradiction, that there exists a definable infinite almost $(\Delta_D,\Gamma_D)$-invariant subgroup $B$ of $D$. Then, for any $\delta\in \Delta$, the subgroup $\delta_D[B]=\delta[B]\cap D$ is of finite index in $\delta[B]$. Since $B$ is of finite index in $\delta_D[B]$, we conclude that $B$ is almost $\Delta$-invariant. The same for $\Gamma$. This contradicts the absolutely minimality of $A$.
    \end{itemize}
    Therefore, by Theorem \ref{ThmA}, the bikatakernel is finite and the Theorem is proved.
\end{proof}
Observe that, up to work in $D=\bigcap_{\gamma\in \Gamma} \Dom(\gamma)$, we can apply Theorem $B$ to linearize the action. 
\begin{theorem}
    Let $A$ be an abelian infinite group, $\Delta$ be a pre-ring of endogenies and $\Gamma$ a near-ring of quasi-endomorphisms of $A$ such that:
    \begin{itemize}
        \item $\Delta$ is essentially unbounded;
        \item $\Gamma$ is essentially infinite;
        \item $A$ is absolutely $(\Gamma,\Delta)$-minimal;
        \item $\Delta$ and $\Gamma$ flat commute.
    \end{itemize}
\end{theorem}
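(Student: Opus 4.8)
The plan is to reduce this statement to Theorem B by passing to the global domain $D=\Dom(\Gamma)=\bigcap_{\gamma\in\Gamma}\Dom(\gamma)$ and then to the quotient of $D$ by the bikatakernel. First I would invoke Theorem \ref{ExtA general}: under the present hypotheses $\Kat(\Gamma,\Delta)$ is finite and $D$ has finite index in $A$. By Lemma \ref{Lemma 14}, $D$ is $\Gamma$-invariant and weakly $\Delta$-invariant, so the restriction-corestrictions $\Gamma_D=\{\gamma_D:\gamma\in\Gamma\}$ and $\Delta_D=\{\delta_D:\delta\in\Delta\}$ are well defined on $D$; moreover, since every $\gamma\in\Gamma$ is total on $D$ and $\kat(\gamma_D)=\kat(\gamma)\cap D$ is finite, $\Gamma_D$ and $\Delta_D$ are genuine pre-rings of definable \emph{endogenies} of $D$ — the domain obstructions that forced us into near-rings disappear on $D$.

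Next I would check that the hypotheses of Theorem B transfer to $(D,\Gamma_D,\Delta_D)$, following the verifications already carried out at the end of the proof of Theorem \ref{ExtA general}: flat commutation of $\Gamma,\Delta$ specialises to sharp commutation of $\Gamma_D,\Delta_D$ on $D$ (using $\kat(\delta)\leq D$); $D$ is absolutely $(\Gamma_D,\Delta_D)$-minimal, since a definable infinite almost $(\Gamma_D,\Delta_D)$-invariant subgroup of $D$ would be almost $(\Gamma,\Delta)$-invariant, against the absolute minimality of $A$; and, as $D$ has finite index in $A$, an endogeny $\gamma_D-\gamma'_D$ with finite image on $D$ forces $\gamma\sim\gamma'$, so $\Gamma_D/{\sim}\cong\Gamma/{\sim}$ and $\Delta_D/{\sim}\cong\Delta/{\sim}$ and one of them is essentially unbounded while the other is essentially infinite. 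Then I would pass to $C^{\#}(\Gamma_D)$ and $C^{\#}(C^{\#}(\Gamma_D))$ exactly as in the reduction preceding Theorem \ref{Theorem 2A} (this only enlarges the pre-rings, makes them mutual sharp centralisers, and preserves absolute minimality and essential size), and set $A_0$ equal to the bikatakernel of this enlarged pair, which is finite by Theorem \ref{ThmA} and, by Lemma \ref{easylemma}, absorbs every finite weakly invariant subgroup. At this point $(D,\ \text{enlarged }\Gamma_D,\ \text{enlarged }\Delta_D,\ A_0)$ satisfies precisely the hypotheses of Theorem B.

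Applying Theorem B yields a definable infinite field $K$ such that $D/A_0$ is a finite-dimensional $K$-vector space, $K$ embeds into $(\Gamma_D/{\sim})\cap(\Delta_D/{\sim})$, and $\Gamma_D/{\sim}$ and $\Delta_D/{\sim}$ act $K$-linearly on $D/A_0$. Transporting this along the ring isomorphisms $\Gamma/{\sim}\cong\Gamma_D/{\sim}$ and $\Delta/{\sim}\cong\Delta_D/{\sim}$ gives the conclusion: $D=\Dom(\Gamma)$ is a definable subgroup of finite index in $A$, $A_0$ is a finite $(\Gamma,\Delta)$-invariant subgroup, and $D/A_0$ is a finite-dimensional vector space over a definable infinite field $K$ on which $\Gamma/{\sim}$ and $\Delta/{\sim}$ act $K$-linearly. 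Since $\Gamma$ is total only on $D$, one should not expect more than a statement about this finite-index subgroup: on $A/A_0$ the two near-rings still act, but merely by quasi-endomorphisms with domain of finite index.

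The hard part here is not a single estimate but the bookkeeping of the reduction: one must be careful that passing to $D$ genuinely upgrades the near-rings of quasi-endomorphisms to pre-rings of endogenies (totality of each $\gamma_D$, finiteness of $\kat(\gamma_D)=\kat(\gamma)\cap D$), that flat commutation indeed becomes sharp commutation on $D$, and that $A_0$ is the correct finite subgroup so that the quotient rings act by honest endomorphisms of $D/A_0$. Each of these has already been checked, in isolation, inside the proofs of Theorems \ref{ExtA general} and \ref{ThmA}, so the remaining work is to assemble those facts and quote Theorem B.
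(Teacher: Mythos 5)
Your proposal is correct and follows exactly the route the paper indicates: the paper states this theorem with no proof beyond the remark ``up to work in $D=\bigcap_{\gamma\in\Gamma}\Dom(\gamma)$, we can apply Theorem B,'' and your argument is precisely the expansion of that remark, reusing verbatim the verifications already made at the end of the proof of Theorem \ref{ExtA general} (that $\Gamma_D,\Delta_D$ are pre-rings of endogenies on $D$, that flat commutation becomes sharp commutation on $D$, that absolute minimality descends to $D$, and that the passage to $D$ and to the quotient by the bikatakernel preserves essential size), followed by the reduction to mutual sharp centralisers used before Theorem \ref{Theorem 2A} and an application of Theorem B.
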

Then, there exists a definable subgroup $D$ of finite index in $A$ and a finite subgroup $D_0$ such that $D/D_0$ is a $K$-vector space for a definable field $K$. Moreover, $\Delta$ and $\Gamma$ acts $K$-linearly on $D/D_0$.
\section{Theorem A: second case}
For the second case, we suppose $\Gamma$ essentially unbounded.\\
In this case, we only prove that the bikatakernel is finite. The statement of the theorem is the following.
\begin{theorem}
    Let $A$ be an abelian definable infinite group, $\Gamma$ an invariant near-ring of definable quasi-endomorphisms, and $\Delta$ an invariant pre-ring of definable endogenies. Assume that:
    \begin{itemize}
        \item $\Gamma$ is essentially unbounded and $\Delta$ is essentially infinite;
        \item $\Gamma$ and $\Delta$ flat commute;
        \item $A$ is absolutely $(\Gamma,\Delta)$-minimal.
    \end{itemize}
    Then, $\Kat(\Gamma,\Delta)$ is finite.
\end{theorem}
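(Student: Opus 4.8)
The plan is to run the same three‑case induction on $\dim A$ that established Theorem~\ref{ThmA}, but now in the near‑ring setting, using the quasi‑endomorphism analogues of Sections~6--9. I take a counterexample $(A,\Gamma,\Delta)$ of minimal dimension. First, exactly as in the reduction preceding Theorem~\ref{Theorem 2A}, I may assume $\Gamma=C^{\flat}(\Delta)$ and $\Delta=C^{\flat}(\Gamma)$: by Lemma~\ref{Lemma 13} these are near‑subrings, enlarging $\Gamma,\Delta$ preserves invariance, flat commutation, absolute $(\Gamma,\Delta)$‑minimality and the essentially unbounded/infinite dichotomy, and $\Kat(\Gamma,\Delta)$ can only grow, so it suffices to treat this case (one should check here that the enlarged $\Delta$ still behaves well even if it acquires non‑total quasi‑endomorphisms; the line‑case lemmas below are already stated for arbitrary near‑rings of quasi‑endomorphisms, so this is harmless). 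If no element of $\Gamma\cup\Delta$ has both infinite image and infinite kernel — in particular if neither $\Gamma$ nor $\Delta$ has a line — then $\Kat(\Gamma,\Delta)$ is finite by Theorem~\ref{Theorem QA}, a contradiction. So some line exists.

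Suppose first that $\Gamma$ has a $\Gamma$‑line $L$. By Lemma~\ref{Lemma 16} the triple $(L,\Gamma_L,\Delta_L)$ satisfies the hypotheses of the Theorem in strictly smaller dimension, so by induction $\Kat\bigl(C^{\flat}(\Gamma_L),C^{\flat}C^{\flat}(\Gamma_L)\bigr)$ is finite; I call it $L_0$ and, via the quasi‑endomorphism version of Lemma~\ref{easylemma}, note that $L_0$ absorbs the finite images of elements of $\Gamma_L$ and each finite $\gamma_L^{-1}[L_0]$ (finite because, by Lemma~\ref{Lemma 12}, every element of $\Gamma_L$ has finite image or finite kernel), so $\Gamma_L/{\sim}$ acts by monomorphisms on $L/L_0$. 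I then repeat the surjectivity argument of Theorem~\ref{Theorem 2A}: $\Gamma_L/{\sim}$ is an integral domain with a type‑definable subset of positive dimension (unboundedness) and all type‑definable subsets of dimension at most $\dim L$, so by Proposition~3.6 of \cite{wagner2020dimensional} its skew‑field of fractions $K$ is definable; the images of the nonzero $mn^{-1}$ with $m,n$ in a type‑definable subset of maximal dimension form a uniformly definable family of finite‑index subgroups of $L/L_0$, which by Lemma~\ref{boundedind} have bounded index, and the identity $|L/L_0:\gamma^n(L/L_0)|=|L/L_0:\gamma(L/L_0)|^n$ then forces every $\gamma\in\Gamma_L/{\sim}$ to be surjective. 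Using Lemma~\ref{Lemma 12} to pick $\gamma\in\Gamma$ with $\gamma[\Dom(\gamma)]\le L$ and $\gamma_L[L]$ of finite index in $L$, I build a quasi‑projection $\pi:A\to L$ in $\Gamma$ as in the quasi‑analogue of point~1 of Lemma~\ref{lemma 10}; iterating produces lines $L_1,\dots,L_n$ with quasi‑projections $\pi_i$, $\sum_i\pi_i\sim 1$ and $\sum_i L_i$ an almost direct sum of finite index in $A$, and then, as in point~3 of Lemma~\ref{lemma 10}, $\Kat(\Gamma,\Delta)\le\sum_i (L_i)_0+\operatorname{Im}(1-\sum_i\pi_i)$ is finite, a contradiction.

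If $\Gamma$ has no line but $\Delta$ does, I mirror Theorem~\ref{Theorem A3}: for a $\Delta$‑line $L$, the elements of $\Gamma_L\cup\Delta_L$ have finite image or finite kernel (Lemma~\ref{Lemma 12}), so as in Theorem~\ref{Theorem QA} there is a maximal finite weakly $\Gamma_L$‑invariant subgroup $L_0$, which is $(\Gamma_L,\Delta_L)$‑invariant and contains $\Kat(\Gamma_L,\Delta_L)$, and both $\Gamma_L/{\sim}$ and $\Delta_L/{\sim}$ act by monomorphisms on $L/L_0$. To see that $\Delta_L/{\sim}$ acts by automorphisms I bound the indices of the images: if $\{\delta_i\}_{i<\omega}$ had images of unbounded index, $B=\bigcap_i\operatorname{Im}(\delta_i)$ would be a type‑definable $\Gamma_L/{\sim}$‑invariant subgroup of full dimension on which $\Gamma_B/{\sim}$ has a definable skew‑field of fractions identified with a subfield of $\operatorname{Aut}(B)$, making $B$ a finite‑dimensional $K$‑vector space and hence a definable subgroup of finite index in $L/L_0$ — a contradiction. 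Then each nonzero $\delta\in\Delta_L$ is surjective on $L/L_0$, a quasi‑projection is built as before, and the quasi‑version of Lemma~\ref{lemma 10} concludes. This completes the induction and contradicts the existence of the counterexample.

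I expect the main obstacle to be the domain bookkeeping for quasi‑endomorphisms. Unlike the first case (Theorem~\ref{ExtA general}), here $\Dom(\Gamma)$ need not have finite index, so one cannot reduce to endogenies by passing to it; consequently the surjectivity arguments, the construction of the quasi‑projections $\pi_i$, and the verification that they still flat commute with $\Delta$ must all be carried out with genuinely partial maps, leaning on Lemma~\ref{Lemma 13} (stability of flat commutation under the operations) and on Lemma~\ref{Lemma 15} (invariance of $\delta^{-1}[A_0]$). The second delicate point is establishing the needed quasi‑endomorphism analogue of Lemma~\ref{lemma 10}: one must re‑derive the almost‑direct‑sum decomposition into lines and the identity $\kat(\gamma)=\sum_i\pi_i\gamma[0]+(1-\sum_i\pi_i)[\gamma[0]]$ while tracking that each $\pi_i$, $\gamma$ and their composites have domain of finite index, so that the relevant sums remain finite.
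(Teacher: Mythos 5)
Your overall structure (minimal‑dimension counterexample, reduce to the base case via Theorem~\ref{Theorem QA}, then split according to whether $\Gamma$ or only $\Delta$ has a line) matches the paper, and your handling of the $\Gamma$‑line case is close in spirit; but the argument for the case where $\Gamma$ has no lines and $\Delta$ does has a genuine gap, and it is precisely the one the paper is designed to avoid. You mirror Theorem~\ref{Theorem A3}: bound the indices of images of $\Delta_L/{\sim}$, and if they are unbounded form $B=\bigcap_i\operatorname{Im}(\delta_i)$ and construct a definable skew‑field of fractions of $\Gamma_B/{\sim}$. But $\Gamma_B/{\sim}$ is now a near‑ring of quasi‑endomorphisms, and the dimension bound on its type‑definable subsets — the injectivity of $x\mapsto x(a)$ for a fixed nonzero $a$ — requires a single point $a$ lying in $\Dom(x)$ for every $x$ in the type‑definable set, which is not available for quasi‑endomorphisms; the paper flags exactly this obstruction at the end of Section~9. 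Without that bound, Proposition 3.6 of \cite{wagner2020dimensional} cannot be applied and the skew‑field is not definable. The paper's actual argument for this case is entirely different: using that $\Delta$ is a pre‑ring of \emph{endogenies}, it passes to $A/\Kat(\Delta)$, shows the set $\Delta'/{\sim}$ of classes with finite kernel is a finite group (so $\delta^n\sim\mathrm{Id}$), proves via the line‑level induction that $\Kat(\Gamma)$ is almost contained in the kernel of every $\delta$ with small image, and concludes by showing the resulting minimal intersection $K$ of infinite $\Delta$‑kernels is almost $(\Gamma,\Delta)$‑invariant, hence finite by absolute minimality. This route never needs a field built from $\Gamma$.

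Two smaller points. First, in the $\Gamma$‑line case you run the Theorem~\ref{Theorem 2A} surjectivity argument to get $\gamma_L$ surjective on $L/L_0$; this again needs the dimension bound for type‑definable subsets of $\Gamma_L/{\sim}$ and therefore inherits the same domain difficulty. It is also unnecessary: since $\pi$ need only be a quasi‑endomorphism (domain of finite index, not total), the paper simply takes $\gamma\in\Gamma$ with $\gamma[A]\le L$ and $\gamma[L]$ of finite index, and defines $\pi=\gamma_L^{-1}\gamma$ with domain $\gamma^{-1}(\Dom(\gamma_L^{-1}))$ — no surjectivity and no field construction required. Second, the preliminary replacement $\Delta\mapsto C^{\flat}(\Gamma)$ turns $\Delta$ into a near‑ring of quasi‑endomorphisms, taking you outside the hypotheses of the statement being proved (and of the inductive hypothesis): the step ``pass to $A/\Kat(\Delta)$ where $\Delta$ acts by endomorphisms'' in the no‑line case, and indeed the whole distinction between this theorem and the open question of two quasi‑endomorphism near‑rings, relies on $\Delta$ consisting of total maps. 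The paper does not perform this reduction in the quasi‑endomorphism sections, and you should not either.
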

\begin{proof}
Let $(A,\Gamma,\Delta)$ be a counterexample of minimal dimension. By Theorem \ref{Theorem QA}, we may assume that either $\Gamma$ or $\Delta$ has a line.\\
    Suppose that $\Gamma$ has no lines. Being $\Gamma$ essentially unbounded and not having a line, $\Kat(\Delta)$ is finite. We work in $A/\Kat(\Delta)$, where $\Delta$ acts by endomorphisms. If $\Delta$ has essentially infinitely many elements with finite kernel, then the katakernel of $\Delta$ is finite, by the same proof of Theorem \ref{Theorem QA}. Therefore, we may assume that there exist only finitely many elements in $\Delta/\sim$ with finite kernel. The set $\Delta'/\sim=\{\delta:\ |\ker(\delta)|<\omega\}/\sim$ is a semigroup. Indeed, $\delta\delta'$ has finite kernel if both of the endogenies have finite kernel and $\Delta'/\sim$ contains the identity. Moreover, this semigroup has cancellation on the right. Assume $\delta_1\delta\sim \delta_2\delta$. Since $\operatorname{Im}(\delta')$ has finite index, then $\delta-\delta_1$ is finite iff $\delta\delta_1[\operatorname{Im}(\delta')]$ is finite. $\delta_1\delta\sim \delta_2\delta$ implies $\delta_1\sim \delta_2$. Therefore, $\Delta'/\sim$ is a finite semigroup with cancellation and so a group. In particular, there exists $n\in \mathbb{N}$ such that $\delta^n\sim Id$ for any $\delta\in \Delta'$. Observe that, for any $\Delta$-image not of finite index $B=\delta[A]$, we can define:
\begin{itemize}
    \item $\Delta_B$ the ring generated by $\delta_B:B\rightarrow B$ such that $\delta_B[b]=\delta[b]$ and $\delta[A]\leq B$;
    \item $\Gamma_B$ the ring generated by the restriction-corestrictions to $B$ of the elements of $\Gamma$.
\end{itemize}
 By Lemma \ref{Lemma 12}, these flat commute, are as essentially large as $\Gamma$ and $\Delta$ and, $B$ is absolutely $(\Gamma_B,\Delta_B)$-minimal. Therefore, by induction hypothesis, the bikatakernel is finite. Then, $\delta[\kat(\gamma)]\leq \Kat(\Delta_B,\Gamma_B)$. Indeed, for any $\gamma\in \Gamma$, 
$$\delta[\kat(\Gamma)]\leq \Kat(\Gamma)\cap B+\kat(\delta)\leq \Kat(\Delta_B,\Gamma_B).$$ 
Therefore, $\Kat(\Gamma)$ is almost contained in the kernel of any $\delta$ with image not of finite index. In particular, it is almost contained in a finite intersection $K$ of minimal dimension of infinite $\Delta$-kernels. This definable subgroup is of dimension strictly less than the dimension of $A$ since $\Delta$, by hypothesis, has lines. \\
We verify that $K$ is almost $\Gamma$-invariant. Let $K=\bigcap_{i=1}^n \ker(\delta_i)$, then $\gamma[K\cap \Dom(\gamma)]$ is almost contained in $K$ iff $\gamma[K]$ is almost contained in any $\ker(\delta_i)$ for any $i$. By dimensionality, this coincide to prove that $\delta_i\gamma[K\cap \Dom(\gamma)]$ is finite for every $\gamma_i$. By commutation, this is equivalent to $\gamma\delta_i[K]\leq \gamma[\kat(\delta_i)]\leq \kat(\gamma)+\kat(\delta_i)$. To prove the almost $\Delta$-invariance, we distinguish two cases: if $\delta\in \Delta$ has infinite kernel or $\delta$ has finite kernel. Assume the first. Then, $\ker(\delta)\cap K$ is of the same dimension as $K$ by minimality. Therefore, $\ker(\delta)$ almost contains $K$ and so $\delta[K]$ is finite and clearly almost contained in $K$. Assume the second \hbox{i.e.} $\delta\in \Delta$ has finite kernel. By previous proof, $\delta^n\sim 1$ for a certain $n<\omega$. We need the following easy Lemma.
\begin{lemma}
    Let $G$ be a definable finite-dimensional abelian group and $A, B,C\leq G$ definable subgroups. Assume that $C$ is finite. Then, $\dim((A+C)\cap (B+C))=\dim(A\cap B)$.
\end{lemma}
\begin{proof}
    By fibration, $\dim((A+C)\cap (B+C))=\dim((A+C)\times (B+C))-\dim(A+C+B+C)$. $\dim((A+C)\times (B+C))=\dim(A)+\dim(B)$ again by fibration, and $\dim(A+B+C)=\dim(A+B)$. Therefore, $\dim((A+C)\cap (B+C))=\dim(A)+\dim(B)-\dim(A+B)=\dim(A\cap B)$.
\end{proof}
Therefore, $\delta(\bigcap_{i=1}^n \big(\ker(\delta_i)+\ker(\delta)\big))\sim \delta[K]$ and so it is sufficient to prove that the latter is almost contained in $K$. $\delta[\bigcap_{i=1}^n \ker(\delta_i)+\ker(\delta)]$ is almost contained in $\bigcap_{i=1}^n \ker(\delta_i)+\ker(\delta)$. In particular, we can see $\delta$ as a monomorphism from $A/\ker(\delta)$ to $A$. In this case, since $\delta$ is injective
$$\delta\bigg(\bigcap_{i=1}^n \ker(\delta_i)+\ker(\delta)/\ker(\delta)\bigg)=\bigcap_{i=1}^n \delta\big(\ker(\delta_i)+\ker(\delta)/\ker(\delta)\big).$$
We prove that $\delta(\ker(\delta_i))=\ker(\delta_i\delta^{-1})$ where $\delta^{-1}$ is the quasi-endomorphisms with domain $\operatorname{Im}(\delta)$ that sends $a$ in $b+\ker(\delta)$.
Indeed, if $a\in \delta(\ker(\delta_i))$, then $\delta^{-1}[a]\leq \ker(\delta_i)+\ker(\delta)$ and so $\delta_i[\delta^{-1}[a]]\leq \delta_i(\ker(\delta))$ that is exactly the katakernel of $\delta_i\delta^{-1}$. Therefore, $\delta(\ker(\delta_i))\leq \ker(\delta_i\delta^{-1})$. By assumption on $\delta$, there exists $n<\omega$ such that $\delta^n\sim Id$. We verify that $\delta^{n-1}\sim \delta^{-1}$. Given $a\in \operatorname{Im}(\delta)$, then $\delta^{-1}[a]=b+\ker(\delta)$ with $\delta[b]=a+\kat(\delta)$. Since $\delta\delta^{n-1}[a]=a$, we have that $b-\delta^{n-1}[a]\in \ker(\delta)$ \hbox{i.e.} $(\delta^{-1}-\delta^{n-1})[a]\leq \ker(\delta)$ for every $a\in A$. Since $\ker(\delta)$ is finite, we obtain that $\delta^{-1}\sim\delta^{n-1}$. This implies that, on $\ker(\delta_i\delta^{-1})\leq A$, the image of $\delta_i\delta^{n-1}$ is finite. Consequently, $\ker(\delta_i\delta^{n-1})$ is of finite index in $\delta(\ker(\delta_i))$.\\
We verify that $\delta(\ker(\delta_i))$ almost contains $\ker(\delta_i\delta^{-1})$. Let $B=\{b\in \Dom(\delta):\ \delta(b)\in \ker(\delta_i\delta^{-1})\}$. Then, $\delta(B)=\ker(\delta_i\delta^{-1})$ since, given $a\in \ker(\delta_i\delta^{-1})$, there exists $b\in \Dom(\delta)$ such that $\delta(b)=a$, by definition. If we verify that $\ker(\delta_i)$ almost contains $B$, then $\delta(\ker(\delta_i))$ almost contains $\delta(B)=\ker(\delta_i\delta^{-1})$. It is sufficient to verify that $\delta_i(B)$ is finite. Given $b\in B$, then 
$$\delta_i(b)\leq \delta_i(b+\ker(\delta))=\delta_i(\delta^{-1}\delta[b])=\delta_i\delta^{-1}[a]$$
with $a\in \ker(\delta_i\delta^{-1})$, and so $\delta_i\delta^{-1}[a]\leq \delta_i(\ker(\delta))$. Since this image is finite, $\delta(\ker(\delta_i))$ almost contains $\ker(\delta\delta^{-1})$ and so $\delta(\ker(\delta_i))\sim \ker(\delta_i\delta^{-1})\sim \ker(\delta_i\delta^{n-1})$.\\
This implies that $\dim(K)=\dim(\delta(K))$ since $\delta$ has finite kernel and $\dim(\delta(K))=\dim(\bigcap_{i=1}^n \ker(\delta_i\delta^{n-1})$. By the minimality of the dimension 
$$\dim\bigg(\bigcap_{i=1}^n \ker(\delta_i\delta^{n-1})\cap K\bigg)=\dim(K)=\dim\bigg(\bigcap_{i=1}^n \ker(\delta_i\delta^{n-1})\bigg)$$
and so $K$ almost contains $\ker(\delta_i\delta^{n-1})$ for any $i$.\\
In conclusion, $K$ is an almost $(\Gamma,\Delta)$-invariant definable subgroup of $A$. By absolute minimality, it can only be finite or of finite index. The second case is a contradiction to the existence of $\Delta$-lines. Therefore, $K$ is finite and $\Kat(\Gamma)$ is finite since $\Kat(\Gamma)$ is almost contained in $K$. The proof in this case is completed.\\
We may assume that $\Gamma$ has at least a line $L=\gamma[A]$. By induction hypothesis, $L_0=\Kat(\Gamma_L,C^{\flat}_{\Endog}(\Gamma_L))$ is finite where $C^{\flat}_{\End}(\Gamma_L)$ is the pre-ring of endogenies that flat commutes with $\Gamma_L$. Moreover, any finite $\Gamma_L$ invariant subgroup contained in $\Dom(\Gamma)$ is contained in $L_0$. Let $\gamma\in \Gamma$ be such that $\gamma[A]\leq L$ and $\gamma[L]$ is of finite index in $L$. We define 
$$\pi:a\mapsto \gamma_L^{-1}\gamma[a]+\gamma^{-1}[L_0]$$
with domain $\gamma^{-1}(\Dom(\gamma_L))$ and katakernel $\gamma^{-1}[L_0]$.
This map is clearly a quasi-endomorphism. We prove that it flat commutes with every $\delta\in \Delta$. It is sufficient to verify that $\delta[\Dom(\pi)]\leq \Dom(\pi)$ and $\pi\delta-\delta\pi[A]\leq \kat(\pi)+\kat(\delta)$. For the first, $a\in \Dom(\pi)$ iff $a\in \Dom(\gamma)$ and $\gamma[a]\leq \Dom(\gamma_L)+\kat(\gamma)$. Given $a\in \Dom(\pi)\cap \Dom(\delta)$, then, $\delta[a]\leq \Dom(\pi)$ iff $\gamma\delta[a]\leq \Dom(\gamma_L)+\kat(\gamma)$.\\
$$\gamma\delta[a]\leq(\delta\gamma[a])\cap L+\kat(\gamma)\leq \delta_L\big(\Dom(\gamma_L)\cap \Dom(\delta)+\kat(\gamma)\big)+\kat(\gamma).$$
By flat commutation, the latter is contained in 
$$\Dom(\gamma_L)+\kat(\gamma)+\kat(\delta_L)\leq \Dom(\gamma_L)+\kat(\gamma)$$
since the domain of $\gamma_L$ contains $\kat(\delta)$.\\
The second is equivalent to proving that
$$\gamma_L[\delta_L\pi-\pi\delta][A]\leq L_0.$$
By definition, $\delta_L\pi[A]\leq \Dom(\gamma_L)+\kat(\gamma)$ and same for $\pi\delta[A]$. Fixed $a\in \Dom(\delta_L\pi-\pi\delta)$, the first term, by distributivity, is equal to 
$$\gamma_L\delta_L[\gamma_L^{-1}\gamma[a]+\gamma_L^{-1}[L_0]].$$
This is in $\Dom(\gamma_L)$ and so we can apply flat commutation, obtaining that
$$\delta_L[\gamma_L\gamma_L^{-1}[\gamma[a]]]+L_0\leq \delta_L[\gamma[a]+L_0]]=\delta_L[\gamma[a]]+L_0.$$
The second term is of the form 
$$\gamma_L[\gamma_L^{-1}\gamma\delta[a]+\gamma^{-1}_L[L_0]]\leq \gamma\delta[a]+L_0.$$
But $\gamma\delta[a]\leq \delta_L\gamma[a]+\kat(\gamma)$ and the proof is completed.\\
Then, we proceed as in Theorem \ref{Theorem 2A} and obtain that the bikatakernel is finite.
\end{proof}
I have not yet proved that we can linearize the action in this case. The main problem is the construction of a definable field. In particular, we do not have a bound on the type-definable subsets of the ring of fractions generated by quasi-endomorphisms, since we do not have, in general, an element that is contained in the domain of any quasi-endomorphism.
\section{Characteristic 0 case}
In this section, we analyze the action of an essentially unbounded near-ring $\Gamma$ of definable quasi-endomorphisms acting on an abelian definable $\Gamma$-minimal group $A$ such that $A[n]=\{a\in A:\ na=0\}$ is finite for any $n<\omega$. In this case, there naturally exists an essentially infinite ring of endomorphisms that commutes with $\Gamma$: $\mathbb{Z}$. Consequently, there is no need to assume the existence of a second near-ring of quasi-endomorphisms. We verify that either $A$ is virtually connected, and the proof follows by the two Theorems of \cite{WagnerDeloro}, or $A$ has an infinite definable subgroup on which $\Gamma$ acts "almost trivially".
\begin{defn}
    Let $A$ be a definable abelian group and $\Gamma$ a near-ring of definable quasi-endomorphisms of $A$. $\Gamma $ acts \emph{almost trivial on $B$} with $B$ a definable almost $\Gamma$-invariant subgroup of $A$ if $\Gamma_B/\sim$ is not as essentially large as $\Gamma$.
\end{defn}
\begin{lemma}\label{cha0}
    Let $\Gamma$ be an invariant near-ring of definable quasi-endomorphisms of $A$. Assume that:
    \begin{itemize}
        \item $\Gamma$ is essentially unbounded;
        \item $A$ is weakly $\Gamma$-minimal;
        \item $A[n]$ is finite for any $n\in \mathbb{N}$.
    \end{itemize}
    Then, either $A$ has an almost $\Gamma$-invariant subgroup on which $\Gamma$ acts almost trivially or $A$ is virtually connected, and we can apply Theorem $B$ of \cite{WagnerDeloro} to $A^0$.
\end{lemma}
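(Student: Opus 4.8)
The plan is to use the commuting ring $\mathbb{Z}$ of ordinary endomorphisms (which is essentially infinite precisely because each $A[n]$ is finite) together with Theorem~\ref{ThmA} and its quasi-endomorphism analogues, applied after passing to a suitable almost-$\Gamma$-minimal quotient. First I would observe that $\mathbb{Z}$ acts on $A$ by honest endomorphisms, and that $\mathbb{Z}/{\sim}$ is essentially infinite: if $n\neq m$ then $n-m$ has kernel $A[|n-m|]$ finite and image of finite index (since $A/(n-m)A$ embeds, up to finite error, into a finite product of copies of $A[\,\cdot\,]$-type groups), so $n\not\sim m$ in $\operatorname{Endog}(A)$ — hence $\mathbb{Z}/{\sim}$ is infinite. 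Moreover $\mathbb{Z}$ flat commutes with $\Gamma$ trivially, since every integer multiplication is a total endomorphism commuting set-wise with any additive subgroup of $A\times A$.

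Next I would dichotomize on whether $A$ is absolutely $\Gamma$-minimal, i.e. whether $A$ (equivalently, by weak $\Gamma$-minimality, its relevant quotients) admits an infinite definable almost-$\Gamma$-invariant subgroup of infinite index. \emph{Case 1:} no such subgroup exists, so $A$ is absolutely $(\Gamma,\mathbb{Z})$-minimal (note $\mathbb{Z}$-invariance is automatic since $\mathbb{Z}$ is a ring of endomorphisms and every definable subgroup is $\mathbb{Z}$-invariant). Then the hypotheses of the quasi-endomorphism version of Theorem~A — Theorem~\ref{ExtA general} with the roles $\Delta=\mathbb{Z}$ essentially infinite, wait, here the essentially unbounded one is $\Gamma$ and the essentially infinite one is $\mathbb{Z}$ — apply, so $\Kat(\Gamma,\mathbb{Z})=\Kat(\Gamma)$ is finite and $\Dom(\Gamma)$ is of finite index in $A$. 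One checks that the quotient $A^0$ (or rather $\Dom(\Gamma)/\Kat(\Gamma)$) carries the structure forcing $A$ to be virtually connected: the field $K$ produced by Theorem~B has characteristic $0$ because multiplication by $n$ is invertible on a $K$-vector space, and a finite-dimensional $\mathbb{Q}$-vector space is connected, so $A^0$ is connected and Theorem~B of \cite{WagnerDeloro} applies to it. \emph{Case 2:} there \emph{is} an infinite definable almost-$\Gamma$-invariant subgroup $B\le A$ of infinite index. If $\Gamma$ acts almost trivially on some such $B$ we are done; otherwise choose $B$ with $\Gamma_B/{\sim}$ as essentially large as $\Gamma$, of minimal dimension among such. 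Then $B$ with $\Gamma_B$ satisfies the same hypotheses (almost-$\Gamma_B$-invariant subgroups of $B$ pull back, as in the proof of Lemma~\ref{lemma 8} point~3 and Lemma~\ref{Lemma 16} point~3, to almost-$\Gamma$-invariant subgroups of $A$, and $B[n]\le A[n]$ is finite), so an induction on dimension reduces us to the absolutely-minimal situation of Case~1 inside $B$, again yielding virtual connectedness of $B$ or an almost-trivial action on a further subgroup.

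The main obstacle I anticipate is the bookkeeping required to make the reduction in Case~2 genuinely inductive: one must verify that $B$ is weakly $\Gamma_B$-minimal (which may fail — $B$ could contain a chain of weakly invariant subgroups even if $A$ is weakly $\Gamma$-minimal), so in fact the right move is to replace "weakly $\Gamma$-minimal" reasoning by a descent on the \emph{pair} (dimension of $B$, "size" of $\Gamma_B/{\sim}$) and to track that the almost-triviality of the action is inherited downward — if $\Gamma_B/{\sim}$ is not as large as $\Gamma$ we stop, and if it is, a minimal-dimension choice guarantees every proper further restriction is almost trivial, which is exactly the alternative in the statement. A secondary subtlety is confirming that the finiteness of $\Kat(\Gamma)$ and the field being of characteristic $0$ together force virtual connectedness: this uses that on a $K$-vector space with $\operatorname{char}K=0$ the maps $x\mapsto nx$ are bijective, so $A$ (mod a finite group, on a finite-index subgroup) is divisible and torsion-free of finite dimension, hence connected, which is where the hypothesis "$A[n]$ finite for all $n$" is doing its real work.
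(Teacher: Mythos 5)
Your proposal correctly identifies the commuting ring $\mathbb{Z}$, its essential infiniteness, and the general shape of the dichotomy, but the execution diverges from the paper's and, more importantly, Case 1 relies on results that the paper does not have.

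The key problem is your appeal to Theorem~\ref{ExtA general} and ``Theorem B'' in Case 1. Those results require the \emph{pre-ring of endogenies} to be the essentially unbounded one and the near-ring of quasi-endomorphisms to be the essentially infinite one. Here the roles are swapped: $\Gamma$ (quasi-endomorphisms) is essentially unbounded and $\mathbb{Z}$ (endogenies) is essentially infinite. The only applicable result is the unnamed theorem of Section 9, which yields finiteness of $\Kat(\Gamma,\mathbb{Z})$ but \emph{not} that $\Dom(\Gamma)$ is of finite index, and the paper explicitly remarks at the end of that section that linearisation has not been established in this configuration, because there is in general no bound on the domains of the quasi-endomorphisms. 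So there is no ``field $K$ produced by Theorem B'' available to you. What the paper actually does in the no-line case is exploit the arithmetic of $\mathbb{Z}$: each $A[n]$ is finite and weakly $\Gamma$-invariant, so $T(A)=\bigcup_n A[n]$ is a finite $\Gamma$-invariant subgroup; passing to $A/T(A)$, each $nA$ is $\Gamma$-invariant, and $A_1=\bigcap_n nA$ is a divisible, torsion-free, $\Gamma$-invariant type-definable subgroup of bounded index. Because $A_1$ is divisible it has no proper subgroups of finite index, so every quasi-endomorphism of $\Gamma/{\sim}$ becomes a \emph{total} monomorphism on $A_1$ — this is what rescues the skew-field-of-fractions argument and makes $A_1$ a definable finite-dimensional vector space of finite index, hence $A$ virtually connected. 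That divisibility trick, circumventing the quasi-endomorphism domain issue, is the missing idea.

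Your Case 2 also has issues you already flagged but did not resolve. The paper dichotomizes on whether $\Gamma$ has a line, not on absolute minimality; when $\Gamma$ has a line $L$, it forms $\Gamma_L$, which has no lines, and distinguishes whether $\Gamma_L$ is as essentially large as $\Gamma$. If not, the sum of $\Gamma$-translates of $L$ of maximal dimension is a definable almost-$\Gamma$-invariant subgroup on which $\Gamma$ acts almost trivially — no induction is needed. If $\Gamma_L$ is essentially unbounded, the no-line case applied to $L$ gives $L$ virtually connected, and again a maximal sum $B$ of $\Gamma$-translates is almost $\Gamma$-invariant, virtually connected, and by weak $\Gamma$-minimality must have full dimension, hence finite index, so $A$ is virtually connected. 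Your proposed descent on $(\dim B, |\Gamma_B/{\sim}|)$ is not carried out, and you do not address how a conclusion about a proper $B$ transfers to $A$.
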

\begin{proof}
The ring $\mathbb{Z}$ acts on $A$ by endomorphisms, and it is essentially infinite (since the $n$-torsion is finite for any $n$).\\
Assume that $\Gamma$ has no line. Then, $T(A)=\bigcup_{n<\mathbb{N}}A[n]$ is finite by the proof of Theorem \ref{Theorem Ab}. Moreover, since $\kat(\gamma)\leq T(A)$ for any $\gamma\in \Gamma$, the torsion $T(A)$ is $\Gamma$-invariant. Taking $A/T(A)$, we may assume $T(A)=0$. Therefore, any quasi-endomorphism of $A$ is a partial endomorphism of $A/T(A)$. Moreover, any element $\gamma\in \Gamma$ with finite image has $0$-image, and any finite kernel is $0$. This implies that $\Gamma/\sim$ acts by partial endomorphisms with trivial kernel. Since any $nA$ is $\Gamma$-invariant, $\Gamma/\sim$ acts on $A_1=\bigcap_{n<\omega} nA$. By dimensionality, $A_1$ is a type-definable subgroup of bounded index in $A$. Moreover, it is divisible. Let $a\in A_1$ and $n\in \omega$. Then, for every $m\in \mathbb{N}$, $a\in A_{nm}$ and so there exists $y_m\in A_m$ such that $n(y_m)=a$. This implies that $n(y_m-y_{m'})=0$ for any $m,m'\in \omega$. Since the $n$-torsion is $0$, $y_m=y_{m'}$ for any $m,m'\in \mathbb{N}$. Therefore, $y=y_m$ is an element of $A_1$ such that $ny=a$. By arbitrariety of $n$ and $a$, we conclude that $A_1$ is divisible. In particular, $A_1$ has no subgroups of finite index. Therefore, $\Gamma/\sim$ acts on $A_1$ by definable automorphisms. Therefore, the ring generated by this set $X$ of partial endomorphisms is an Ore domain. Moreover, the dimension of any type-definable subset is bounded by the dimension of $A$. By Proposition 3.3 of \cite{wagner2020dimensional}, the skew-field of fractions $K$ generated by $X$ is definable. By the identification used also in Theorem \ref{Theorem 2A}, the skew-field acts on $A_1$ by automorphisms. Therefore, $A_1$ is definable, being a vector space of finite dimension over a definable skew-field. This implies that $A_1$ is of finite index in $A/T(A)$. Consequently, $A$ is virtually connected.\\
Assume $\Gamma$ has a line $L$. By Lemma \ref{Lemma 12}, $\Gamma_L$ has no lines. Moreover, either $\Gamma_L$ is essentially unbounded or $\Gamma_L$ is essentially infinite. In the second case, taken a finite sum of $\gamma$-translates of $L$ of maximal dimension, this is an almost $\Gamma$-invariant on which $\Gamma$ acts almost trivially. Assume the second. By previous proof, $T(L)$ is finite and $L$ is virtually connected. Denote by $B$ a sum of $\Gamma$-translates of $L$ of maximal dimension. $B$ is a virtually connected almost $\Gamma$-invariant subgroup, by maximality of the dimension. Therefore, $B^0$ is $\Gamma$-invariant. By minimality of $A$, the dimension of $B$ is equal to $\dim(A)$ and so $A$ is virtually connected. 
\end{proof}
\section{Zilber's Field Theorem}
We prove a version of Zilber's Field Theorem in the finite-dimensional context. As already observed, it is possible to linearize the action of an abelian group on a minimal abelian group in the finite Morley rank case \cite[Theorem 3.7]{poizat} and in the $o$-minimal ones \cite[Theorem 2.6]{peterzil2000simple}. We would like to extend the previous results to the general finite-dimensional case (a version for connected finite-dimensional groups follows from \cite{WagnerDeloro}). This will follow from Theorem $B$ and from the fact that a minimal $G$-invariant subgroup is absolutely $G$-minimal. The latter is a consequence of Schilisting's Theorem \cite[Theorem 4.2.4]{wagner2000simple}.
\begin{theorem}
    Let $G$ be a group and $\{G_i\}_{i\in I}$ a uniformly commensurable family of subgroups in $G$. Then, there exists a subgroup $H$ in $G$ commensurable with every $G_i$ and invariant under all the automorphisms that fix set-wise the family $\{G_i\}_{i\in I}$. Moreover, if $G$ is definable and $\{G_i\}_{i\in I}$ is an uniformly commensurable family of definable subgroups in $G$, also $H$ is definable.
\end{theorem}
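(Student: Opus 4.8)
The plan is to obtain this as the classical theorem of Schlichting (the purely group-theoretic statement is also due independently to Bergman--Lenstra), so that the proof splits into an abstract group-theoretic part and a definability transfer; one may in any case simply invoke \cite[Theorem 4.2.4]{wagner2000simple}, but the structure of an argument is as follows.

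First I would handle the abstract group statement. Fix $n<\omega$ with $[G_i:G_i\cap G_j]\le n$ for all $i,j\in I$, and let $\mathcal C$ be the common commensurability class of the $G_i$. The point is to attach to $\mathcal C$ a \emph{canonical} representative $H$: since any automorphism $\sigma$ of $G$ stabilizing $\{G_i\}_{i\in I}$ setwise fixes $\mathcal C$, it will then automatically fix $H$. The basic tool is the elementary inequality $[A:A\cap C]\le[B:C]$ for $A,C\le B$, which lets one control how finite intersections and joins of members of the family degrade the commensurability bound by a function of $n$ only. Following Schlichting, one considers the collection $\mathcal L$ of subgroups of $G$ that are still uniformly commensurable with the whole family $\{G_i\}$ with a bound depending only on $n$, isolates inside $\mathcal L$ the (finitely many, up to commensurability) commensurability-minimal subgroups, and builds $H$ from them --- for instance as a suitable finite intersection or join of these canonical representatives. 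One then checks that $H$ is commensurable with every $G_i$ --- this is exactly where uniform commensurability is indispensable, since for a non-uniform family the intersection defining $H$ could acquire infinite index --- and that the recipe producing $H$ refers only to $\mathcal C$, hence is preserved by any $\sigma$ stabilizing $\{G_i\}$.

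Second, for the definable version I would verify that the above construction survives inside a structure: if $\{G_i\}_{i\in I}$ is a uniformly definable family, then the bound $n$ is witnessed by a formula, the relevant finite intersections and joins are definable, and Lemma~\ref{boundedind} guarantees that the descending/ascending chains of indices involved are of bounded length, so that only finitely many of the $G_i$ are needed and the procedure terminates with a \emph{definable} $H$; moreover $H$ is definable over any set of parameters over which the family $\{G_i\}$ is invariant, since it is characterized by a first-order property of $\mathcal C$. In the application to Zilber's Field Theorem one applies this to the family of $G$-conjugates (or $\Gamma$-images) of a minimal invariant subgroup to conclude it is absolutely minimal.

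The main obstacle, as in every proof of Schlichting's theorem, is precisely the group-theoretic bookkeeping that converts the uniform bound $n$ into a genuinely canonical, automorphism-invariant representative of $\mathcal C$ whose index over each $G_i$ stays finite: the naive candidates $\bigcap_i G_i$, $\langle G_i:i\in I\rangle$, or a single finite intersection all fail in general, and the combinatorial heart of the theorem is that a good $H$ nonetheless exists. The remaining ingredients --- the index inequality, the finiteness of the chains via Lemma~\ref{boundedind}, and invariance under automorphisms stabilizing the family --- are routine.
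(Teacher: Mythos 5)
Your proposal matches the paper's treatment: the paper gives no proof of this theorem at all, it simply states it and attributes it to Schlichting via the citation \cite[Theorem 4.2.4]{wagner2000simple}, which is exactly the shortcut you allow yourself ("one may in any case simply invoke \cite[Theorem 4.2.4]{wagner2000simple}"). Your sketch of the underlying Bergman--Lenstra/Schlichting argument is a fair high-level description. One small correction worth flagging: the definable version of Schlichting's theorem does not need Lemma~\ref{boundedind} or any finite-dimensionality hypothesis --- the theorem holds in an arbitrary theory, because the canonical subgroup $H$ is produced from a finite intersection/join of members of the family, bounded purely by the uniform commensurability constant $n$, so definability is automatic once the family is uniformly definable. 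Invoking Lemma~\ref{boundedind} here is harmless but superfluous, and would misleadingly suggest the result is special to the finite-dimensional setting.
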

We introduce the notion of almost centraliser of an action.
\begin{defn}
    Let $G$ be a group acting on a group $A$. The \emph{almost centraliser of the action in $G$}, denoted by $\widetilde{C}_G(A)$, is the subgroup $\{g\in G:\ C_A(g)\sim A\}$. The \emph{almost centraliser of the action in $A$}, denoted by $\widetilde{C}_A(G)$, is the subgroup $\{a\in A:\ C_G(a)\sim G\}$.
\end{defn}
It follows from Lemma \ref{boundedind} that, in a finite-dimensional theory, both the two almost centralisers are definable if $G$ and $A$ are definable in a finite-dimensional theory. Moreover, $\widetilde{C}_G(A)$ is normal in $G$, while $\widetilde{C}_A(G)$ is normal and $G$-invariant if $A$ is abelian
\begin{lemma}\label{lemma 1}
     Let $G$ be a group acting on a group $A$. Then,
     \begin{itemize}
         \item $\widetilde{C}_G(A)$ is normal in $G$;
         \item $\widetilde{C}_A(G)$ is $G$-invariant.
     \end{itemize}
\end{lemma}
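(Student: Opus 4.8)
The plan is to compute the relevant fixed-point subgroup and stabilizer under conjugation and under the $G$-action directly, and to use that both of these operations preserve the index of a subgroup. Throughout I use the standing convention that $G$ acts on $A$ by automorphisms, so that each $h\in G$ induces a bijection $a\mapsto h\cdot a$ of $A$ carrying subgroups to subgroups of the same index, and that conjugation by $h$ is an automorphism of $G$.

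First I would record that $\widetilde{C}_G(A)$ is a subgroup: for $g,g'\in\widetilde{C}_G(A)$ one has $C_A(g)\cap C_A(g')\le C_A(gg')$, and the intersection of two finite-index subgroups of $A$ is again of finite index, so $C_A(gg')$ is of finite index in $A$; moreover $C_A(g^{-1})=C_A(g)$ and $C_A(1)=A$, so $\widetilde{C}_G(A)$ is closed under products and inverses. For normality, fix $g\in\widetilde{C}_G(A)$ and $h\in G$. Unwinding the definitions, $a\in C_A(hgh^{-1})$ iff $g\cdot(h^{-1}\cdot a)=h^{-1}\cdot a$ iff $h^{-1}\cdot a\in C_A(g)$ iff $a\in h\cdot C_A(g)$; that is, $C_A(hgh^{-1})=h\cdot C_A(g)$. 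Since $a\mapsto h\cdot a$ is an automorphism of $A$, $|A:h\cdot C_A(g)|=|A:C_A(g)|<\infty$, whence $hgh^{-1}\in\widetilde{C}_G(A)$ and $\widetilde{C}_G(A)\trianglelefteq G$.

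For the second point, fix $a\in\widetilde{C}_A(G)$ and $h\in G$; the goal is $h\cdot a\in\widetilde{C}_A(G)$, i.e. $C_G(h\cdot a)$ of finite index in $G$. Again unwinding, $g\in C_G(h\cdot a)$ iff $(h^{-1}gh)\cdot a=a$ iff $h^{-1}gh\in C_G(a)$ iff $g\in h\,C_G(a)\,h^{-1}$, so $C_G(h\cdot a)=h\,C_G(a)\,h^{-1}$. Conjugation by $h$ preserves index, so $|G:C_G(h\cdot a)|=|G:C_G(a)|<\infty$, and therefore $h\cdot a\in\widetilde{C}_A(G)$. This proves that $\widetilde{C}_A(G)$ is $G$-invariant.

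The only real point of care is the bookkeeping: one must use that the $G$-action is by automorphisms so that the two set-theoretic identities $C_A(hgh^{-1})=h\cdot C_A(g)$ and $C_G(h\cdot a)=h\,C_G(a)\,h^{-1}$ are equalities of subgroups whose indices are genuinely preserved. There is no deeper obstacle; in particular neither assertion uses that $A$ is abelian — that hypothesis is needed only for the normality (as opposed to mere $G$-invariance) of $\widetilde{C}_A(G)$, which is not claimed in this lemma.
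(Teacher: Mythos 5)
Your proof is correct, and it takes a somewhat different (and in fact more robust) route than the paper's. For normality of $\widetilde{C}_G(A)$, the paper computes $(g'gg'^{-1}-1)(A)=g'\operatorname{Im}(g-1)$ and uses that, for an automorphism $g$ of an abelian group, $C_A(g)$ has finite index in $A$ exactly when $\operatorname{Im}(g-1)$ is finite; this argument is written additively and implicitly relies on $A$ being abelian, whereas your identity $C_A(hgh^{-1})=h\cdot C_A(g)$ together with index-preservation under the automorphism $a\mapsto h\cdot a$ works for arbitrary $A$ and avoids the endomorphism $g-1$ altogether. For $G$-invariance of $\widetilde{C}_A(G)$, the paper passes through orbits: $C_G(a)\sim G$ iff the orbit $Ga$ is finite, and $G(ga)=Ga$; your version via $C_G(h\cdot a)=hC_G(a)h^{-1}$ and invariance of index under conjugation is the stabilizer side of the same orbit–stabilizer coin. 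You also supply the verification that $\widetilde{C}_G(A)$ is closed under products and inverses, which the paper's definition asserts but never checks. One cosmetic remark: the paper's proof of the first point introduces $g\in G$ and $g'\in\widetilde{C}_G(A)$ but then needs $\operatorname{Im}(g-1)$ finite, so its quantifiers are swapped; your write-up does not have this slip.
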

\begin{proof}
    Let $g\in G,g'\in \widetilde{C}_G(A)$. Then 
    $$(g'gg'^{-1}-1)(A)=g'(g-1)(gA)=g'Im(g-1)$$
    is finite.\\
    Let $a\in \widetilde{C}_A(G)$ and $g\in G$. Let $a\in \widetilde{C}_A(G)$, then $Ga$ is finite. This implies that $Gga\subseteq Ga$ is finite.
\end{proof}
We verify that $\widetilde{C}_G(A)$ is of finite index in $G$ iff $\widetilde{C}_A(G)$ is of finite index in $A$.
\begin{lemma}\label{SimAlmCenAct}
    Let $G$ be a group acting on the group $A$, both definable in a finite-dimensional theory. Then, $\widetilde{C}_A(G)$ is of finite index in $A$ iff $\widetilde{C}_G(A)$ is of finite index in $G$. In this case, the action of $G$ is \emph{almost trivial}.
\end{lemma}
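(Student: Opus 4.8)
The plan is to deduce the equivalence from a single dimension‑counting argument which is symmetric in $G$ and $A$, so that proving one implication and then running it verbatim with the roles of $G$ and $A$ interchanged will finish the proof. The key preliminary observation is that, since $A$ is abelian and $G$ acts by automorphisms, for each $g\in G$ the fixed set $C_A(g)=\{a\in A:\ ga=a\}$ is a \emph{subgroup} of $A$, while $C_G(a)=\{g\in G:\ ga=a\}$ is always a subgroup of $G$. Hence, by additivity of dimension, $C_A(g)$ has finite index in $A$ iff $\dim C_A(g)=\dim A$, and $C_G(a)$ has finite index in $G$ iff $\dim C_G(a)=\dim G$. So $\widetilde{C}_G(A)=\{g:\ \dim C_A(g)=\dim A\}$ and $\widetilde{C}_A(G)=\{a:\ \dim C_G(a)=\dim G\}$; these are definable subgroups by Lemma \ref{boundedind} (as observed just before the statement), hence so are their complements $G_{<}=\{g:\ \dim C_A(g)\le\dim A-1\}$ and $A_{<}=\{a:\ \dim C_G(a)\le\dim G-1\}$.

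The central object is the definable set $\Phi=\{(g,a)\in G\times A:\ ga=a\}$, whose two projections onto $G$ and onto $A$ are both surjective (witnessed by $a=0$, resp.\ $g=1$), with fibre $C_A(g)$ over $g$ and fibre $C_G(a)$ over $a$. Suppose first that $\widetilde{C}_A(G)$ has finite index in $A$, i.e.\ $\dim\widetilde{C}_A(G)=\dim A$. Projecting $\Phi$ onto $A$: every fibre $C_G(a)$ has dimension $\le\dim G$, and over the full‑dimensional subset $\widetilde{C}_A(G)$ every such fibre has dimension exactly $\dim G$; the two fibre‑dimension axioms then force $\dim\Phi=\dim A+\dim G$. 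Next project $\Phi$ onto $G$ and split $G=\widetilde{C}_G(A)\cup G_{<}$: over $G_{<}$ the fibres have dimension $\le\dim A-1$, so the part of $\Phi$ lying over $G_{<}$ has dimension $\le\dim G_{<}+(\dim A-1)\le\dim G+\dim A-1<\dim\Phi$. Therefore $\dim\Phi$ is already attained over $\widetilde{C}_G(A)$, and since each fibre there has dimension $\le\dim A$ we get $\dim\widetilde{C}_G(A)+\dim A\ge\dim\Phi=\dim G+\dim A$, i.e.\ $\dim\widetilde{C}_G(A)=\dim G$. Being a definable subgroup of $G$ of full dimension, $\widetilde{C}_G(A)$ has finite index in $G$. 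The converse implication is proved identically after interchanging $G$ and $A$ throughout, since the argument uses nothing beyond the fact that both $C_A(g)$ and $C_G(a)$ are subgroups. Finally, \emph{almost trivial} is simply the name attached to this common situation; if one wants the stronger formulation that some finite‑index subgroup of $G$ acts trivially on some finite‑index subgroup of $A$, one can feed the uniformly commensurable family $\{C_A(g):\ g\in\widetilde{C}_G(A)\}$ (uniformized in index via the hereditary $\widetilde{\mathfrak{M}}_c$‑property, Lemma \ref{boundedind}) into Schlichting's theorem.

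I expect no genuine obstacle, only bookkeeping. The points to watch are: (i) that the two projections of $\Phi$ are really onto $G$ and onto $A$, so the fibre‑dimension inequalities are applied over the correct base groups rather than over a proper subset, and that the lower bound $\dim\Phi\ge\dim A+\dim G$ is obtained by restricting the $A$‑projection to the full‑dimensional set $\widetilde{C}_A(G)$ (where all fibres have dimension $\dim G$) rather than over all of $A$; (ii) that $\widetilde{C}_G(A),\widetilde{C}_A(G)$, and hence $G_{<},A_{<}$, are definable, which is exactly what Lemma \ref{boundedind} provides here; and (iii) that the interchangeability of ``finite index'' with ``full dimension'' for the sets $C_A(g)$ is precisely what the hypothesis ``$A$ abelian, $G$ acting by automorphisms'' buys. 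In particular the dimension argument deliberately avoids any need for a \emph{uniform} bound on the indices $|A:C_A(g)|$ or $|G:C_G(a)|$: it uses only that each of these indices is finite, which is exactly what membership in the relevant almost‑centraliser guarantees.
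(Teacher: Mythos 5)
Your proof is correct and takes essentially the same approach as the paper: both use the definable incidence set $\{(g,a):ga=a\}$, compute its dimension as $\dim A+\dim G$ via the fibration over the full-dimensional almost-centralizer, and then split the other projection to force the second almost-centralizer to have full dimension. (You start from the hypothesis on $\widetilde{C}_A(G)$ while the paper starts from $\widetilde{C}_G(A)$, but the paper explicitly invokes symmetry, so this is the same argument.)
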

\begin{proof} 
    Both $\widetilde{C}_G(A)$ and $\widetilde{C}_A(G)$ are definable by Lemma \ref{boundedind}.\\
    We prove that, if $\widetilde{C}_G(A)$ is of finite index in $G$, then $\widetilde{C}_A(G)$ is of finite index in $A$. The other implication follows by symmetry.\\
    Let $X$ denote the definable subset
    $$=\{(a,g)\in A\times G: g(a)=a\}.$$
    Let $\pi_2$ be the projection to the second factor. Then, $\pi_2^{-1}(\widetilde{C}_G(A))$ is a definable subset of $X$. By fibration, 
    $$\dim(\pi_2^{-1}(\widetilde{C}_G(A)))=\dim(\widetilde{C}_G(A))+\dim(A)$$
    since, given $g\in \widetilde{C}_G(A)$, the dimension of $\{a\in A:\ g(a)=a\}$ is equal to $\dim(A)$. Therefore, $\dim(X)=\dim(A)+\dim(G)$. Since $\widetilde{C}_A(G)$ is definable, also its complement $A'$ in $A$ is definable. Given $\pi_1$ the projection from $X$ to $A$ then, by union,
    $$\dim(X)=\max\{\dim(\pi_1^{-1}(A')),\dim(\pi_1^{-1}(\widetilde{C}_A(G))\}.$$
    By fibration, the first set has dimension less than or equal to $\dim(A)+\dim(G)-1$. Indeed, all the fibers $\pi_1^{-1}(a)$ for $a\in A'$ have dimension strictly less than $\dim(G)$. Therefore,
    $$\dim(A)+\dim(G)=\dim(\pi^{-1}(\widetilde{C}_A(G)))\leq \dim(\widetilde{C}_G(A))+\dim(G).$$
    Consequently, $\dim(A)=\dim(\widetilde{C}_A(G))$.
\end{proof}
The now prove Zilber's Field Theorem for finite dimensional groups.
\begin{theorem}\label{Zilber}
    Let $G$ be an infinite abelian group acting by automorphisms on an infinite abelian group $A$, both definable in a finite-dimensional theory. Assume that:
    \begin{itemize}
        \item $A$ is $G$-minimal;
        \item The action is not almost trivial.
    \end{itemize}
    Then, there exists a finite subgroup $A_0$ and a definable field $K$ such that $A/A_0\simeq K^+$ and $G/G_0$ embeds in $K^{\times}$.
\end{theorem}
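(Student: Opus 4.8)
The idea is to encode the $G$-action as a commutative pre-ring of endogenies, pass to its sharp bicommutant so as to land in the hypotheses of Theorems~\ref{ThmA} and~B, and then extract the field by a Schur-type argument driven by $G$-minimality.

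First I would let $\Gamma_0$ be the pre-ring of endogenies of $A$ generated by the image of $G$ in $\operatorname{Aut}(A)$. Since $G$ is abelian, $\Gamma_0$ is a commutative pre-ring of honest endomorphisms (all katakernels trivial), so any two of its elements sharply commute and in particular $\Gamma_0\subseteq C^{\#}(\Gamma_0)$; moreover $\Gamma_0$ is invariant, being a countable union of definable sets. Put $\Delta:=C^{\#}(\Gamma_0)$ and $\Gamma:=C^{\#}(\Delta)$; exactly as in the reduction preceding Theorem~\ref{Theorem 2A} one gets $C^{\#}(\Gamma)=\Delta$ and $C^{\#}(\Delta)=\Gamma$, while $\Gamma_0\subseteq\Gamma\cap\Delta$. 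Now I would check the hypotheses. For essential size: the map $g\mapsto[g]\in\Gamma_0/{\sim}$ has fibres equal to the cosets of $\widetilde C_G(A)$, since $\operatorname{Im}(g-g')$ is finite iff $C_A(g'^{-1}g)$ has finite index in $A$ iff $g'^{-1}g\in\widetilde C_G(A)$; because the action is not almost trivial, $\widetilde C_G(A)$ --- definable by Lemma~\ref{boundedind}, normal by Lemma~\ref{lemma 1} --- has infinite index in $G$ (Lemma~\ref{SimAlmCenAct}), so $G/\widetilde C_G(A)$ is an infinite definable group that injects into $\Gamma_0/{\sim}$, whence $\Gamma_0/{\sim}$, and therefore $\Gamma/{\sim}$ and $\Delta/{\sim}$, are unbounded. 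For absolute minimality: any infinite definable almost $(\Gamma,\Delta)$-invariant subgroup $B$ of infinite index is in particular almost $G$-invariant, so $\{gB:g\in G\}$ is a uniformly commensurable family of definable subgroups by Lemma~\ref{boundedind}, and Schlichting's theorem yields a definable $G$-invariant subgroup commensurable with $B$, hence infinite and of infinite index, contradicting $G$-minimality. Thus all hypotheses of Theorems~\ref{ThmA} and~B hold, so $A_0:=\Kat(\Gamma,\Delta)$ is finite and $(\Gamma,\Delta)$-invariant (hence $G$-invariant, Lemma~\ref{Lemma 6}), and there is a definable infinite field $K_0$ making $V:=A/A_0$ a finite-dimensional $K_0$-vector space on which $\Gamma/{\sim}$, and so $G$ via $\Gamma_0$, acts $K_0$-linearly.

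It then remains to cut $V$ down to dimension one. Every $K_0$-subspace of the finite-dimensional $K_0$-space $V$ is definable, so any $K_0[G]$-submodule $W\le V$ lifts to a definable $G$-invariant subgroup $\widetilde W\le A$ with $A_0\subseteq\widetilde W$; by $G$-minimality $\widetilde W$ is finite or of finite index, and since $K_0$ is infinite this forces $W=0$ or $W=V$, so $V$ is a simple $K_0[G]$-module. As $G$ is abelian, $R:=K_0[G]/\operatorname{Ann}(V)$ is a commutative ring with a faithful simple module, hence a field, with $V\cong R^{+}$ (one-dimensional over $R$), and $R$ is finite-dimensional over $K_0$ --- so definable --- since it embeds in $\operatorname{End}_{K_0}(V)$. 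Writing $K:=R$ we obtain a definable infinite field with $A/A_0\cong K^{+}$; because $R$ is commutative each $g\in G$ acts $K$-linearly on $K^{+}$, i.e.\ as an element of $K^{\times}$, giving a homomorphism $G\to K^{\times}$ with kernel $G_0:=\{g\in G:(g-1)(A)\subseteq A_0\}$, so $G/G_0$ embeds in $K^{\times}$, as required.

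\textbf{Main obstacle.} The two load-bearing points are: producing an essentially infinite ring commuting with the $G$-action without any torsion assumption on $A$ --- obtained here at no cost from the commutativity of $\Gamma_0$, which gives $\Gamma_0\subseteq C^{\#}(\Gamma_0)$ --- and converting $G$-minimality into the absolute $(\Gamma,\Delta)$-minimality required by Theorems~\ref{ThmA} and~B, which is exactly where Schlichting's theorem together with Lemma~\ref{boundedind} is essential. The concluding linear-algebra step is routine once one observes that the relevant subspaces and the field $R$ are definable.
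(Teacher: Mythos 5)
Your proof is correct and follows essentially the same strategy as the paper's: reduce $G$-minimality to absolute $(\Gamma,\Delta)$-minimality via Schlichting's theorem, then invoke Theorem~B. You are, however, more explicit than the paper in two places where its argument is terse or glosses over details. First, you pass to the sharp bicommutant ($\Delta=C^{\#}(\Gamma_0)$, $\Gamma=C^{\#}(\Delta)$) so that the double-commutant hypotheses of Theorem~B are genuinely satisfied, whereas the paper only says ``$G$ commutes with itself.'' Second, the paper's concluding step (``Since a vector subspace is $G$-invariant and, by minimality, of finite index\dots'') is not literally correct as written --- an arbitrary $K_0$-subspace of $A/A_0$ need not be $G$-invariant, since $K_0$ lies in $\Gamma/{\sim}\cap\Delta/{\sim}$ rather than in the image of $G$ --- so something further is required to reach $A/A_0\cong K^{+}$. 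Your replacement is a clean and standard commutative Schur argument: definable $G$-invariant subgroups of infinite index are forbidden by $G$-minimality, so $V=A/A_0$ is a simple $K_0[G]$-module; as $G$ is abelian, $K_0[G]/\operatorname{Ann}(V)$ is a commutative ring with a faithful simple module, hence a field $K$, finite-dimensional over $K_0$ and therefore definable, and $V\cong K^{+}$ with $G/G_0\hookrightarrow K^{\times}$. One small imprecision: to pass from commensurability of $\{gB:g\in G\}$ to \emph{uniform} commensurability, Lemma~\ref{boundedind} alone is not quite enough (it bounds the length of descending chains with large index, not the individual pairwise indices); the paper uses a compactness/saturation argument at this point, and that is also what is needed here. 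This does not affect the validity of the proof.
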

\begin{proof}
It is sufficient to prove that $A$ is absolutely $G$-minimal. Then, the proof follows by Theorem $B$ (since $G$ commutes with itself and it is essentially unbounded by hypothesis). Therefore, $A/A_0$ is a $K$-vector space for $A_0$ a finite subgroup. Since a vector subspace is $G$-invariant and, by minimality, of finite index in $A$, $A/A_0\simeq K^+$ and $G/G_0$ embeds in $K^{\times}$.\\
Assume that $A$ is not absolutely $G$-minimal. Let $B$ be an almost $G$-invariant subgroup such that $\dim(A)>\dim(B)$. The uniformly definable family of subgroups $\{g\cdot B\}_{g\in G}$ is commensurable. By compactness, it is uniformly commensurable, and, by Schlisting's Theorem, there exists $H\sim B$ such that $H$ is $G$-invariant. This contradicts $G$-minimality.
\end{proof}
We extend the previous theorem to the action of an almost abelian group on another almost abelian group. This is not just a mere technical extension, since we usually encounter almost abelian groups while working with finite-dimensional groups.
\begin{defn}
    A group $G$ is \emph{almost abelian} if $\widetilde{Z}(G):=\widetilde{C}_G(G)=G$.
\end{defn}
By Lemma \ref{boundedind}, finite-dimensional almost abelian groups are BFC.
\begin{defn}
    A group $G$ is a \emph{BFC-group} if there exists $n\in \mathbb{N}$ such that $|g^G|\leq n$ for any $g\in G$.
\end{defn}
BFC-groups are abelian-by-finite as proven in \cite{neumann1954groups}.
\begin{lemma}\label{BFC-groups}
    Let $G$ be a BFC-group. Then, $G'$ is finite.
\end{lemma}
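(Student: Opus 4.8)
The plan is to reduce to the finitely generated case and then apply Schur's theorem, following the classical argument of B.~H.~Neumann \cite{neumann1954groups}. Write $n$ for a bound with $|g^{G}|=[G:C_{G}(g)]\le n$ for every $g\in G$, and note that every subgroup of $G$, in particular every finitely generated one, is again a BFC-group with the same bound $n$.

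First I would reduce to finitely generated groups. Each element of $G'$ is a product of finitely many commutators, hence lies in $H'$ for some finitely generated $H\le G$, so $G'=\bigcup\{H':H\le G\text{ finitely generated}\}$; moreover this family is directed, since for finitely generated $H_{1},H_{2}$ one has $H_{1}',H_{2}'\le H'$ with $H=\langle H_{1},H_{2}\rangle$. Hence it suffices to produce a bound $|H'|\le f(n)$, with $f$ depending on $n$ only, valid uniformly over all finitely generated $H\le G$: then $G'$ is a directed union of subgroups of order at most $f(n)$, and therefore $|G'|\le f(n)$ as well.

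Next I would treat a finitely generated $G=\langle g_{1},\dots,g_{r}\rangle$. An element is central exactly when it commutes with every generator, so $Z(G)=\bigcap_{i=1}^{r}C_{G}(g_{i})$ has index at most $n^{r}$ in $G$; thus $G/Z(G)$ is finite, and Schur's theorem gives that $G'$ is finite, with $|G'|$ bounded in terms of $[G:Z(G)]$.

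The main obstacle is that the bound just obtained depends on the number of generators $r$, and this dependence cannot simply be discarded: a directed union of finite groups need not be finite ($\mathbb{Q}/\mathbb{Z}$ is such a union), so the reduction of the second step genuinely requires a bound in terms of $n$ alone. Removing the dependence on $r$ is the heart of Neumann's theorem. The key extra input is a bound, depending only on $n$, on the size of the set of commutators $\{[x,y]:x,y\in G\}$: for fixed $x$ this set equals $\{x^{-1}x^{y}:y\in G\}$ and so has at most $n$ elements, and a more careful combinatorial analysis controls how these finite sets vary with $x$. Once the set of all commutators is seen to be finite, it is a finite conjugation-invariant generating set of $G'$; since the commutator subgroup of a group all of whose conjugacy classes are finite is periodic (again \cite{neumann1954groups}), Dietzmann's lemma applies and shows $G'$ finite, with order bounded by a function of $n$ alone. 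For this last and hardest step I would simply invoke \cite{neumann1954groups}.
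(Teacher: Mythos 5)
The paper gives no proof of this lemma at all; it simply cites \cite{neumann1954groups}, and the surrounding text appeals directly to Neumann's theorem on BFC-groups. Your proposal ultimately does the same: you correctly diagnose that the naive reduction (express $G'$ as a directed union over finitely generated subgroups and apply Schur's theorem) cannot work on its own, because the bound on $|H'|$ it produces depends on the number of generators of $H$, and you then defer to \cite{neumann1954groups} for the two points that remove this dependence, namely the uniform bound on the set of commutators and the torsion of the derived subgroup needed to apply Dietzmann's lemma. Thus your argument is consistent with the paper's treatment, with the added benefit of an accurate sketch of what the cited result actually does.
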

Therefore, almost abelian finite-dimensional groups have finite derived subgroup. 
 In this case, the minimality is substituted by the stronger notion of \emph{absolute minimality}. 
\begin{defn}
    Let $G$ be a definable group acting on the definable abelian group $A$. $A$ is \emph{absolutely $G$-minimal} if $A$ is $H$-minimal for any definable subgroup $H$ of finite index in $G$. 
\end{defn} 
The proof of the following theorem follows, partially, from \cite[Theorem 2.4]{wagner2023finite}.
\begin{theorem}
    Let $G$ be a definable almost abelian group acting on a definable infinite almost abelian group $A$. Assume that:
    \begin{itemize}
        \item $A$ is absolutely $G$-minimal;
        \item the action is not almost trivial.
    \end{itemize}
    Then, $G/\widetilde{C}_G(A)$ is abelian and there exists a definable field $K$ such that $G/\widetilde{C}_G(A)$ definably embeds in $K^{\times}$ and $A/\widetilde{C}_A(G)$ is isomorphic to $K^{+}$.
\end{theorem}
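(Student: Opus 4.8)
The plan is to reduce everything to the abelian situation of Theorem~\ref{Zilber} (equivalently of Theorem~B); the bookkeeping for the almost abelian groups runs parallel to \cite[Theorem~2.4]{wagner2023finite}.

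\textbf{Reducing $G$ and showing $G/\widetilde C_G(A)$ is abelian.} Being definable and almost abelian, $G$ is BFC by Lemma~\ref{boundedind}, so $G'$ is finite by Lemma~\ref{BFC-groups}. Replace $G$ by the characteristic, finite‑index subgroup $C_G(G')$: by absolute $G$-minimality $A$ is still minimal for it, and since the new almost centraliser is normal in the original $G$ it cannot have finite index without the action being almost trivial by Lemma~\ref{SimAlmCenAct}. Now $G$ is nilpotent of class $\le 2$, the commutator map $G\times G\to G'$ is biadditive, so $N:=G'$ is a \emph{finite abelian} central subgroup. I claim $N\subseteq\widetilde C_G(A)$; this forces $G/\widetilde C_G(A)$ abelian. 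For $c\in N$ the subgroups $C_A(c)$ and $\operatorname{Im}(\rho(c)-1)$ (with $\rho$ the action) are $G$-invariant, hence finite or of finite index by $G$-minimality; the only bad case is $C_A(c)$ finite and $\operatorname{Im}(\rho(c)-1)$ of finite index, and after passing to a finite‑index $G$-invariant subgroup we may take such a $c$ of prime order. To exclude this one uses a Clifford-type decomposition: the idempotents of $\mathbb Q[N]$ are central in $\mathbb Q[G]$ since $G$ centralises $N$, so the finitely many associated subgroups of $A$ are $G$-invariant, and by minimality all but one are finite (absorbed into $\widetilde C_A(G)$); thus $N$ ends up acting essentially by a single character, and the structure theory of definable almost abelian groups acting minimally (\cite[Theorem~2.4]{wagner2023finite}, together with Schlichting's theorem applied to the uniformly commensurable family $\{gB\}_{g\in G}$) forbids $c$ from acting fixed‑point‑freely. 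Hence $N\subseteq\widetilde C_G(A)$.

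\textbf{Reducing $A$ and applying Theorem~B.} Likewise $A'$ is finite, it is characteristic hence $G$-invariant hence contained in $\widetilde C_A(G)$, and $A/A'$ is still absolutely $G$-minimal with the action still not almost trivial and with $A/\widetilde C_A(G)$ unchanged; so assume $A$ abelian. Let $\Gamma$ be the sub-pre-ring of $\operatorname{End}(A)$ (regarded as endogenies) generated over $\mathbb Z$ by $\rho(G)$. From $gh=hg\cdot[h,g]$ with $[h,g]\in N$ central one gets $\rho(g)\rho(h)-\rho(h)\rho(g)=(\rho([h,g])-1)\rho(hg)$, whose image is finite because $N\subseteq\widetilde C_G(A)$; hence $\Gamma/{\sim}$ is \emph{commutative}. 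It contains the infinite interpretable group $G/\widetilde C_G(A)$ (since $\rho(g)\sim\rho(g')$ iff $g^{-1}g'\in\widetilde C_G(A)$), hence is essentially unbounded, and it sharply commutes with itself. Finally $A$ is absolutely $(\Gamma/{\sim})$-minimal: an infinite almost $\Gamma$-invariant subgroup of infinite index would, via Schlichting's theorem, produce an infinite $G$-invariant one of infinite index. Applying Theorem~B with $\Gamma=\Delta=\Gamma/{\sim}$ (whose katakernels are trivial) yields a finite subgroup $A_0$ and a definable infinite field $K$ such that $A/A_0$ is a finite-dimensional $K$-vector space on which $\Gamma/{\sim}$, hence $G$, acts $K$-linearly.

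\textbf{Conclusion.} A $K$-subspace of $A/A_0$ is $G$-invariant, so by minimality $A/A_0$ is one-dimensional over $K$, i.e. $A/A_0\cong K^{+}$, and $G$ acts on it through scalars, giving a definable homomorphism $G\to K^{\times}$ whose kernel is $\widetilde C_G(A)$; unwinding the reductions, $A_0$ corresponds to $\widetilde C_A(G)$. Thus $G/\widetilde C_G(A)$ embeds definably in $K^{\times}$ and $A/\widetilde C_A(G)\cong K^{+}$, and $K$ is infinite precisely because the action is not almost trivial. The one genuinely hard point is the claim $N\subseteq\widetilde C_G(A)$: everything else is reduction machinery plus an invocation of Theorem~B, whereas showing that the finite derived subgroup acts almost trivially — equivalently that the ambient ring of endomorphisms becomes commutative after quotienting by $\sim$ — is exactly where the structure of definable almost abelian groups in finite-dimensional theories is needed.
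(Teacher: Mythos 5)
Your proposal attacks the theorem in a genuinely different order from the paper, and the inversion is where the gap lies. The paper never tries to prove $G'\subseteq\widetilde C_G(A)$ up front. Instead it passes to $A/A_0$ and $G/G_0$, builds the ring $M=\langle G/G_0\rangle$ of (possibly non-commuting) endomorphisms, shows each non-zero element is a monomorphism, bounds the dimension of type-definable subsets, and obtains the definable skew-field of fractions $K$ acting on $A/A_0$ (Proposition 3.6 of \cite{wagner2020dimensional}). Only \emph{then} does it use almost abelianity: with $n_1=|(G/G_0)'|$ and $n_2=[G/G_0:C_{G/G_0}((G/G_0)')]$, it shows $G^{n_1n_2}/G_0$ is central of finite index, lets $K_1\subseteq K$ be the (commutative) fraction field of the ring it generates, uses minimality to get $A/A_0\cong K_1^{+}$, and concludes $K_1=K$ by comparing dimensions, whence $K$ is a field and $G/G_0\hookrightarrow K^\times$ is abelian. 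The commutativity of $G/\widetilde C_G(A)$ is thus a \emph{consequence} of the skew-field construction, not a prerequisite for it.

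Your attempt to prove $N=G'\subseteq\widetilde C_G(A)$ directly via a Clifford/idempotent decomposition does not work as written, and this is a genuine gap rather than a matter of presentation. The idempotents of $\mathbb Q[N]$ have denominators dividing $|N|$; they give operators on $A$ only when $A$ is uniquely $|N|$-divisible, which is not an available hypothesis (the group $A$ may well have $|N|$-torsion — indeed the theorem allows arbitrary finite-dimensional abelian $A$, including bounded-exponent groups). So ``the finitely many associated subgroups of $A$'' need not exist as definable subgroups at all. Even granting a decomposition, the leap from ``all but one piece finite'' to ``$N$ acts by a single character'' and then to ``this forbids $c$ from acting fixed-point-freely'' is unsupported: a central element of finite order acting with finite fixed-point group and finite-index image is exactly the generic picture for a scalar in $K^\times$, and nothing in Schlichting's theorem or \cite[Theorem 2.4]{wagner2023finite}, as invoked, rules it out in advance of knowing that $K$ is commutative. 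In short, the one step you flag as ``the genuinely hard point'' is precisely the one your sketch does not actually prove; the paper circumvents it by deriving commutativity of $K$ from the central finite-index subgroup $G^{n_1n_2}/G_0$ rather than from a module-theoretic decomposition of $A$ under $N$. The rest of your reduction machinery (replacing $G$ by $C_G(G')$, killing $A'$ via Lemma~\ref{BFC-groups}, and, once commutativity is in hand, applying Theorem~B as in Theorem~\ref{Zilber}) is sound and parallels the paper.
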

\begin{proof}
    $\widetilde{C}_A(G):=A_0$ and $\widetilde{C}_G(A):=G_0$ are definable normal subgroups respectively of $A,G$ by Lemma \ref{lemma 1}. Moreover, they are not of finite index since the action is not almost trivial and by Lemma \ref{SimAlmCenAct}. By absolute $G$-minimality, $\widetilde{C}_A(G)$ is finite. We define the action of $G/G_0$ on $A/A_0$ simply as $[g][a]=[g(a)]$.\\
    We prove that this action is well-defined. Since $A_0$ is $G$-invariant by Lemma \ref{lemma 1}, $g(a)\in A_0$ for every $g\in G$ and $a\in A_0$.
    We may work in $A/A_0$. This group is abelian since, by Lemma \ref{BFC-groups}, the derived subgroup $A'$ of $A$ is a finite characteristic subgroup. In particular, it is $G$-invariant and, for any $a\in A'$, $Ga$ is finite. This clearly implies that $a\in A_0$. We verify that $\widetilde{C}_{A/A_0}(G)=0+A_0$. Let $a+A_0\in A/A_0$ be such that $Ga+A_0$ is finite in $A/A_0$. Since $A_0$ is finite, $Ga$ is itself finite \hbox{i.e.} $a\in A_0$. To prove the well-defineness, it is sufficient to show that $G_0$ acts as the identity on $A/A_0$ \hbox{i.e.} for any $g_0\in G_0$, $(g_0-1)A+A_0\leq A_0$. $(g_0-1)A+A_0=A_1$ is  a finite $G_1:=C_G(g_0)$-invariant subgroup. By almost abelianity, $G_1$ is a definable subgroup of finite index in $G$ and so $A_1\leq A_0$. The previous proof also verifies that there are no finite subgroups of $A/A_0$ invariant for a definable subgroup of finite index in $G/G_0$.\\
    Let $M=\langle G/G_0\rangle$ be the invariant domain of endomorphisms of $A/A_0$ generated by $G/G_0$. We prove that every endomorphism is either a monomorphism or $0$. Let $m\in M$. Then, $m$ is generated by finitely many elements $g_1,...,g_n\in G/G_0$. Therefore, $\mathrm{im}(m)$ and $\ker(m)$ are $\bigcap_{i=1}^n C_G(g_i):=G_1$-invariant. Since the latter is a definable subgroup of finite index, $\mathrm{im}(m)$ and $\ker(m)$ are either finite or of finite index in $A/A_0$, by absolute minimality. Being both invariant for $G_1$-invariant, then, if they are finite, they are equal to $0$. This implies that either $m$ is a monomorphism or it is equal to $0$. Consequently, $M$ is an integral domain. Indeed, if the kernel of $m$ and $m'$ are $0$, then $\ker(mm')$ is equal to $\ker(m')+m'^{-1}(\ker(m))$ that is $0$. The dimension of any type-definable subset of $M$ is smaller than $2\dim(A)$. Indeed, fixed $c\in A/A_0-\{0+A_0\}$, every $x\in X$ is uniquely determined by the couple $(c,x(c))\in (A/A_0)^2$. By \cite[Proposition 3.6]{wagner2020dimensional}, the skew-field of fractions $K$ is a definable infinite skew-field. Moreover, we can define a function $\phi$ between $K$ and $\mathcal{F}-\operatorname{End}(A)/\sim$. This function associate to $m/n$ for $m,n\in M,n\not=0$, the equivalence class $\phi(m/n)=[mn^{-1}]$ where $mn^{-1}$ is the quasi-endomorphism 
    $$mn^{-1}: \mathrm{im}(n)\rightarrow A/A_0$$
    that sends $n(a)$ to $m(a)$. Proceeding as in Theorem $A$, we may conclude that this skew-field of fractions acts definably on $A/A_0$, that is a $K$-vector space of finite dimension. Since $G/G_0$ is almost abelian, $(G/G_0)'$ is finite and let $n_1=|(G/G_0)'|$. Therefore, $C_{G/G_0}((G/G_0)')$ is of finite index $n_2$. Since $G/G_0$ is a group embeddable in the multiplicative group of a skew-field, there are only finitely many elements of order $n$ for any $n<\omega$. We verify that $G^{n_1n_2}$ is a central subgroup in $G$. It is sufficient to prove that $[g,h^{n_1n_2}]=0$ for any $g,h\in G$. Clearly, $[g,h^{n_1n_2}]=[g,h^{n_2}]^{n_1}=0$ since $h^{n_2}\in C_G((G/G_0)')$. Therefore, the definable normal subgroup $G^{n_1n_2}/G_0$ is of finite index in $G/G_0$ and it is central. The field of fractions $K_1$ of the ring of automorphisms generated by $G^{n_1n_2}/G_0$ acts on $A/A_0$. Consequently, $A/A_0$ is a $K_1$-vector space of finite dimension. Suppose, for a contradiction, that the linear dimension on $A/A_0$ as a $K_1$-vector space is not $1$. Then, given $a+A_0\in A/A_0-\{0\}$, $K_1a$ is an infinite definable $G^{n_1n_2}$-invariant subgroup. By absolute minimality, it must be of same dimension as $A$. In conclusion, $A/A_0$ is isomorphic to $K_1^{+}$. This implies that $\dim(K_1)=\dim(K)$. Since $K$ must be a vector space over $K_1$, $K_1=K$. Therefore, $(G/G_0)'$ is abelian. Moreover, $A/A_0$ has no proper $G/G_0$-invariant subgroups. Suppose, for a contradiction, that there exists a proper $G$-invariant subgroup $A_1/A_0$ of finite index in $A/A_0$. Then, it is a subvector space. Indeed, the action of $K$ on $A_1/A_0$ defined as $mn^{-1}(a)$ with $a\in A'/A_0$ is the same. Working as in the previous proof, we obtain the conclusion. Therefore, $A'$ is an infinite vector space of at least dimension $1$. Since $A/A_0$ has dimension $1$, $A'$ must coincide with $A/A_0$. Then, the conclusion follows from Theorem \ref{Zilber}.
\end{proof}
\section{Conclusion}
A first important question is whether Theorem $A$ and Theorem $B$ are still true with the hypothesis of minimality. In general, this seems an unnatural assumption. Indeed, in the cases in which it holds (as \cite{WagnerDeloro} and \ref{Zilber}), it is a consequence of the connectivity.
\begin{question}
    Are Theorems $A$ and $B$ still true when we only assume minimality?
\end{question}
Another possible extension of Theorem $A$ and Theorem $B$ is when both $\Gamma$ and $\Delta$ are near-rings of definable quasi-endomorphisms. The problem for this case is that, in general, is not clear how to construct a definable field when we do not have a bound on the indices, as already described at the end of section $10$.
\begin{question}
    Are Theorems $A$ and $B$ still true when we assume that both $\Gamma$ and $\Delta$ are near-rings of quasi-endomorphisms?
\end{question}
Finally, we may ask if Theorem $A$ and/or $B$ holds for two pre-rings of quasi-endomorphisms with a notion of commutation weaker than the flat one. For example, we can define sharp commutation for quasi-endomorphisms without assuming anything on the domains. This notion still extends the sharp commutation for endogenies. Nevertheless, it leads to many obstacles, as the non-$(\Gamma,\Delta)$-invariance of the bikatakernel. Find a result in this direction requires a completely different approach.
\begin{question}
    Are Theorems $A$ and $B$ still true if we assume a notion of commutation weaker than flat commutativity?
\end{question}
The main directions of research for these results are the possible applications. Already Theorem \ref{Zilber} leads the way to the study of finite-dimensional groups, and in particular supersimple and superstable of finite Lascar rank, in a direction that resembles the finite Morley rank approach, with potentially interesting application. For example, it can be applied to classify superstable soluble groups of Lascar rank $2$. The main issue for supersimple groups is that is not knows if a supersimple definably minimal group of finite Lascar rank should be finite-by-abelian-by-finite. Nevertheless, the strength of Theorem $B$ is not limited to groups. Another application is to Lie rings. A \emph{Lie ring} is simply an abelian group with a bilinear anti-symmetric map 
$$[\_,\_]:\mathfrak{g}\times \mathfrak{g}\to \mathfrak{g}$$
 that respects the Jacobi identity. In \cite{invitti2025lie}, Theorem $B$ is used to linearize the action of a Lie ring on an absolutely minimal module. This leads to a classification of NIP finite-dimensional Lie rings, which resembles the result in \cite{deloro2023simple}.

\end{document}